
\documentclass[10pt, a4paper, twoside]{article}

\usepackage{amsmath}
\usepackage{amsfonts}
\usepackage{amssymb}
\usepackage{amsthm}
\usepackage{verbatim}
\usepackage[all]{xy}

\pagestyle{myheadings}
\markboth{{ Mikhail Borovoi and Cyril Demarche}}{{ Manin obstruction to strong approximation for homogeneous spaces}}





\DeclareSymbolFont{rsfs}{U}{rsfs}{m}{n}
\DeclareSymbolFontAlphabet{\mathcal}{rsfs}


\DeclareFontEncoding{OT2}{}{} 
\DeclareTextFontCommand{\textcyr}{\fontencoding{OT2} \fontfamily{wncyr}\fontseries{m}\fontshape{n}\selectfont}
\newcommand{\Sha}{\textcyr{Sh}}


\newcommand{\capbar}{\overline}

\theoremstyle{plain}

\newtheorem{theorem}{Theorem}[section]
\newtheorem{maintheorem}[theorem]{Main Theorem}
\newtheorem{proposition}[theorem]{Proposition}
\newtheorem{lemma}[theorem]{Lemma}
\newtheorem{corollary}[theorem]{Corollary}

\theoremstyle{definition}
\newtheorem{subsec}[theorem]{}
\newtheorem{remark}[theorem]{Remark}
\newtheorem{construction}[theorem]{Construction}

\def \kbar {{\overline{k}}}

\def \Romannumeral #1 {\expandafter\uppercase\expandafter {\romannumeral #1} }

\def \Br {{\rm{Br}}}

\def \A{{\mathbf A}}

\def \tor {{\rm{tor}}}
\def \Spec {{\rm{Spec\,}}}

\def \Hom {{\rm {Hom}}}

\def \GL {{\rm {GL}}}
\def \SL {{\rm {SL}}}

\def \PGL {{\rm {PGL}}}

\def\ov{\overline}
\def\Q{{\mathbf{Q}}}
\def\Z{{\mathbf{Z}}}

\def\Gm{{\mathbb{G}_m}}

\def\Gal{{\rm Gal}}

\def\Hbar{{\overline{H}}}


\def\lra{\longrightarrow}




\newcommand{\sV}{\mathcal{V}}

\newcommand{\isoto}{\overset{\sim}{\to}}

\newcommand{\into}{\hookrightarrow}
\newcommand{\onto}{\twoheadrightarrow}
\newcommand{\labelto}[1]{\xrightarrow{\makebox[1.5em]{\scriptsize ${#1}$}}}

\DeclareMathOperator{\coker}{coker}


\def\uu{^\mathrm{u}}

\def\red{^\mathrm{red}}
\def\tor{^{\mathrm{tor}}}
\def\sc{^{\mathrm{sc}}}
\def\sss{^{\mathrm{ss}}}
\def\scu{^{\mathrm{scu}}}

\def\ssu{^{\mathrm{ssu}}}

\def\tors{_{\mathrm{tors}}}

\def\sab{^{\mathrm{sab}}}
\def\lin{^{\mathrm{lin}}}


\DeclareMathOperator{\U}{U}


\newcommand{\Xbar}{{\capbar{X}}}
\newcommand{\Ybar}{{\capbar{Y}}}

\newcommand{\Gbar}{{\capbar{G}}}



\def\id{{\rm id}}

\def\et{{\textup{\'et}}}

\def\sO{\mathcal{O}}

\def\gg{{\mathfrak{g}}}

\def\pibar{\overline{\pi}}

\def\Bro{\textup{Br}_1}

\def\A{{\mathbf{A}}}
\def\U{{\mathcal{U}}}

\def\R{{\mathbf{R}}}

\renewcommand{\sV}{{\mathcal{V}}}
\newcommand{\sU}{{\mathcal{U}}}
\newcommand{\abvar}{{}^{\textup{abvar}}}
\newcommand{\ab}{{\textup{ab}}}
\newcommand{\kk}{{k}}

\def\P{P${}^S$}

\newcommand{\Bra}{{{\rm Br}_{\rm a}}}

\def\Br{{\rm Br}}
\def\Pic{{\rm Pic}}

\newcommand{\Ov}{{\sO_v}}

\def\X{{\mathcal{X}}}
\def\Y{{\mathcal{Y}}}
\def\N{{\mathcal{N}}}
\def\G{{\mathcal{G}}}

\def\wt{\widetilde}

\def\Phat{{\widehat{P}}}
\def\im{\rm im}

\def\pn{\par\noindent}
\def\C{{\bf C}}

\textheight=185mm 
\textwidth=125mm 
\parskip=2pt minus1pt
\parindent=12pt

\renewcommand{\cdot}{.}

\begin{document}

\title{Manin obstruction to strong approximation\\ for homogeneous spaces}

\author{Mikhail Borovoi\thanks{The first-named author was partially supported
by the Israel Science Foundation (grant No. 807/07)
and by the Hermann Minkowski Center for Geometry}
\\Raymond and Beverly Sackler School of Mathematical Sciences,\\
Tel Aviv University, 69978 Tel Aviv, Israel\\borovoi@post.tau.ac.il
\and
Cyril Demarche\\
Universit\'e Paris-Sud, Laboratoire de Math\'ematiques d'Orsay,\\
91405 Orsay Cedex, France
\\cyril.demarche@math.u-psud.fr}

\date{\today}

\maketitle

\begin{abstract}
For a homogeneous space $X$ (not necessarily principal)
of a connected algebraic group $G$ (not necessarily linear)
over a number field $k$, we prove a theorem
of strong approximation for the adelic points of $X$
in the Brauer-Manin set.
Namely, for an adelic point $x$ of $X$
orthogonal to a certain subgroup (which may contain transcendental elements)
of the Brauer group $\Br(X)$ of $X$
with respect to the Manin pairing,
we prove a strong approximation property for  $x$
away from a finite set $S$ of places of $k$.
Our result extends a result of Harari
for torsors of semiabelian varieties
and a result of Colliot-Th\'el\`ene and Xu
for homogeneous spaces of simply connected semisimple groups,
and our proof uses those results.
\end{abstract}
\bigskip

\pn
{\bf Mathematics Subject Classification (2010).} Primary  14M17; Secondary 11G35, 14F22, 20G30.
\bigskip

\pn
{\bf Keywords.}
Manin obstruction, strong approximation, Brauer group, homogeneous spaces, connected algebraic groups.

\setcounter{section}{-1}



\section{Introduction}\label{sec:introduction}

Let $k$ be a number field.
We denote by $\Omega$ the set of all places of $k$,
by $\Omega_\infty$ the set of all infinite (archimedean) places of $k$,
by $\Omega_r$ the set of all real places of $k$, and by $\Omega_f$ the set of all finite (nonarchimedean) places of $k$.
For a finite set $S\subset\Omega$, we set $k_S:=\prod_{v\in S} k_v$,
where $k_v$ denotes the completion of $k$ at $v$.
We write $k_\infty$ for $k_{\Omega_\infty}$.
We denote by $\A$ the ring of ad\`eles of $k$ and by $\A^S$
the ring of ad\`eles without $S$-components.
We have $\A=\A^S\times k_S$.
If $S = \Omega_{\infty}$, we denote by $\A^f := \A^{\Omega_\infty}$ the ring of finite adeles.
If $X$ is  a $k$-variety, we have $X(k_S)=\prod_{v\in S} X(k_v)$
and $X(\A)=X(\A^S)\times X(k_S)$.
In particular $X(\A)=X(\A^f)\times X(k_\infty)$.

Let $X$ be a smooth geometrically integral $k$-variety
over a field $k$ of
characteristic 0.
Let $\Br(X):=H^2_{\textup{\'et}}(X,\Gm)$ denote the cohomological Brauer group of $X$.
We set $\Bro(X):=\ker [\Br(X)\to\Br(X\times_k \kbar)]$,
where $\kbar$ is an algebraic closure of $k$.

Recall that when $k$ is  a number field, there exists a canonical pairing
(Manin pairing)
\begin{equation}\label{eq:Manin-pairing}
\Br(X)\times X(\A)\to \Q/\Z,\quad b, x\mapsto \langle b,x\rangle,
\end{equation}
see \cite{CTSa}, Section 3.1,  or \cite{Sk}, Section 5.2.
This pairing is additive in $b\in\Br(X)$ and continuous in $x\in X(\A)$.
If $x\in X(k)\subset X(\A)$ or  $b$ comes from $\Br(k)$, then $\langle b,x\rangle=0$.

For a subgroup $B\subset\Br(X)$ we denote by $X(\A)^B$ the set of points
of $x\in X(\A)$ orthogonal to $B$ with respect to Manin pairing.
We have
$$
X(k)\subset X(\A)^{\Br(X)}\subset X(\A)^B.
$$
One can ask whether any point $x=(x_v)\in X(\A)$ which is orthogonal to $\Br(X)$
can be approximated in a certain sense by $k$-rational points.

In this paper
we consider the case when $X$ is a homogeneous space
of a connected algebraic $k$-group $G$ (not necessarily linear)
with connected geometric stabilizers.
For such  an $X$ and $x=(x_v)\in X(\A)^{\Bro(X)}$ 
it was proved in \cite{BCTS}, Appendix, Theorem A.1,  that our $X$ has a $k$-point
and that $x$ can be approximated by $k$-points in the sense of
weak approximation.
We used a result of Harari \cite{H1} on the Manin obstruction
to  weak approximation
for principal homogeneous spaces of semiabelian varieties.
Here, using a result of recent Harari's paper \cite{H}
on the Manin obstruction to  {strong} approximation
for principal homogeneous spaces of semiabelian varieties
together with a recent result of Colliot-Th\'el\`ene and Xu \cite{CTX}
on strong approximation for  homogeneous spaces of simply connected groups,
we prove a theorem
on {\em strong} approximation for our $x$.

For a connected $k$-group $G$ we write $G\abvar$
for the biggest quotient of $G$ which is an abelian variety,
and we write $G\sc$ for ``the simply connected semisimple part of $G$'',
see \ref{subsec:groups} below for details.

\begin{theorem} \label{thm:intro-Br}
Let $G$ be a connected  algebraic group (not necessarily linear)
over a number field $k$.
Let $X := H \backslash G$ be a right homogeneous space of $G$,
where $H$ is a connected $k$-subgroup of $G$.
Assume that the Tate--Shafarevich group
of the maximal abelian  variety quotient $G\abvar$ of $G$ is finite.
Let $S\supset\Omega_\infty$ be a finite set of places of $k$ containing  all  archimedean places.
We assume that $G\sc(k)$ is dense in $G\sc(\A^S)$.
Let $x=(x_v)\in  X(\A)$ be a point orthogonal to
$\Br(X)$ with respect to the Manin pairing.
Then for any open neighbourhood $\sU^{S}$ of the projection $x^S$ of $x$ to $X(\A^S)$
there exists a rational point $x_0 \in X(k)$
whose diagonal image in $X(\A^S)$ lies in $\sU^{S}$.
Moreover, we can ensure  that for each archimedean place $v$,
the points $x_0$ and $x_{v}$ lie in the same connected
component of $X(k_{v})$.
\end{theorem}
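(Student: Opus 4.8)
The strategy is to reduce the general case, step by step, to the two known results cited in the introduction: Harari's strong approximation theorem for torsors under semiabelian varieties, and the Colliot-Th\'el\`ene–Xu theorem for homogeneous spaces of simply connected semisimple groups. The reductions will exploit the structure theory of connected algebraic groups over a field of characteristic $0$, together with the functoriality of the Manin pairing.

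First I would perform a series of \emph{dévissage} steps on $G$ and $H$. Using the maximal connected linear subgroup $G\lin\subset G$ with $G/G\lin$ a semiabelian variety (Chevalley, plus the abelian-variety quotient), and the normal series $1\to G\unipotent\to G\lin\to G\reductive\to 1$, $1\to G\sc\to [G\reductive,G\reductive]\to G\tor\to\dots$, one separates the unipotent part (where strong approximation is automatic and contributes nothing to the Brauer group), the simply connected semisimple part $G\sc$ (governed by Colliot-Th\'el\`ene–Xu under the hypothesis that $G\sc(k)$ is dense in $G\sc(\A^S)$), and the quotient $G\sab$ which is an extension of a semiabelian variety by a torus, i.e. itself essentially semiabelian-plus-torus — here Harari's theorem applies, and the finiteness of $\Sha(G\abvar)$ is exactly what is needed to make the Manin obstruction the only one. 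One has to handle the stabilizer $H$ in parallel: after twisting one may assume that the image of $H$ in each of these quotients is the stabilizer of the relevant induced homogeneous space, and the fibration $X\to H'\backslash G'$ with fibers homogeneous spaces of smaller groups lets one argue inductively.

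Concretely, I expect the core step to be a fibration argument: write $X$ as fibered over $Y:=H'\backslash G\sab$ (a torsor-type object to which Harari applies) with fibers that are homogeneous spaces of $G\lin$, themselves fibered over a homogeneous space of $G\sc$ (to which Colliot-Th\'el\`ene–Xu applies). Given the adelic point $x=(x_v)$ orthogonal to $\Br(X)$, one pushes it down to an adelic point of $Y$ orthogonal to $\Br(Y)$ (using that $\Br(Y)\to\Br(X)$ and the compatibility of the pairings), applies Harari to get a rational point of $Y$ close to the projection of $x$ away from $S$, then lifts: the fiber over that rational point is a homogeneous space of $G\lin$ with an adelic point that is still orthogonal to the relevant Brauer classes, and one applies (the $G\sc$-version of) strong approximation there, controlling the archimedean components throughout. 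The lifting requires a local-study/implicit-function-theorem argument near each place in $S$ and at archimedean places to keep $x_0$ in the same connected component as $x_v$ for $v\in\Omega_\infty$.

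The main obstacle, I expect, is bookkeeping the Brauer groups and the Manin pairing through these fibrations — specifically, showing that orthogonality of $x$ to all of $\Br(X)$ (which may contain transcendental classes) forces orthogonality of the pushed-down adelic points to the relevant subgroups of $\Br(Y)$ and of the fibers, so that the two black-box theorems can actually be invoked. This is where one must be careful that no Brauer class is lost in the dévissage: one needs to know enough about how $\Br$ behaves for homogeneous spaces with connected stabilizers (relating $\Br(X)$ to $\Br$ of the group and of the abelian quotient), and to control the possibly-transcendental part. A secondary difficulty is the twisting needed to arrange that the relevant stabilizers behave well under the projections; this may require replacing $X$ by a form of itself, which changes the group but not the essential hypotheses.
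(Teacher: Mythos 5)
Your high-level plan — dévissage on the structure of $G$, push forward along the fibration $X \to X/G\sss$ to a torsor under a semiabelian variety (Harari), and lift via the fibers, which are homogeneous spaces of the simply connected group $G\sss$ (Colliot-Thélène--Xu) — is exactly the strategy of the paper, and you correctly identify the Brauer-group bookkeeping as the crux. But the proposal stops at the point where the real work begins, and a few of the gaps are substantive rather than mere omitted detail.

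The central missing idea is the group $\Br_1(X,G)$: the subgroup of $b\in\Br(X)$ whose pullback to $G$ along an orbit map lies in $\Br_1(G)$. This is the correct ``relevant subgroup'' for the fibration argument, and it genuinely contains transcendental classes of $\Br(X)$, which is why the theorem is stated for $\Br(X)$ and why (as the paper's counter-example shows) it would be \emph{false} with $\Br_1(X)$ in its place. Your plan implicitly assumes that one can push $\langle\,,\,\rangle$-orthogonality to all of $\Br(X)$ down to orthogonality to $\Br(Y)$ of the base and to the fibers' Brauer groups, but making this precise requires (a) a Sansuc-type exact sequence for $\Br_1(X,Y)$ with the Colliot-Thélène--Xu map $\Delta_{Y/X}$, which is proved from scratch in the paper (Theorem \ref{lem Sansuc}), and (b) the key Proposition \ref{prop : key prop}, surjectivity of the restriction $\Br_{1,x_0}(X,G)\to\Br_{x_0}(X_{y_0})$ to a fiber. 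Without (b), you cannot conclude that a point of the fiber $X_{y_0}(\A)$ inherited from $x$ is orthogonal to the fiber's Brauer group, which is what CT--Xu needs.

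A second genuine gap: Proposition \ref{prop : key prop} itself needs the hypothesis that $\Br_{1,e}(G\sab)\to\Br_{1,e}(H\tor)$ is surjective, which does not hold in general. The paper handles this by a quasi-trivial torus trick: take a coflasque resolution $1\to H\tor\to P\to Q\to 1$, replace $G$ by $F:=G\times P$ and $X$ by $W:=H\backslash F$, a torsor over $X$ under the quasi-trivial torus $P$ (Construction \ref{con:torsor}, Lemma \ref{lem:Brauersab}). One must then lift the orthogonal adelic point of $X$ to an orthogonal adelic point of $W$ (Lemma \ref{lem:torsor-quasi-trivial-exact} plus surjectivity of $P(\A)\to\Bra(P)^D$). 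Your mention of ``twisting $X$ by a form of itself'' points to a different, and I believe inapplicable, mechanism; what is actually needed is this auxiliary torsor $W\to X$ under a quasi-trivial torus, not a twist. Finally, to make the fiber comparison work for \emph{varying} rational base point (you get the rational point $y_0$ of $Y$ only after applying Harari, so the fiber $X_{y_0}$ is not the fiber at $x_0$), one needs the fact that $G(\kbar)$ acts trivially on $\Br(\Xbar)$ (Theorem \ref{thm:Brauer}); this allows one to choose a \emph{single} finite set $M\subset\Br_1(X,G)/\Br(k)$ controlling all fibers at once (Lemma \ref{lem:Br(X1)Br(X_2)} and \ref{subsec:M-and-sU}). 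None of these three ingredients is present, even in outline, in the proposal, and without them the fibration argument as sketched cannot be completed.
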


Recall that $G\sc(k)$ is dense in $G\sc(\A^S)$ if and only if
for every $k$-simple factor $G_i\sc$ of $G\sc$ the group
$G\sc_i(k_S)$ is noncompact
(a theorem of Kneser and Platonov, cf. \cite{PR}, Theorem 7.12).

Theorem \ref{thm:intro-Br} extends  a  result of Harari (\cite{H}, Theorem 4)
and a result  of Colliot-Th\'el\`ene and Xu (\cite{CTX}, Theorem 3.7(b)).

In Theorem \ref{thm:intro-Br} we assume that our adelic point $x$ is orthogonal
to the whole Brauer group $\Br(X)$.
Actually it is sufficient  to require that $x$ were orthogonal to a certain subgroup $\Br_1(X,G)\subset \Br(X)$.
In general this subgroup $\Br_1(X,G)$ contains transcendental elements (i.e. is not contained in $\Bro(X)$).
Note that Theorem \ref{thm:intro-Br} with $\Bro(X)$ instead of $\Br(X)$ would be false,
see  Counter-example \ref{subsec:Bro-versus-Br} below.
However this theorem still holds with $\Bro(X)$ instead of $\Br(X)$, if  $S$ contains at least
one nonarchimedean place:

\begin{theorem} \label{thm:intro-Bro}
Let $G$ be a connected  algebraic group (not necessarily linear)
over a number field $k$.
Let $X := H \backslash G$ be a right homogeneous space of $G$,
where $H$ is a connected $k$-subgroup of $G$.
Assume that the Tate--Shafarevich group
of the maximal abelian  variety quotient $G\abvar$ of $G$ is finite.
Let $S\supset\Omega_\infty$ be a finite set of places of $k$
containing  all  archimedean places and {\em at least one nonarchimedean place.}
We assume that $G\sc(k)$ is dense in $G\sc(\A^S)$.
Let $x=(x_v)\in  X(\A)$ be a point orthogonal to
$\Bro(X)$ with respect to the Manin pairing.
Then for any open neighbourhood $\sU^{S}$ of the projection $x^S$ of $x$ to $X(\A^S)$
there exists a rational point $x_0 \in X(k)$
whose diagonal image in $X(\A^S)$ lies in $\sU^{S}$.
Moreover, we can ensure  that for each archimedean place $v$,
the points $x_0$ and $x_{v}$ lie in the same connected
component of $X(k_{v})$.
\end{theorem}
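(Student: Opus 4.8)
The plan is to deduce Theorem~\ref{thm:intro-Bro} from Theorem~\ref{thm:intro-Br} by comparing the two orthogonality conditions. Since $\Bro(X)\subset\Br(X)$, we automatically have $X(\A)^{\Br(X)}\subset X(\A)^{\Bro(X)}$, so an adelic point orthogonal to $\Bro(X)$ need not be orthogonal to the whole Brauer group, and in general (by Counter-example~\ref{subsec:Bro-versus-Br}) the conclusion can fail. The point of allowing a nonarchimedean place in $S$ is that we gain freedom to modify the components of $x$ at places of $S$ without affecting the approximation statement, which only concerns the projection $x^S\in X(\A^S)$. So the strategy is: starting from $x\in X(\A)^{\Bro(X)}$, I would modify $x$ at the places in $S$ (crucially using a nonarchimedean $v_0\in S$) to obtain a new adelic point $x'$ with $x'^S = x^S$ (or arbitrarily close to it) which is orthogonal to the larger group $\Br_1(X,G)$ mentioned in the remark after Theorem~\ref{thm:intro-Br}, and then apply that theorem to $x'$.

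First I would recall the structure of $\Br_1(X,G)$: modulo $\Bro(X)$ (equivalently, modulo the image of $\Br(k)$ plus the ``constant'' part), the relevant extra transcendental classes are controlled by a finite group, and the obstruction they impose on an adelic point factors through a finite quotient. Concretely, the difference between being orthogonal to $\Bro(X)$ and being orthogonal to $\Br_1(X,G)$ is measured by finitely many invariant maps $\langle b_i,-\rangle:X(\A)\to\Q/\Z$ with $b_i\in\Br_1(X,G)$; by reciprocity, the sum $\sum_v \langle b_i,x_v\rangle$ over all $v$ vanishes for the class $b_i$ restricted to... — one has to be careful here since $b_i$ is not in $\Bro(X)$, so global reciprocity does not directly give vanishing. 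The correct mechanism, which I expect is the technical heart, is a local solvability/surjectivity statement: for a nonarchimedean place $v_0$, the local evaluation $x_{v_0}\mapsto (\langle b_i,x_{v_0}\rangle)_i$ hits enough of the finite target group that one can correct the components at $v_0$ (and possibly other places of $S$) to kill the discrepancy between the $\Bro$-obstruction and the $\Br_1(X,G)$-obstruction. This is analogous to the standard trick in strong approximation arguments (as in Colliot-Th\'el\`ene–Xu) where one uses a place not in $S$, or here in $S$, as a ``garbage place'' absorbing the Brauer-Manin correction.

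Thus the key steps, in order, are: (i) identify a finite set of generators $b_1,\dots,b_n$ of $\Br_1(X,G)$ modulo $\Bro(X)+\Br(k)$, and describe the resulting obstruction map $\mathrm{ob}:X(\A)\to F$ to a finite group $F$; (ii) prove that $x\in X(\A)^{\Bro(X)}$ lands in a coset of $\mathrm{ob}$ that can be moved to $0$ by changing only the $v_0$-component (and, if needed, archimedean components, while staying in the same connected component of $X(k_{v_0'})$ for archimedean $v_0'$) — this uses that $X$ has points in $X(k_{v_0})$ filling out all local invariant values, which for a homogeneous space of a connected group over a nonarchimedean field follows from the local description of such classes and the nonemptiness $X(k_{v_0})\neq\emptyset$; (iii) having produced $x'\in X(\A)^{\Br_1(X,G)}$ with $x'^S$ arbitrarily close to $x^S$, invoke Theorem~\ref{thm:intro-Br} (in its $\Br_1(X,G)$-sufficient form) to get $x_0\in X(k)$ approximating $x'^S$, hence $x^S$, in $\sU^S$, with the archimedean connected-component condition preserved. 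The main obstacle is step~(ii): one must show the local evaluation at the single nonarchimedean place $v_0\in S$ is rich enough to absorb the full discrepancy in the finite group $F$; this is exactly where the hypothesis ``$S$ contains at least one nonarchimedean place'' is used, and it is false at archimedean places, which is why the unconditional statement requires orthogonality to all of $\Br(X)$.
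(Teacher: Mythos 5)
Your plan has the right general intuition — the extra nonarchimedean place $v_0\in S$ should serve as a ``garbage place'' absorbing an obstruction — but your proposed implementation has a genuine gap and is not the paper's route.

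The gap is in your step~(ii). You need the local evaluation map
$X(k_{v_0})\to\Hom\bigl(\Br_1(X,G)/\Bro(X),\Q/\Z\bigr)$
to be surjective (or at least rich enough to cancel the discrepancy), but you offer no argument for this and it is far from obvious. The elements of $\Br_1(X,G)$ not in $\Bro(X)$ are transcendental Brauer classes, and nothing in the paper asserts or proves such a pointwise surjectivity over a single nonarchimedean completion. (The counter-example in \ref{subsec:Bro-versus-Br} does exhibit such surjectivity for $\SO_n\backslash\SL_n$ via Hasse invariants, but that is a very special case.) Without this step your deduction from Theorem~\ref{thm:intro-Br} does not go through.

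The paper proves Theorem~\ref{thm:intro-Bro} via Theorem~\ref{thm:Main-algebraic}, and the route is different: rather than modifying $x$ to make it orthogonal to $\Br_1(X,G)$ and then invoking the $\Br_1(X,G)$-theorem, the authors re-run the d\'evissage of Section~7 (reductions to $G\uu=1$, $G\sss$ simply connected, etc.), pass to the semiabelian quotient $Y$ and apply Harari's result there, and then at the crucial step on the fibre $X_{y_0}$ (a homogeneous space of the semisimple simply connected group $G\sss$ with geometrically character-free stabilizer) they replace the Colliot-Th\'el\`ene--Xu theorem (which needs orthogonality to transcendental classes) by Theorem~\ref{prop:simply-connected-homogeneous-spaces-algebraic}, which requires \emph{no} Brauer--Manin condition at all. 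The substitute result rests on Proposition~\ref{lem:simply-connected-homogeneous-spaces-orbits}, proved via Kottwitz's Proposition~\ref{prop:Kottwitz}: one uses that at a nonarchimedean $v_0$ the Kottwitz map $\beta_{v_0}\colon H^1(k_{v_0},H)\to(\pi_1(\Hbar)_{\Gamma_{v_0}})\tors$ is a bijection and $H^1(k_{v_0},G)=1$, so $\mu_{v_0}$ surjects onto $\pi_1(\Hbar)_\Gamma$ and the Kottwitz invariant can be zeroed by choosing the $v_0$-component of the cohomology class. In other words, the correction happens in the Galois cohomology of the \emph{stabilizer} group, where the required surjectivity is an easy structural fact, rather than in Brauer-class evaluations on $X$, where the analogous surjectivity is exactly what you would have to prove and didn't.
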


\begin{remark}
 Let $X$ be  a right homogeneous space  of a connected group $k$-group $G$ over a number field $k$
such that the stabilizers of the geometric points are connected.
By \cite{BCTS}, Theorem A.1,
if there exists $x \in X(\A)$ which is orthogonal to $\Bro(X)$, then the variety $X$ must have a $k$-point,
hence $X=H\backslash G$, where $H$ is a connected subgroup of $G$.
Therefore we could reformulate Theorems \ref{thm:intro-Br}, \ref{thm:intro-Bro}, \ref{thm:Harari-generalized} and \ref{thm:Main-algebraic} for a general homogeneous space $X$ of $G$ (without assuming that $X$ is of the form $X=H\backslash G$).
\end{remark}

Our proof is somewhat similar to that of Theorem A.1 of \cite{BCTS}.
We use the reductions and constructions
of Subsections 3.1 and 3.3 of \cite{BCTS} in order to reduce the assertion
to the case when $X$ is a $k$-torsor under  a semiabelian variety
(treated by Harari \cite{H})
and to the case when $X$ is a homogeneous space of a simply connected semisimple group
with connected geometric stabilizers (the Hasse principle was proved in \cite{B93};
for strong approximation, see Theorem \ref{prop:simply-connected-homogeneous-spaces} below,
which is actually due to Colliot-Th\'el\`ene and Xu  \cite{CTX}).



\section{Main results}\label{section:Main-results}

\begin{subsec}\label{subsec:groups}
Let $G$ be a connected algebraic group (not necessarily linear)
over a field $k$ of characteristic 0.
Then $G$ fits into a canonical short exact sequence
$$
1\to G\lin\to G\to G\abvar\to 1,
$$
where $G\lin$ is a connected linear $k$-group and
$G\abvar$ is an abelian variety over $k$.
We use the following notation:
\pn $G\uu$ is the unipotent radical of $G\lin$;
\pn $G\red:=G\lin/G\uu$, it is a reductive $k$-group;
\pn $G\sss$ is the commutator subgroup of $G\red$,
it is a semisimple $k$-group;
\pn $G\sc$ is the universal covering of $G\sss$,
it is a simply connected semisimple $k$-group;
\pn $G\tor:=G\red/G\sss$, it is a $k$-torus;
\pn $G\ssu:=\textup{Ker}(G\lin \to G\tor)$, it is an extension of
$G\sss$ by $G\uu$.
\pn $G\sab:=(G/G\uu)/G\sss$, it is a semiabelian variety over $k$,
it fits into a short exact sequence
$$
1\to G\tor\to G\sab\to G\abvar\to 1.
$$
We define  the group $G\scu$ as the fibre product $G\scu := G\sc \times_{G\red} G\lin$,
it fits into an  exact sequence
$$
1 \to G\uu \to G\scu \to G\sc \to 1.
$$
We have a canonical homomorphism $G\scu\to G\lin\to G$.
\end{subsec}

\begin{subsec}\label{subsec:Br-def}
Let $X$ be a smooth geometrically integral $k$-variety.
We write $\Xbar$ for $X\times_k \kbar$,
where $\kbar$ is a fixed algebraic closure of $k$.
Recall that $\Br(X)$ is the cohomological Brauer group of $X$
and that $\Bro(X)=\ker [\Br(X)\to\Br(\Xbar)]$.
We set $\Bra(X):=\coker[\Br(k)\to\Bro(X)]$.
If $x_0\in X(k)$ is a $k$-point of $X$, we set
$\Br_{x_0}(X):=\ker[x_0^*\colon \Br(X)\to\Br(k)]$ and
$\Br_{1,x_0}(X):=\ker[x_0^*\colon \Bro(X)\to\Br(k)]$.
We have a canonical isomorphism $\Br_{1,x_0}(X)\isoto \Bra(X)$.

Let $X=H\backslash G$ be a homogeneous space of a connected $k$-group $G$.
Let $x_0\in X(k)$.
Consider the map $\pi_{x_0}\colon G\to X,\ g\mapsto x_0\cdot g$, it
induces a homomorphism $\pi_{x_0}^*\colon \Br(X)\to\Br(G)$.
Consider the commutative diagram
$$
\xymatrix{
{\Br(X)}\ar[r]^{\pi_{x_0}^*} \ar[d]   &\Br(G)\ar[d]\\
\Br(\Xbar)\ar[r]^{\pi_{x_0}^*}        &\Br(\Gbar) \, .
}
$$
Let $\Br_1(X,G)$ denote the kernel of any of the two
equal composed homomorphisms $\Br(X) \to \Br(\Gbar)$.
In other words, $\Br_1(X,G)$ is the subgroup of elements $b\in\Br(X)$
such that $\pi_{x_0}^*(b)\in\Br_{1}(G)$.
We show in Lemma \ref{lem:Br1(X,G)} below, that $\Br_1(X,G)$ does not depend on $x_0$.
Let
$$
\Br_{1,x_0}(X,G):=\ker[ x_0^*\colon \Br_1(X,G)\to\Br(k)]
=\{b\in\Br(X)\ |\ \pi_{x_0}^*(b)\in\Br_{1,e}(G)\}.
$$
It is easy to see that the structure map $X\to\Spec(k)$
induces an embedding $\Br(k)\into \Br_1(X,G)$
(because $X$ has a $k$-point)
and that $\Br_1(X,G)=\Br_{1,x_0}(X,G) + \Br(k)$.
It follows that an adelic point $x\in X(\A)$ is orthogonal to $\Br_1(X,G)$
with respect to Manin pairing
if and only if it is orthogonal to $\Br_{1,x_0}(X,G)$.
\end{subsec}

\begin{subsec}\label{subsec:bullet}
Let $X$ be a smooth geometrically integral $k$-variety over a number field $k$.
We denote by $X(\A)_{\bullet}$ 
the set $X(\A^f) \times \prod_{v \in \Omega_\infty} \pi_0(X(k_v))$,
where $\pi_0(X(k_v))$ is the set of connected components of $X(k_v)$.
The set $X(\A)_{\bullet}$ has a natural topology, which we call the adelic topology.
We have a canonical continuous map $X(\A)\to X(\A)_{\bullet}$
and a canonical embedding $X(k)\into X(\A)_{\bullet}$.
The pairing \eqref{eq:Manin-pairing} of the Introduction induces a pairing
\begin{equation}\label{eq:Manin-pairing-0}
\Br(X)\times X(\A)_{\bullet}\to \Q/\Z.
\end{equation}
For a subgroup $B\subset\Br(X)$ we denote by $(X(\A)_{\bullet})^B$ the set of points
of $x\in X(\A)_{\bullet}$ orthogonal to $B$ with respect to Manin pairing.

Let $X$ be a homogeneous space of a connected $k$-group $G$.
Then $G(\A)$ acts on $X(\A)$ and on $X(\A)_{\bullet}$.
\end{subsec}

\begin{maintheorem} \label{thm:Harari-generalized}
Let $G$ be a connected  algebraic group (not necessarily linear)
over a number field $k$.
Let $X := H \backslash G$ be a right homogeneous space of $G$,
where $H$ is a connected $k$-subgroup of $G$.
Assume that the Tate--Shafarevich group
of the maximal abelian  variety quotient $G\abvar$ of $G$ is finite.
Let $S\supset\Omega_\infty$ be a finite set of places of $k$
containing  all  archimedean places.
We assume that $G\sc(k)$ is dense in $G\sc(\A^S)$.
Set $S_f:=S\cap\Omega_f=S\smallsetminus \Omega_\infty$.
Then the set $(X(\A)_{\bullet})^{\Br_1(X,G)}$ coincides with the closure of the set
$X(k)\cdot G\scu(k_{S_f})$ in $X(\A)_{\bullet}$ for the adelic topology.
\end{maintheorem}

\begin{remark}
If $x\in X(\A)_{\bullet}$ is \emph{not} orthogonal to $\Bro(X,G)$,
we regard it as an obstruction to strong approximation for $x$
(this is {\em the Manin obstruction to strong approximation} from the title of the paper).
Indeed, then by the trivial part of Main Theorem \ref{thm:Harari-generalized}
the point $x$ does not belong to the closure of $X(k)\cdot G\scu(k_{S_f})$.
We interpret the nontrivial part of this theorem as an assertion that under certain assumptions
the Manin obstruction is {\em the only obstruction to strong approximation} for $x$ and for its projection $x^S\in X(\A^S)$.
Indeed, if there is no Manin obstruction, i.e. $x$ is orthogonal to $\Bro(X,G)$,
then by the nontrivial part of Main Theorem \ref{thm:Harari-generalized}
the point $x$ belongs to the closure of $X(k)\cdot G\scu(k_{S_f})$,
and its projection $x^S\in X(\A^S)$ belongs to the closure of $X(k)$ in $X(\A^S)$.
\end{remark}

\begin{subsec}\label{subsec:Bro-versus-Br}
{\em Counter-example with $\Bro(X)$ instead of $\Br(X)$.}
Above we stated the main theorem about the Manin obstruction
related to the subgroup $\Br_1(X,G)\subset\Br(X)$.
We are interested also in  {\em the algebraic Manin obstruction},
that is, the obstruction coming from the subgroup $\Bro(X) \subset \Br(X)$.
We can easily see that
in general the algebraic Manin obstruction is not the only obstruction to strong approximation.

First we notice that in general $\Bro(X)\subsetneqq \Bro(X,G)$.
Indeed, assume that $G$ is semisimple and simply connected and that $H$ is connected semisimple but not simply connected.
In particular, $H$ fits into an exact sequence
$$
1 \to \mu_H \to H\sc \to H \to 1,
$$
where $\mu_H$ is finite and abelian. Consider $X := H \backslash G$.
Then we know by \cite{San}, Proposition 6.9(iv), that the groups $\Pic(G)$ and $\Br_{1,e}(G)$ are trivial.
By \cite{CTX} Proposition 2.10(ii) we have a canonical isomorphism
$\Pic(H) \cong \Br_{x_0}(X)$, where $x_0 \in X(k)$ is the image of $e\in G(k)$.
Since $H$ is semisimple,
by the exact sequence in \cite{San}, Lemma 6.9(i),  we know that the map $\Pic(H) \to \Pic(\Hbar)$ is injective,
therefore any non-trivial element of $\Br_{x_0}(X)$ is a transcendental element of $\Br(X)$, i.e. is not killed in $\Br(\Xbar)$.
We see that $\Br_{1,x_0}(X)=0$, hence $\Bra(X)=0$.
In addition, Proposition 2.6(iii) in \cite{CTX} (or the corollary in the introduction of \cite{G})
implies that $\Br(\Gbar) = 0$, hence
$\Br_{1,x_0}(X,G) = \Br_{x_0}(X)$.
Therefore in this case $\Br_1(X)/\Br(k)=\Bra(X) = 0$,
while $\Br_1(X,G)/\Br(k)\cong\Br_{1,x_0}(X,G) \cong \Pic(H)\cong \widehat{\mu_H}(k)$.
We see that $\Br_1(X,G)\not\subset\Br_1(X)$ if $\widehat{\mu_H}(k)\neq 0$.

\def\eps{{\varepsilon}}
\def\T{T}
\def\TT{U}
An explicit example is given by $H = {\rm SO}_n \subset  \SL_n =G$  for $n \geq 3$.
In this case $\mu_H=\mu_2$ and $\widehat{\mu_H}(k)=\Z/2\Z\neq 0$, hence $\Br_1(X,G)\not\subset\Br_1(X)$.
We take  $k$ to be a totally imaginary number field, e.g. $k=\Q(i)$.
Take $S=\Omega_\infty$, then $\A^S=\A^f$.
We show that in this case the algebraic Manin obstruction
is not the only obstruction to strong approximation away from $S$.

We may and shall identify $X:=H\backslash G$ with the variety of symmetric $n\times n$-matrices $\T$ with determinant 1.
Then an element of $X(\A)$ can be written as $(\T_v)_{v\in\Omega}$, where $T_v$ is a symmetric $n\times n$-matrix
with determinant 1 over $k_v$.
Let $\eps_v(\T_v)\in\{\pm 1 \}$ denote the Hasse invariant of the
quadratic form defined by $\T_v$ (see for instance \cite{OM}, p.167, before Example 6:14).
Note that for $v\in\Omega_\infty$ we have $k_v\cong \C$, hence $\eps_v(\T_v)=1$.
For $\T^f\in X(\A^f)$  set
$$
\eps^f(\T^f)=\prod_{v\in\Omega_f} \eps_v(\T_v),
$$
then $\eps^f$ is a continuous function on $X(\A^f)$ with values $\pm 1$.
For $\T_0\in X(k)$ we have
$$
\eps^f(\T_0)=\prod_{v\in\Omega_f} \eps_v(\T_0)=\prod_{v\in\Omega_f} \eps_v(\T_0)\prod_{v\in\Omega_\infty} \eps_v(\T_0)
=\prod_{v\in\Omega}\eps_v(\T_0)  =1,
$$
because for all $v\in\Omega_\infty$ we have $\eps_v(\T_0)=1$,
and because by  \cite{OM}, Theorem 72:1, the last product equals 1.
Since $\eps^f$ is a continuous function,
for any $\T^f=(\T_v)\in X(\A^f)$ lying in the closure of $X(k)$
we have $\eps^f(\T^f)=1$.
We show below that there exists $\TT^f\in X(\A^f)$ with $\eps^f(\TT^f)=-1$.

Fix $v_0\in\Omega_f$.
Let $\TT_{v_0}\in X(k_{v_0})$ be a symmetric matrix with determinant 1 with $\eps_{v_0}(\TT_{v_0})=-1$
(there exists such a symmetric matrix, see  \cite{OM}, Theorem 63:22).
For $v\in\Omega_f\smallsetminus\{v_0\}$ set $\TT_v:={\rm diag}(1,\dots,1)\in X(k_v)$,
then $\eps_v(\TT_v)=1$.
We obtain an element $\TT^f=(\TT_v)_{v\in\Omega_f}\in X(\A^f)$ with $\eps^f(\TT^f)=-1$.
We see that $\TT^f$ does not lie in the closure of $X(k)$ in $X(\A^f)$.

It is well known that for our $G=\SL_n$ and $S=\Omega_\infty$,
the  group $G(k)$ is dense in $G(\A^S)=G(\A^f)$.
We have $\Bro(X)=\Br(k)$, hence \emph{all} the points of $X(\A)_{\bullet}=X(\A^f)$ are orthogonal to $\Bro(X)$,
in particular our point $\TT^f$.
However,  $\TT^f$ does not lie in the closure of $X(k)$ in $X(\A^f)=X(\A^S)=X(\A)_{\bullet}$.

Of course, our $\TT^f$  is not orthogonal to $\Br_1(X,G)$
(otherwise it would lie in the closure of $X(k)$ by Main Theorem \ref{thm:Harari-generalized}).
Indeed, consider the map
$$
m\colon X(\A^f)=X(\A)_{\bullet}\lra \Hom(\Br_1(X,G)/\Br(k),\Q/\Z)=\Z/2\Z
$$
induced by the Manin pairing \eqref{eq:Manin-pairing-0} from \ref{subsec:bullet}.
It can be shown that this map coincides with the map $\eps^f\colon X(\A^f)\to \{\pm 1\}$
under the canonical identification $\Z/2\Z\cong\{\pm 1\}$.
It follows that $m(\TT^f)=1+2\Z\in \Z/2\Z$,
hence $\TT^f$ is not orthogonal to $\Bro(X,G)$.
\end{subsec}

The above counter-example shows that Main Theorem \ref{thm:Harari-generalized}
does not hold with $\Bro(X)$ instead of $\Br_1(X,G)$. Nevertheless, we
can prove a similar  result about the algebraic Manin obstruction,
assuming that $S$ contains at least one nonarchimedean place.

\begin{theorem}\label{thm:Main-algebraic}
Let $G$ be a connected  algebraic group (not necessarily linear)
over a number field $k$.
Let $X := H \backslash G$ be a right homogeneous space of $G$,
where $H$ is a connected $k$-subgroup of $G$.
Assume that the Tate--Shafarevich group
of the maximal abelian  variety quotient $G\abvar$ of $G$ is finite.
Let $x\in  X(\A)$ be an adelic point orthogonal to
$\Bro X$ with respect to the Manin pairing.
Let $S\supset\Omega_\infty$ be a finite set of places of $k$
containing  all  archimedean places and {\em at least one nonarchimedean place} $v_0$.
We assume that $G\sc(k)$ is dense in $G\sc(\A^S)$.
Set $S'_f := S \smallsetminus(\Omega_\infty\cup \{ v_0 \})$.
We write  $X(\A^{\{v_0\}})_{\bullet}$ for $X(\A^{\Omega_\infty\cup\{v_0\}}) \times \prod_{v \in \Omega_\infty} \pi_0(X(k_v))$.
Then the projection $x^{\{v_0\}} \in X(\A^{\{v_0\}})_{\bullet}$ of $x$
lies in the closure of the set $X(k) \cdot G\scu(k_{S'_f})$ in $X(\A^{\{v_0\}})_{\bullet}$ for the adelic topology.
\end{theorem}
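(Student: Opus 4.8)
The plan is to use the auxiliary nonarchimedean place $v_0$ to upgrade the hypothesis on $x$ from orthogonality to $\Bro(X)$ to orthogonality to the whole group $\Br_1(X,G)$, and then to invoke Main Theorem~\ref{thm:Harari-generalized}. From now on I write $x$ also for the image of the given adelic point in $X(\A)_\bullet$; by compatibility of the Manin pairings it is still orthogonal to $\Bro(X)$, and automatically to $\Br(k)\subseteq\Bro(X)$. Note that $\Bro(X)\subseteq\Br_1(X,G)$, since the defining homomorphism $\Br(X)\to\Br(\Gbar)$ of $\Br_1(X,G)$ factors through $\Br(\Xbar)$; moreover the quotient $Q:=\Br_1(X,G)/\Bro(X)$ is \emph{finite}. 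Indeed, via $x_0^*$ and $\pi_{x_0}^*$ one identifies $Q$ with a subgroup of $\ker[\Br(\Xbar)\to\Br(\Gbar)]=\coker[\Pic(\Gbar)\to\Pic(\Hbar)]$; as $\Hbar$ is a connected subgroup of $\Gbar$, its abelian variety quotient is an abelian \emph{sub}variety of that of $\Gbar$, so $\Pic^0(\Gbar)\to\Pic^0(\Hbar)$ is onto and the cokernel is a quotient of the finite group $\Pic(\Hbar)/\Pic^0(\Hbar)$. Since $x$ is orthogonal to $\Bro(X)$, the homomorphism $b\mapsto\langle b,x\rangle$ on $\Br_1(X,G)$ descends to a homomorphism $\bar\nu\colon Q\to\Q/\Z$.

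The core of the argument is a purely local correction at $v_0$. Write $\mathrm{inv}_v\colon\Br(k_v)\isoto\Q/\Z$ for the local invariant, and let $Z_{v_0}\subseteq X(k_{v_0})$ be the set of points $y$ with $b(y)=b(x_{v_0})$ in $\Br(k_{v_0})$ for all $b\in\Bro(X)$; since $\Bra(X)$ is finite (a standard fact for homogeneous spaces of connected groups), $Z_{v_0}$ is open, closed and nonempty. For $y\in Z_{v_0}$ the homomorphism $b\mapsto\mathrm{inv}_{v_0}(b(y))-\mathrm{inv}_{v_0}(b(x_{v_0}))$ on $\Br_1(X,G)$ kills $\Bro(X)$ and so descends to $\bar\delta_y\colon Q\to\Q/\Z$, with $\bar\delta_{x_{v_0}}=0$. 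The crucial local claim is that the map $Z_{v_0}\to\Hom(Q,\Q/\Z)$, $y\mapsto\bar\delta_y$, is surjective. Granting it, I would pick $x'_{v_0}\in Z_{v_0}$ with $\bar\delta_{x'_{v_0}}=-\bar\nu$, and let $x'\in X(\A)_\bullet$ be the point agreeing with $x$ off $v_0$ and having $v_0$-component $x'_{v_0}$. Then for every $b\in\Br_1(X,G)$,
\[
\langle b,x'\rangle=\langle b,x\rangle+\bigl(\mathrm{inv}_{v_0}(b(x'_{v_0}))-\mathrm{inv}_{v_0}(b(x_{v_0}))\bigr)=\langle b,x\rangle-\langle b,x\rangle=0 ,
\]
using $x'_{v_0}\in Z_{v_0}$ and orthogonality of $x$ for $b\in\Bro(X)$, and $\bar\delta_{x'_{v_0}}=-\bar\nu$ for the remaining classes. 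Hence $x'\in(X(\A)_\bullet)^{\Br_1(X,G)}$.

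Now Main Theorem~\ref{thm:Harari-generalized}, applied to $x'$, puts $x'$ in the closure of $X(k)\cdot G\scu(k_{S_f})$ in $X(\A)_\bullet$, where $S_f=S\cap\Omega_f$. I would finish by pushing forward along the continuous projection $p\colon X(\A)_\bullet\to X(\A^{\{v_0\}})_\bullet$ that forgets the $v_0$-component. One has $p(x')=p(x)=x^{\{v_0\}}$; and, writing $G\scu(k_{S_f})=G\scu(k_{v_0})\times G\scu(k_{S'_f})$, the $v_0$-factor is simply discarded, so $p\bigl(X(k)\cdot G\scu(k_{S_f})\bigr)=X(k)\cdot G\scu(k_{S'_f})$. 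Since $p$ is continuous, $p(x')=x^{\{v_0\}}$ lies in the closure of $X(k)\cdot G\scu(k_{S'_f})$ in $X(\A^{\{v_0\}})_\bullet$, which is the assertion of the theorem.

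The step I expect to be the main obstacle is the surjectivity claim above, which is a statement purely over the local field $k_{v_0}$. I would approach it by decomposing $X(k_{v_0})$ into its $G(k_{v_0})$-orbits, which are indexed by $\mathcal{E}_{v_0}:=\ker[H^1(k_{v_0},H)\to H^1(k_{v_0},G)]$, and checking that, up to a controlled shift, both $\bar\delta_y$ and the condition $y\in Z_{v_0}$ depend only on the orbit of $y$; the claim then reduces to the surjectivity of the resulting map $\mathcal{E}_{v_0}\to\Hom(Q,\Q/\Z)$, which — since $Q$ is governed, through $\pi_{x_0}^*$, by $\Pic(\Hbar)$ — follows from local Tate--Nakayama duality over $k_{v_0}$, in the spirit of \cite{CTX}. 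Alternatively, one can bypass the claim entirely and re-run the d\'evissage of \cite{BCTS} used for Main Theorem~\ref{thm:Harari-generalized}: for a torsor under a semiabelian variety one has $\Br_1(X,G)=\Bro(X)$, so Harari's theorem \cite{H} applies with no loss, while for a homogeneous space of a simply connected semisimple group the extra nonarchimedean place $v_0$ is exactly what makes the result of Colliot-Th\'el\`ene and Xu \cite{CTX} available.
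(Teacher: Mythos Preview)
Your main line of attack is genuinely different from the paper's, and it has two concrete problems. First, the claim that $\Bra(X)$ is finite is false in general (already for $X=\Gm$ one has $\Bra(\Gm)\cong H^2(k,\Z)$); fortunately this claim is not actually used later, since you only need $Z_{v_0}\ni x_{v_0}$ to be nonempty, not open. Second, and more seriously, the local surjectivity claim --- that $Z_{v_0}\to\Hom(Q,\Q/\Z)$ is onto --- is left as a sketch. This is the entire content of the theorem: you must show that, at a single nonarchimedean place, one can realise \emph{any} character of $Q$ by moving inside $X(k_{v_0})$ while keeping all $\Bro(X)$-evaluations fixed. After the usual reductions (which you should perform first, so that $H$ is linear and $\Pic(\Hbar)$, hence $Q$, is visibly finite), the orbits of $G(k_{v_0})$ on $X(k_{v_0})$ are parametrised by $H^1(k_{v_0},H)$, and the map you need is essentially the Kottwitz map $\beta_{v_0}\colon H^1(k_{v_0},H)\to(\pi_1(\Hbar)_{\Gamma_{v_0}})\tors$ composed with a corestriction; proving surjectivity here is exactly the work the paper does in Proposition~\ref{lem:simply-connected-homogeneous-spaces-orbits}. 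So your route is not wrong, but completing it costs the same key local input as the paper's proof, and you have not supplied it.

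The paper does \emph{not} reduce to Main Theorem~\ref{thm:Harari-generalized}; it deliberately avoids the Colliot-Th\'el\`ene--Xu input altogether. Instead it re-runs the d\'evissage directly: the semiabelian piece is handled by Harari (here indeed $\Br_1=\Bro$, as you note), but for the fibre $X_{y_0}=H\ssu\backslash G\sss$ under the simply connected group $G\sss$ the paper proves, via Kottwitz's abelianisation and the bijectivity of $\beta_{v_0}$ at the nonarchimedean place $v_0$, that \emph{every} $G\sss(\A^{\{v_0\}})$-orbit in $X_{y_0}(\A^{\{v_0\}})$ already contains a $k$-point (Proposition~\ref{lem:simply-connected-homogeneous-spaces-orbits} and Theorem~\ref{prop:simply-connected-homogeneous-spaces-algebraic}). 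No Brauer-orthogonality hypothesis on the fibre is needed at all. Your ``alternative'' paragraph gestures at this d\'evissage but misidentifies the input: it is not that the extra place makes a CTX-type Brauer--Manin statement available, but that it makes the Kottwitz corestriction map $\mu_{\{v_0\}}$ surjective, so orbits lift to rational points unconditionally.
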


Theorem \ref{thm:intro-Br} follows from Theorem \ref{thm:Harari-generalized},
and Theorem \ref{thm:intro-Bro} follows from Theorem \ref{thm:Main-algebraic}.



\section{Sansuc's exact sequence}\label{sec:Sansuc}

\begin{subsec}
Let $k$ be a field of characteristic 0.

Let $\pi \colon Y \to X$ be a (left) torsor under a connected linear $k$-group $H$.
We define $\Br_1(X,Y)$ to be the following group:
$$
\Br_1(X,Y) := \left\{ b \in \Br(X) : \pi^*(b) \in \Br_{1}(Y) \right\}
$$
and if $y \in Y(k)$, $x=\pi(y)$, define $\Br_{1,x}(X,Y)$ to be
$$
\Br_{1,x}(X,Y) :=\ker[x^*\colon \Br_1(X,Y)\to\Br(k)]
=  \left\{ b \in \Br(X) : \pi^*(b) \in \Br_{1,y}(Y) \right\}.
$$

We denote by
$$
\langle, \rangle\colon\  \Br(X)\times X(k)\to \Br(k)\colon\
(b,x)\mapsto b(x)
$$
the evaluation map.
\end{subsec}

\begin{subsec}
Before recalling the result of Sansuc, we give  a few more definitions
and notations. Let $\mathcal{A}$ be an abelian category and $F \colon \underline{\textup{Var}}/k \rightarrow
\mathcal{A}$ be a contravariant functor from the category of $k$-varieties to
$\mathcal{A}$. If $X$ and $Y$ are $k$-varieties, the projections
$p_X, p_Y \colon X \times_k Y \rightarrow X, Y$ induce a morphism in
$\mathcal{A}$ (see \cite{San}, Section 6.b):
$$F(p_X) + F(p_Y) \colon F(X) \oplus F(Y) \to F(X \times_k Y)$$
such that
\begin{equation}
\label{def sum}
F(p_X) + F(p_Y) = F(p_X) \circ \pi_X + F(p_Y) \circ \pi_Y \, ,
\end{equation}
where $\pi_X, \pi_Y$ are the projections $F(X) \oplus F(Y) \to
F(X), F(Y)$ and the group law in the right-hand side is the law in
$\textup{Hom}(F(X) \oplus F(Y), F(X \times_k Y))$.

Let $m \colon X \times_k Y \to Y$ be a morphism of $k$-varieties.
Assume that the morphism $F(p_X) + F(p_Y)$ is an isomorphism.
We define a map
$$\varphi : F(Y) \to F(X\times_k Y)\to F(X)\oplus F(Y)  \to F(X)$$
by the formula
\begin{equation}
\label{defi varphi}
\varphi := \pi_X \circ (F(p_X) + F(p_Y))^{-1} \circ F(m)
\end{equation}
(see \cite{San}, (6.4.1)).
\end{subsec}

\begin{lemma}
\label{lem compatibility functors}
Let $F : \underline{\textup{Var}}/k \rightarrow
\mathcal{A}$ be a contravariant functor. Let $X$, $Y$ be two
$k$-varieties, $m : X \times_k Y \to Y$ be a $k$-morphism.
Assume that:
\begin{itemize}
  \item $F(\textup{Spec}(k)) = 0$.
    \item  $F(p_X) + F(p_Y)  \colon\  F(X) \oplus F(Y) \to F(X \times_k Y) $ is an isomorphism.
      \item There exists $x \in X(k)$ such that the morphism $m(x,.) : Y \to Y$ is the identity of $Y$.
\end{itemize}
Then $F(m) = F(p_X) \circ \varphi + F(p_Y) \colon\  F(Y)\to F(X\times_k Y) $.
\end{lemma}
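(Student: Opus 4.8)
The plan is to translate the claimed identity into a statement about the components of the inverse isomorphism $\sigma := F(p_X) + F(p_Y)$. Set $\psi := \sigma^{-1}\circ F(m)\colon F(Y)\to F(X)\oplus F(Y)$, which makes sense since $\sigma$ is an isomorphism by hypothesis. Applying \eqref{def sum} we get $F(m) = \sigma\circ\psi = F(p_X)\circ(\pi_X\circ\psi) + F(p_Y)\circ(\pi_Y\circ\psi)$, and by \eqref{defi varphi} we have $\varphi = \pi_X\circ\psi$. Hence the desired conclusion $F(m) = F(p_X)\circ\varphi + F(p_Y)$ is equivalent to the single identity $\pi_Y\circ\psi = \id_{F(Y)}$, which is what I would establish.

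To prove this I would use the section $s_x\colon Y\to X\times_k Y$, $y\mapsto(x,y)$, attached to the point $x\in X(k)$ of the third hypothesis. It satisfies $p_Y\circ s_x = \id_Y$; $m\circ s_x = m(x,\cdot) = \id_Y$, which is precisely the third hypothesis; and $p_X\circ s_x\colon Y\to X$ is the constant morphism through $x$, which factors through $\Spec k$. Applying the contravariant functor $F$ and invoking $F(\Spec k)=0$, these three relations yield $F(s_x)\circ F(p_Y) = \id_{F(Y)}$, $F(s_x)\circ F(m) = \id_{F(Y)}$, and $F(s_x)\circ F(p_X) = 0$.

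Finally I would combine these with \eqref{def sum} to compute $F(s_x)\circ\sigma = \bigl(F(s_x)\circ F(p_X)\bigr)\circ\pi_X + \bigl(F(s_x)\circ F(p_Y)\bigr)\circ\pi_Y = \pi_Y$; since $\sigma$ is invertible this gives $F(s_x) = \pi_Y\circ\sigma^{-1}$, and therefore $\pi_Y\circ\psi = \pi_Y\circ\sigma^{-1}\circ F(m) = F(s_x)\circ F(m) = \id_{F(Y)}$, as needed. I do not expect any real obstacle: the argument is a formal diagram chase, and the only points requiring care are the reversal of the order of composition for the contravariant functor $F$ and the bookkeeping with the biproduct projections $\pi_X,\pi_Y$; the substantive inputs are simply the existence of the section $s_x$ and the normalization $F(\Spec k)=0$.
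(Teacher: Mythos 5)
Your proof is correct and takes essentially the same route as the paper: both introduce the section $Y\to X\times_k Y$ determined by $x$ (you call it $s_x$, the paper calls it $x_Y$), establish the three identities $F(s_x)\circ F(p_X)=0$, $F(s_x)\circ F(p_Y)=\id$, $F(s_x)\circ F(m)=\id$, deduce $F(s_x)\circ(F(p_X)+F(p_Y))=\pi_Y$, and then combine with \eqref{def sum} and \eqref{defi varphi}. Your presentation in terms of $\sigma$ and $\psi$ is just a notational repackaging of the same diagram chase.
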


\begin{proof}
Consider the morphism $x_Y\colon Y \to X \times_k Y$ defined by $x$. Then $F(x_Y) \circ
F(p_X) = 0$, since the morphism $p_X \circ x_Y\colon Y \to X$ factors
through $x : \textup{Spec}(k) \rightarrow X$ and $F(\textup{Spec}(k))
= 0$. Since $p_Y \circ x_Y = \textup{id}_Y$, we have $F(x_Y) \circ F(p_Y) =
\textup{id}$, and the third assumption of the lemma implies that $F(x_Y) \circ F(m) =
\textup{id}$. Therefore, we deduce that
$$F(x_Y) \circ (F(p_X) + F(p_Y)) = \pi_Y \colon\  F(X)\oplus F(Y)\to F(Y), $$
hence
\begin{equation}
\label{functors eq-2}
\pi_Y \circ (F(p_X) + F(p_Y))^{-1} \circ F(m) = F(x_Y) \circ F(m) =
\textup{id}_{F(Y)} \, .
\end{equation}
But by \eqref{def sum} we have
\begin{align*}
F(m) = F(p_X) \circ \pi_X \circ (F(p_X) + &F(p_Y))^{-1} \circ F(m) \\
+&F(p_Y) \circ \pi_Y \circ (F(p_X) + F(p_Y))^{-1} \circ F(m)\, ,
\end{align*}
so \eqref{defi varphi} and \eqref{functors eq-2} give exactly
$$F(m) = F(p_X) \circ \varphi + F(p_Y) \, .$$
\end{proof}

We shall apply those constructions and this lemma to the functors $F =
\Pic(.)$ and $F = \Bra(.)$ and to the morphism $m : H \times Y \to Y$
defined by an action of an algebraic group $H$ on a variety $Y$. In
this context, those functors satisfy the assumptions of
Lemma \ref{lem compatibility functors} by \cite{San}, Lemma 6.6.

We now recall Sansuc's result.

\begin{proposition}[{\rm \cite{San}, Proposition 6.10}]
\label{prop:Sansuc-original}
 Let $k$ be a field of characteristic zero, $H$ a connected linear $k$-group, $X$ a smooth $k$-variety and
$\pi : Y \xrightarrow{H} X$ a torsor under $H$. Then we have a functorial exact sequence:
\begin{equation}
\label{eq:Sansuc-0}
\textup{Pic}(Y) \xrightarrow{\varphi_1} \textup{Pic}(H) \xrightarrow{\Delta'_{Y/X}} \textup{Br}(X) \xrightarrow{\pi^*}
\textup{Br}(Y) \xrightarrow{m^* - p_Y^*} \textup{Br}(H \times Y).
\end{equation}
Here $m : H \times Y \to Y$ denotes the left action of $H$ on $Y$, $p_Y : H \times Y \to Y$
the natural projection and $\Delta'_{Y/X} : \Pic(H) \to \Br(X)$ is a map defined in the proof
of Proposition 6.10 in \cite{San}. For the definition of the map
$\varphi_1$, see \eqref{defi varphi} or \cite{San}, (6.4.1).
\end{proposition}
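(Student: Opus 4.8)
Since this is Proposition~6.10 of \cite{San}, one may for the present purposes simply quote it; let me nonetheless describe how I would reconstruct the proof. The approach is descent along the torsor $\pi$. Because $\pi$ is faithfully flat and quasi-compact, $\Gm$ satisfies cohomological descent along the \v{C}ech nerve of $\pi$; moreover, for a left $H$-torsor the canonical isomorphism $Y\times_X Y\isoto H\times Y$ carries the two face maps $Y\times_X Y\rightrightarrows Y$ to the action $m$ and the projection $p_Y$, and in higher degrees $Y^{\times_X(p+1)}\isoto H^{\times p}\times Y$. One thus obtains a descent spectral sequence
\[
E_1^{p,q}=H^q_\et\bigl(H^{\times p}\times Y,\Gm\bigr)\ \Longrightarrow\ H^{p+q}_\et(X,\Gm),
\]
whose $d_1$ is the alternating sum of the simplicial face maps, so on the relevant rows it is $\pm(m^*-p_Y^*)$ and its iterates. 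I would then compute the corner of the $E_1$-page using \cite{San}, Lemma~6.6 (the same input feeding Lemma~\ref{lem compatibility functors}): the functors $\Pic$ and $\Bra$ turn a product with $H$ into a direct sum, whence $\Pic(H^{\times p}\times Y)$ and the algebraic part of $\Br(H^{\times p}\times Y)$ split into contributions of the factors; in particular $\Pic(H\times Y)=\Pic(H)\oplus\Pic(Y)$, and under this splitting $m^*-p_Y^*$ on $\Pic(Y)$ becomes $-\varphi_1$ by Lemma~\ref{lem compatibility functors}. Assembling this identifies the relevant $E_2$-subquotients with $\ker\varphi_1\subset\Pic(Y)$, with a quotient of $\Pic(H)$, with a subquotient of $\Br(X)$, and with $\ker(m^*-p_Y^*)\subset\Br(Y)$; the edge map is $\varphi_1$ and the connecting map is, after matching with Sansuc's explicit cocycle description, the map $\Delta'_{Y/X}$.

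Granting these identifications, the three asserted exactnesses are read off from the low-degree exact sequence of the spectral sequence (together with a small amount of extra vanishing on the $E_2$-page needed to pass from subquotients to the groups themselves): exactness at $\Br(Y)$ says that a Brauer class on $Y$ descends along $\pi$ if and only if its two pullbacks to $Y\times_X Y\cong H\times Y$ agree; exactness at $\Br(X)$ says that $\ker\pi^*$ coincides with the image of $\Delta'_{Y/X}$; and exactness at $\Pic(H)$ says that a line bundle on $H$ extends to $Y$ if and only if $\Delta'_{Y/X}$ kills it, in particular $\Delta'_{Y/X}\circ\varphi_1=0$. Functoriality in the torsor is automatic, since the nerve, the splittings of \cite{San}, Lemma~6.6, and the edge and connecting maps are all functorial.

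The main obstacle is precisely this bookkeeping. One must check that the product-decompositions of $\Pic$ and $\Bra$ are compatible with the simplicial structure of the nerve, so that the corner $E_2$-subquotients are literally the groups appearing in the statement and the extra summands coming from $\Pic(H^{\times 2})$ and $\Br(H^{\times 2})$ do not interfere in total degree $\le 2$; one must also deal with the transcendental part of $\Br$, which \cite{San}, Lemma~6.6 does not see directly; and one must match the resulting connecting map with an intrinsic construction of $\Delta'_{Y/X}$. Sansuc carries all of this out via an explicit $2$-cocycle computation; the spectral-sequence reformulation above merely trades those cocycles for a routine diagram chase, and in either form no input beyond \cite{San}, Lemma~6.6 is required.
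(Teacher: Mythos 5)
Your proposal is correct and takes essentially the same route as the paper: both defer to Sansuc's Proposition 6.10 (itself a \v{C}ech/descent spectral sequence computation along the torsor covering $Y\to X$) and identify the final map via the canonical isomorphism $Y\times_X Y\cong H\times Y$ carrying the two projections to $m$ and $p_Y$. The paper only spells out this identification of the last term, citing \cite{San} for the rest of the exactness, whereas you unpack the full spectral-sequence mechanism and the splittings from \cite{San}, Lemma 6.6; the underlying argument is the same.
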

\begin{proof}
For the part of the exact sequence up to $\Br(X)\labelto{\pi^*}\Br(Y)$
see \cite{San}, Proposition 6.10.
Concerning the last map $\Br(Y) \to  \Br(H \times Y)$, it implicitly  appears
in Sansuc's paper in the last term of the exact sequence
$$
0\to\check{H}^1(Y/X,\mathcal{P}ic)\to H^2(X,\Gm)\xrightarrow{p} \check{H}^0(Y/X,\mathcal{B}r')\to 0
$$
(the second sequence of the four short exact sequences on page 45).
By definition
$$
\check{H}^0(Y/X,\mathcal{B}r')=\ker[\Br(Y) \xrightarrow{\textup{pr}_1^* - \textup{pr}_2^*} \Br(Y \times_X Y)],
$$
where $\textup{pr}_i : Y \times_X Y \to Y$ denote the two
projections.
Since $Y\to X$ is a torsor, we have a canonical isomorphism
$H \times Y \to Y \times_X Y$ defined by $(h,y) \mapsto (m(h,y),y)$.
Sansuc noticed that the maps $\textup{pr}_1$ and $\textup{pr}_2$ correspond under this isomorphism
to the maps $m$ and $p_Y$, respectively (see \cite{San}, the formulas for the faces of the simplicial system
on page 44 before Lemma 6.12).
Thus we see that
$$
\check{H}^0(Y/X,\mathcal{B}r')=\ker[\Br(Y) \xrightarrow{m^* - p_Y^*}
\textup{Br}(H \times Y)] \, .
$$
A computation using the \v{C}ech spectral sequence (6.12.0) in
\cite{San} shows that the map $\Br(X)\to\Br(Y)$ defined by the composition
$$H^2(X, \Gm) \xrightarrow{p} \check{H}^0(Y/X,\mathcal{B}r') \subset H^2(Y,\Gm)$$
is the pullback morphism $\pi^* : \Br(X)\to\Br(Y)$.
This concludes the proof of the exactness of \eqref{eq:Sansuc-0}.
\end{proof}

Such an exact sequence will be very useful in the following, but we
need another exact sequence:  we need a version
of this exact sequence with the map
$\Delta_{Y/X} : \Pic(H)\to \Br(X)$,
defined in \cite{CTX} before Proposition 2.3.
We recall here the definition of $\Delta_{Y/X}$ due to Colliot-Th\'el\`ene and Xu.

\begin{subsec}\label{subsec:definiton-of-DelataX}
{\em Definition of $\Delta_{Y/X}$.}
We use the above notation.
Since $H$ is connected, we have a canonical isomorphism
$c_H : \textup{Ext}^c_k(H,\Gm) \cong \textup{Pic}(H)$ (see \cite{CT}, Corollary 5.7),
where  $\textup{Ext}^c_k(H,\Gm)$  is the abelian group of isomorphism
classes of central extensions of $k$-algebraic groups of $H$ by
$\Gm$. Given such an extension
$$1 \to \Gm \to H_1 \to H \to 1$$
corresponding to an element $p \in \Pic(H)$, we get a coboundary map in \'etale cohomology
$$\partial_{H_1} : H^1(X, H) \to H^2(X, \Gm),$$
see \cite{Gir}, IV.4.2.2.
This coboundary map fits in the natural exact sequence of pointed sets (see
\cite{Gir}, Remark IV.4.2.10 )
\begin{equation}\label{nonabelian-exact}
 H^1(X,H_1)\to H^1(X,H)\labelto{\partial_{H_1}}H^2(X,\Gm)=\Br(X).
\end{equation}
The element $\Delta_{Y/X}(p)$ is defined to be the image of the class
$[Y] \in H^1(X,H)$ of the torsor $\pi : Y \to X$ by the map
$\partial_{H_1}$. This construction defines a map
$$
\Delta_{Y/X} \colon \Pic(H) \to \Br(X),\quad p\mapsto \partial_{H_1}([Y]),
$$
which is functorial in $X$ and $H$
(this map was denoted  by $\delta_{\textup{tors}}(Y)$ in \cite{CTX}).

We can compare the map $\Delta_{Y/X} : \Pic(H) \to \Br(X)$ with another useful
map $\alpha_{Y/X} : H^1(k, \widehat{H}) \to \Br_1(X)$ defined by the formula
$$\alpha_{Y/X}(z) := p_X^*(z) \cup [Y] \in H^2(X, \Gm) \, ,$$
where $p_X : X \to \textup{Spec}(k)$ is the structure morphism and $[Y]
\in H^1(X,H)$.

Recall that we have a canonical map $\eta_H : H^1(k, \widehat{H})
\rightarrow \Pic(H)$ coming from Leray's spectral sequence (see for
instance \cite{San}, Lemma 6.9).

\begin{lemma}
  The following diagram is commutative :
\begin{displaymath}
\xymatrix{
H^1(k, \widehat{H}) \ar[r]^{\alpha_{Y/X}} \ar[d]^{\eta_H} & \Br_1(X) \ar[d] \\
\Pic(X) \ar[r]^{\Delta_{Y/X}} & \Br(X) \, .
}
\end{displaymath}
\end{lemma}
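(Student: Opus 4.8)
The statement to prove is the commutativity of the square
\[
\xymatrix{
H^1(k, \widehat{H}) \ar[r]^{\alpha_{Y/X}} \ar[d]^{\eta_H} & \Br_1(X) \ar[d] \\
\Pic(H) \ar[r]^{\Delta_{Y/X}} & \Br(X)\, ,
}
\]
i.e.\ that for $z\in H^1(k,\widehat H)$ the cup product $p_X^*(z)\cup[Y]\in\Br_1(X)$ coincides, after inclusion into $\Br(X)$, with $\Delta_{Y/X}(\eta_H(z))$. Both $\alpha_{Y/X}$ and $\Delta_{Y/X}$ are defined through operations on the torsor class $[Y]\in H^1(X,H)$, so the natural strategy is to express both composites as a single cohomological operation applied to $[Y]$ and then identify them. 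The plan is to proceed as follows.

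First I would unwind $\eta_H$. Recall that $\eta_H\colon H^1(k,\widehat H)\to\Pic(H)$ is the edge map of the Leray spectral sequence for $H\to\Spec k$ with coefficients in $\Gm$, and that under the Colliot-Th\'el\`ene isomorphism $c_H\colon\Ext^c_k(H,\Gm)\isoto\Pic(H)$ it sends $z$ to the class of a central extension $1\to\Gm\to H_1\to H\to 1$; concretely, $z\in H^1(k,\widehat H)=\Ext^1_k(H,\Gm)$ (for $H$ of multiplicative type) or more generally the pushout of the canonical extension by the character $z$. So $\Delta_{Y/X}(\eta_H(z))$ is, by its very definition in \ref{subsec:definiton-of-DelataX}, the image $\partial_{H_1}([Y])$ of $[Y]$ under the connecting map of $1\to\Gm\to H_1\to H\to 1$ pulled back to $X$.

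Second, and this is the crux, I would show that this connecting map $\partial_{H_1}\colon H^1(X,H)\to H^2(X,\Gm)$, applied to a class coming from the base (more precisely: applied to $[Y]$, where $H_1$ is the pushforward to $X$ of an extension defined over $k$), is computed by cup product with $z$. This is a compatibility between the nonabelian coboundary of Giraud (\cite{Gir}, IV.4.2.2) and the cup-product pairing $H^0(X,H)$-free version: for an extension of $H$ by $\Gm$ that is \emph{central} and \emph{obtained by pushout along a character} $z$, the coboundary $\partial_{H_1}(\xi)$ equals $z_*(\xi)$ where $z_*\colon H^1(X,H)\to H^1(X,\Gm)=\Pic(X)$... no: rather, the composite of the abelianization $H^1(X,H)\to H^1(X,H^{\ab})$ followed by pairing against $z\in H^1(k,\widehat H)$ via cup product lands in $H^2(X,\Gm)$ and agrees with $\partial_{H_1}$. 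Equivalently, one can factor: the extension $H_1$ of $H$ by $\Gm$ determined by $z$ is $p_X^*$ of a $k$-extension, so by functoriality of $\partial$ in the torsor and naturality of cup product, $\partial_{H_1}([Y]) = p_X^*(z)\cup[Y]$, which is exactly $\alpha_{Y/X}(z)$. Making this identification precise — matching Giraud's cocycle description of $\partial_{H_1}$ with the \v{C}ech/cup-product formula — is the main obstacle; it is a diagram chase at the level of $2$-cocycles (or gerbes) and requires care about the precise normalization of $c_H$ and of the cup product $H^1(k,\widehat H)\times H^1(X,H)\to H^2(X,\Gm)$ used to define $\alpha_{Y/X}$.

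Third, I would check that the image indeed lands in $\Br_1(X)$ (clear, since $p_X^*(z)$ pulls back from $\Spec k$, so $\alpha_{Y/X}(z)$ dies in $\Br(\Xbar)$), so that the right-hand vertical map in the square is just the inclusion $\Br_1(X)\hookrightarrow\Br(X)$ and the diagram typechecks. Finally I would assemble: $\Delta_{Y/X}(\eta_H(z)) = \partial_{(p_X^*\text{-pushout of }z)}([Y]) = p_X^*(z)\cup[Y] = \alpha_{Y/X}(z)$, which gives commutativity. If the direct cocycle comparison proves unwieldy, an alternative is to invoke functoriality of the whole picture in $(X,H)$ to reduce to the universal case $X=\Spec k$, $Y$ the trivial torsor, where the statement becomes the compatibility of $\eta_H$ with the identification of $\Ext^c_k(H,\Gm)$ and cup products — a statement that is essentially \cite{San}, Lemma 6.9 combined with the definition of $\Delta$; one then transports back along the classifying map of $Y$.
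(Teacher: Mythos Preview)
Your outline is headed in the right direction --- the heart of the matter is indeed the identification $\partial_{H_1}([Y]) = p_X^*(z)\cup[Y]$ --- but the paper organizes the argument rather differently, and in doing so exposes a step you have glossed over.

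The paper first reduces to the case where $H$ is a torus: replacing $Y$ by $Z:=Y/H\ssu$, one gets a torsor $Z\to X$ under $H\tor$, and since $\widehat H\cong\widehat{H\tor}$ it suffices to treat $H\tor$. This reduction matters because for a torus the groups $\Ext^1_k(H\tor,\Gm)$ (in the abelian category of fppf sheaves) are available, and the paper can then split the problem in two. First, the compatibility ``coboundary $=$ cup product'' --- your declared main obstacle --- is not reproved by hand but is cited from Kahn \cite{Ka}, Lemmas A.3.1 and A.3.2, which give commutativity of the square with $\eta'_{H\tor}\colon H^1(k,\widehat{H\tor})\to\Ext^1_k(H\tor,\Gm)$ (the edge map of the local-to-global $\Ext$ spectral sequence) in place of $\eta_{H\tor}$. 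Second, the paper must still identify $\eta_{H\tor}$ (the Leray edge map) with the composite $c'_{H\tor}\circ\eta'_{H\tor}$ through $\Ext^1_k(H\tor,\Gm)\to\Ext^c_k(H\tor,\Gm)\to\Pic(H\tor)$. This is done by exhibiting a natural transformation $\tau$ from $\Hom_{\kbar\text{-gp}}(\ov{H\tor},(-)_{\kbar})$ to $H^0(\ov{H\tor},(-)_{\kbar})$, which induces a morphism from the local-to-global $\Ext$ spectral sequence to the Leray spectral sequence, and then checking (via an injective resolution of $\Gm$) that the induced map $\tau_0$ on $\Ext^1$ agrees with $c'_{H\tor}$.

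In your plan, this second issue is hidden inside the sentence ``under the Colliot-Th\'el\`ene isomorphism $c_H$ \dots\ it sends $z$ to the class of a central extension'': that claim is precisely the triangle $\eta_H = c'_H\circ\eta'_H$, and it requires its own argument. So your direct cocycle/gerbe computation would have to simultaneously handle Kahn's lemma \emph{and} this spectral-sequence comparison; the paper's approach disentangles them and outsources the harder half to \cite{Ka}. Your alternative suggestion of reducing to a universal case over $\Spec k$ does not obviously help, since over $\Spec k$ the torsor is trivial and both sides vanish.
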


\begin{proof}
  Define $Z$ to be the quotient of $Y$ by the action of $H\ssu$. Then
  $Z \to X$ is a torsor under $H\tor$. By functoriality, and using the
  isomorphism $\widehat{H\tor} \cong \widehat{H}$, it is sufficient to
  prove the commutativity of the following diagram :
\begin{displaymath}
\xymatrix{
H^1(k, \widehat{H\tor}) \ar[r]^{\alpha_{Z/X}} \ar[d]^{\eta_{H\tor}} & \Br_1(X) \ar[d] \\
\Pic(X) \ar[r]^{\Delta_{Z/X}} & \Br(X) \, .
}
\end{displaymath}

Consider the groups $\textup{Ext}^n_k(H\tor, \Gm)$ in the
abelian category of fppf-sheaves over $\textup{Spec}(k)$.
By \cite{Ka}, Lemmas A.3.1 and A.3.2, we know that the diagram
\begin{displaymath}
\xymatrix{
H^1(k, \widehat{H\tor}) \ar[r]^{\alpha_{Z/X}} \ar[d]^{\eta'_{H\tor}} & \Br_1(X)
\ar[d] \\
\textup{Ext}^1_k(H\tor, \Gm) \ar[r]^(.6){\Delta_{Z/X}} & \Br(X)
}
\end{displaymath}
is commutative, where $\eta'_{H\tor} : H^1(k, \widehat{H\tor}) \rightarrow
\textup{Ext}^1_k(H\tor, \Gm)$ is the edge map from the local to global
$\textup{Ext}$'s spectral sequence
$H^p(k, \textup{Ext}^q_{\overline{k}}(\ov{H}\tor,
\Gm_{\ov{k}})) \implies \textup{Ext}_k^{p+q}(H\tor, \Gm)$ (see
\cite{SGA4}, V.6.1).

By \cite{Oo}, Proposition 17.5, there exists a canonical map
$\textup{Ext}^1_k(H\tor, \Gm) \rightarrow \textup{Ext}^c_k(H\tor,
\Gm)$. Composing this map with $c_{H\tor} : \textup{Ext}^c_k(H\tor,
\Gm) \to \Pic(H\tor)$ (used in the construction of
$\Delta_{Z/X}$), we get a map $c'_{H\tor} :
\textup{Ext}^1_k(H\tor, \Gm) \rightarrow \Pic(H\tor)$.

It is now sufficient to prove that the diagram
\begin{displaymath}
\xymatrix{
H^1(k, \widehat{H\tor}) \ar[rd]_{\eta_{H\tor}} \ar[r]^(.4){\eta'_{H\tor}} &
\textup{Ext}_k^1(H\tor, \Gm) \ar[d]^{c'_{H\tor}} \\
& \Pic(H\tor)
}
\end{displaymath}
is commutative.

The natural transformation
$\textup{Hom}_{\overline{k}-\textup{groups}}(\ov{H}\tor,(.)_{\ov{k}}) \xrightarrow{\tau}
H^0(\ov{H}\tor, (.)_{\ov{k}})$
of functors from the category of fppf-sheaves over $\textup{Spec}(k)$ to the category of
$\textup{Gal}(\overline{k}/k)$-modules
induces a morphism of spectral
sequences
\begin{align*}
( H^p(k, \textup{Ext}^q_{\overline{k}}(\ov{H}\tor,
\Gm_{\ov{k}})) \implies &\textup{Ext}_k^{p+q}(H\tor, \Gm))    \labelto{\tau}\\
&( H^p(k, H^q(\ov{H}\tor, \Gm_{\ov{k}})) \implies
  H^{p+q}(H\tor, \Gm) )
\end{align*}
from the local to global $\textup{Ext}$'s spectral sequence to
Leray's spectral sequence. This morphism implies that the induced
diagram between edge maps
\begin{displaymath}
\xymatrix{
H^1(k, \widehat{H\tor}) \ar[r]^(.4){\eta'_{H\tor}} \ar[rd]_{\eta_{H\tor}} & \textup{Ext}_k^1(H\tor, \Gm)
\ar[d]^{\tau_0} \\
& \Pic(H\tor)
}
\end{displaymath}
is commutative.
We need to prove that the map $\tau_0$ induced by
$\tau$ coincides with the map $c'_{H\tor} :
\textup{Ext}_1^c(H\tor, \Gm) \to \Pic(H\tor)$. Let
$$0 \to \Gm_k \to I \to Q \to 0$$
be an exact sequence of fppf-sheaves on $\textup{Spec}(k)$ such that
$I$ is injective. Then the long exact sequences associated to the functors
$H^0(H\tor, .)$ and $\textup{Hom}_k(H\tor, .)$ give rise to the
following commutative diagram
\begin{displaymath}
\xymatrix{
\textup{Hom}_k(H\tor, Q) / \textup{Hom}_k(A,I) \ar[r] \ar[d]^{\cong} &
H^0(H\tor, Q) / H^0(H\tor, I) \ar[d]^{\cong} \\
\textup{Ext}_k^1(H\tor, \Gm) \ar[r]^{\tau_0} & H^1(H\tor, \Gm) \, ,
}
\end{displaymath}
where the vertical maps are the coboundary maps and the horizontal
ones are induced by $\tau$. With this diagram, it is clear that the
image by $\tau_0$ of a given group extension is the same as the image
of this extension by the map $c'_{H\tor}$, which concludes the proof.
\end{proof}

\begin{remark}
In particular, if $H = T$ is a $k$-torus, then the map $\eta_T : H^1(k, \widehat{T})
\xrightarrow{\cong} \Pic(T)$ is an isomorphism, and we see that the
map $\Delta_{Y/X}$ coincides with the map $\alpha_{Y/X} : H^1(k, \widehat{T})
\to \Br_1(X)$ defined by $z \mapsto p_X^*(z) \cup [Y]$.
\end{remark}
\end{subsec}

The goal of the following theorem is to give an equivalent of
Sansuc's exact sequence (\ref{eq:Sansuc-0}) with the map $\Delta'_{Y/X}$
replaced by the map $\Delta_{Y/X}$. It will be very important in the following.

\begin{theorem}\label{lem Sansuc}
Let $k$ be a field of characteristic zero. Let $H$ a connected linear $k$-group, $X$ a smooth $k$-variety
and $\pi : Y \xrightarrow{H} X$ a (left) torsor under $H$.
Then we have a commutative diagram with exact rows, functorial in $(X,Y,\pi,H)$:
\begin{equation}\label{eq:Sansuc-1}
\xymatrix{
\textup{Pic}(Y) \ar[r]^{\varphi_1} &\textup{Pic}(H) \ar[r]^{\Delta_{Y/X}} &\textup{Br}(X) \ar[r]^{\pi^*}
&\textup{Br}(Y) \ar[r]^-{m^* - p_Y^*} &\textup{Br}(H \times Y)\\
\textup{Pic}(Y) \ar[r]^{\varphi_1}\ar@{=}[u] &\textup{Pic}(H) \ar[r]^-{\Delta_{Y/X}}\ar@{=}[u]
&\textup{Br}_1(X,Y) \ar[r]^-{\pi^*}\ar[u]_{\iota_X}
&\Br_1(Y) \ar[r]^{\varphi_2}\ar[u]_{\iota_Y}    &\Bra(H)\ar[u]_{\nu} \, .
}
\end{equation}
Here $m : H \times Y \to Y$ denotes the left action of $H$ on $Y$,
the homomorphism $\Delta_{Y/X} : \Pic(H) \to \Br(X)$ is the map of \cite{CTX},
see  \ref{subsec:definiton-of-DelataX} above,
the homomorphisms $\varphi_1$ and $\varphi_2$ are defined in
\cite{San} (6.4.1) (or see \eqref{defi varphi}), the homomorphisms $\iota_X$ and $\iota_Y$ are the inclusion maps,
and the injective homomorphism $\nu$ is given as the composite of the following natural injective maps:
$$
\Bra(H)\to\Bra(H)\oplus\Bro(Y)\labelto{\cong}\Bro(H\times Y)\into \Br(H\times Y).
$$

In particular, if $Y(k) \neq \emptyset$ and $y\in Y(k)$, $x=\pi(y)$,
then the maps $\varphi_i$ are induced by the map $i_y : H \to Y$ defined by $h \mapsto h\cdot y$,
and we have an exact sequence
\begin{equation}\label{eq:Sansuc-3}
\textup{Pic}(Y) \xrightarrow{i_y^*} \textup{Pic}(H) \xrightarrow{\Delta_{Y/X}} \textup{Br}_{1,x}(X,Y) \xrightarrow{\pi^*}
\Br_{1,y}(Y) \xrightarrow{i_y^*} \textup{Br}_{1,e}(H).
\end{equation}
\end{theorem}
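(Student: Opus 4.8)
The plan is to build the bottom row and the vertical maps from the top row, which is Sansuc's exact sequence \eqref{eq:Sansuc-0} of Proposition \ref{prop:Sansuc-original}, and then verify exactness of the bottom row essentially by diagram chasing, the only genuinely new input being that the connecting map $\Delta'_{Y/X}$ of Sansuc may be replaced by the map $\Delta_{Y/X}$ of Colliot-Th\'el\`ene and Xu. First I would recall why the two connecting maps agree up to sign: both $\Delta'_{Y/X}(p)$ and $\Delta_{Y/X}(p)$, for $p \in \Pic(H)$ corresponding to a central extension $1 \to \Gm \to H_1 \to H \to 1$, are built from the coboundary of the class $[Y] \in H^1(X,H)$ for this extension; Sansuc constructs his map via the \v{C}ech description of $\Br(X)$ attached to the cover $Y \to X$, while \ref{subsec:definiton-of-DelataX} uses Giraud's nonabelian coboundary $\partial_{H_1}$. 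A comparison of the two (this is implicit in \cite{CTX}, around Proposition 2.3, and can be checked by unwinding both definitions on cocycle level) shows $\Delta_{Y/X} = \pm \Delta'_{Y/X}$; since exactness is insensitive to sign, I may simply substitute $\Delta_{Y/X}$ for $\Delta'_{Y/X}$ in the top row. This gives the exactness of the top row of \eqref{eq:Sansuc-1} as stated.

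Next I would define the bottom row and the vertical arrows. The maps $\iota_X$ and $\iota_Y$ are the tautological inclusions $\Br_1(X,Y) \hookrightarrow \Br(X)$ and $\Br_1(Y) \hookrightarrow \Br(Y)$, and $\nu$ is the composite of canonical injections displayed in the statement; commutativity of the left-hand squares is immediate since $\varphi_1$ and $\Delta_{Y/X}$ are the same maps in both rows, and one only has to check that $\Delta_{Y/X}$ lands in $\Br_1(X,Y)$, i.e. that $\pi^* \Delta_{Y/X}(p) \in \Br_1(Y)$ — but $\pi^* \Delta_{Y/X} = 0$ by exactness of the top row, so this is automatic. For the square involving $\pi^*$: one must check $\pi^*(\Br_1(X,Y)) \subset \Br_1(Y)$, which holds by the very definition of $\Br_1(X,Y)$. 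The map $\varphi_2$ is defined as the corestriction of $m^* - p_Y^*$ to $\Br_1(Y)$; here the point is that for $b \in \Br_1(Y)$ the element $(m^*-p_Y^*)(b) \in \Br(H\times Y)$ actually lies in the subgroup $\Bra(H) \oplus \Bro(Y) \cong \Bro(H\times Y)$ — indeed both $m^*b$ and $p_Y^* b$ die in $\Br(\overline{H}\times\overline Y)$ since $b$ dies in $\Br(\overline Y)$, so the difference lies in $\Bro(H\times Y)$; projecting to the $\Bra(H)$-summand (the $\Bro(Y)$-component of $m^*b - p_Y^*b$ is seen to vanish using the section $h \mapsto e$, or equivalently $p_Y^* b$ has trivial $H$-part and $m^* b$ restricted over $e \in H$ equals $b$) defines $\varphi_2$, and by construction the right-hand square commutes with $\nu$.

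It remains to prove exactness of the bottom row. Exactness at $\Pic(H)$ and at $\Br_1(X,Y)$ follows formally: these are the same objects and maps as in the top row, whose exactness there is known, together with the facts that $\iota_X, \iota_Y$ are injective. Concretely, $\ker[\Delta_{Y/X}\colon \Pic(H)\to\Br_1(X,Y)] = \ker[\Delta_{Y/X}\colon\Pic(H)\to\Br(X)] = \im \varphi_1$, and $\ker[\pi^*\colon\Br_1(X,Y)\to\Br_1(Y)] = \Br_1(X,Y) \cap \ker[\pi^*\colon\Br(X)\to\Br(Y)] = \Br_1(X,Y)\cap \im\Delta_{Y/X} = \im\Delta_{Y/X}$. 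The one nontrivial point is exactness at $\Br_1(Y)$: I must show $\ker\varphi_2 = \im[\pi^*\colon\Br_1(X,Y)\to\Br_1(Y)]$. Take $b \in \Br_1(Y)$ with $\varphi_2(b)=0$; then $(m^*-p_Y^*)(b)$ has trivial $\Bra(H)$-component, hence (being in $\Bro(H\times Y)\cong\Bra(H)\oplus\Bro(Y)$ with vanishing $\Bro(Y)$-part as noted above) $(m^*-p_Y^*)(b)=0$ in $\Br(H\times Y)$, so by exactness of the top row at $\Br(Y)$ we get $b = \pi^*(a)$ for some $a \in \Br(X)$; then $\pi^*(a) = b \in \Br_1(Y)$ forces $a \in \Br_1(X,Y)$ by definition, so $b \in \im[\pi^*\colon \Br_1(X,Y)\to\Br_1(Y)]$, as required; the reverse inclusion is $\varphi_2\circ\pi^* = \nu\circ(\text{something})\circ\dots = 0$, immediate from $m\circ(\id_H\times\pi) = \pi\circ p_Y$ up to the torsor identification. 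The main obstacle in this whole argument is the bookkeeping around $\varphi_2$ and $\nu$ — i.e. pinning down precisely which summand of $\Bro(H\times Y)$ the image of $m^*-p_Y^*$ lands in and checking the two components behave as claimed; everything else is formal diagram chasing on top of Proposition \ref{prop:Sansuc-original}. Finally, for the last assertion: when $Y(k)\neq\emptyset$ with $y \in Y(k)$, Lemma \ref{lem compatibility functors} applied to $F = \Pic$ and $F=\Bra$ (legitimate by \cite{San}, Lemma 6.6) identifies $\varphi_1$ and $\varphi_2$ with $i_y^*$, and restricting the bottom row to the kernels of evaluation at $y$ (resp. $x = \pi(y)$, resp. $e$) yields the exact sequence \eqref{eq:Sansuc-3} upon noting $\Br_{1,e}(H) = \ker[\Bra(H)\to\Br(k)]$ under the identification $\Br_{1,e}(H)\cong\Bra(H)$.
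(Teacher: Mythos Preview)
Your treatment of the bottom row, the commutativity of the diagram, and the deduction of sequence \eqref{eq:Sansuc-3} is essentially the same as the paper's and is fine: once the top row is known to be exact with $\Delta_{Y/X}$ in place, everything else is formal diagram chasing together with Lemma \ref{lem compatibility functors} (which gives the formula $m^*\beta_Y = p_H^*\varphi_2(\beta_Y) + p_Y^*\beta_Y$, i.e.\ your claim that the $\Bro(Y)$-component of $(m^*-p_Y^*)(b)$ vanishes).

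The genuine gap is your first step. You assert that $\Delta_{Y/X} = \pm\Delta'_{Y/X}$ and say this ``is implicit in \cite{CTX}, around Proposition 2.3, and can be checked by unwinding both definitions on cocycle level.'' The paper does \emph{not} prove or use any such comparison; indeed the whole point of the theorem, as the paper states explicitly just before it, is to obtain an analogue of Sansuc's sequence with $\Delta_{Y/X}$ in place of $\Delta'_{Y/X}$, and the authors treat this as requiring substantial new work. Sansuc's $\Delta'_{Y/X}$ is extracted from the \v{C}ech spectral sequence for the cover $Y\to X$, while $\Delta_{Y/X}$ is Giraud's nonabelian coboundary $\partial_{H_1}$ attached to a central extension of $H$ by $\Gm$; there is no evident cocycle-level identification of these for a general connected linear $H$, and nothing in \cite{CTX} provides one. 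So as written your proof is incomplete at exactly the crucial point.

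What the paper actually does for the top row is prove exactness with $\Delta_{Y/X}$ directly. The key ingredient is Lemma \ref{lem composition torsors}: given a $\Gm$-torsor $Z\to Y$, there is a central extension $1\to\Gm\to H_1\to H\to 1$ and a compatible $H_1$-action making $Z\to X$ a torsor under $H_1$, with $[H_1]=\varphi_1([Z])$ in $\Pic(H)$. This lemma immediately gives both that the top row is a complex at $\Pic(H)$ and exactness there. Exactness at $\Br(X)$ is harder: assuming first that $X$ is quasi-projective, the paper invokes the Gabber--de Jong theorem to represent a Brauer class killed by $\pi^*$ via a $\PGL_n$-torsor, then applies Lemma \ref{lem composition torsors} again (to the $H\times\PGL_n$-torsor $Y\times_X Z\to X$) to produce the required central extension of $H$; the general case is reduced to the quasi-projective one via Nagata compactification, Chow's lemma, and Hironaka resolution. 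None of this is bypassed by a sign comparison of the two $\Delta$'s, and your proposal gives no substitute for it.
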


\begin{remark}
Recall that the exact sequences \eqref{eq:Sansuc-1} and
\eqref{eq:Sansuc-3} can be extended to the left by
$$0 \rightarrow k[X]^*/k^* \rightarrow k[Y]^*/k^* \rightarrow \widehat{H}(k) \rightarrow
\Pic(X) \rightarrow \Pic(Y)$$
(see \cite{San}, Prop. 6.10).
\end{remark}

\begin{corollary}
Let $k$ be a field of characteristic zero. Let $T$ be a $k$-torus, $X$ a smooth $k$-variety
and $\pi : Y \xrightarrow{T} X$ a (left) torsor under $T$.
Then we have an exact sequence :
$$ \textup{Pic}(Y) \xrightarrow{\varphi_1} \textup{Pic}(T)
\xrightarrow{\Delta_{Y/X}} \textup{Br}_1(X) \xrightarrow{\pi^*} \Br_1(Y)
\xrightarrow{\varphi_2} \Bra(T) \, .$$
\end{corollary}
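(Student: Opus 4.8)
The plan is to deduce the corollary directly from Theorem~\ref{lem Sansuc} applied to the torsor $\pi\colon Y\xrightarrow{T} X$, noting that a $k$-torus is in particular a connected linear $k$-group. The bottom row of the commutative diagram \eqref{eq:Sansuc-1} is then the exact sequence
$$
\Pic(Y)\xrightarrow{\varphi_1}\Pic(T)\xrightarrow{\Delta_{Y/X}}\Br_1(X,Y)\xrightarrow{\pi^*}\Br_1(Y)\xrightarrow{\varphi_2}\Bra(T),
$$
where $\varphi_1,\Delta_{Y/X},\pi^*,\varphi_2$ are exactly the maps appearing in the corollary. So the corollary reduces to the single assertion that, for a torsor under a \emph{torus}, one has $\Br_1(X,Y)=\Br_1(X)$ as subgroups of $\Br(X)$; substituting this equality into the sequence above gives precisely the claimed exact sequence, the exactness being that of the bottom row of \eqref{eq:Sansuc-1}.

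To prove $\Br_1(X,Y)=\Br_1(X)$ I would use the commutative square
$$
\xymatrix{
\Br(X)\ar[r]^{\pi^*}\ar[d] & \Br(Y)\ar[d]\\
\Br(\Xbar)\ar[r]^{\pibar^*} & \Br(\Ybar)
}
$$
obtained by base change to $\kbar$. The inclusion $\Br_1(X)\subseteq\Br_1(X,Y)$ holds for any morphism: if $b\in\Br(X)$ dies in $\Br(\Xbar)$, then by the square $\pi^*(b)$ dies in $\Br(\Ybar)$, i.e.\ $\pi^*(b)\in\Br_1(Y)$, so $b\in\Br_1(X,Y)$. For the reverse inclusion the essential point is that $\pibar^*\colon\Br(\Xbar)\to\Br(\Ybar)$ is \emph{injective}: granting this, if $b\in\Br_1(X,Y)$ then $\pi^*(b)\in\Br_1(Y)$ forces, via the square, the image of $b$ in $\Br(\Xbar)$ to lie in $\ker\pibar^*$, hence to be $0$, so $b\in\Br_1(X)$.

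The injectivity of $\pibar^*$ is the one substantive step. I would obtain it by applying Sansuc's exact sequence \eqref{eq:Sansuc-0} of Proposition~\ref{prop:Sansuc-original} to the base-changed torsor $\Ybar\xrightarrow{\Tbar}\Xbar$ over $\kbar$, whose initial segment reads
$$
\Pic(\Tbar)\xrightarrow{\Delta'_{\Ybar/\Xbar}}\Br(\Xbar)\xrightarrow{\pibar^*}\Br(\Ybar).
$$
Over the algebraically closed field $\kbar$ the torus $\Tbar$ is split, $\Tbar\cong\Gm^n$, and $\Pic(\Gm^n)=0$ because $\kbar[t_1^{\pm1},\dots,t_n^{\pm1}]$ is a unique factorization domain; hence $\ker\pibar^*=\Delta'_{\Ybar/\Xbar}(\Pic(\Tbar))=0$. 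I do not anticipate any further difficulty; the only point needing care is that $\pi^*$, $\pibar^*$ and the base-change maps genuinely assemble into the single commutative square used above, which is contained in the functoriality asserted in Proposition~\ref{prop:Sansuc-original} and Theorem~\ref{lem Sansuc}. It is worth noting that the argument really uses $\kbar$ and not $k$: for a $k$-torus $T$ the group $\Pic(T)$ need not vanish (by the Remark above it is isomorphic to $H^1(k,\widehat T)$), which is exactly why only $\Br_1(X)$, and not all of $\Br(X)$, occurs in the conclusion.
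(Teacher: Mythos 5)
Your proposal is correct and follows the same route as the paper: apply Theorem~\ref{lem Sansuc} and use $\Pic(\Tbar)=0$ (via Sansuc's sequence over $\kbar$) to identify $\Br_1(X,Y)$ with $\Br_1(X)$. The paper's proof is a one-liner citing exactly this fact; you have merely filled in the details, in particular the commutative square argument showing the injectivity of $\pibar^*$ forces the two subgroups to coincide.
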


\begin{proof}
It is a direct application of Theorem \ref{lem Sansuc}, using
$\Pic(\ov{T}) = 0$.
\end{proof}

\begin{remark}
This corollary compares the algebraic Brauer groups of $X$ and
$Y$. Concerning the transcendental part of those groups, Theorem
\ref{lem Sansuc} can be used to study the injectivity of the map
$\Br(X) \rightarrow \Br(Y)$. We cannot describe easily the image of this map
in general. However, Harari and Skorobogatov studied this map in particular
cases (universal torsors for instance): see \cite{HSk}, Theorems 1.6
and 1.7.
\end{remark}

\begin{corollary}[{\rm cf. \cite{San}, Corollary 6.11}]\label{cor:Sansuc-cor}
Let
$$
1\to G'\labelto{i} G\labelto{j} G''\to 1
$$
be an exact sequence of connected algebraic groups over a field $k$ of characteristic 0.
Assume that $G'$ is linear.
Then there is a commutative diagram with exact rows
\begin{equation}\label{eq:Sansuc-cor-3}
\xymatrix{
 \Pic(G)\ar[r]^{i^*}      &\Pic(G')\ar[r]^{\Delta_{G/G''}}      &\Br(G'')\ar[r]^{j^*}
       &\Br(G)\ar[r]^-{m^* - p_G^*}  &\Br(G' \times G)\\
\Pic(G)\ar[r]^{i^*}\ar@{=}[u] &\Pic(G')\ar[r]^-{\Delta_{G/G''}}\ar@{=}[u]  &\Br_{1,e}(G'',G)\ar[r]^-{j^*}\ar[u]_{\iota''}
   &\Br_{1,e}(G)\ar[r]^{i^*}\ar[u]_\iota   &\Br_{1,e}(G')\ar[u]_\nu \, .
}
\end{equation}
Here $p_G : G' \times G \to G$ is the projection map, the map $m : G' \times G \to G$ is defined by $m(g',g) := i(g')\cdot g$
(where the product denotes the group law in $G$), $\iota''$ and $\iota$ are the inclusion homomorphisms,
and the injective homomorphism $\nu$ is defined as in Theorem \ref{lem Sansuc}.

If the homomorphism $\Pic(\Gbar)\to\Pic(\ov{G'})$ is surjective
(e.g. when $G'$ is a $k$-torus, or when ${G'}\sss$ is simply
connected, or when all the three groups $G'$, $G$ and $G''$ are linear), then  $\Br_{1,e}(G'', G)=\Br_{1,e}(G'')$,
and we have a commutative diagram with exact rows
\begin{equation}\label{eq:Sansuc-cor-1}
\xymatrix{
 \Pic(G)\ar[r]^{i^*}      &\Pic(G')\ar[r]^{\Delta_{G/G''}}      &\Br(G'')\ar[r]^{j^*}
       &\Br(G)\ar[r]^-{m^* - p_G^*}  &\Br(G' \times G)\\
\Pic(G)\ar[r]^{i^*}\ar@{=}[u] &\Pic(G')\ar[r]^{\Delta_{G/G''}}\ar@{=}[u]  &\Br_{1,e}(G'')\ar[r]^{j^*}\ar[u]_{\iota''}
   &\Br_{1,e}(G)\ar[r]^{i^*}\ar[u]_\iota   &\Br_{1,e}(G')\ar[u]_\nu \, .
}
\end{equation}
\end{corollary}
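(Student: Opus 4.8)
The plan is to deduce both diagrams from Theorem~\ref{lem Sansuc} applied to the torsor that the given exact sequence defines, together with one short base-change argument for the refinement in the second half of the statement.

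Since $G'=\ker j$ is normal in $G$, the projection $j\colon G\to G''$ is a left torsor under $G'$ for the action $m\colon G'\times G\to G$, $m(g',g)=i(g')\cdot g$. Here $G'$ is linear by hypothesis and $G''$ is a smooth $k$-variety (characteristic $0$), and $G(k)\neq\emptyset$ since $e\in G(k)$, so Theorem~\ref{lem Sansuc} applies with $X=G''$, $Y=G$, $H=G'$, $y=e$ and $x=j(e)$; note that it is Theorem~\ref{lem Sansuc}, not merely Proposition~\ref{prop:Sansuc-original}, that is used, since the statement of the corollary involves $\Delta_{G/G''}$. The first step is to transcribe the resulting diagram: the torsor projection $\pi$ becomes $j$, the pair $(m,p_Y)$ becomes $(m,p_G)$, the map $\Delta_{Y/X}$ becomes $\Delta_{G/G''}$, and the base-point map $i_y\colon H\to Y$, $h\mapsto h\cdot e$, is exactly the inclusion $i\colon G'\into G$, so $\varphi_1$ and $\varphi_2$ become $i^*$. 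Thus the top row of~\eqref{eq:Sansuc-1} becomes the top row of~\eqref{eq:Sansuc-cor-3}, while the exact sequence~\eqref{eq:Sansuc-3} becomes its bottom row; the inclusions $\iota'',\iota$ are the restrictions of $\iota_X,\iota_Y$, the injection $\nu$ is the one of Theorem~\ref{lem Sansuc} (precomposed with $\Br_{1,e}(G')\isoto\Bra(G')$), and the commutativity of all squares together with the functoriality in $(G',G,G'')$ are inherited from Theorem~\ref{lem Sansuc}. This yields~\eqref{eq:Sansuc-cor-3}.

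For the second assertion, it suffices to prove that $\Br_{1,e}(G'',G)=\Br_{1,e}(G'')$ when $\Pic(\Gbar)\to\Pic(\ov{G'})$ is surjective, since then~\eqref{eq:Sansuc-cor-1} is simply~\eqref{eq:Sansuc-cor-3} rewritten with this identification. The inclusion $\Br_{1,e}(G'')\subseteq\Br_{1,e}(G'',G)$ holds unconditionally: for $b\in\Bro(G'')$ one has $j^*(b)\in\Bro(G)$ by compatibility of $\Bro$ with base change, and $(j(e))^*(b)=0$, so $j^*(b)\in\Br_{1,e}(G)$. Conversely, take $b\in\Br_{1,e}(G'',G)$, so $j^*(b)\in\Bro(G)$ and $(j(e))^*(b)=0$. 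Base-changing the top row of~\eqref{eq:Sansuc-cor-3} to $\kbar$ (still exact, by functoriality), surjectivity of $\Pic(\Gbar)\to\Pic(\ov{G'})$ forces $\Delta_{\Gbar/\ov{G''}}=0$, hence $j^*\colon\Br(\ov{G''})\to\Br(\Gbar)$ is injective; since the image of $j^*(b)$ in $\Br(\Gbar)$ vanishes, so does the image $\ov{b}$ of $b$ in $\Br(\ov{G''})$ (by commutativity of base change), and injectivity gives $\ov{b}=0$, i.e. $b\in\Bro(G'')$. Together with $(j(e))^*(b)=0$ this gives $b\in\Br_{1,e}(G'')$.

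Finally I would check that each of the three sample situations provides the required surjectivity of $\Pic(\Gbar)\to\Pic(\ov{G'})$. If $G'=T$ is a torus, then $\Pic(\ov T)=0$. If $L:={G'}\sss$ is simply connected, then over $\kbar$ one first kills the split unipotent radical, obtaining $\Pic(\ov{G'})\cong\Pic(\ov{G'}\red)$, and then the exact sequence of \cite{San}, Lemma~6.9, attached to $1\to\ov L\to\ov{G'}\red\to\ov{G'}\tor\to1$, combined with $\Pic(\ov{G'}\tor)=0$, gives $\Pic(\ov{G'})\into\Pic(\ov L)\cong\widehat{\pi_1(L)}=0$. When $G'$, $G$ and $G''$ are all linear, this surjectivity is the classical ingredient of Sansuc's original Corollary~6.11 in \cite{San}. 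The bulk of the argument is mere transcription of Theorem~\ref{lem Sansuc}; the one genuine step is the base-change argument above, which is precisely where the surjectivity hypothesis on Picard groups is indispensable.
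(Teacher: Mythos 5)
Your proposal is correct and follows essentially the same route as the paper's proof: identify the short exact sequence as a left $G''$-torsor under $G'$ on $G$, transcribe Theorem~\ref{lem Sansuc} with $Y=G$, $X=G''$, $H=G'$, $y=e$ to obtain~\eqref{eq:Sansuc-cor-3}, and then deduce $\Br_{1,e}(G'',G)=\Br_{1,e}(G'')$ from the injectivity of $j^*\colon\Br(\ov{G''})\to\Br(\Gbar)$, which follows by applying the top row of~\eqref{eq:Sansuc-1} over $\kbar$ and using the surjectivity hypothesis on Picard groups. The only cosmetic difference is that you spell out the inclusion $\Br_{1,e}(G'')\subseteq\Br_{1,e}(G'',G)$ and the vanishing $\Pic(\ov{G'})=0$ in the simply connected case in slightly more detail than the paper does, but the structure and the key ingredients are the same.
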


\begin{proof}[Proof of the corollary]
The short exact sequence of algebraic groups defines a structure of (left) $G''$-torsor
under $G'$ on $G$ ($G'$ acts on $G$ by left translations).
Now from the diagram with exact rows \eqref{eq:Sansuc-1} we obtain diagram \eqref{eq:Sansuc-cor-3},
which differs from diagram \eqref{eq:Sansuc-cor-1} by the middle term in the bottom row.

From diagram \eqref{eq:Sansuc-1} we obtain an exact sequence
\begin{equation}
\textup{Pic}(\Gbar) \xrightarrow{i^*} \textup{Pic}(\overline{G'})
\xrightarrow{\Delta_{G/G''}} \textup{Br}(\overline{G''}) \xrightarrow{j^*}
\textup{Br}(\Gbar) \, .
\end{equation}
If the homomorphism $i^*\colon \textup{Pic}(\Gbar) \to \textup{Pic}(\overline{G'})$ is surjective,
then the homomorphism $j^*\colon \textup{Br}(\overline{G''}) \to \textup{Br}(\Gbar)$ is injective,
hence $\Br_{1,e}(G'', G)=\Br_{1,e}(G'')$, and we obtain diagram \eqref{eq:Sansuc-cor-1} from diagram \eqref{eq:Sansuc-cor-3}.

If $G'$ is a $k$-torus or if ${G'}\sss$ is simply connected, then $\Pic(\overline{G'})=0$, and the homomorphism  $\Pic(\Gbar)\to\Pic(\overline{G'})$ is clearly surjective.
If all the three groups $G'$, $G$ and $G''$ are linear, then again
the homomorphism $\textup{Pic}(\Gbar) \to \textup{Pic}(\overline{G'})$ is surjective,
see \cite{San}, proof of Corollary 6.11, p. 44.
\end{proof}

For the proof of Theorem \ref{lem Sansuc} we need a crucial lemma.

\begin{lemma}\label{lem composition torsors}
Let $k$ be a field of characteristic zero. Let $H$ a connected linear $k$-group, $X$ a smooth $k$-variety
and $\pi : Y \xrightarrow{H} X$ a (left) torsor under $H$.
Let $\tau\colon Z \xrightarrow{\Gm} Y$ be a torsor under $\Gm$.
Then there exists a central extension of algebraic $k$-groups
$$1 \to \Gm \to H_1 \to H \to 1$$
and a left action $H_1 \times Z \to Z$, extending the action of $\Gm$ on $Z$ and compatible with the action of $H$ on $Y$.
This action makes $Z \to X$ into a torsor under $H_1$.
Moreover, the class of such an  extension $H_1$
in the group $\textup{Ext}^{\textup{c}}_k(H, \Gm)$ is uniquely determined,
namely
$[H_1]=\varphi_1([Z])  \in \textup{Ext}^{\textup{c}}_k(H, \Gm) =
\Pic(H)$.
\end{lemma}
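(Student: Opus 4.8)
The plan is to take for $H_1$ the central extension of $H$ by $\Gm$ whose class is $\varphi_1([Z])$, to build the $H_1$-action on $Z$ from a canonical trivialization over $H\times_k Y$, and to deduce uniqueness of the class from the injectivity of $p_H^*\colon\Pic(H)\to\Pic(H\times_k Y)$. Throughout I identify a $\Gm$-torsor with its class in $\Pic$. Since $H$ is connected, $c_H\colon\textup{Ext}^c_k(H,\Gm)\isoto\Pic(H)$ (\cite{CT}, Cor.~5.7); I let $1\to\Gm\to H_1\to H\to 1$ be the central extension with class $\varphi_1([Z])\in\Pic(H)$, where $[H_1]$ is understood as the class of the underlying $\Gm$-torsor $H_1\to H$. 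This already settles existence of a candidate group and, as we shall see, its class.

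Next I construct the action. Apply Lemma~\ref{lem compatibility functors} to $F=\Pic$ and the action $m\colon H\times_k Y\to Y$, with the r\^ole of ``$X$'' played by $H$ and ``$x$'' by $e\in H(k)$; the hypotheses hold by \cite{San}, Lemma~6.6, which also gives that $\Pic(p_H)+\Pic(p_Y)\colon\Pic(H)\oplus\Pic(Y)\to\Pic(H\times_k Y)$ is an isomorphism. The lemma yields, in $\Pic(H\times_k Y)$,
$$m^*[Z]=p_H^*(\varphi_1([Z]))+p_Y^*[Z]=p_H^*[H_1]+p_Y^*[Z],$$
hence an isomorphism of $\Gm$-torsors over $H\times_k Y$
$$\theta\colon\ m^*Z\ \isoto\ p_H^*H_1\wedge^{\Gm}p_Y^*Z,$$
where $\wedge^{\Gm}$ is the contracted product along the central $\Gm\subset H_1$. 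Over $\{e\}\times_k Y$ all three torsors are canonically $Z$ (using the neutral section of $H_1$ over $e$), and I normalize $\theta$ to restrict to the identity there; since $\Pic(p_H)+\Pic(p_Y)$ is an isomorphism, this determines $\theta$ uniquely. Unwinding, $\theta$ is precisely the datum of a morphism $a\colon H_1\times_k Z\to Z$ lying over $m$ and equivariant for the $\Gm$-actions in the expected way; the normalization forces $a$ to restrict, on the central $\Gm\subset H_1$, to the given $\Gm$-action on $Z\to Y$, and $a$ lifts the $H$-action on $Y$. That $a$ is an associative action with unit --- i.e. that $\theta$ satisfies the cocycle identity over $H\times_k H\times_k Y$ --- is checked by comparing the two isomorphisms it induces there: their ratio is an automorphism of a $\Gm$-torsor, hence an invertible function, which is forced to be trivial by a direct computation with the splittings of $\Pic$ and of the units of a product, using the associativity of the multiplicative structure on $H_1$ furnished by $c_H$. (This is the analogue for our situation of Mumford's theta-group construction.)

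It remains to see that $a$ makes $Z\to X$ a torsor under $H_1$, and that the class is unique. The composite $Z\to Y\to X$ is faithfully flat of finite type, and $H_1\times_k Z\to Z\times_X Z$, $(g,z)\mapsto(a(g,z),z)$, is an isomorphism: after the canonical identification $Y\times_X Y\isoto H\times_k Y$ attached to the $H$-torsor $Y\to X$, it is identified via $\theta$ with the combination of the $\Gm$-torsor $Z\to Y$ and the $H$-torsor $Y\to X$; equivalently one checks it fppf-locally on $X$, where $Y\to X$ and then $Z\to Y$ split. For uniqueness of the class, if $(H_1',a')$ also satisfies the conclusions, then $a'$ provides over $H\times_k Y$ an isomorphism $m^*Z\cong p_H^*H_1'\wedge^{\Gm}p_Y^*Z$ of $\Gm$-torsors, so $p_H^*[H_1']=m^*[Z]-p_Y^*[Z]=p_H^*(\varphi_1([Z]))$; since $p_H^*$ is injective (the restriction of the isomorphism $\Pic(p_H)+\Pic(p_Y)$ to a direct summand), $[H_1']=\varphi_1([Z])$ in $\Pic(H)=\textup{Ext}^c_k(H,\Gm)$, i.e. $H_1'\cong H_1$ as central extensions.

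The main obstacle is the verification of the cocycle identity for $\theta$, that is, the step turning the morphism $a$ into a genuine group action; everything else is essentially formal once one has the identity $m^*[Z]=p_H^*[H_1]+p_Y^*[Z]$ and the decomposition $\Pic(H\times_k Y)\cong\Pic(H)\oplus\Pic(Y)$. This is exactly the place where the connectedness of $H$ enters in an essential way, through the existence and uniqueness of a multiplicative structure on every line bundle on $H$ (the isomorphism $c_H$).
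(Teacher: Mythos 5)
Your proposal follows essentially the same route as the paper: define $H_1$ from the class $\varphi_1([Z])$, derive the identity $m^*[Z]=p_H^*[H_1]+p_Y^*[Z]$ from Lemma~\ref{lem compatibility functors}, normalize the resulting torsor isomorphism over $\{e\}\times Y$ to obtain a candidate action, verify associativity by showing the discrepancy unit on a triple product is trivial via Rosenlicht/Sansuc's splitting of units, check the torsor property by a cartesian diagram, and deduce uniqueness of the class from the injectivity of $p_H^*$. One small inaccuracy: the normalization does not determine $\theta$ uniquely in general (two normalized choices may differ by a character of $H$), but this side remark plays no role in the argument, which is otherwise sound.
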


\begin{remark}
In \cite{HSk2}, Harari and Skorobogatov studied this question of
composition of torsors. Their results (see Theorem 2.2 and Proposition
2.5 in \cite{HSk2}) deal with torsors under multiplicative groups and
not only under $\Gm$ as here, but they require additional assumptions
concerning the type of the torsor and on invertible functions on
the varieties. Those additional assumptions are not satisfied in our context.
\end{remark}

\begin{proof}
Let $p_H\colon   H\times Y \to H$ and $p_Y\colon  H\times Y\to Y$ denote the two projections. Let
$$1 \to \Gm \to H_1 \to H \to 1$$
be a central extension such that its class in $\textup{Ext}^{\textup{c}}_k(H, \Gm) = \Pic(H)$ is exactly $\varphi_1([Z])$.

In this setting, Lemma \ref{lem compatibility functors} implies that
\begin{equation} \label{formula Pic}
m^*[Z] =  p_H^* [H_1]+p_Y^* [Z] \, .
\end{equation}

Formula \eqref{formula Pic} means that the push-forward of the torsor
$H_1 \times Z \xrightarrow{\Gm \times \Gm} H \times Y$
by the group law homomorphism $\Gm \times \Gm \to \Gm$
is isomorphic (as a $H \times Y$-torsor under $\Gm$) to the pullback $m^* Z$
of the torsor $Z \to Y$ by the map $m : H \times Y \to Y$.
In particular, we get the following commutative diagram:
\begin{equation}
\label{diag group action}
\xymatrix{
H_1 \times Z \ar@/^1.5pc/@{-->}[rr]^{m'} \ar[r]^-{\Gm} \ar[rd]_{\Gm \times \Gm} & m^* Z \ar[r] \ar[d]^{\Gm} & Z \ar[d]^{\Gm} \\
& H \times Y \ar[r]^-m & Y
}
\end{equation}
where $m'$ is defined to be the composite of the two upper horizontal maps.
The situation is very similar to that in the proof
of Theorem 5.6 in \cite{CT}: the map $m'$ fits into  commutative diagram (\ref{diag group action})
and for all $t_1, t_2 \in \Gm$ and all $h_1 \in H_1, z \in Z$, we have
\begin{equation}
\label{compatible actions}
m'(t_1\cdot h_1, t_2\cdot z) = t_1 t_2 \cdot m'(h_1, z) \, .
\end{equation}
We want to use $m'$ to define a group action of $H_1$ on $Z$.
Formula (\ref{compatible actions}) implies that the morphism
$m'(e,.) : z \mapsto m'(e,z)$ is an automorphism of the $Y$-torsor $Z$, and we can define a map $m'' : H_1 \times Z \to Z$ to be
the composition
$$m'' := m'(e,.)^{-1} \circ m'\colon H_1\times Z\to Z.$$
Then we get a commutative diagram
\begin{equation}
\label{diag group action 2}
\xymatrix{
H_1 \times Z \ar[r]^(.6){m''} \ar@<2ex>[d] \ar@<-2ex>[d] & Z \ar[d] \\
H \times Y \ar[r]^(.6)m & Y
}
\end{equation}
where the map $m''$ still satisfies formula (\ref{compatible actions}) and now, for all $z \in Z$, we have
\begin{equation} \label{neutral element}
m''(e,z) = z.
\end{equation}
We wish to prove that $m''$ is a left group action of $H_1$ on $Z$.

Since $m\colon H\times Y\to Y$ is a left action, we have
$$
\tau(m''(h_1 h_2,z))=\tau(m''(h_1, m''(h_2,z))) \text{ for } h_1,h_2\in H_1,\ z\in Z,
$$
where $\tau \colon Z\to Y$ is the canonical map.
Since $\tau\colon Z\to Y$ is a torsor under $\Gm$,
there is a canonical map
$$
Z\times_Y Z\to\Gm, \quad (z_1,z_2)\mapsto z_1 z_2^{-1}.
$$
We obtain a morphism of $k$-varieties
$$
\varphi\colon H_1\times H_1\times Z\to \Gm\quad (h_1,h_2,z)\mapsto \varphi_z(h_1,h_2)
$$
such that
$$
m''(h_1 h_2,z)=\varphi_z(h_1,h_2)\cdot m''(h_1, m''(h_2,z))\text{ for }h_1,h_2\in H_1, z\in Z.
$$
Then \eqref{neutral element} implies that
$$
\varphi_z(h,e)=1 \text{ and } \varphi_z(e,h)=1.
$$
By Rosenlicht's lemma (see \cite{R}, Theorem 3, see also \cite{San},
Lemma 6.5), the map $\varphi$ has to be trivial,
i.e. $\varphi_z(h_1,h_2)=1$ for all $z,h_1,h_2$.
Therefore we have
\begin{equation}
\label{associative action}
m''(h_1h_2,z)=m''(h_1,m''(h_2,z)) \, .
\end{equation}

Formulas (\ref{neutral element}) and (\ref{associative action}) show  that $m''$
is a left group action of $H_1$ on $Z$.
Since $m''$ satisfies \eqref{compatible actions}, we have for $t\in \Gm,\ z\in Z$
$$
m''(te,z)=t\cdot m''(e,z)=t\cdot z,
$$
hence the action $m''$ extends the action of $\Gm$ on $Z$.
From diagram \eqref{diag group action 2} with $m''$ instead of $m'$
we see that the action $m''$ induces the action $m$ of $H$ on $Y$.

Consider the following commutative diagram (see (\ref{diag group action 2})):
\begin{equation}
\label{diag isom torsors}
\xymatrix{
H_1 \times_k Z \ar[r]^{\phi_Z} \ar[d] & Z \times_X Z \ar[d] \\
H \times_k Y \ar[r]^{\phi_Y} & Y \times_X Y \, ,
}
\end{equation}
where $\phi_Z(h_1,z) := (m''(h_1,z), z)$, $\phi_Y(h,y) := (m(h,y), y)$,
and the unnamed morphisms are the natural ones.
Since $Y \to X$ is a torsor under $H$, the morphism $\phi_Y$ is an
isomorphism.
The group $\Gm \times \Gm$ acts on $H_1 \times_k Z$ via $(t_1,t_2)\cdot(h_1,z) := (t_1 h_1, t_2\cdot z)$,
making $H_1 \times_k Z \to H \times_k Y$ into a torsor under $\Gm \times \Gm$.
We define an action of $\Gm\times\Gm$ on $Z\times_X Z$ by
$$
(t_1,t_2)\cdot(z_1,z_2) := ((t_1 t_2)\cdot z_1,t_2\cdot z_2),
$$
then $Z \times_X Z \to Y \times_X Y$ is a torsor under $\Gm \times \Gm$.
By  formula (\ref{compatible actions}) the map $\phi_Z$ in  (\ref{diag isom torsors})
is a morphism of torsors under $\Gm \times \Gm$ compatible with the isomorphism $\phi_Y$ of $k$-varieties.
Therefore the map $\phi_Z$ is an isomorphism of $k$-varieties,
which proves that the action $m''$ makes $Z \to X$ into a torsor under $H_1$.

Let us prove the uniqueness of the class of the extension $H_1$.
If $H_2$ is a central extension of $H$ that satisfies the conditions of the lemma,
then the analogues of diagram (\ref{diag group action 2}) and formula (\ref{compatible actions})
with $H_2$ instead of $H_1$ define an isomorphism of $H \times Y$-torsors under $\Gm$
between the push-forward of $H_2 \times Z$ by the morphism $\Gm \times \Gm \to \Gm$ and the torsor $m^* Z$. Therefore, we get
$$m^*[Z] = p_H^* [H_2] + p_Y^* [Z].$$
Comparing with  (\ref{formula Pic}), we see that  $ p_H^* [H_2]= p_H^* [H_1]$.
Since $p_H^* + p_Y^* : \Pic(H) \oplus \Pic(Y) \to \Pic(H \times Y)$
is an isomorphism, we see that $p_H^*\colon\Pic(H)  \to \Pic(H \times Y)$
is an embedding, hence $[H_2] = [H_1]$,
which completes the proof of Lemma \ref{lem composition torsors}.
\end{proof}

\begin{subsec}
{\em Proof of Theorem \ref{lem Sansuc}: Top row of the diagram.}
First we prove that the top  row in diagram (\ref{eq:Sansuc-1})
is a complex.
Let $p \in \Pic(Y)$ and let us prove that $\Delta_{Y/X}(\varphi_1(p)) = 0$.
Let $Z \to Y$ be a torsor under $\Gm$ such that $[Z] = p \in \Pic(Y)$. Let $p' := \varphi_1(p) \in \textup{Pic}(H)$,
and $1 \rightarrow \Gm \rightarrow H_1 \rightarrow H \rightarrow 1$ be a
central extension of $H$ by $\Gm$ corresponding to $p'$ via the
isomorphism $\textup{Ext}^c_k(H, \Gm) \cong \textup{Pic}(H)$. Then
$\Delta_{Y/X}(p')$ is equal (by definition) to $\partial_{H_1}([Y]) \in
H^2(X, \Gm)$, where $\partial_{H_1} : H^1(X, H) \rightarrow H^2(X,
\Gm)$ is the coboundary map coming from the extension $H_1$, and
$[Y]$ is the class of the torsor $Y \rightarrow X$ in $H^1(X,
H)$.

Lemma \ref{lem composition torsors} implies that the class $[Y] \in H^1(X, H)$ is in the image of the map $H^1(X,H_1) \to H^1(X,H)$.
From  exact sequence \eqref{nonabelian-exact} we see that the class $\partial_{H_1}([Y])$
is trivial in $H^2(X, \Gm)$, i.e. $\Delta_{Y/X}(p') = \partial_{H_1}([Y]) = 0 \in H^2(X, \Gm)$,
hence $\textup{Pic}(Y) \xrightarrow{j^*} \textup{Pic}(H) \xrightarrow{\Delta_{Y/X}} \textup{Br}(X)$ is a complex.

Let $p \in \textup{Pic}(H)$, and let us prove that $\pi^*(\Delta_{Y/X}(p)) = 0$.
The element $p$ corresponds to the class of an extension
$1 \rightarrow \Gm \rightarrow H_1 \rightarrow H \rightarrow 1$, and we have  $\Delta_{Y/X}(p) = \partial_{H_1}([Y])$.
We have a commutative diagram
\begin{displaymath}
\xymatrix{
H^1(X, H) \ar[r]^{\partial_{H_1}} \ar[d]^{\pi^*} & H^2(X, \Gm) \ar[d]^{\pi^*} \\
H^1(Y, H) \ar[r]^{\partial_{H_1}} &  H^2(Y, \Gm)
}
\end{displaymath}
so that $\pi^*(\Delta_{Y/X}(p)) = \pi^*(\partial_{H_1}([Y])) = \partial_{H_1}(\pi^*[Y])$.
The torsor $\pi^*[Y]$ is trivial in the set $H^1(Y,H)$,
hence $\pi^*(\Delta_{Y/X}(p)) = 0$.
Consequently the top row of diagram (\ref{eq:Sansuc-1})  is a complex.

Let us prove that the top row of diagram (\ref{eq:Sansuc-1}) is exact.
For the exactness at the term $\Br(Y)$, see Proposition \ref{prop:Sansuc-original}.
So it remains to prove the exactness of the top row at $\Pic(H)$ and at $\Br(X)$.

\bigskip

Let $p \in \textup{Pic}(H)$ be such that $\Delta_{Y/X}(p) = 0$. Such a $p$
corresponds to the class of an extension $1 \rightarrow \Gm \rightarrow H_1
\xrightarrow{q} H \rightarrow 1$ such that $\partial_{H_1}([Y]) = 0$.
From  exact sequence \eqref{nonabelian-exact} we see
that there exists an $X$-torsor $Z \xrightarrow{H_1} X$ under $H_1$,
with an $H_1$-equivariant map $Z \to Y$, making $Z \to Y$ into a torsor under $\Gm$.
Then by the uniqueness part of Lemma \ref{lem composition torsors}
we know that the class of $Z \to Y$ in $\Pic(Y)$
maps to the class of $H_1$ in $\Pic(H)$, i.e. $\varphi_1([Z]) = p$, which
proves the exactness of the top row of diagram (\ref{eq:Sansuc-1}) at $\Pic(H)$.

Let us now prove the exactness of the top row of (\ref{eq:Sansuc-1}) at $\Br(X)$.

Assume first that the $k$-variety $X$ is quasi-projective. Let $A \in \Br(X)$ such that $\pi^* A = 0 \in \Br(Y)$.
By a theorem of Gabber also proven by de Jong (see \cite{dJ}),
we know that there exists a positive integer $n$
and an $X$-torsor $Z \to X$ under $\PGL_n$ such that $- A$ is the image of the class
of $Z$ in $H^1(X,\PGL_n)$ by the coboundary map $H^1(X, \PGL_n) \xrightarrow{\partial_{\GL_n}} H^2(X, \Gm)$.
Let $W$ denote the product $Y \times_X Z$.
From the commutative diagram with exact rows
$$
\xymatrix{
H^1(X,\GL_n)\ar[d]\ar[r] &H^1(X,\PGL_n)\ar[d]\ar[r] &\Br(X)\ar[d]^{\pi^*}\\
H^1(Y,\GL_n)\ar[r]       &H^1(Y,\PGL_n)\ar[r]       &\Br(Y)
}
$$
we see that the assumption $\pi^* A = 0$ implies that the torsor $W \xrightarrow{\PGL_n} Y$
is dominated by some $Y$-torsor under $\GL_n$,
i.e. there exists a torsor $V \xrightarrow{\GL_n} Y$ and a morphism of $Y$-torsors $V \to W$
compatible with the quotient morphism $\GL_n \to \PGL_n$.
We have the following picture:
\begin{displaymath}
\xymatrix{
Y \ar[d]_H & W \ar[l]^{\PGL_n} \ar[d]^H & V \ar[l]^{\Gm} \ar@/_1.5pc/[ll]_{\GL_n} \\
X & Z \ar[l]^{\PGL_n} & \, .
}
\end{displaymath}
Since $W \to X$ is a torsor under the connected linear group $H \times \PGL_n$, we can apply Lemma \ref{lem composition torsors}
to get a central extension
\begin{equation}
\label{extension product}
1 \to \Gm \to L \to H \times \PGL_n \to 1
\end{equation}
and a structure of $X$-torsor under $L$ on $V \to X$, compatible with the action of $H \times \PGL_n$ on $W$.
In particular,
the natural injections of $H$ and $\PGL_n$ into $H \times \PGL_n$ define two central extensions obtained by pulling back the extension
(\ref{extension product}):
\begin{gather}
\label{exactLH}1 \to \Gm \to L_H \to H \to 1\\
\label{exact PGL} 1 \to \Gm \to L_{\PGL_n} \to \PGL_n \to 1 \, .
\end{gather}
Since $\PGL_n$ acts trivially on $Y$, the action of $L_{\PGL_n}$ (as a
subgroup of $L$) on $V$ defines a commutative diagram
\begin{displaymath}
\xymatrix{
L_{\PGL_n} \times V \ar[r]^-{\phi_V} \ar[d] & V \times_Y V \ar[d] \\
\PGL_n \times W \ar[r]^-{\phi_W} & W \times_Y W
}
\end{displaymath}
where $\phi_V(l,v) := (l.v, v)$, $\phi_W(p,w) := (p.w, w)$, and the
vertical maps are the natural ones.
We see easily that $\phi_V$ is an isomorphism, hence $V \to Y$ is a torsor under $L_{\PGL_n}$.
 This action of $L_{\PGL_n}$ extends the action of $\Gm$ on $V$ above $W$, and is compatible with the action of
$\PGL_n$ on $W$ above $Y$ via the extension (\ref{exact PGL}) and the
map $V \to W$. Therefore, the unicity result in Lemma \ref{lem  composition torsors}
implies that  exact sequence (\ref{exact PGL}) is equivalent to the usual extension
$1 \to \Gm \to \GL_n \to \PGL_n \to 1$, and in particular that $\partial_{L_{\PGL_n}}([Z]) = \partial_{\GL_n}([Z]) \in H^2(X, \Gm)$.

Consider the direct product of the  exact sequences \eqref{exactLH} and \eqref{exact PGL}
$$
1 \to \Gm \times \Gm \to L_H \times L_{\PGL_n} \to H \times \PGL_n \to
1 \, .
$$
Define the morphism $\mu : L_H \times L_{\PGL_n} \to L$ by $\mu(l,l') := l.l'$, where the product is taken inside the group $L$.
By definition of $L_H$ and $L_{\PGL_n}$, we see that the image of the commutator morphism
$c : L_H \times L_{\PGL_n} \to L$ defined by $c(l,l') := l.l'.l^{-1}.{l'}^{-1}$ is contained in the central subgroup $\Gm$ of $L$.
Therefore, since $c(e,l') = c(l,e) = 1$, Rosenlicht's lemma implies that $c(l,l') = 1$ for all $(l, l') \in L_H \times L_{\PGL_n}$.
 Hence the morphism $\mu$ is a group homomorphism, and
the following diagram is commutative with exact rows
\begin{displaymath}
\xymatrix{
1 \ar[r] & \Gm \times \Gm \ar[r] \ar[d]^{m} & L_H \times L_{\PGL_n} \ar[r] \ar[d]^{\mu} &  H \times \PGL_n \ar@{=}[d] \ar[r] & 1 \\
1 \ar[r] & \Gm \ar[r] & L \ar[r] &  H \times \PGL_n \ar[r] & 1 \, ,
}
\end{displaymath}
where $m \colon \Gm \times \Gm \to \Gm$ is the group law.
Therefore we get a commutative diagram of coboundary maps
\begin{displaymath}
\xymatrix{
H^1(X, H) \times H^1(X, \PGL_n) \ar[rr]^{\partial_{L_H} \times \partial_{L_{\PGL_n}}} & & H^2(X,\Gm) \times H^2(X, \Gm) \\
H^1(X, H \times \PGL_n) \ar@{=}[d] \ar[u]^{\cong} \ar[rr]^{\partial_{L_H \times L_{\PGL_n}}} &
     & H^2(X,\Gm \times \Gm) \ar[u]^{\cong} \ar[d]^m \\
H^1(X, H \times \PGL_n) \ar[rr]^{\partial_L} & & H^2(X, \Gm) \, .
}
\end{displaymath}
In particular, this diagram implies that
$$\partial_L([W]) = \partial_{L_H}([Y]) + \partial_{L_{\PGL_n}}([Z])$$
in $H^2(X,\Gm)$.
Since $W \xrightarrow{H \times \PGL_n} X$ is dominated by the $X$-torsor $V \xrightarrow{L} X$, we know that $\partial_L([W]) = 0$,
therefore the above formula implies that
$$\partial_{L_H}([Y]) = - \partial_{L_{\PGL_n}}([Z]) = A \in \Br(X)\, ,$$
i.e. $A = \Delta_{Y/X}([L_H])$, with $[L_H] \in \textup{Ext}^c_k(H, \Gm)
\cong \Pic(H)$. Hence the first row of diagram (\ref{eq:Sansuc-1}) is exact under
the assumption that $X$ is quasi-projective.

Let us now deduce the general case: $X$ is not supposed to be
quasi-projective anymore. By Nagata's theorem (see \cite{N}) we know that there
exists a proper $k$-variety $Z$ and an open immersion of $k$-varieties $X \to Z$.
By Chow's lemma (see \cite{EGA}, II.5.6.1 and II.5.6.2, or \cite{Sh}, Chapter VI, \S 2.1),
there exists a projective $k$-variety $Z'$ and a projective, surjective
birational morphism $Z' \to Z$.
Moreover, using Hironaka's resolution
of singularities (see \cite{Hir}, see also \cite{BM} and \cite{EV}), there exists a smooth
projective $k$-variety $\widetilde{Z}$ and a birational morphism
$\widetilde{Z} \to Z'$.
Define $X'$ to be the fibred
product $X' := \widetilde{Z} \times_Z X$. Then $X'$ is a open
subvariety of $\widetilde{Z}$, hence $X'$ is a smooth quasi-projective
$k$-variety, and the natural map $X' \to X$ is a birational
morphism.
Define $Y'$ to be the product $Y \times_X X'$.
By the quasi-projective case, we know that in the  commutative
diagram
\begin{equation}\label{eq:diagram-Delta}
\xymatrix{
\Pic(H) \ar[r]^{\Delta_{Y'/X'}} & \Br(X') \ar[r] & \Br(Y') \\
\Pic(H) \ar@{=}[u] \ar[r]^{\Delta_{Y/X}} & \Br(X) \ar[u] \ar[r] & \Br(Y) \ar[u]
}
\end{equation}
the first row is exact.
Since the map $X' \to X$ is a birational morphism,
we have a commutative diagram
\begin{equation*}
\xymatrix{
\Br(X)\ar[r]\ar[d] &\Br(X')\ar[d]\\
\Br(k(X))\ar[r]^\cong    &\Br(k(X')) \, ,
}
\end{equation*}
where the bottom horizontal arrow is an isomorphism.
Since both $X$ and $X'$ are smooth,
by \cite{Gr}, II, Corollary 1.8, the vertical arrows are injective.
It follows that the homomorphism $\Br(X) \to \Br(X')$ is injective.
Now a diagram chase in diagram  \eqref{eq:diagram-Delta}
proves the exactness of the second row in that diagram.
This completes the proof of the exactness of
the top row of diagram (\ref{eq:Sansuc-1}).
\qed
\end{subsec}

\begin{subsec}\label{subsec:proof-Sansuc-2}
{\em Proof of Theorem \ref{lem Sansuc}: The commutativity of the diagram and the exactness of its bottom row.}
It is clear that all the squares of diagram \eqref{eq:Sansuc-1} are commutative except maybe the rightmost square.

By Lemma \ref{lem compatibility functors}, we have for all $\beta_Y \in \Br_1(Y)$,
\begin{equation}\label{general formula Br}
m^*\beta_Y=p^*_H \varphi_2(\beta_Y)+p^*_Y\beta_Y\text{ in } \Br_1(H\times Y).
\end{equation}
This formula implies immediately that the rightmost square of diagram \eqref{eq:Sansuc-1} commutes,
hence this diagram is commutative.

Since the top row of (\ref{eq:Sansuc-1}) is exact, and the diagram (\ref{eq:Sansuc-1}) is commutative, it is easy
to conclude that the bottom row of (\ref{eq:Sansuc-1}) is exact.

\qed
\end{subsec}

\begin{subsec}
{\em Proof of Theorem \ref{lem Sansuc}: Exact sequence \eqref{eq:Sansuc-3}.}
We consider the following diagram:
\begin{equation}\label{eq:diagram-Sansuc-4}
\xymatrix{
\Pic(Y) \ar[r]^{i_y^*} \ar @{=}[d] & \Pic(H) \ar[r]^-{\Delta_{Y/X}} \ar @{=}[d] & \Br_{1,x}(X,Y) \ar[r]^-{\pi^*} \ar[d]
           & \Br_{1,y}(Y) \ar[r]^{i_y^*} \ar[d] & \Br_{1,e}(H) \ar[d]^{\cong} \\
\Pic(Y) \ar[r]^{\varphi_1} & \Pic(H) \ar[r]^-{\Delta_{Y/X}} & \Br_1(X,Y) \ar[r]^-{\pi^*} & \Br_1(Y) \ar[r]^{\varphi_2} & \Br_a(H) \, .
}
\end{equation}
In this diagram the second row is the second row of diagram \eqref{eq:Sansuc-1}, hence it is exact.
In the first row,
$\Br_{1,x}(X,Y)$ is a subgroup of $\Br_1(X,Y)$ and $\Br_{1,y}(Y)$ is a subgroup of $\Br_1(Y)$.
We easily check that all the arrows in the first row are well-defined.

This diagram is commutative: for the first and the last squares it
is a consequence of Lemma 6.4 of \cite{San}, and for the two central squares it is clear.
We know that the second row of this diagram is exact.
Since all the vertical arrows are injective, we see that the first row is a complex.
An easy diagram chase shows that the first row is also exact, i.e. sequence
\eqref{eq:Sansuc-3} is exact.
\qed
\end{subsec}



\section{Compatibility}\label{sec:compatibility}

In this section we use Theorem \ref{lem Sansuc}
to get a compatibility result between the evaluation map and the action of a linear group.
We begin with a lemma:

\begin{lemma}\label{lem:product}
Let $k$ be a field of characteristic 0 and $X$, $X'$ be two smooth $k$-varieties.
Let $\pi\colon Y \xrightarrow{H} X$ and $\pi'\colon Y' \xrightarrow{H'} X'$
be two (left) torsors under connected linear $k$-groups $H$ and $H'$.
The map $Y \times Y' \to X \times X'$ is naturally a torsor under $H \times H'$.
Assume that $\ov{Y'}$ is rational and that $Y'(k) \neq \emptyset$. Let $y'\in Y'(k)$ and $x':=\pi'(y')$.
Then  the homomorphism
$$p_X^* + p_{X'}^* : \textup{Br}_1(X, Y) \oplus \textup{Br}_{1,x'}(X', Y') \rightarrow \textup{Br}_1(X \times X', Y \times Y')$$
is well-defined and is an isomorphism.
\end{lemma}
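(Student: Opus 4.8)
The plan is to compare the Brauer groups of the four relevant varieties via the Sansuc-type exact sequence of Theorem \ref{lem Sansuc}, applied to each of the torsors $Y\to X$, $Y'\to X'$ and $Y\times Y'\to X\times X'$. First I would establish that the map is well-defined: since $p_X^*$ and $p_{X'}^*$ obviously land in $\Br_1(X\times X',\,Y\times Y')$ (pulling back further to $\Br(\overline{Y\times Y'})=\Br(\Ybar\times\Ybar')$ factors through $\Br(\Ybar)$, respectively $\Br(\Ybar')$), the only point is the claim about $\Br_{1,x'}(X',Y')$ versus $\Br_1(X',Y')$; here I would invoke the decomposition $\Br_1(X',Y')=\Br_{1,x'}(X',Y')+\Br(k)$ coming from the section $x'\in X'(k)$, together with the fact that $\Br(k)$ maps to $0$ in $\Br_1(X\times X',Y\times Y')$ modulo its image from the base, so no information is lost by restricting to $\Br_{1,x'}$.

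For injectivity and surjectivity I would use the hypotheses $\Ybar'$ rational and $Y'(k)\neq\emptyset$ in an essential way. Rationality of $\Ybar'$ gives $\Br(\Ybar')=0$ (a birational invariant of smooth proper varieties, vanishing for $\mathbb P^n$), hence $\Br_1(Y')=\Br(Y')$ and in fact, combined with $k[\Ybar']^*=\kbar^*$ and $\Pic(\Ybar')=0$ for a rational variety, one gets good control: $\Bra(Y')$ and $\Pic(Y')$ are built only from the rational-variety data. The key structural input is the classical fact (Sansuc, \cite{San}, Lemma 6.6 and the surrounding material) that for a smooth variety $V$ with $\Br(\Vbar)=0$ one has $\Br(U\times V)=\Br(U)\oplus\Br(V)$ type decompositions modulo the algebraic parts being controlled by $\Pic$; more precisely I would use that the functors $\Pic$ and $\Bra$ satisfy the hypotheses of Lemma \ref{lem compatibility functors}, so that $p_U^*+p_V^*$ is an isomorphism on $\Pic$ and on $\Bra$ whenever $\Vbar$ is rational with a rational point. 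Then I would run the two Sansuc exact sequences \eqref{eq:Sansuc-1} for $Y\to X$ and for $Y\times Y'\to X\times X'$ in parallel, with vertical comparison maps $p_X^*+p_{X'}^*$ on the $\Br_1(X,-)$ terms, $\mathrm{id}$ (essentially) on the $\Pic(H\times H')=\Pic(H)\oplus\Pic(H')$ and $\Pic(Y)$-type terms once one also feeds in the sequence for $Y'\to X'$, and the five lemma will then force $p_X^*+p_{X'}^*$ to be an isomorphism on $\Br_1(X,Y)\oplus\Br_{1,x'}(X',Y')$.

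Concretely, the bookkeeping step is: write the middle portion of \eqref{eq:Sansuc-3} for each torsor, observe $H\times H'$ gives $\Pic(H\times H')\cong\Pic(H)\oplus\Pic(H')$ and $\Br_{1,e}(H\times H')\cong\Br_{1,e}(H)\oplus\Br_{1,e}(H')$ (again Sansuc's Lemma 6.6), and $\Pic(Y\times Y')\cong\Pic(Y)\oplus\Pic(Y')$, $\Br_{1,(y,y')}(Y\times Y')\cong\Br_{1,y}(Y)\oplus\Br_{1,y'}(Y')$ because $\Br(\Ybar')=0$ and $\Ybar'$ rational; chasing these isomorphisms through the two exact sequences, together with the already-established isomorphism on $\Bra$ and $\Pic$, pins down the $\Br_1(X\times X',Y\times Y')$ term as exactly $\Br_1(X,Y)\oplus\Br_{1,x'}(X',Y')$ via $p_X^*+p_{X'}^*$. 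The main obstacle I anticipate is not any single deep input but the care needed in the diagram chase to keep track of which groups are ``$\Br_1$'' versus ``$\Br_{1,x}$'' and to verify the compatibility of the maps $\Delta_{Y/X}$, $\pi^*$, $\varphi_i$ with the product decompositions — i.e.\ checking that the vertical arrows genuinely commute with the horizontal ones in the stacked Sansuc sequences; this is where functoriality in $(X,Y,\pi,H)$ from Theorem \ref{lem Sansuc} does the work, but it must be applied to the three projection morphisms simultaneously and the rationality of $\Ybar'$ invoked at precisely the right spot (to kill $\Br(\Ybar')$ and $\Pic(\Ybar')$).
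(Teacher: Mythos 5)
Your plan is essentially the paper's proof: apply the Sansuc exact sequence of Theorem \ref{lem Sansuc} to the three torsors $Y\to X$, $Y'\to X'$ and $Y\times Y'\to X\times X'$, form the resulting two-row diagram with vertical maps $p_Y^*+p_{Y'}^*$, $p_H^*+p_{H'}^*$, $p_X^*+p_{X'}^*$, $p_Y^*+p_{Y'}^*$, $p_H^*+p_{H'}^*$, note that the four outer arrows are isomorphisms by Sansuc's Lemma 6.6 (which applies because $\overline{Y'}$ and $\overline{H'}$ are rational with $k$-points), and conclude by the five lemma. One caveat worth flagging: your side remarks that rationality of $\overline{Y'}$ forces $\Br(\overline{Y'})=0$, $k[\overline{Y'}]^*=\overline{k}^*$ and $\Pic(\overline{Y'})=0$ are not correct in general for a non-proper rational variety and are not needed — the product decompositions you require already follow directly from \cite{San}, Lemma 6.6, so the five-lemma step stands, but those specific claims should be dropped.
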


\begin{proof}
Consider the three exact sequences associated to the torsors $Y \to X$, $Y' \to X'$ and $Y \times Y' \to X \times X'$
(see Theorem \ref{lem Sansuc}). We get the following commutative diagram with exact rows:
\begin{displaymath}
\xymatrix{
\Pic(Y) \oplus \Pic(Y') \ar[r] \ar[d]^{p_Y^* + p_{Y'}^*} &
\Pic(H) \oplus \Pic(H') \ar[r] \ar[d]^{p_H^* + p_{H'}^*} &
\Br_1(X,Y) \oplus \Br_{1,x'}(X',Y') \ar[d]^{p_X^* + p_{X'}^*}
\\
\Pic(Y \times Y') \ar[r] &
\Pic(H \times H') \ar[r] &
\Br_1(X \times X', Y \times Y')
\\
  \ar[r]  &
\Br_1(Y) \oplus \Br_{1,y'}(Y') \ar[r] \ar[d]^{p_Y^* + p_{Y'}^*} &
\Bra(H) \oplus \Br_{1, e}(H') \ar[d]^{p_H^* + p_{H'}^*}
\\
\ar[r] &
\Br_1(Y \times Y') \ar[r] &
\Bra(H \times H') \, .
}
\end{displaymath}
The two first and the two last vertical arrows are isomorphisms by \cite{San}, Lemma 6.6,
hence the five lemma implies that the central vertical arrow is an isomorphism.
\end{proof}

\begin{corollary}\label{cor:product}
 If in Lemma  \ref{lem:product} we also have $Y(k)\neq \emptyset$, $y\in Y(k)$ and $x=\pi(y)$, then the homomorphism
$$
p_X^* + p_{X'}^* : \Br_{1,x}(X, Y) \oplus \textup{Br}_{1,x'}(X', Y') \longrightarrow
\textup{Br}_{1, (x,x')}(X \times X', Y \times Y')
$$
is an isomorphism.
\end{corollary}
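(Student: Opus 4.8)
The plan is to deduce Corollary \ref{cor:product} from Lemma \ref{lem:product} by simply restricting the isomorphism furnished there to the appropriate kernels of evaluation maps. Recall that by definition $\Br_{1,x}(X,Y)=\ker[x^*\colon\Br_1(X,Y)\to\Br(k)]$ and likewise $\Br_{1,(x,x')}(X\times X',Y\times Y')=\ker[(x,x')^*\colon\Br_1(X\times X',Y\times Y')\to\Br(k)]$; the latter is a subgroup of $\Br_1(X\times X',Y\times Y')$, so it is legitimate to compare it with the target of the isomorphism of Lemma \ref{lem:product}. Note that the extra hypothesis $Y(k)\neq\emptyset$ in the corollary is exactly what is needed to produce the point $y\in Y(k)$ with $x=\pi(y)$, hence to make sense of $\Br_{1,x}(X,Y)$, and that we take the product point $(x,x')$ on $X\times X'$, which is the image of $(y,y')$ under $\pi\times\pi'$.

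The key elementary observation I would record first is the compatibility of evaluation with the projections. Since $p_X\circ(x,x')=x$ and $p_{X'}\circ(x,x')=x'$ as $k$-morphisms $\Spec(k)\to X$ and $\Spec(k)\to X'$, we have for every $b\in\Br_1(X,Y)$ and $b'\in\Br_{1,x'}(X',Y')$
$$(x,x')^*\big(p_X^*(b)+p_{X'}^*(b')\big)=x^*(b)+(x')^*(b')=x^*(b),$$
the last equality because $b'\in\Br_{1,x'}(X',Y')$ forces $(x')^*(b')=0$. Consequently $p_X^*(b)+p_{X'}^*(b')$ lies in $\Br_{1,(x,x')}(X\times X',Y\times Y')$ if and only if $x^*(b)=0$, i.e. if and only if $b\in\Br_{1,x}(X,Y)$. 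In particular the homomorphism $p_X^*+p_{X'}^*$ of the corollary is well-defined, and moreover the preimage under the isomorphism of Lemma \ref{lem:product} of the subgroup $\Br_{1,(x,x')}(X\times X',Y\times Y')$ is precisely $\Br_{1,x}(X,Y)\oplus\Br_{1,x'}(X',Y')$.

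From this the conclusion is immediate. Injectivity of the map in the corollary holds because it is the restriction of the injective (indeed bijective) map of Lemma \ref{lem:product}. For surjectivity, given $c\in\Br_{1,(x,x')}(X\times X',Y\times Y')$, Lemma \ref{lem:product} supplies a unique pair $(b,b')\in\Br_1(X,Y)\oplus\Br_{1,x'}(X',Y')$ with $c=p_X^*(b)+p_{X'}^*(b')$, and the displayed computation gives $x^*(b)=(x,x')^*(c)=0$, so $b\in\Br_{1,x}(X,Y)$ and $c$ is in the image. I do not expect a genuine obstacle here: all the substance is already in Lemma \ref{lem:product}, and the only point requiring care is the bookkeeping with base points — verifying that $y$, $y'$, $x=\pi(y)$, $x'=\pi'(y')$ and the product point $(x,x')$ on $X\times X'$ are chosen compatibly so that $(x,x')^*$ factors through the projections as used above.
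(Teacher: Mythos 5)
Your proof is correct: the corollary is indeed just the restriction of the isomorphism of Lemma \ref{lem:product} to the kernels of the evaluation maps, and the computation $(x,x')^*\bigl(p_X^*(b)+p_{X'}^*(b')\bigr)=x^*(b)+(x')^*(b')$ combined with the vanishing $(x')^*(b')=0$ for $b'\in\Br_{1,x'}(X',Y')$ is exactly the bookkeeping needed to identify the preimage of $\Br_{1,(x,x')}(X\times X',Y\times Y')$ with $\Br_{1,x}(X,Y)\oplus\Br_{1,x'}(X',Y')$. The paper states the corollary without proof, treating it as immediate from the lemma, which matches your argument.
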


\begin{lemma}[Compatibility]\label{lem:compatibility}
Let $k$ be a field of characteristic 0.
Let $\pi : Y \to X$ be a (left) torsor under a connected linear $k$-group $H$.
Let $G$ be a  connected algebraic $k$-group,
acting on the right on $Y$ and $X$ such that $\pi$ is $G$-equivariant.
Assume that $Y$ is $\kbar$-rational and  $Y(k) \neq \emptyset$ and let $y_0 \in Y(k)$, $x_0=\pi(y_0)$.
Then for any  $b\in\Br_{1,x_0}(X,Y),\ x\in X(k),\ g\in G(k)$ we have
$$
b(x\cdot g)=b(x)+\pi^*(b)(y_0\cdot g) \, .
$$
\end{lemma}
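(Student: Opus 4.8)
The plan is to evaluate the Brauer element $b$ at the point $x\cdot g$ by pulling everything back to $Y$ via $\pi$ and using the torsor structure together with the functoriality of the evaluation pairing. First I would observe that since $Y(k)\neq\emptyset$ and $\pi\colon Y\to X$ is a torsor under $H$, every $k$-point of $X$ lifts to a $k$-point of $Y$ (torsors under connected linear groups over a field of characteristic $0$ with a rational point have all fibres with rational points), so I can choose $y\in Y(k)$ with $\pi(y)=x$. The key input is the fact, recorded just above, that $b\in\Br_{1,x_0}(X,Y)$ means precisely $\pi^*(b)\in\Br_{1,y_0}(Y)$, so the element $\beta:=\pi^*(b)\in\Br_1(Y)$ is available and satisfies $\beta(y_0)=0$.

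Next I would apply formula \eqref{general formula Br} from the proof of Theorem \ref{lem Sansuc}: for $\beta\in\Br_1(Y)$ one has $m^*\beta = p_H^*\varphi_2(\beta)+p_Y^*\beta$ in $\Br_1(H\times Y)$. The subtlety is that here we have two commuting actions on $Y$: the left $H$-action defining the torsor, and the right $G$-action which is $\pi$-equivariant. The compatibility I want concerns the right $G$-action on $Y$ and on $X$. So instead I would directly use the $G$-equivariance: since $\pi(y\cdot g)=\pi(y)\cdot g = x\cdot g$, functoriality of the evaluation map gives $b(x\cdot g)=\beta(y\cdot g)$, and likewise $b(x)=\beta(y)$. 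Thus it suffices to prove $\beta(y\cdot g)=\beta(y)+\beta(y_0\cdot g)-\beta(y_0)$, i.e. that the function $g\mapsto \beta(y\cdot g)$ is, up to the constant $\beta(y)-\beta(y_0)$ at $g=e$, independent of the choice of $y$ in its $H(k)$-orbit — combined with $\beta(y_0)=0$ this will give the claimed formula $b(x\cdot g)=b(x)+\pi^*(b)(y_0\cdot g)$.

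The heart of the matter is then a cocycle/additivity computation for the algebraic part $\beta\in\Br_1(Y)$ under the $G$-action. Here I would use the result (Lemma \ref{lem compatibility functors}, applied to $F=\Bra$ and to the right action $\mu\colon Y\times G\to Y$, which satisfies the hypotheses by \cite{San}, Lemma 6.6, since $Y$ is $\kbar$-rational so $\Bra$ satisfies the product formula on $Y\times G$) to write $\mu^*\beta = p_Y^*\beta + p_G^*\psi(\beta)$ in $\Br_1(Y\times G)$ for the appropriate $\psi(\beta)\in\Br_1(G)$, or more precisely in $\Br_{1,e}(G)$ after normalizing. Evaluating this identity at the $k$-point $(y,g)$ and at $(y_0,g)$ and subtracting kills the $p_G^*$-term's dependence and isolates exactly $\beta(y\cdot g)-\beta(y)=\beta(y_0\cdot g)-\beta(y_0)=\beta(y_0\cdot g)$. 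I expect the main obstacle to be bookkeeping: checking that $\Bra$ (not just $\Pic$) genuinely satisfies the hypotheses of Lemma \ref{lem compatibility functors} for the variety $Y\times G$ — this needs $\ov Y$ rational so that $k[\ov Y\times\ov G]^*=k[\ov G]^*\cdot\overline{k}^*$ and $\Pic(\ov Y\times\ov G)=\Pic(\ov G)$, feeding into the isomorphism $\Bra(Y)\oplus\Bra(G)\isoto\Bra(Y\times G)$ — and then carefully tracking that the element $\pi^*(b)(y_0\cdot g)$ appearing in the statement is exactly $p_G^*\psi(\pi^*b)$ evaluated at $g$, using $\pi^*(b)(y_0)=0$. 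Once that identification is pinned down, the rest is a direct evaluation of the functorial decomposition at rational points.
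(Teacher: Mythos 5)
Your plan has a genuine gap at the very first step. You assert that because $Y(k)\neq\emptyset$, the torsor $\pi\colon Y\to X$ under the connected linear group $H$ has a $k$-point in every fibre over $X(k)$, ``so I can choose $y\in Y(k)$ with $\pi(y)=x$.'' This is false: $Y(k)\neq\emptyset$ only trivialises the fibre over $x_0=\pi(y_0)$; the fibre over an arbitrary $x\in X(k)$ is a $k$-torsor under $H$ whose class in $H^1(k,H)$ has no reason to be trivial, and $H^1(k,H)$ is nonzero for many connected linear groups $H$ (already for $H=\PGL_n$, or for a non-quasi-trivial torus) over a general field of characteristic $0$. Since your entire reduction routes through choosing such a lift $y$ and writing $b(x)=\beta(y)$, $b(x\cdot g)=\beta(y\cdot g)$ for $\beta=\pi^*(b)$, the argument breaks down precisely for those $x\in X(k)$ that do not lift to $Y(k)$.

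The remaining bookkeeping is sound as far as it goes: applying Lemma~\ref{lem compatibility functors} with $F=\Bra$ to the action $Y\times G\to Y$ is legitimate because $\ov{Y}$ is rational with a $k$-point, and evaluating the resulting decomposition $m^*\beta=p_Y^*\beta+p_G^*\psi(\beta)$ at $(y,g)$ and $(y_0,g)$, using $\pi^*(b)(y_0)=0$, does yield $\beta(y\cdot g)=\beta(y)+\beta(y_0\cdot g)$ — but only for $y\in Y(k)$, hence only for $x$ in the image of $Y(k)\to X(k)$. The paper avoids the issue by never lifting $x$: it performs the product decomposition on the \emph{base}, in $\Br_{1,(x_0,e)}(X\times G,\,Y\times G)\cong\Br_{1,x_0}(X,Y)\oplus\Br_{1,e}(G)$ via Corollary~\ref{cor:product} applied to the torsor $Y\times G\to X\times G$, evaluates the two projections of $m_X^*b$ at the $k$-point $(x,g)\in X(k)\times G(k)$, and then specialises $x=x_0$ to identify the $G$-component with $\pi^*(b)(y_0\cdot g)$. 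If you want to keep your strategy, move the decomposition from $Y\times G$ to $X\times G$: the hypothesis that $\ov{Y}$ is rational with $Y(k)\neq\emptyset$ is exactly what makes Corollary~\ref{cor:product} available there, and the rest of your evaluation-at-rational-points computation then goes through for all $x\in X(k)$.
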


\begin{proof}
We consider  two torsors: $\pi \colon Y \xrightarrow{H} X$ and $\pi'=\id \colon G \to G$.
Let $m_X\colon X \times G \rightarrow X$ and $m_Y\colon Y\times G\to Y$ denote the actions of $G$.
Since $m_Y(y,e)=y$,
we see  that if $b \in\textup{Br}_y(X, Y)$, then $m_X^* b \in
\textup{Br}_{(y,e)}(X \times G,\, Y \times G)$. By functoriality of
the evaluation map, we have a commutative diagram
\begin{displaymath}
\xymatrix{
\Br_{1,(x_0,e)}(X \times G,\, Y \times G) \times (X \times G)(k)
\ar@<10ex>[d]^= \ar[r]^(.78){\textup{ev}} & \Br(k) \ar@{=}[d] \\
\Br_{1,x_0}(X, Y) \oplus \Br_{1, e}(G) \times (X(k) \times
G(k)) \ar@<8ex>[u]^{p_X^* + p_G^*} \ar[r]^(.8){\textup{ev}'} &
\Br(k)
}
\end{displaymath}
where the first pairing is the evaluation map on $X \times G$
and the second one is the sum of the evaluation maps on $X$ and on $G$,
i.e.
$$
 \textup{ev}'((B,C), (x,g)) := B(x) + C(g) \in \textup{Br}(k)\text{
   for } B\in\Br_{1,x_0}(X,Y),\ C\in\Br_{1,e}(G) \, .
$$
By Corollary \ref{cor:product} the left vertical morphism $p_X^* + p_G^*$ is an isomorphism.
Therefore we get two natural projections:
\begin{align*}
\pi_X \colon &\Br_{1,(x_0,e)}(X \times G,\, Y \times G) \to \Br_{1,x_0}(X, Y)\text{ and }\\
\pi_G \colon &\Br_{1,(x_0,e)}(X \times G,\, Y \times G) \to \Br_{1,e}(G).
\end{align*}
Hence for all
$D \in \textup{Br}_{1,(x_0,e)}(X \times G,\ Y \times G)$, we get $D = p_X^*(\pi_X(D)) + p_G^*(\pi_G(D))$,
and so, by the commutativity of the diagram, for all $(x, g) \in X(k) \times G(k)$ we have
$$
D((x,g)) = \pi_X(D)(x) + \pi_G(D)(g).
$$
For $g = e \in G(k)$, this formula implies that
$$
D((x,e)) =
\pi_X(D)(x)$$
because $\pi_G(D)(e) = 0$.
So, for $b \in \Br_{1,x_0}(X,Y)$, $g \in G(k)$ and $x \in X(k)$, we have
$$
b(x\cdot g) = (m^* b)((x,g)) = \pi_X(m^* b)(x) + \pi_G(m^*b)(g) = (m^* b)((x,e)) + \pi_G(m^* b)(g)
$$
where we write $m$ for $m_X$.
Since $(m^* b)((x,e)) = b(m(x,e)) = b(x)$ by functoriality, we have
\begin{equation} \label{eq:compatibility1}
b(x\cdot g) = b(x) + \pi_G(m^* b)(g) \, .
\end{equation}
In the case $x=x_0 = \pi(y_0)$ we obtain
\begin{equation} \label{eq:compatibility2}
(\pi^*b)(y_0\cdot g) = b(\pi(y_0)\cdot g) = b(\pi(y_0)) + \pi_G(m^* b)(g) = \pi_G(m^* b)(g),
\end{equation}
since $b(\pi(y_0)) = b(x_0) = 0$.
Consequently, (\ref{eq:compatibility1}) and (\ref{eq:compatibility2}) give the expected formula, that is:
$$ b(x\cdot g) = b(x) + (\pi^*b)(y_0\cdot g) \, .$$
\end{proof}

\begin{corollary}\label{cor-lem-compatibility}
Let $X := H \backslash G$ be a right homogeneous space of a connected  $k$-group $G$
over a field $k$ of characteristic 0, where $H \subset G$
is a connected linear $k$-subgroup.
Then for all $b\in\Br_{1,x_0}(X,G),\ x\in X(k),\ g\in G(k)$ we have
$$
b(x\cdot g) =  b(x)+ \pi^*b(g),
$$
where
$\pi : G \to X$ is the quotient map and $x_0 = \pi(e)$.
\end{corollary}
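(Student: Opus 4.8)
The plan is to deduce the corollary as an essentially immediate application of the Compatibility Lemma~\ref{lem:compatibility}. First I would record the set-up: since $H$ is a connected linear $k$-subgroup of $G$, the quotient map $\pi=\pi_{x_0}\colon G\to X=H\backslash G$ is a left $H$-torsor, $H$ acting on $G$ by left translations. The group $G$ acts on the right both on the total space $Y:=G$ (by right translations) and on $X$ (the given action), and $\pi$ is equivariant for these two right $G$-actions simply because left and right translations on $G$ commute. I take $y_0:=e\in G(k)$, so that $x_0=\pi(e)=\pi(y_0)$. With the conventions of~\ref{subsec:Br-def}, the group $\Br_{1,x_0}(X,G)$ occurring in the statement is exactly the group $\Br_{1,x_0}(X,Y)$ attached in Section~\ref{sec:Sansuc} to the torsor $Y=G\to X$ (both are $\{b\in\Br(X):\pi^*(b)\in\Br_{1,e}(G)\}$), so every $b$ in the statement is a legitimate input for Lemma~\ref{lem:compatibility}.

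Granting the hypotheses of Lemma~\ref{lem:compatibility} for this torsor with its right $G$-action, the lemma gives, for all $b\in\Br_{1,x_0}(X,G)$, $x\in X(k)$, $g\in G(k)$,
$$
b(x\cdot g)=b(x)+\pi^*(b)(y_0\cdot g)=b(x)+\pi^*(b)(e\cdot g)=b(x)+\pi^*(b)(g),
$$
which is exactly the asserted formula. So the whole content is to check the hypotheses of Lemma~\ref{lem:compatibility} for $\pi\colon G\to X$: that $H$ is connected linear (given), that $\pi$ is $G$-equivariant (just observed), that $Y(k)=G(k)\ni e$ is nonempty, and that $Y=G$ is $\kbar$-rational. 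All of these are formalities except the last.

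I expect the $\kbar$-rationality hypothesis to be the only real obstacle. When $G$ is linear it is classical that $\Gbar$ is rational ($G\uu$ is an affine space and $G\red$ is rational by the Bruhat decomposition), so the corollary follows at once in that case. For a general connected $G$ the base change $\Gbar$ need not be rational, and I would instead feed Lemma~\ref{lem:compatibility} an auxiliary $\kbar$-rational torsor $\pi_Y\colon Y\to X$ under a connected linear group, with $Y(k)\ne\emptyset$ and $\pi_Y(y_0)=x_0$ for a chosen $y_0\in Y(k)$; for such $Y$ one has $\Br(\Ybar)=0$, hence $\Br_{1,x_0}(X,G)\subseteq\Br_{1,y_0}(X,Y)$, so Lemma~\ref{lem:compatibility} applies to our $b$ and yields $b(x\cdot g)=b(x)+\pi_Y^*(b)(y_0\cdot g)$. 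Using the $G$-equivariance of $\pi_Y$ to compute $\pi_Y(y_0\cdot g)=\pi_Y(y_0)\cdot g=x_0\cdot g=\pi(g)$, one rewrites $\pi_Y^*(b)(y_0\cdot g)=b(\pi(g))=\pi^*(b)(g)$, recovering the desired identity. Producing such an auxiliary $Y$ (or otherwise discharging the rationality-type hypothesis) is the step I would expect to require the actual work; everything else is bookkeeping with the definitions of $\Br_1(X,Y)$ and the evaluation pairing.
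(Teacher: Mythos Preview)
Your direct application of Lemma~\ref{lem:compatibility} with $Y=G$ and $y_0=e$ is exactly the paper's proof, which is the two-line observation that then $\Br_{1,x_0}(X,Y)=\Br_{1,x_0}(X,G)$ and $\pi^*(b)(y_0\cdot g)=\pi^*(b)(g)$. The paper does not verify the $\kbar$-rationality hypothesis of Lemma~\ref{lem:compatibility}; it simply invokes the lemma.

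Your caution about this hypothesis for non-linear $G$ is therefore more scrupulous than the paper itself. However, your proposed workaround cannot succeed as stated: any $G$-equivariant torsor $\pi_Y\colon Y\to X$ under a connected linear group still surjects, via the composite $Y\to X\to G\abvar$, onto the abelian variety $G\abvar$, so whenever $G\abvar\neq 0$ the variety $\Ybar$ has nontrivial Albanese and hence cannot be $\kbar$-rational. Thus no auxiliary $Y$ of the kind you describe exists in the genuinely non-linear case, and the rationality issue would have to be handled by a different argument (for instance, by showing directly that the class $m_X^*b-p_X^*b-p_G^*(\pi^*b)\in\Br(X\times G)$ vanishes, using primitivity of $\pi^*b\in\Br_{1,e}(G)$ for the group law on $G$) --- or, as the paper does, left implicit.
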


\begin{proof}
 We take $Y=G$, $y_0=e\in G(k)$, and $x_0=\pi(y_0)=\pi(e)$ in Lemma \ref{lem:compatibility},
then $\Br_{1,x_0}(X,Y)=\Br_{1,x_0}(X,G)$ and $\pi^*b(y_0\cdot g)=\pi^*b(g)$.
\end{proof}

\begin{corollary}\label{cor:compatibility}
Let $k$ be a number field.
Let $X:= H \backslash G$ be a homogeneous space of a connected $k$-group $G$,
where $H \subset G$ is a connected linear $k$-subgroup.
Let $\langle,\rangle$ denote the Manin pairing
$$
\Br(X)\times X(\A)\to\Q/\Z.
$$
Let $b\in\Br_{1, x_0}(X,G),\ x\in X(\A),\ g\in G(\A)$.
Then
$$
\langle b,x\cdot g\rangle =\langle b,x\rangle
+\langle \pi^*(b),g\rangle,
$$
where $\pi : G \rightarrow X$ is the quotient map and $x_0 = \pi(e)$.
\end{corollary}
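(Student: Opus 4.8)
\noindent
The plan is to deduce the adelic identity place by place from the field-by-field identity of Corollary~\ref{cor-lem-compatibility}, using that the Manin pairing is the sum of the local pairings. Recall that for $\beta\in\Br(X)$ and $z=(z_v)\in X(\A)$ one has
\[
\langle\beta,z\rangle=\sum_{v\in\Omega}\mathrm{inv}_v\bigl(\beta(z_v)\bigr),
\]
where $\beta(z_v)\in\Br(k_v)$ is the value of $\beta$ at $z_v$, which vanishes for all but finitely many $v$, and $\mathrm{inv}_v\colon\Br(k_v)\into\Q/\Z$ is the local invariant; the same description holds for $\Br(G)$ and $G(\A)$. Note also that, since $\pi$ coincides with the map $\pi_{x_0}$ of \ref{subsec:Br-def} and $b\in\Br_{1,x_0}(X,G)$, we have $\pi^*(b)\in\Br_{1,e}(G)\subset\Br(G)$, so the pairing $\langle\pi^*(b),g\rangle$ is defined.

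\noindent
First I would fix a place $v\in\Omega$ and base-change the whole situation to $k_v$: the group $G_{k_v}$ is connected, $H_{k_v}\subset G_{k_v}$ is a connected linear $k_v$-subgroup, $X_{k_v}=H_{k_v}\backslash G_{k_v}$, and $\pi$ becomes the quotient map $G_{k_v}\to X_{k_v}$. Choosing compatible algebraic closures $\kbar\subset\overline{k_v}$, the varieties $\overline{G}$ and $\overline{X}$ are unchanged, so the conditions $\pi^*_{x_0}(\beta)\in\Br_1(G)$ and $x_0^*(\beta)=0$ defining the subgroup $\Br_{1,x_0}(X,G)\subset\Br(X)$ are preserved by the restriction maps $\Br(X)\to\Br(X_{k_v})$ and $\Br(G)\to\Br(G_{k_v})$. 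Hence the image of $b$ in $\Br(X_{k_v})$ lies in $\Br_{1,x_0}(X_{k_v},G_{k_v})$, and Corollary~\ref{cor-lem-compatibility}, applied over the field $k_v$ to this element together with $x_v\in X(k_v)$ and $g_v\in G(k_v)$, gives
\[
b(x_v\cdot g_v)=b(x_v)+\pi^*(b)(g_v)\quad\text{in }\Br(k_v)
\]
for every $v\in\Omega$.

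\noindent
Finally I would apply $\mathrm{inv}_v$ to this local identity and sum over all $v$. The three families $\bigl(b(x_v)\bigr)_v$, $\bigl(\pi^*(b)(g_v)\bigr)_v$ and $\bigl(b(x_v\cdot g_v)\bigr)_v$ are each almost everywhere trivial in the relevant local Brauer groups, so all the sums below are finite, and the additivity of $\mathrm{inv}_v$ yields
\[
\langle b,x\cdot g\rangle=\sum_{v}\mathrm{inv}_v\bigl(b(x_v\cdot g_v)\bigr)=\sum_{v}\mathrm{inv}_v\bigl(b(x_v)\bigr)+\sum_{v}\mathrm{inv}_v\bigl(\pi^*(b)(g_v)\bigr)=\langle b,x\rangle+\langle\pi^*(b),g\rangle,
\]
which is the asserted formula. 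The argument is essentially formal once Corollary~\ref{cor-lem-compatibility} is available; the only point requiring a moment's thought — and the closest thing to an obstacle — is the verification that $b$ still lies in $\Br_{1,x_0}(X_{k_v},G_{k_v})$ after base change, which is exactly the hypothesis under which Corollary~\ref{cor-lem-compatibility} applies at each $v$, and which follows from the functoriality of the constructions in \ref{subsec:Br-def}.
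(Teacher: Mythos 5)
Your proof is correct. The paper states Corollary~\ref{cor:compatibility} without proof, regarding it as an immediate consequence of the field-level Corollary~\ref{cor-lem-compatibility}; your argument --- base-change to each $k_v$, check via functoriality that $b$ still lies in $\Br_{1,x_0}(X_{k_v},G_{k_v})$, apply the pointwise identity, and sum the local invariants --- is exactly the implicit reasoning.
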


\begin{corollary}\label{cor:from-compatibility}
Let $k,G,H$ and $X$ be as in Corollary \ref{cor:compatibility}.
Let $\varphi\colon G'\to G$ be a homomorphism of $k$-groups,
where $G'$ is  \emph{a simply connected $k$-group.}
Let $b\in\Br_{1,x_0}(X,G),\ x\in X(\A),\ g'\in G'(\A)$.
Then
$$
\langle b,x\cdot \varphi(g')\rangle =\langle b,x\rangle.
$$
\end{corollary}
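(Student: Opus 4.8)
The plan is to deduce this from Corollary \ref{cor:compatibility} together with the triviality of the algebraic Brauer group of a simply connected group. First I would apply Corollary \ref{cor:compatibility} with $g:=\varphi(g')\in G(\A)$, which gives
$$
\langle b,x\cdot\varphi(g')\rangle=\langle b,x\rangle+\langle\pi^*(b),\varphi(g')\rangle .
$$
Thus it suffices to show that $\langle\pi^*(b),\varphi(g')\rangle=0$.

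Next I would use the functoriality of the Manin pairing with respect to the $k$-morphism $\varphi\colon G'\to G$: for the adelic point $g'\in G'(\A)$ one has
$$
\langle\pi^*(b),\varphi(g')\rangle=\langle(\pi\circ\varphi)^*(b),g'\rangle ,
$$
which is immediate from the definition of the pairing via local invariants (see \cite{CTSa}, Section 3.1, or \cite{Sk}, Section 5.2). So the assertion is reduced to proving that $(\pi\circ\varphi)^*(b)=\varphi^*(\pi^*(b))=0$ in $\Br(G')$.

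Finally, here $\pi$ is the map $\pi_{x_0}$ of \ref{subsec:Br-def}, so the hypothesis $b\in\Br_{1,x_0}(X,G)$ means precisely that $\pi^*(b)\in\Br_{1,e}(G)\subset\Br_1(G)$. Pulling back along the group homomorphism $\varphi$ (which sends $e$ to $e$) and using the commutative square relating $\Br(G)\to\Br(\overline{G})$ and $\Br(G')\to\Br(\overline{G'})$, one gets $\varphi^*(\pi^*(b))\in\Br_{1,e}(G')$. Since $G'$ is simply connected we have $\Pic(\overline{G'})=0$, hence $\Br_{1,e}(G')=0$ by \cite{San}, Proposition 6.9(iv); therefore $\varphi^*(\pi^*(b))=0$ and we are done. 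The one step that genuinely uses the hypotheses, rather than formal functoriality, is this last vanishing $\Br_{1,e}(G')=0$ for a simply connected group, and that is where I would expect any difficulty to lie, though here it is entirely covered by Sansuc's computation.
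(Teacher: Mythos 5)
Your proof is correct and follows essentially the same route as the paper: apply Corollary~\ref{cor:compatibility}, use functoriality of the Manin pairing to pull back along $\varphi$, note that $\pi^*b\in\Br_{1,e}(G)$ implies $\varphi^*\pi^*b\in\Br_{1,e}(G')$, and conclude from the vanishing $\Br_{1,e}(G')=0$ for a semisimple simply connected group (Sansuc's Lemma 6.9(iv)). The only difference is cosmetic: you spell out the reference for $\Br_{1,e}(G')=0$, which the paper asserts without citation at that point.
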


\begin{proof}
By Corollary \ref{cor:compatibility} we have
$$
\langle b,x\cdot \varphi(g')\rangle =\langle b,x\rangle
+\langle \pi^*(b),\varphi(g')\rangle.
$$
By functoriality we have
$$
\langle \pi^*(b),\varphi(g')\rangle = \langle \varphi^*\pi^*(b),g'\rangle.
$$
Since $b\in\Br_{1,x_0}(X,G)$, we have $\pi^*b\in\Br_{1,e}(G)$
and $\varphi^*\pi^*b\in\Br_{1,e}(G')=0$.
Thus $\varphi^*\pi^*b=0$, hence $\langle \pi^*(b),\varphi(g')\rangle=0$, and the corollary follows.
\end{proof}



\section{Some lemmas}\label{sec:some-lemmas}

For an abelian group $A$ we write $A^D:=\Hom(A,\Q/\Z)$.

\begin{lemma}\label{lem:surjectivity}
Let $P$ be a quasi-trivial $k$-torus
over a number field $k$.
Then the canonical map $\lambda\colon P(\A)\to\Bra(P)^D$
induced by the Manin pairing is surjective.
\end{lemma}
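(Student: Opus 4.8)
The statement to prove is Lemma~\ref{lem:surjectivity}: for a quasi-trivial $k$-torus $P$ over a number field $k$, the map $\lambda\colon P(\A)\to\Bra(P)^D$ induced by the Manin pairing is surjective. The plan is to reduce the question to the case of a permutation module via the standard reciprocity/duality machinery, and then compute directly.

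First I would recall that a quasi-trivial torus is by definition a torus whose character module $\widehat{P}$ is a permutation $\Gal(\kbar/k)$-module, so $P\cong\prod_i R_{k_i/k}\Gm$ for finitely many finite separable extensions $k_i/k$. Since both $P(\A)$ and $\Bra(P)$ are compatible with finite products, it suffices to treat $P=R_{K/k}\Gm$ for a single finite extension $K/k$. For such $P$ one has $P(\A)=\A_K^\times$ and $P(k)=K^\times$, and by Shapiro's lemma the algebraic Brauer group $\Bra(P)=\Bro(P)/\Br(k)$ can be identified explicitly; in fact for $P=R_{K/k}\Gm$ the relevant group $\Bra(P)$ is finite (one can cite Sansuc, or compute $H^1(k,\widehat{P})$ using $\Pic(\ov P)=0$ and the sequence $0\to\widehat{P}(k)\to\cdots\to\Pic(P)\to 0$, getting $\Bra(P)\cong H^1(k,\widehat P)=H^1(k,\mathbf Z[\Gal/\Gal_K])$, which by Shapiro vanishes). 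Thus for the basic building block $R_{K/k}\Gm$ the target $\Bra(P)^D$ is in fact trivial, and surjectivity is automatic.

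Actually the cleaner route, avoiding the claim that $\Bra(P)$ vanishes, is as follows. The Manin pairing on $P(\A)\times\Bra(P)$ factors through $P(\A)/\overline{P(k)}$ on the left (continuity plus vanishing on $P(k)$), and one knows from class field theory / the theory of the Brauer--Manin pairing for tori (e.g.\ via the Poitou--Tate duality for tori, or Sansuc's computations) that the induced pairing $P(\A)/\overline{P(k)}\times\Bra(P)\to\Q/\Z$ is a \emph{perfect} pairing of finite groups when $P$ has no nontrivial characters over $k$ --- and more generally its left kernel is exactly $\overline{P(k)}$, while its right kernel is the image of $\Sha^1(k,\widehat P)$-type obstructions. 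For a quasi-trivial torus $\widehat P$ is an induced module, so $H^1(k,\widehat P)=0$ and hence $\Bra(P)=\coker[\Br(k)\to\Bro(P)]$ is computed by this vanishing, forcing the pairing to be nondegenerate on the right; concretely, since $H^1(k_v,\widehat P)=0$ for all $v$ as well (induced modules have vanishing $H^1$ over every field), the local pairings $P(k_v)\times\Bra(P_{k_v})\to\Q/\Z$ are already surjective on the $\Bra$-side at, say, any single place, and summing over $v$ gives surjectivity of $\lambda$.

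The main obstacle is organizing the duality input correctly: one must be careful that $\lambda$ is surjective, not merely that its cokernel is controlled, so the key step is exhibiting, for each $\xi\in\Bra(P)^D$, an explicit adele realizing it. I would do this by using the description $\Bra(P)\cong H^1(k,\widehat P)$ combined with the local-global principle: because $\widehat P$ is a permutation module, $H^1(k,\widehat P)=0$, so $\Bra(P)^D=0$ and there is nothing to prove; if one prefers not to invoke $\Pic(\ov P)=0$ directly, reduce via the product decomposition to $P=R_{K/k}\Gm$ and invoke Shapiro's lemma together with Hilbert~90 ($H^1(K,\Gm)=0$) to conclude $H^1(k,\widehat P)=H^1(K,\mathbf Z)=\Hom(\Gal(\kbar/K),\mathbf Z)=0$. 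Either way the lemma follows, with the bookkeeping of the identification $\Bra(P)\cong H^1(k,\widehat P)$ (via Sansuc's exact sequence of the previous section, using $k[\ov P]^\times/\kbar^\times\cong\widehat P$ and $\Pic(\ov P)=0$) being the only nonformal ingredient.
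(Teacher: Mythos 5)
Your argument has a fatal identification error: you equate $\Bra(P)$ with $H^1(k,\widehat P)$, but Sansuc's Lemma~6.9 gives $\Pic(P)\cong H^1(k,\widehat P)$ and $\Bra(P)\cong H^2(k,\widehat P)$. For a quasi-trivial torus, $\widehat P$ being a permutation module indeed forces $H^1(k,\widehat P)=0$, so $\Pic(P)=0$; but $H^2(k,\widehat P)$ does \emph{not} vanish. For $P=R_{K/k}\Gm$, Shapiro's lemma gives $H^2(k,\widehat P)\cong H^2(K,\mathbf{Z})\cong H^1(K,\Q/\Z)=\Hom_{\mathrm{cont}}(\Gal(\kbar/K),\Q/\Z)$, which is a large (infinite) group. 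So $\Bra(P)^D$ is far from trivial, the lemma is not vacuous, and your conclusion ``there is nothing to prove'' is wrong.

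The pivot you attempted in your second paragraph (invoking nondegeneracy of a pairing on $P(\A)/\overline{P(k)}\times\Bra(P)$) also does not close the gap: that pairing is not one between finite groups here, the hypothesis ``$P$ has no nontrivial characters over $k$'' is false for quasi-trivial tori, and even a perfect pairing would only give injectivity on the $\Bra$-side, not the needed surjectivity of $\lambda$ onto the dual. What is actually required is a form of Poitou--Tate duality for tori. The paper's proof runs as follows: identify $\Bra(P)\cong H^2(k,\widehat P)$ (Sansuc~6.9(ii)) so that $\lambda$ is induced by the canonical pairing $P(\A)\times H^2(k,\widehat P)\to\Q/\Z$; then the Tate--Poitou sequence for tori gives
$(P(\A)_\bullet)^\wedge\labelto{\mu}H^2(k,\widehat P)^D\to H^1(k,P)$,
with $\mu$ induced by $\lambda$. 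Quasi-triviality gives $H^1(k,P)=0$ (Shapiro plus Hilbert~90), so $\mu$ is surjective, and a lemma of Harari identifies $\im\mu=\im\lambda$, yielding the surjectivity of $\lambda$. Your plan misses the genuine duality input entirely because it mistakenly discards the target as zero.
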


\begin{proof}
We have $\Bra(P)=H^2(k,\Phat)$, see \cite{San}, Lemma 6.9(ii). By \cite{San}, (8.11.2), the map
$$
\lambda\colon P(\A)\to \Bra(P)^D=H^2(k,\Phat)^D
$$
is given by the canonical pairing
$$
P(\A)\times H^2(k,\Phat)\to\Q/\Z.
$$
Consider the map $\mu$ from the Tate-Poitou exact sequence
\begin{equation}\label{eq:Tate-Poitou}
  (P(\A)_{\bullet})^\wedge\labelto{\mu}H^2(k,\Phat)^D\to H^1(k,P),
\end{equation}
 see \cite{HSz}, Theorem 5.6 or \cite{D1}, Theorem 6.3.
By  $(P(\A)_{\bullet})^\wedge$ we mean the completion of $P(\A)_{\bullet}$
for the topology of open subgroups of finite index.
Then the map $\mu$ is induced by $\lambda$.
Since $P$ is a quasi-trivial torus, we have  $H^1(k,P)=0$,
and we see from \eqref{eq:Tate-Poitou} that the map $\mu$ is surjective.
But by \cite{H}, Lemma 4, $\im\,\mu=\im\,\lambda$.
Thus $\lambda$ is surjective.
\end{proof}

\begin{lemma}\label{lem:open-map}
Let $X$ be a right homogeneous space (not necessarily principal)
 of a connected $k$-group $G$ over a number field $k$.
Let $N\subset G$ be a connected normal $k$-subgroup.
Set $Y:=X/N$, and let $\pi\colon X\to Y$ be the canonical map.
Then the induced  map $X(\A)\to Y(\A)$ is open.
\end{lemma}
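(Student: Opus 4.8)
The plan is to reduce the assertion to two local facts — openness of the induced map on $k_v$-points at every place $v$, and surjectivity on $\sO_v$-points at almost every $v$ — and then to assemble these using the definition of the restricted-product (adelic) topology on $X(\A)$ and $Y(\A)$.

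First I would check that $\pi\colon X\to Y$ is smooth and surjective with geometrically integral fibres. This can be verified after base change to $\kbar$: writing $X_{\kbar}=\ov{H}\backslash\ov{G}$ for the stabiliser $\ov H\subset\ov G$ of a point, normality of $\ov N$ makes $\ov H\,\ov N$ a closed subgroup, $Y_{\kbar}=(\ov H\,\ov N)\backslash\ov G$, and $\pi_{\kbar}$ is the canonical projection between these two homogeneous spaces. Such a projection is $\ov G$-equivariant, smooth and surjective, and each of its fibres is an $\ov G$-translate of $\ov H\backslash(\ov H\,\ov N)\cong(\ov H\cap\ov N)\backslash\ov N$, which is a homogeneous space of the connected group $\ov N$, hence smooth and connected, hence integral. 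Thus $\pi$ is smooth and surjective with geometrically integral fibres.

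Next, since $\pi$ is smooth, for every place $v$ the induced map $\pi_v\colon X(k_v)\to Y(k_v)$ is open: a smooth morphism is, Zariski-locally on the source, the composite of an \'etale morphism to an affine space over $Y$ and the projection of that affine space to $Y$, and for both archimedean and non-archimedean $v$ the \'etale part induces a local homeomorphism on $k_v$-points while the projection is visibly open, so $\pi_v$ is open. Now I would spread out: choose a finite set $S\supset\Omega_\infty$ and a smooth surjective morphism $\sX\to\sY$ of finite-type $\sO_S$-schemes with geometrically integral fibres extending $\pi$. For $v\notin S$, smoothness and Hensel's lemma show that $y\in\sY(\sO_v)$ lifts to a point of $\sX(\sO_v)$ as soon as the fibre of $\sX\to\sY$ over the closed point of $y$ has a point over the residue field $\kappa(v)$. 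Because these fibres form a family of geometrically integral varieties over finite fields whose dimension and number of geometric components are bounded (by a constructibility argument over $\sY$), the Lang--Weil estimates, applied uniformly over the base, yield a finite set $S_0\supset S$ such that for $v\notin S_0$ every such fibre has a $\kappa(v)$-point; hence $\sX(\sO_v)\to\sY(\sO_v)$ is surjective for all $v\notin S_0$.

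Finally I would put the pieces together. Every open subset of $X(\A)$ is a union of sets of the form $U=\prod_{v\in T}V_v\times\prod_{v\notin T}\sX(\sO_v)$ with $T\supset S_0$ finite and $V_v\subset X(k_v)$ open (one may always enlarge $T$ to contain $S_0$, since $\sX(\sO_v)$ is open in $X(k_v)$), so it is enough to see that the image of such a $U$ in $Y(\A)$ is open. As $\pi$ sends $\sX(\sO_v)$ into $\sY(\sO_v)$ for all $v\notin S$, that image equals $\prod_{v\in T}\pi_v(V_v)\times\prod_{v\notin T}\pi_v(\sX(\sO_v))$; by local openness each $\pi_v(V_v)$ is open in $Y(k_v)$, and by the integral surjectivity $\pi_v(\sX(\sO_v))=\sY(\sO_v)$ for every $v\notin T$. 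Hence the image is a basic open subset of $Y(\A)$, which proves the claim. The step I expect to be the main obstacle is the uniformity in the spreading-out argument: one must ensure that only finitely many places are exceptional, i.e.\ that the Lang--Weil bound for the fibres of $\sX\to\sY$ can be chosen independently of the point of the base.
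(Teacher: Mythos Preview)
Your argument is correct and follows the same overall architecture as the paper's proof: local openness of $\pi_v$ at every place via smoothness, surjectivity of $\sX(\sO_v)\to\sY(\sO_v)$ at almost every place, and assembly via the restricted-product topology. The genuine difference lies in how you obtain the integral surjectivity. You spread out only the smooth morphism $\pi$ with geometrically integral fibres and invoke uniform Lang--Weil estimates over the base $\sY$ to produce $\kappa(v)$-points on all special fibres; this is valid, but, as you yourself flag, it requires a constructibility argument to control the Lang--Weil constants uniformly over the family. The paper instead spreads out the entire group-theoretic picture --- the group schemes $\G$, $\N$ and the relation $\Y=\X/\N$ over $\sO^S$ --- so that for $v\notin S$ the reduction of each fibre $X_{y_v}$ is a homogeneous space of the connected $\kappa_v$-group $\wt N_v$, and then applies Lang's theorem directly. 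This sidesteps any uniformity issue: Lang's theorem gives a rational point on every homogeneous space of a connected group over a finite field, with no size hypothesis on the field. Your route is more general (it would apply to any smooth surjection with geometrically integral fibres, not just quotients by connected normal subgroups), while the paper's route is shorter and exploits precisely the structure available in the lemma.
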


Note that the geometric quotient  $X/N$ exists in the category of $k$-varieties
by \cite{B96}, Lemma 3.1.

\begin{proof}
If $v$ is a nonarchimedean place of $k$, we denote
by $\sO_v$ the ring of integers of $k_v$,
and by $\kappa_v$ the residue field of $\sO_v$.
For an $\sO_v$-scheme $Z_v$ we set
$\wt{Z}_v:=Z_v\times_\Ov \kappa_v$.

Since the morphism $\pi$ is smooth, the map
$X(k_v)\to Y(k_v)$ is open for any place $v$ of $k$.

Let $S$ be a finite set of places of $k$
containing all the archimedean places.
Write $\sO^S$ for the ring of elements of $k$
that are integral outside $S$.
Taking $S$ sufficiently large,
we can assume that $G$ and $N$ extend to smooth group schemes
$\mathcal G$ and $\mathcal N$ over ${\rm Spec}(\sO^S)$,
and that $X$  and $Y$ extend to homogeneous spaces
$\mathcal X$ of $\mathcal G$ and $\Y$ of $\G/\N$
over  ${\rm Spec}(\sO^S)$ such that
$\mathcal Y=\mathcal X/\mathcal N$.
In particular, the reduction $\wt N_v:=\N\times_{\sO^S} \kappa_v$
is connected
for $v\notin S$.

Let $v\notin S$ and let $y_v\in \Y(\Ov)$.
Set $X_{y_v}:=\X \times_\Y {\rm Spec}(\sO_v)$,
the morphisms being given by $\pi$ and  by $y_v\colon\Spec(\Ov)\to\Y$.
It is an $\Ov$-scheme.
Then its reduction $\wt{X_{y_v}}$ is a homogeneous space of the connected
$\kappa_v$-group $\wt N_v$ over the finite field $\kappa_v$.
 By Lang's theorem (\cite{L}, Theorem 2)
 $\wt{X_{y_v}}$ has a $\kappa_v$-point.
By Hensel's lemma $X_{y_v}$ has an $\Ov$-point.
This means that $y_v\in\pi(\X(\Ov))$.
Thus $\pi(\X(\Ov))=\Y(\Ov)$ for all $v\notin S$.
It follows that the map $X(\A)\to Y(\A)$ is open.
\end{proof}

\begin{subsec}
Let $X=H\backslash G$ be a homogeneous space of a connected $k$-group $G$.
Let $x_1\in X(k)$.
Consider the map $\pi_{x_1}\colon G\to X,\ g\mapsto x_1\cdot g$,
it induces a homomorphism $\pi_{x_1}^*\colon \Br(X)\to\Br(G)$.
Let $\Br_1(X,G)_{x_1}$ denote the subgroup of elements $b\in\Br(X)$
such that $\pi_{x_1}^*(b)\in\Br_{1}(G)$.
The following lemma shows that $\Br_1(X,G)_{x_1}$  does not depend on $x_1$,
so we may write $\Br_1(X,G)$ instead of $\Br_1(X,G)_{x_1}$.
Note that $\Br_1(X,G)=\Br_{1,x_1}(X,G) + \Br(k)$.
\end{subsec}

\begin{lemma}\label{lem:Br1(X,G)}
The subgroup $\Br_1(X,G)_{x_1}\subset \Br(X)$ does not depend on $x_1$.
\end{lemma}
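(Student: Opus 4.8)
The plan is to pass to the algebraic closure, where all $k$-points of $X$ become $G$-conjugate. First I would rewrite the subgroup $\Br_1(X,G)_{x_1}$ in an intrinsic form. By the commutative square of \ref{subsec:Br-def}, for $b\in\Br(X)$ the class $\pi_{x_1}^*(b)$ lies in $\Br_1(G)=\ker[\Br(G)\to\Br(\Gbar)]$ if and only if the image of $b$ under the composite $\Br(X)\to\Br(\Xbar)\xrightarrow{\overline{\pi}_{x_1}^*}\Br(\Gbar)$ vanishes, where $\overline{\pi}_{x_1}\colon\Gbar\to\Xbar$ is the base change of $\pi_{x_1}$ to $\kbar$. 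Hence $\Br_1(X,G)_{x_1}$ is the kernel of this composite map $\Br(X)\to\Br(\Gbar)$, and it is enough to prove that the homomorphism $\overline{\pi}_{x_1}^*\colon\Br(\Xbar)\to\Br(\Gbar)$ has a kernel that does not depend on the choice of $x_1$.

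Next, given two points $x_1,x_1'\in X(k)$, I would compare $\overline{\pi}_{x_1}$ and $\overline{\pi}_{x_1'}$. Since $\Xbar$ is a homogeneous space of $\Gbar$ over the algebraically closed field $\kbar$, the group $G(\kbar)$ acts transitively on $\Xbar(\kbar)$, so there exists $g_0\in G(\kbar)$ with $x_1'=x_1\cdot g_0$; note that $g_0$ need not be defined over $k$, which is why passing to $\kbar$ is essential. Then for $g\in G(\kbar)$ one has $\pi_{x_1'}(g)=x_1'\cdot g=x_1\cdot(g_0 g)=\pi_{x_1}(g_0 g)$, that is, $\overline{\pi}_{x_1'}=\overline{\pi}_{x_1}\circ L_{g_0}$, where $L_{g_0}\colon\Gbar\to\Gbar$ denotes left translation by $g_0$.

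Finally, since $L_{g_0}$ is a $\kbar$-automorphism of the variety $\Gbar$, the induced map $L_{g_0}^*\colon\Br(\Gbar)\to\Br(\Gbar)$ is an isomorphism, so $\ker(\overline{\pi}_{x_1'}^*)=\ker(L_{g_0}^*\circ\overline{\pi}_{x_1}^*)=\ker(\overline{\pi}_{x_1}^*)$, and the lemma follows. I do not expect a serious obstacle: the argument is elementary once one notices that the defining condition only involves the geometric map to $\Br(\Gbar)$, on which the two base points differ by a geometric translation. The only subtlety worth flagging is precisely that $x_1$ and $x_1'$ need not be conjugate under $G(k)$, only under $G(\kbar)$ --- but that is all the argument requires.
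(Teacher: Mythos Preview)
Your proof is correct and follows essentially the same route as the paper's own argument: reduce to comparing the kernels of $\overline{\pi}_{x_1}^*\colon\Br(\Xbar)\to\Br(\Gbar)$ for different base points, then observe that over $\kbar$ these maps differ by a left translation on $\Gbar$, which induces an automorphism of $\Br(\Gbar)$. The paper passes through $\Br(X)/\Br(k)$ rather than $\Br(X)$ directly, but this is an inessential cosmetic difference.
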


\begin{proof}
\def\btil{{\tilde{b}}}
We have a commutative diagram
$$
\xymatrix{
\Br(X)/\Br(k)\ar[d]\ar[r]^{\pi^*_{x_1}}  &\Br(G)/\Br(k)\ar[d]\\
\Br(\Xbar)  \ar[r]^{\overline{\pi}^*_{x_1}}  &\Br(\Gbar) \, .
}
$$
We see that it suffices to prove that the kernel
of ${\overline{\pi}^*_{x_1}}$ does not depend on $x_1$.

Now if $x_2\in X(k)$ is another $k$-point, then
$x_2=x_1\cdot g$ for some $g\in G(\kbar)$,
hence
\def\pibar{{\overline{\pi}}}
$$
\pibar_{x_2}(g')=x_2\cdot g'=x_1\cdot gg'=\pibar_{x_1}(gg')=(\pibar_{x_1}\circ l_g)(g'),
$$
where $l_g$ denotes the left translation on $\Gbar$ by $g$.
Thus
$$
\pibar_{x_2}^*=l_g^*\circ\pibar_{x_1}^*\,.
$$
Since $l_g$ is an isomorphism of the underlying variety of $\Gbar$,
we see that $l_g^*\colon\Br(\Gbar)\to\Br(\Gbar)$ is an isomorphism, hence $\ker\pibar_{x_2}^*=\ker\pibar_{x_1}^*$,
which proves the lemma.
\end{proof}

\begin{lemma}\label{lem:unipotent-k}
Let $X$ be a right homogeneous space of a unipotent $k$-group $U$
over a field $k$ of characteristic 0.
Then $X(k)$ is non-empty and is one orbit of $U(k)$.
\end{lemma}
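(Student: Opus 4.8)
The plan is to reduce everything to the vanishing of $H^1$ of unipotent $k$-groups. Recall that in characteristic $0$ every smooth connected commutative unipotent $k$-group is a $k$-vector group, that the only smooth connected closed $k$-subgroups of $\mathbb{G}_a$ are $0$ and $\mathbb{G}_a$, and that $H^1(k,\mathbb{G}_a)=0$ since $\mathbb{G}_a$ is the additive group of $\sO_{\Spec k}$. First I would record, by induction on $\dim U'$ using a central $k$-subgroup $Z'\cong\mathbb{G}_a$ of $U'$ and the exact sequence of pointed sets $H^1(k,Z')\to H^1(k,U')\to H^1(k,U'/Z')$, that $H^1(k,U')=\{*\}$ for every unipotent $k$-group $U'$ (alternatively this may be quoted from the literature on unipotent groups over perfect fields).

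Next, to prove $X(k)\neq\emptyset$ I would induct on $\dim U$. If $\dim U=0$ then $U$ is trivial, $X$ is geometrically a single point, and since a homogeneous space of a connected group is geometrically integral this forces $X=\Spec k$. If $\dim U\geq 1$, choose a central $k$-subgroup $Z\subset U$ with $Z\cong\mathbb{G}_a$ (the centre of a nontrivial unipotent group is a nontrivial $k$-vector group). If $Z$ acts trivially on $X$, then the $U$-action factors through a transitive action of the lower-dimensional unipotent group $U/Z$, and the induction hypothesis applies to $X$. Otherwise, since $Z$ is central the subgroup of $Z$ stabilising a point of $X$ is independent of the point and equals the kernel $Z_0$ of the $Z$-action on $X$; as $Z_0\subsetneq Z$ we get $Z_0=0$, so $Z$ acts freely and $X\to Y:=X/Z$ is a torsor under $\mathbb{G}_a$ (the quotient $Y$ exists by \cite{B96}, Lemma 3.1). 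The $U$-action descends to a transitive action of $U/Z$ on $Y$, so by induction $Y(k)\neq\emptyset$; picking $y_0\in Y(k)$, the fibre $X\times_Y\{y_0\}$ is a $\mathbb{G}_a$-torsor over $k$, hence trivial, and a $k$-point of this fibre gives a $k$-point of $X$.

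Finally, for the transitivity of $U(k)$ on $X(k)$, fix $x_0\in X(k)$ (which exists by the previous step) and let $H:=\mathrm{Stab}_U(x_0)$, a unipotent $k$-subgroup; in characteristic $0$ the orbit map identifies $X$ with $H\backslash U$. From the exact sequence of pointed sets
\[
U(k)\longrightarrow X(k)\longrightarrow H^1(k,H)
\]
and $H^1(k,H)=\{*\}$ we conclude that $X(k)$ is precisely the image of $U(k)$, i.e. the single $U(k)$-orbit of $x_0$.

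The argument is largely routine; the points needing care are, in the inductive step for existence, verifying that $X/Z$ is again a homogeneous space of $U/Z$ and that a nontrivial $Z$-action is free (which is where characteristic $0$ enters, via the absence of proper nontrivial subgroups of $\mathbb{G}_a$), together with making sure the ambient notion of homogeneous space supplies the geometric integrality used in the base case.
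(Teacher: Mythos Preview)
Your proof is correct. For the transitivity of $U(k)$ on $X(k)$ you argue exactly as the paper does: pick $x_0\in X(k)$, let $H$ be its stabilizer, and use $H^1(k,H)=1$. The only difference is in the existence of a $k$-point: the paper simply cites \cite{B96}, Lemma~3.2(i), whereas you supply a self-contained induction on $\dim U$ via a central $\mathbb{G}_a$, forming the quotient $X/Z$ and using $H^1(k,\mathbb{G}_a)=0$ on the fibre. Your argument is essentially a reconstruction of that cited lemma; it buys independence from the reference at the cost of a bit more length, and your caveats (geometric integrality in the base case, the passage from $X$ to $X/Z$ as a homogeneous space of $U/Z$, freeness of a nontrivial central $\mathbb{G}_a$-action in characteristic~$0$) are all correctly handled.
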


\begin{proof}
By \cite{B96}, Lemma 3.2(i), $X(k)\neq \emptyset$.
Let $x\in X(k)$, $H:={\rm Stab}(x)$,
then $H$ is unipotent, hence $H^1(k,H)=1$,
and therefore $X(k)=x\cdot U(k)$.
\end{proof}

\begin{corollary}\label{cor:unipotent-R}
Let $X$ be a right homogeneous space of a unipotent $\R$-group $U$.
Then $X(\R)$ is non-empty and connected.
\end{corollary}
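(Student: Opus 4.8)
The plan is to derive the corollary from Lemma~\ref{lem:unipotent-k} together with the connectedness of the group of real points of a unipotent $\R$-group.

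First I would apply Lemma~\ref{lem:unipotent-k} with $k=\R$: it tells us that $X(\R)$ is non-empty and that it consists of a single orbit of $U(\R)$. So I would fix a point $x\in X(\R)$ and consider the orbit morphism $\pi_x\colon U\to X$, $u\mapsto x\cdot u$. Being a morphism of $\R$-varieties, it induces a continuous map $U(\R)\to X(\R)$ for the real topologies, and this map is surjective precisely by the orbit statement of the lemma. Hence it suffices to prove that $U(\R)$ is connected, since the continuous image of a connected space is connected.

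To prove that $U(\R)$ is connected, I would use that a unipotent group over a field of characteristic $0$ is split: $U$ admits a filtration $U=U_0\supset U_1\supset\dots\supset U_m=\{1\}$ by normal $\R$-subgroups with $U_i/U_{i+1}\cong\mathbb{G}_a$ for all $i$. It follows that the underlying $\R$-variety of $U$ is isomorphic to affine $m$-space, so that $U(\R)$ is homeomorphic to $\R^m$ and hence connected. (Alternatively one can argue by induction on $m$, using that $H^1(\R,\mathbb{G}_a)=0$, so that $U(\R)/U_{m-1}(\R)\cong(U/U_{m-1})(\R)$, and a topological group which is an extension of a connected group by a connected group is connected.) Combining the two steps, $X(\R)$ is the continuous surjective image of $\R^m$, hence non-empty and connected.

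I do not expect any real difficulty here: the only structural input is that the underlying variety of a unipotent $\R$-group is an affine space (which uses that $\R$ has characteristic zero), and the only input from Lemma~\ref{lem:unipotent-k} is the surjectivity of the orbit map $U(\R)\to X(\R)$, which already contains the non-emptiness assertion.
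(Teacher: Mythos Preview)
Your proof is correct and follows essentially the same approach as the paper: apply Lemma~\ref{lem:unipotent-k} to get $X(\R)=x\cdot U(\R)$, then use that $U(\R)$ is connected. The paper simply asserts the connectedness of $U(\R)$ without justification, whereas you spell out the standard argument via a filtration by $\mathbb{G}_a$'s.
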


\begin{proof}
Since $U(\R)$ is connected and $X(\R)=x\cdot U(\R)$, we conclude that $X(\R)$ is connected.
\end{proof}

\begin{lemma}\label{lem:unipotent-SA}
Let $G$ be a unipotent $k$-group over a number field $k$.
Let $X$ be a right homogeneous space of $G$.
Let $S\subset \Omega$ be any non-empty finite set of places.
Then $X(k)$ is  is non-empty and dense in $X(\A^S)$.
\end{lemma}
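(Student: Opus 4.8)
\emph{Plan.} The non-emptiness of $X(k)$ is Lemma~\ref{lem:unipotent-k}, which moreover shows that $X(k)=x_0\cdot G(k)$ for any chosen point $x_0\in X(k)$; so only the density assertion needs proof. The plan is to combine two facts: (a) $G(k)$ is dense in $G(\A^S)$, and (b) the orbit map $\pi_{x_0}\colon G\to X$, $g\mapsto x_0\cdot g$, which is the quotient map $G\to H\backslash G$ with $H:=\mathrm{Stab}(x_0)$, induces a \emph{surjective} map $G(\A^S)\to X(\A^S)$. Since this map is continuous, it carries the dense subset $G(k)$ onto a dense subset of $X(\A^S)$; but that image is exactly $x_0\cdot G(k)=X(k)$, which is therefore dense in $X(\A^S)$.

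For (a) I would use that a unipotent $k$-group in characteristic $0$ is split, and hence isomorphic, as a $k$-variety, to an affine space $\mathbb{A}^n_k$ with $n=\dim G$ (for instance via the exponential isomorphism). As $S\neq\emptyset$, the field $k$ is dense in $\A^S$ (strong approximation for the affine line), so $k^n$ is dense in $(\A^S)^n$, and transporting this along the chosen isomorphism $G\cong\mathbb{A}^n_k$ gives the density of $G(k)$ in $G(\A^S)$. (Alternatively one argues by d\'evissage along a filtration of $G$ with successive quotients $\mathbb{G}_{\mathrm a}$, each extension splitting as a morphism of $k$-varieties because split unipotent groups are special.)

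For (b), $\pi_{x_0}$ realizes $G$ as a torsor under the unipotent $k$-group $H$. At every place $v$ the map $G(k_v)\to X(k_v)$ is surjective, since $H^1(k_v,H)=1$ for a unipotent $H$. To upgrade this to adelic points I would spread everything out over a sufficiently large finite set $S'\supseteq S$, choosing smooth $\sO^{S'}$-models $\mathcal G$, $\mathcal H$, $\mathcal X$ with $\mathcal G\to\mathcal X$ an $\mathcal H$-torsor and the fibres of $\mathcal H$ connected unipotent; the section $x_0$ spreads out, so $\mathcal X(\sO_v)\neq\emptyset$ for $v\notin S'$. For such $v$ the fibre of $\mathcal G(\sO_v)\to\mathcal X(\sO_v)$ over any point is the set of $\sO_v$-points of an $\mathcal H$-torsor over $\Spec\sO_v$, which is trivial by Lang's theorem over the finite residue field together with Hensel's lemma --- this is exactly the argument used in the proof of Lemma~\ref{lem:open-map}. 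Hence $\mathcal G(\sO_v)\to\mathcal X(\sO_v)$ is onto for $v\notin S'$; given an adelic point of $X$, one lifts its components in $\mathcal X(\sO_v)$ to $\mathcal G(\sO_v)$ for almost all $v$ and lifts the remaining (finitely many) components through $G(k_v)\to X(k_v)$, producing the desired preimage in $G(\A^S)$. The argument is essentially soft; the only step that is not purely formal is this passage from local to adelic surjectivity of the orbit map, and it is handled by the integral-model plus Lang--Hensel reasoning of Lemma~\ref{lem:open-map}.
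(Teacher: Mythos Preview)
Your proof is correct and follows essentially the same route as the paper's: the paper also uses the exponential map to identify $G$ with affine space and invoke classical strong approximation for (a), and for (b) it argues that $x_0\cdot G(\A^S)=X(\A^S)$ via $H^1(k_v,H)=0$ at each place together with Lang's theorem and Hensel's lemma at the integral level. Your write-up is a bit more explicit about the spreading-out step, but the two arguments are the same in substance.
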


\begin{proof}
By \cite{B96}, Lemma 3.2(i), $X(k)$ is non-empty.
Let $x_0\in X(k)$, and let $H\subset G$ denote the stabilizer
of $x_0$ in $G$.
We have $X=H\backslash G$.

Set $\gg={\rm Lie}(G)$.
Since $\gg$ is a vector space and $S\neq\emptyset$,
by the classical strong approximation theorem $\gg$ is dense in $\gg\otimes_k\A^S$.
Since ${\rm char}(k)=0$, we have the exponential map
$\gg\to G$, which is an isomorphism of $k$-varieties.
We see that $G(k)$ is dense in $G(\A^S)$.
It follows that $x_0 G(k)$ is dense in $x_0 G(\A^S)$.
Since $H$ is unipotent, we have $H^1(k_v,H)=0$ for any $v\in\Omega$,
and therefore $x_0 G(k_v)=X(k_v)$ for any $v$.
It follows that  $x_0 G(\A^S)=X(\A^S)$ (we use Lang's theorem and Hensel's lemma).
Thus $x_0 G(k)$ is dense in $X(\A^S)$, and $X(k)$ is dense in $X(\A^S)$.
\end{proof}


\section{Brauer group}

We are grateful to A.N.~Skorobogatov, E.~Shustin, and T. Ekedahl
for helping us to prove Theorem \ref{thm:Brauer} below.

\begin{theorem} \label{thm:Brauer}
Let $X$ be a smooth irreducible algebraic variety
over an algebraically closed field $k$ of characteristic 0.
Let $G$ be a \emph{connected} algebraic group
(not necessarily linear) defined over $k$, acting on $X$.
Then $G(k)$ acts on $\Br(X)$ trivially.
\end{theorem}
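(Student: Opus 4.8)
The plan is to use the action morphism to express the automorphism of $\Br(X)$ induced by $g\in G(k)$, to reduce by the Kummer sequence to the analogous statement for étale cohomology with finite coefficients, and then to exploit the connectedness of $G$ through a Leray spectral sequence argument based on smooth base change. First I would observe that, since $X$ is smooth and irreducible, hence regular and integral, the restriction map $\Br(X)=H^2_\et(X,\Gm)\to\Br(k(X))$ is injective (\cite{Gr}, II, Corollary 1.8); therefore $\Br(X)$ is a torsion group and $\Br(X)=\bigcup_{n\ge1}\Br(X)[n]$. It thus suffices to show that every $g\in G(k)$ acts trivially on each $\Br(X)[n]$.

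Since $k$ has characteristic zero, the Kummer sequence produces a short exact sequence $0\to\Pic(X)/n\to H^2_\et(X,\mu_n)\to\Br(X)[n]\to0$ which is functorial in $X$, hence compatible with the automorphisms of $X$ coming from the elements of $G(k)$; so it is enough to prove that $G(k)$ acts trivially on $H^2_\et(X,\mu_n)$ for every $n$. Write the action as a morphism $a\colon G\times_kX\to X$ (interchanging the two factors if the given action is on the right), and for $g\in G(k)$ let $i_g\colon X\to G\times_kX$, $x\mapsto(g,x)$, which is a section of the second projection $\mathrm{pr}_2$. Then $a\circ i_g$ is the automorphism of $X$ determined by $g$ and $a\circ i_e=\id_X$, so on $H^2_\et(X,\mu_n)$ the action of $g$ is $i_g^*\circ a^*$ and that of $e$ is $i_e^*\circ a^*=\id$. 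Consequently it suffices to show that $i_g^*\colon H^2_\et(G\times_kX,\mu_n)\to H^2_\et(X,\mu_n)$ does not depend on $g\in G(k)$.

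For that, let $\mathrm{pr}_1\colon G\times_kX\to G$ be the first projection. Applying smooth base change along the smooth morphism $G\to\Spec k$, and using $k=\kbar$, one identifies $R^q\mathrm{pr}_{1,*}\mu_n$ with the \emph{constant} sheaf on $G$ with value $H^q_\et(X,\mu_n)$; the Leray spectral sequence of $\mathrm{pr}_1$ then reads $E_2^{p,q}=H^p_\et\bigl(G,\underline{H^q_\et(X,\mu_n)}\bigr)\Rightarrow H^{p+q}_\et(G\times_kX,\mu_n)$. The $k$-point $g\colon\Spec k\to G$ sits in a cartesian square together with $\mathrm{pr}_1$, the map $i_g$ and the structure morphism $X\to\Spec k$, so it induces a morphism from this spectral sequence to the degenerate Leray spectral sequence of $X\to\Spec k$. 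On $E_2$-pages the comparison is the map $H^p_\et(G,\underline A)\to H^p_\et(\Spec k,A)$ induced by $g$, with $A=H^q_\et(X,\mu_n)$; for $p=0$ this is the identity of $A$ (the global sections of a constant sheaf over the connected scheme $G$), and for $p>0$ the target is zero --- in either case the map is the same for all $g$. Hence the comparison on abutments is the same for all $g$: the homomorphism $i_g^*$ annihilates the step $F^1$ of the Leray filtration of $H^2_\et(G\times_kX,\mu_n)$ coming from $H^{\ge1}_\et(G,-)$, because the corresponding filtration on the target $H^2_\et(X,\mu_n)$ is trivial, and modulo $F^1$ it is the fixed edge map $H^2_\et(G\times_kX,\mu_n)\twoheadrightarrow E_\infty^{0,2}\hookrightarrow E_2^{0,2}=H^2_\et(X,\mu_n)$. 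Therefore $i_g^*=i_e^*$, which is what we wanted.

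The only real difficulty is the étale-cohomological input of the third paragraph: one must know that $R^q\mathrm{pr}_{1,*}\mu_n$ is \emph{constant}, not merely locally constant, on $G$, which is exactly where smooth base change and the hypothesis $k=\kbar$ are used; the remainder of that step is the bookkeeping needed to pass from ``the comparison is independent of $g$ on the associated graded of the Leray filtration'' to ``$i_g^*$ itself is independent of $g$'', and this works only because the Leray filtration on $H^2_\et(X,\mu_n)$ (for the structure morphism $X\to\Spec k$) is trivial. Everything else --- the torsionness of $\Br(X)$, the functoriality of the Kummer sequence, and the identity $a\circ i_e=\id_X$ --- is formal.
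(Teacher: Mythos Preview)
Your proof is correct and is essentially the paper's argument: the paper likewise reduces via the Kummer sequence to showing that $G(k)$ acts trivially on $H^i_\et(X,\mu_n)$, uses smooth base change (SGA~$4\frac12$, Th.\ finitude) to identify $R^i(\mathrm{pr}_1)_*\mu_n$ with the constant sheaf $\underline{H^i_\et(X,\mu_n)}$ on $G$, and then observes that $i_g^*$ factors through the value at $g$ of a global section of this constant sheaf on the connected scheme $G$ --- which is exactly your edge-map computation, phrased without naming the Leray spectral sequence. The paper also records a shorter proof valid only in characteristic~0: by the Lefschetz principle one may take $k=\C$, and then $G(\C)$ is path-connected, so each $g_*$ is homotopic to the identity on $X(\C)$ and hence acts trivially on Betti cohomology.
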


\begin{proof}
We write $H^i$ for $H^i_{\textup{\'et}}$ (\'etale cohomology).
The Kummer exact sequence
$$
1\to\mu_n\to\Gm\labelto{n}\Gm\to 1
$$
of multiplication by $n$ gives rise to a surjective map
$$
H^2(X,\mu_n)\onto\Br(X)_n\;,
$$
where $\Br(X)_n$ denotes the group of elements of order dividing $n$ in $\Br(X)$.
Since every element of $\Br(X)$ is torsion
(because $\Br(X)$ embeds in $\Br(k(X))$, cf. \cite{Gr}, II, Corollary 1.8),
it is enough to prove the following Theorem \ref{thm:H^i}.
\end{proof}

\def\et{{\textup{\'et}}}

\begin{theorem}\label{thm:H^i}
Let $X$ be a smooth irreducible algebraic variety
over an algebraically closed field $k$ (of any characteristic).
Let $G$ be a \emph{connected} algebraic group
(not necessarily linear) defined over $k$, acting on $X$.
Let $A$ be a finite abelian group of order invertible in $k$.
Then $G$ acts on $H^i_\et(X,A)$ trivially for all $i$.
\end{theorem}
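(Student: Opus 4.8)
The plan is to reduce the statement to the case where $G$ is either an abelian variety or a connected linear group, and in the latter case to the case of $\Gm$, $\Gak$, or a semisimple group, by dévissage along the structure of $G$. The key point is that an algebraic action of a \emph{connected} group cannot move a (necessarily discrete, since $A$ is finite) cohomology class: the action of $G(k)$ on $H^i_\et(X,A)$ factors, by functoriality applied to the action morphism $a\colon G\times X\to X$, through a map governed by the class $a^*\xi\in H^i_\et(G\times X,A)$; if we can show that $a^*\xi$ is pulled back from $H^i_\et(X,A)$ via the second projection, then every $g\in G(k)$ acts trivially. So the real content is a Künneth-type statement: for a connected $G$, the map $\mathrm{pr}_X^*\colon H^i_\et(X,A)\to H^i_\et(G\times X,A)$ is an isomorphism, or at least that $a^*\xi$ and $\mathrm{pr}_X^*\xi$ agree. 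First I would set this up carefully using the Künneth formula for étale cohomology with finite coefficients of order invertible in $k$ (SGA 4 or SGA 5), which gives $H^i_\et(G\times X,A)\cong \bigoplus_{p+q=i} H^p_\et(G,A)\otimes H^q_\et(X,A)$ up to $\mathrm{Tor}$-terms; the $p=0$ summand is exactly $\mathrm{pr}_X^*H^i_\et(X,A)$, using $H^0_\et(G,A)=A$ since $G$ is connected.

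The main obstacle is therefore to show that the higher terms $H^p_\et(G,A)$ with $p\ge 1$ contribute nothing to $a^*\xi$ — i.e., the component of $a^*\xi$ in $H^p_\et(G,A)\otimes H^{i-p}_\et(X,A)$ vanishes for $p\ge 1$. The clean way I would argue this is by restricting along the unit section $e\colon \Spec k\to G$: the composite $X\xrightarrow{(e,\id)}G\times X\xrightarrow{a}X$ is the identity, so $(e,\id)^*a^*\xi=\xi$, which pins down the $p=0$ component of $a^*\xi$ to be $\mathrm{pr}_X^*\xi$ exactly. To kill the components with $p\ge 1$ one needs an additional input: it suffices to know that $H^1_\et(G,A)=0$ and, more generally, that the cup-product subalgebra of $H^*_\et(G,A)$ generated in positive degrees is trivial — equivalently that $G$ is ``$A$-acyclic in positive degrees'' after accounting for the group structure. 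This is where I would invoke the hypothesis that $G$ is \emph{connected} together with the standard computation of étale cohomology of algebraic groups: for $G$ linear one uses that $\Gm$ and $\Gak$ have no higher étale cohomology with such coefficients over an algebraically closed field (indeed $H^i_\et(\Gak,A)=0$ for $i>0$, and $H^1_\et(\Gm,A)=A$ but one tracks its contribution separately), that a torus is a successive extension of $\Gm$'s, that $G\uu$ is a successive extension of $\Gak$'s, and that a semisimple simply connected $G$ is $A$-acyclic in low degrees (this is the delicate part, needing results of the type of Grothendieck/Deligne, or a direct reduction via the big-cell decomposition $G=\Omega\sqcup(\text{lower strata})$ à la Bruhat); for $G$ an abelian variety one uses the known structure of $H^*_\et(G,A)$ as an exterior algebra and argues directly that the action is trivial.

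Concretely, the dévissage I would carry out: (1) reduce to $G$ linear and $G$ an abelian variety separately using $1\to G\lin\to G\to G\abvar\to 1$ and the fact that an extension of groups acting trivially acts trivially (a class moved by $G$ would be moved by $G\lin$ or by $G\abvar$); (2) for $G$ linear, use $1\to G\uu\to G\lin\to G\red\to 1$, then $1\to G\sss\to G\red\to G\tor\to 1$, then the isogeny $G\sc\to G\sss$, reducing in turn to $G$ unipotent (a tower of $\Gak$'s), $G$ a torus (a tower of $\Gm$'s), and $G$ semisimple simply connected; (3) handle $\Gak$ and $\Gm$ by the explicit acyclicity/cohomology computation, noting that in each tower the relevant action on $H^i_\et(X,A)$ is trivial because at each stage the Künneth component that could produce a nontrivial automorphism sits in $H^{\ge 1}_\et$ of the group factor, which vanishes (for $\Gak$) or is accounted for by the unit-section normalization (for $\Gm$); (4) handle $G$ semisimple simply connected by the Bruhat decomposition, peeling off the open cell $\mathbb A^N$ and arguing by Noetherian induction on the closed complement, or alternatively by citing that such $G$ is $A$-acyclic in positive degrees. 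The one genuinely nontrivial ingredient, and the step I expect to take real work, is the semisimple case together with making the Künneth/unit-section argument uniform enough that it applies at every stage; everything else is formal dévissage.
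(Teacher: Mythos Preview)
Your setup is right --- write $g^*\xi = f_g^*\, a^*\xi$ for the slice map $f_g\colon X\to G\times X$, $x\mapsto (g,x)$ --- but the target you aim for is too strong and in fact false. You propose to show that $a^*\xi = \mathrm{pr}_X^*\xi$, and then try to force this by killing $H^{p}_\et(G,A)$ for $p\ge 1$ via d\'evissage. But already for $G=\Gm$ acting on itself by translation and $\xi$ the Kummer generator of $H^1_\et(\Gm,\mu_\ell)$, one has $m^*\xi = \mathrm{pr}_1^*\xi + \mathrm{pr}_2^*\xi$, so $a^*\xi$ has a nonzero component in $H^1_\et(G,A)\otimes H^0_\et(X,A)$. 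More seriously, your acyclicity claims are wrong: $H^1_\et(\Gm,\mu_\ell)\cong\Z/\ell\Z$, and a simply connected semisimple group is not $A$-acyclic in positive degrees (over $\C$, $\SL_n(\C)$ retracts onto $\mathrm{SU}(n)$, so $H^3_\et(\SL_n,\Z/\ell\Z)\neq 0$ by comparison). So the d\'evissage program cannot succeed as written, and in any case it is chasing a statement that is not true.

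What you actually need is only that $f_g^*\,a^*\xi$ is independent of $g$, and this needs no input about $H^*_\et(G,A)$ at all. The paper's argument is a two-line version of this: by smooth base change (SGA~4$\frac12$, Th.\ finitude), the sheaf $R^i(p_G)_*A$ on $G$ is the constant sheaf with stalk $H^i_\et(X,A)$, so for any $\eta\in H^i_\et(G\times X,A)$ the assignment $g\mapsto f_g^*\eta$ is the value at $g$ of a global section of a constant sheaf on the connected scheme $G$, hence constant; applying this to $\eta=a^*\xi$ gives $g^*\xi = f_e^*a^*\xi = \xi$. If you prefer to stay with K\"unneth, the same conclusion drops out immediately once you observe that $f_g = (c_g,\id_X)$ with $c_g\colon X\to G$ constant, so $f_g^*$ annihilates every K\"unneth component with $p\ge 1$ (because $c_g^*$ factors through $H^p_\et(\Spec k,A)=0$), and on the $p=0$ component $H^0_\et(G,A)\otimes H^i_\et(X,A)=A\otimes H^i_\et(X,A)$ it is visibly independent of $g$ since $G$ is connected. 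Either way, no structure theory of $G$ is needed beyond connectedness.
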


\begin{proof}[Proof in characteristic 0]
By the Lefschetz principle, we may assume that $k=\C$.
Let $g\in G(\C)$.
We must prove that $g$ acts trivially on the Betti cohomology $H^i_B(X,A)$.
Since $G$ is connected, the group $G(\C)$ is connected,
hence we can connect $g$ with the unit element $e\in G(\C)$ by a path.
We see that the automorphism of $X$
$$
g_*\colon X\to X,\quad x\mapsto x\cdot g
$$
is homotopic to the identity automorphism
$$
e_*\colon X\to X,\quad x\mapsto x.
$$
It follows that $g_*$ acts on $H^i_B(X,A)$ as $e_*$, i.e. trivially.
\end{proof}

To prove Theorem \ref{thm:H^i} in any characteristic, we need two lemmas.

\begin{lemma}\label{lem:direct-image}
Let $X,Y$ be smooth algebraic varieties
over an algebraically closed field $k$ (of any characteristic).
Let $A$ be a finite abelian group of order invertible in $k$.
Consider the projection $p_Y\colon X\times Y\to Y$.
Then the higher direct image $R^i (p_Y)_* A$ in the \'etale topology
is the pullback of the abelian group $H^i(X,A)$ considered as a sheaf
on $\textup{Spec}(k)$.
\end{lemma}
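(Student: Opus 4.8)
The plan is to recognize $R^i(p_Y)_*A$ as a base change of $R^i(f_X)_*A$ and then to invoke the smooth base change theorem. Write $f_X\colon X\to\Spec k$ and $f_Y\colon Y\to\Spec k$ for the structure morphisms, so that the projections $p_X,p_Y$ fit into a Cartesian square
\begin{displaymath}
\xymatrix{
X\times Y \ar[r]^-{p_X}\ar[d]_-{p_Y} & X\ar[d]^-{f_X}\\
Y\ar[r]^-{f_Y} & \Spec k\,.
}
\end{displaymath}
This square produces a canonical base change morphism
$$
f_Y^*\,R^i(f_X)_*A\;\lra\; R^i(p_Y)_*A\,,
$$
and it suffices to identify its source with the sheaf appearing in the statement and to prove that this morphism is an isomorphism.

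First I would treat the source. Since $k$ is algebraically closed, every object of the small \'etale site of $\Spec k$ is a finite disjoint union of copies of $\Spec k$, so the presheaf $U\mapsto H^i(f_X^{-1}(U),A)$ is already a sheaf; hence $R^i(f_X)_*A$ is simply the abelian group $H^i(X,A)$ regarded as a sheaf on $\Spec k$. Consequently $f_Y^*R^i(f_X)_*A$ is precisely the pullback to $Y$ of the abelian group $H^i(X,A)$ considered as a sheaf on $\Spec k$, which is the sheaf occurring in the lemma.

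Next I would show that the base change morphism is an isomorphism for every $i$; this is the heart of the argument and is an instance of the smooth base change theorem (see \cite{SGA4}). Indeed, $f_Y$ is smooth because $Y$ is a smooth $k$-variety, $f_X$ is of finite type --- hence quasi-compact and quasi-separated --- because $X$ is a $k$-variety, and $A$ is a finite constant sheaf --- in particular torsion and constructible --- whose order is invertible in $k$; under exactly these hypotheses the base change morphism $f_Y^*R^i(f_X)_*A\to R^i(p_Y)_*A$ is an isomorphism. Combining this with the identification of the previous paragraph completes the proof.

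The step I expect to demand the most care is the verification of the hypotheses of the smooth base change theorem, in particular the assumption that $|A|$ be invertible in $k$: this is exactly what makes the argument valid in arbitrary characteristic, where the topological reasoning available over $\C$ --- namely that $X\times Y\to Y$ is a trivial fibre bundle, so $R^i(p_Y)_*A$ is locally constant with the expected stalks --- is not at one's disposal. A more hands-on alternative would be to compute the stalk of $R^i(p_Y)_*A$ at a geometric point $\ybar$ of $Y$ as $\varinjlim_V H^i(X\times V,A)$ over \'etale neighbourhoods $V$ of $\ybar$ and to check that this colimit equals $H^i(X,A)$, but justifying that computation rests on the same base change input, so invoking smooth base change directly is the most economical route.
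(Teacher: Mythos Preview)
Your proof is correct and follows essentially the same approach as the paper: set up the same Cartesian square, identify $R^i(f_X)_*A$ with the constant sheaf $H^i(X,A)$ on $\Spec k$, and invoke a base change theorem. The only difference is cosmetic --- the paper cites SGA~$4\frac12$, Th.\ finitude, Th\'eor\`eme~1.9(ii), whereas you invoke the smooth base change theorem from SGA4 --- and either reference does the job.
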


\begin{proof}
Consider the commutative diagram
$$
\xymatrix{
X\times_k Y\ar[r]^-{p_X}\ar[d]_{p_Y} &X\ar[d]^{s_X} \\
Y\ar[r]^-{s_Y}                 &\Spec(k) \, ,
}
$$
here $X\times_k Y$ is the fibre product of $X$ and $Y$
with respect to the structure morphisms $s_X$ and $s_Y$.
Clearly $R^i (s_X)_* A$ is the constant sheaf on $\Spec(k)$ with stalk $H^i(X,A)$.
By \cite{SGA}, Th. finitude, Theorem 1.9(ii),
the sheaf $R^i (p_Y)_* A$ on $Y$ is the pullback of the constant sheaf $R^i (s_X)_* A$ on $\Spec(k)$
along the morphism $s_Y\colon Y\to\Spec(k)$, which concludes the proof.
\end{proof}

Let $y\in Y(k)$. It defines a canonical morphism
$f_y\colon X\to X\times_k Y$ such that
$$
p_X\circ f_y=\id_X \text{ and } p_Y\circ f_y=y\circ s_X\colon X\to Y.
$$

\begin{lemma}\label{lem:constant-map}
Let $X,Y$ be two smooth algebraic varieties
over an algebraically closed field $k$ (of any characteristic).
Let $A$ be a finite abelian group of order invertible in $k$.
For a closed point $y\in Y$ consider the map $f_y\colon X\to X\times Y$ defined above.
If $Y$ is irreducible, then the map
$$
f_y^*\colon H^i(X\times Y,A)\to H^i(X,A)
$$
does not depend on $y$.
\end{lemma}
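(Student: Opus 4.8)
\emph{Proof plan.} The idea is to recover $f_y^*$ from the Leray spectral sequence of the second projection $p_Y\colon X\times Y\to Y$, and to observe that the edge map in play involves no choice of point. Set $M_q:=H^q(X,A)$. By Lemma~\ref{lem:direct-image}, the sheaf $R^q(p_Y)_*A$ on $Y$ is the constant sheaf $\underline{M_q}$, so the Leray spectral sequence of $p_Y$ reads
\begin{equation*}
E_2^{p,q}=H^p(Y,\underline{M_q})\ \Longrightarrow\ H^{p+q}(X\times Y,A).
\end{equation*}
Since $Y$ is irreducible, it is connected, whence $H^0(Y,\underline{M_q})=M_q$ canonically (a global section of a constant sheaf on a connected scheme is just its single value, independently of any point). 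Composing the edge homomorphism of the spectral sequence with this identification yields a map
\begin{equation*}
\rho\colon\ H^i(X\times Y,A)\ \longrightarrow\ H^0\bigl(Y,R^i(p_Y)_*A\bigr)=M_i=H^i(X,A),
\end{equation*}
which makes no reference to $y$.

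The substance of the argument is then to verify that $f_y^*=\rho$ for every closed point $y\in Y$. Here the key remark is that $f_y\colon X\to X\times Y$ is exactly the inclusion of the fibre $p_Y^{-1}(y)$: since $k$ is algebraically closed and $Y$ is of finite type, $\kappa(y)=k$, so $p_Y^{-1}(y)=X\times_Y\Spec k=X$, and moreover the strict henselization $\sO^{\mathrm{sh}}_{Y,y}$ has residue field $k$, so the closed fibre of $X\times_Y\Spec\sO^{\mathrm{sh}}_{Y,y}$ is again a copy of $X$, with inclusion into $X\times Y$ equal to $f_y$. Fix $\alpha\in H^i(X\times Y,A)$. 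On one side, under the constant-sheaf identification $\rho(\alpha)\in M_i$ is the germ of $\alpha$ at a geometric point $\bar y$ over $y$, i.e.\ the image of $\alpha$ in $\bigl(R^i(p_Y)_*A\bigr)_{\bar y}$. On the other side, \'etale cohomology commutes with the cofiltered limit defining $\sO^{\mathrm{sh}}_{Y,y}$, so this stalk equals $H^i\bigl(X\times_Y\Spec\sO^{\mathrm{sh}}_{Y,y},A\bigr)$; restricting further to the closed fibre sends the germ of $\alpha$ to $f_y^*(\alpha)$, by the standard compatibility of germs of higher direct images with restriction to fibres. Comparing the two descriptions, and using that under the identifications above the restriction-to-the-closed-fibre map $\bigl(R^i(p_Y)_*A\bigr)_{\bar y}\to H^i(X,A)$ is the identity of $M_i$, we obtain $f_y^*(\alpha)=\rho(\alpha)$.

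I expect the one genuinely delicate point to be this last identification: a priori the base change map $y^*R^i(p_Y)_*A\to H^i(p_Y^{-1}(y),A)$ is merely some endomorphism of $M_i$ once one identifies the fibre of $p_Y$ over every closed point with $X$, and one must check it is the canonical isomorphism — hence the same for all $y$. This should drop out of a diagram chase with base change maps, using that the isomorphism furnished by Lemma~\ref{lem:direct-image} is itself the base change isomorphism attached to the \emph{other} Cartesian square, the projection $X\times Y\to X$. Everything else being formal, the conclusion that $f_y^*$ is independent of $y$ follows at once, since $\rho$ was constructed with no mention of $y$.
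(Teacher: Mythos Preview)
Your proof is correct and follows essentially the same approach as the paper: both pass through the map $H^i(X\times Y,A)\to H^0(Y,R^i(p_Y)_*A)$ and use that $Y$ is connected to conclude that global sections of the constant sheaf $R^i(p_Y)_*A$ are independent of the point. The paper phrases this map directly via the presheaf description rather than as a Leray edge map, and simply asserts ``it is easy to see that $f_y^*(\eta)=\lambda(\eta)(y)$'' for the identification you flag as delicate and correctly resolve via the compatibility of base change maps for the two cartesian squares.
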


\begin{proof}
Let $\eta\in H^i(X\times Y, A)$.
Then $\eta$ defines a global section
$\lambda(\eta)$ of the sheaf $R^i (p_Y)_* A$
(via compatible local sections $U\mapsto \eta|_{X\times U}\in H^i(X\times U, A)$
of the corresponding presheaf, for all \'etale open subsets $U\to Y$).
By Lemma \ref{lem:direct-image} the sheaf $R^i (p_Y)_* A$ is a constant sheaf
with stalk $H^i(X,A)$.
It is easy to see that
$$
f_y^*(\eta)=\lambda(\eta)(y)\in H^i(X,A).
$$
Since $Y$ is irreducible, it is connected, hence the global section
$\lambda(\eta)$ of the constant sheaf $R^i (p_Y)_* A$  on $Y$ is constant,
and therefore $\lambda(\eta)(y)$ does not depend on $y$.
Thus $f_y^*(\eta)$ does not depend on $y$.
\end{proof}

\begin{proof}[Proof of Theorem \ref{thm:H^i} in any characteristic]
Consider the map
$$
m\colon X\times G\to X,\quad (x,g)\mapsto x\cdot g
$$
(the action). Let $\xi\in H^i(X,A)$.
Set $\eta =m^* \xi \in H^i(X\times G, A)$. For a $k$-point  $g\in
G(k)$ consider the map $f_g\colon X\to X\times G$ defined by $x\mapsto
(x,g)$, as above.
Since $x\cdot g=m(x,g)=m(f_g(x))$, we have
$$
g^*\xi=f_g^*\, m^*\,\xi=f^*_g\eta\in H^i(X,A).
$$
By Lemma \ref{lem:constant-map}
$f_g^*\eta$ does not depend on $g$.
Thus $g^*\xi$ does not depend on $g$.
This means that $G(k)$ acts on $H^i(X,A)$ trivially.
\end{proof}


\section{Homogeneous spaces of simply connected groups}

In the proof of the main theorem we shall need a result about strong approximation
in homogeneous spaces of semisimple simply connected groups with connected stabilizers.
If $X = H \backslash G$ is such a homogeneous space, since $G$ is
semisimple and simply connected, the group $\Br(\Gbar)$ is trivial
(see \cite{G}, corollary in the Introduction), hence $\Br_{1,x_0}(X,G)
= \Br_{x_0}(X)$.

\begin{theorem}[{\rm Colliot-Th\'el\`ene and Xu}]\label{prop:simply-connected-homogeneous-spaces}
Let G be a semisimple simply connected $k$-group over a number field $k$,
and let $H\subset G$ be a connected subgroup.
Set $X:=H\backslash G$.
Let $S$ be a non-empty finite set of places of $k$
such that $G(k)$ is dense in $G(\A^S)$.
Then the set of points $x \in  X(\A)$ such that $\langle b, x \rangle = 0$ for all $b \in \Br_{1,x_0}(X,G) = \Br_{x_0}(X)$
coincides with the closure of the set $X(k)\cdot G(k_S)$ in $X(\A)$
for the adelic topology.
\end{theorem}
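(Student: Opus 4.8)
The plan is to prove the two inclusions separately; the first is formal and the second is essentially a reference to \cite{CTX}. For the first, I would show that the closure of $X(k)\cdot G(k_S)$ in $X(\A)$ lies in the set of points orthogonal to $\Br_{1,x_0}(X,G)=\Br_{x_0}(X)$. Since the Manin pairing is continuous in the adelic variable, the orthogonal complement of a fixed subgroup of $\Br(X)$ is closed in $X(\A)$, so it suffices to show that every point $x\cdot g$ with $x\in X(k)$ and $g\in G(k_S)\subset G(\A)$ is orthogonal to $\Br_{1,x_0}(X,G)$. Now $\langle b,x\rangle=0$ for all $b\in\Br(X)$ because $x\in X(k)$, and Corollary \ref{cor:from-compatibility}, applied with the simply connected group $G'=G$ and $\varphi=\id_G$, gives $\langle b,x\cdot g\rangle=\langle b,x\rangle=0$ for every $b\in\Br_{1,x_0}(X,G)$ (one can also argue directly from Corollary \ref{cor:compatibility}, using that $\pi^*b\in\Br_{1,e}(G)=0$ since $G$ is semisimple simply connected). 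Passing to the closure yields the inclusion ``$\supseteq$''.

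For the reverse inclusion, which is the substantial part, I would invoke \cite{CTX}, Theorem 3.7(b). One first records that the present situation is the one of loc.\ cit.: the subgroup $H$ is linear, being a $k$-subgroup of the linear group $G$; the variety $X$ has the $k$-point $x_0$, image of $e\in G(k)$; one has $\kbar[X]^*=\kbar^*$, because $\pi^*$ embeds $\kbar[X]$ into $\kbar[G]$ and $\kbar[G]^*=\kbar^*$ for the semisimple group $G$; and the hypothesis that $G(k)$ is dense in $G(\A^S)$ is exactly the strong approximation assumption on $G$ needed there. Moreover, as recalled just before the statement, $\Br(\Gbar)=0$ by \cite{G}, so $\Br_{1,x_0}(X,G)=\Br_{x_0}(X)$; combined with the splitting $\Br(X)=p_X^*\Br(k)\oplus\Br_{x_0}(X)$ coming from the structure morphism and $x_0^*$, and the vanishing of the Manin pairing on classes coming from $\Br(k)$, this identifies the Brauer--Manin set appearing in our statement with the one treated in \cite{CTX}, Theorem 3.7(b), whose conclusion is precisely that it equals the closure of $X(k)\cdot G(k_S)$ in $X(\A)$.

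The only real point to watch is that the formulation of \cite{CTX}, Theorem 3.7(b) matches ours in full generality: for an arbitrary connected stabilizer $H$, so that the transcendental classes in $\Br_{x_0}(X)\cong\Pic(H)$ that occur when $H^{\mathrm{ss}}$ is not simply connected (as in Counter-example \ref{subsec:Bro-versus-Br}) are genuinely part of the obstruction, and for the same notion of strong approximation off $S$, with the components at places of $S$ free to be moved by $G(k_S)$. I do not expect a genuine difficulty here: the hard work — the descent construction reducing the problem, via a torsor under a group of multiplicative type, to an unconditional strong approximation statement together with arithmetic (Poitou--Tate) duality for tori — is carried out in \cite{CTX}.
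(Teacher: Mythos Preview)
Your argument for the trivial inclusion (closure of $X(k)\cdot G(k_S)$ is contained in the orthogonal set) matches the paper's exactly: continuity of the Manin pairing via Lemma~\ref{lem:Ducros}, together with Corollary~\ref{cor:from-compatibility} applied with $G'=G$ simply connected.

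For the nontrivial inclusion you invoke \cite{CTX}, Theorem~3.7(b) as a black box. The paper takes a slightly different route here, and for a reason: it explicitly notes that the result is ``not stated in these terms'' in \cite{CTX}, so rather than matching hypotheses it gives a short direct argument. The key input is not Theorem~3.7(b) but \cite{CTX}, Theorem~3.3, which says only that an adelic point orthogonal to $\Br_{x_0}(X)$ lies in a $G(\A)$-orbit of a rational point, i.e.\ $x=x_1\cdot g$ with $x_1\in X(k)$ and $g\in G(\A)$. One then uses the hypothesis that $G(k)$ is dense in $G(\A^S)$, hence $G(k)\cdot G(k_S)$ is dense in $G(\A)$, to approximate $g$ by $g_0 g_S$ with $g_0\in G(k)$, $g_S\in G(k_S)$; the point $x_1 g_0\cdot g_S$ then lies in any prescribed neighbourhood of $x$. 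This is precisely the step you leave implicit when you say ``I do not expect a genuine difficulty here.'' There is indeed no genuine difficulty, but the paper's approach has the advantage of isolating exactly which statement from \cite{CTX} is needed (Theorem~3.3, the orbit statement) and making the passage from there to the closure statement transparent, rather than relying on a formulation-matching exercise with Theorem~3.7(b).
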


\begin{proof}
This is very close to a result of Colliot-Th\'el\`ene and Xu, see \cite{CTX}, Theorem 3.7(b).
Since their result is not stated in these terms in \cite{CTX}, we give here a proof
of Theorem \ref{prop:simply-connected-homogeneous-spaces}, following their argument.

We prove the nontrivial inclusion of the theorem.
Let $x \in X(\A)$ be orthogonal to $\Br_{x_0}(X)$.
Then by \cite{CTX}, Theorem 3.3,
there exists $x_1 \in X(k)$ and $g \in G(\A)$ such that $x=x_1\cdot g$.
Let $\sU_X\subset X(\A)$ be an open neighbourhood of $x$.
Clearly there exists an open neighbourhood $\sU_G\subset G(\A)$ of $g$
such that for any $g'\in \sU_G$ we have $x_1\cdot g'\in\sU_X$.
By assumption $G(k)$ is dense in $G(\A^S)$,
hence $G(k)\cdot G(k_S)$ is dense in $G(\A)$.
It follows that there exist $g_0\in G(k)$ and $g_S\in G(k_S)$
such that $g_0 g_S\in \sU_G$,
then $x_1\cdot g_0\cdot g_S\in \sU_X$.
Set $x_2=x_1\cdot g_0$, then $x_2\in X(k)$ and $x_2\cdot g_S\in \sU_X$.
Thus $x$ lies in the closure
of the set $X(k)\cdot G(k_S)$ in $X(\A)$ for the adelic topology.
This proves the nontrivial inclusion.

We prove the trivial inclusion.
Let $x_1\in X(k)$, $g_S\in G(k_S)$ and $b\in \Br_{1,x_0}(X,G)$.
Clearly we have $\langle b,x_1\rangle=0$.
By Corollary \ref{cor:from-compatibility} we have
$\langle b,x_1\cdot g_S\rangle=\langle b,x_1\rangle=0$.
By Lemma \ref{lem:Ducros} below we have $\langle b,x\rangle=0$ for any
$x$ in the closure of the set $X(k)\cdot G(k_S)$.
\end{proof}

\begin{lemma}\label{lem:Ducros}
Let $X$ be a smooth geometrically integral variety over a number field $k$.
Let $b\in\Br(X)$.
Then the function
$$
X(\A)\to\Q/\Z\colon x\mapsto \langle b,x\rangle
$$
is  locally constant in $x$ for the adelic topology in $X(\A)$.
\end{lemma}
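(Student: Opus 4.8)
The plan is to follow the standard argument underlying the continuity of the Manin pairing: evaluate $\langle b,\cdot\rangle$ place by place, show each local term is locally constant on $X(k_v)$, and use a smooth integral model to see that, outside a finite set of places, the local term vanishes on a fixed neighbourhood of the chosen point. First I recall that, by the construction of the pairing \eqref{eq:Manin-pairing} (see \cite{CTSa}, Section 3.1, or \cite{Sk}, Section 5.2), for $x=(x_v)\in X(\A)$ one has
\[
\langle b,x\rangle=\sum_{v\in\Omega}\mathrm{inv}_v\bigl(x_v^*b\bigr),
\]
where $x_v^*b\in\Br(k_v)$ is the pullback of $b$ along $x_v\colon\Spec(k_v)\to X$, $\mathrm{inv}_v\colon\Br(k_v)\hookrightarrow\Q/\Z$ is the local invariant, and only finitely many terms are nonzero. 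So it is enough to prove assertions (a) and (b) below.

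(a) \emph{For each place $v$, the map $\mathrm{ev}_v\colon X(k_v)\to\Br(k_v)$, $x_v\mapsto x_v^*b$, is locally constant for the $v$-adic topology.} Fix $x_v\in X(k_v)$, pick an affine open $U\subseteq X$ containing its image, and put $c:=x_v^*b$, viewed also in $\Br(U)$ via $U\to\Spec(k)$. The class $b|_U-c\in\Br(U)$ is torsion (since $\Br(U)\hookrightarrow\Br(k(U))$) and vanishes at $x_v$; by Gabber's theorem (cf. \cite{dJ}) it is represented by a $\PGL_n$-torsor $Z\to U$, so that $(x_v')^*(b|_U-c)=\partial\bigl([Z_{x_v'}]\bigr)$ for $x_v'\in U(k_v)$, where $\partial\colon H^1(k_v,\PGL_n)\to\Br(k_v)$ is the connecting map. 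Since $\partial([Z_{x_v}])=0$ and $\ker\partial$ is the image of the trivial pointed set $H^1(k_v,\GL_n)=\{*\}$, the torsor $Z_{x_v}$ is trivial, hence has a $k_v$-point. As $Z\to U$ is smooth, the image of $Z(k_v)\to U(k_v)$ contains a $v$-adic neighbourhood $U_v$ of $x_v$ in $X(k_v)$; for every $x_v'\in U_v$ the fibre $Z_{x_v'}$ has a $k_v$-point, so $(x_v')^*(b|_U-c)=0$, i.e. $\mathrm{ev}_v(x_v')=c$. The archimedean places are treated identically.

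(b) \emph{For all but finitely many $v$, the map $\mathrm{ev}_v$ vanishes on a fixed open subset of $X(k_v)$ containing $x_v$.} By spreading out, choose a finite set $S\supseteq\Omega_\infty$ of places such that $X$ extends to a smooth separated scheme $\mathcal X$ over the ring $\sO^S$ of $S$-integers of $k$, $b$ extends to a class $\mathcal B\in\Br(\mathcal X)$, and---after enlarging $S$, which is harmless---$x_v\in\mathcal X(\sO_v)$ for all $v\notin S$. For such $v$ and any $x_v'\in\mathcal X(\sO_v)$ one has $(x_v')^*\mathcal B\in\Br(\sO_v)=0$, because $\sO_v$ is a complete discrete valuation ring with finite residue field; hence $\mathrm{inv}_v(\mathrm{ev}_v(x_v'))=0$.

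Combining (a) and (b), the set $\prod_{v\in S}U_v\times\prod_{v\notin S}\mathcal X(\sO_v)$ is an open neighbourhood of $x$ in $X(\A)$ on which $\langle b,\cdot\rangle$ is identically $\sum_{v\in S}\mathrm{inv}_v(x_v^*b)=\langle b,x\rangle$, which proves the lemma. I expect step (a) to be the only substantive point; step (b) and the assembly are routine manipulations with integral models and the restricted-product topology. (One could also bypass (a): granting the continuity in $x$ of the pairing asserted in the Introduction, since $b$ is torsion of some order $n$ the function $\langle b,\cdot\rangle$ takes values in the finite, hence discrete, subset $\tfrac1n\Z/\Z$ of $\Q/\Z$, so continuity forces local constancy.)
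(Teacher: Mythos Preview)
Your proof is correct and follows essentially the same route as the paper's own argument: reduce to local constancy of the evaluation map $X(k_v)\to\Br(k_v)$ at each place, pass to an affine open, subtract the value at the chosen point, represent the resulting class by a $\PGL_n$-torsor, and use smoothness to see that the locus where this torsor has a rational point is open. The only differences are cosmetic: the paper cites Hoobler's result (valid for smooth affine schemes) rather than Gabber/de~Jong to produce the $\PGL_n$-torsor, and it compresses your step~(b) into a one-line reference to \cite{San}, Lemma~6.2. One small imprecision: in step~(a) you write ``$c:=x_v^*b$, viewed also in $\Br(U)$ via $U\to\Spec(k)$'', but $c\in\Br(k_v)$, so you should base-change $U$ to $k_v$ first; the paper is explicit about this (``$X$ is assumed to be affine over $k_v$'').
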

\begin{proof}
Arguing as in \cite{San}, Proof of Lemma 6.2, we can reduce our lemma
to the local case.
In other words, it is enough to prove that for any completion $k_v$ of $k$
the function
$$
\phi_b : X(k_v)\to\Br(k_v)\colon x\mapsto b(x)
$$
is  locally constant in $x$.
This follows from non-published results from the thesis of Antoine Ducros,
cf.  \cite{Du}, Part II, Propositions (0.31) and (0.33).

Since those results of Ducros are not published, we give another proof
of this fact. We are grateful to J.-L. Colliot-Th\'el\`ene and to the
referee for this proof. Let $x \in X(k_v)$. The
problem is local, so we can replace $X$ by an affine open subset
containing $x$. From now on, $X$ is assumed to be affine over $k_v$.

To show that the map $\phi_b$ is locally constant around $x$, we may
replace $b$ by $b - b(x) \in \Br(X)$. Now we have $b(x) = 0$ and we
want to prove that $\phi_b$ is zero in a topological neighbourhood of
$x$.

Since $X$ is a smooth affine $k_v$-variety, by a result by Hoobler (see
\cite{Hoo}, Corollary 1) there exists a class $\eta \in H^1(X, \PGL_n)$
such that $b$ is the image of $\eta$ by the usual coboundary map. The
class $\eta$ is represented by an $X$-torsor  $f : Y \to X$ under $\PGL_n$.

For $x'\in X(k_v)$ let $\eta(x')\in H^1(k_v, \PGL_n)$ denote the image of $\eta$
under the  map $(x')^*\colon H^1(X, \PGL_n)\to H^1(k_v, \PGL_n)$.
Consider the exact sequence
$$
1=H^1(k_v,\GL_n)\to H^1(k_v, \PGL_n)\labelto{\Delta} \Br(k_v) \, .
$$
It is clear that $b(x')=\Delta(\eta(x'))$.
From the exact sequence we see that $b(x')=0$ if and only if $\eta(x')=1$.
On the other hand, clearly $\eta(x')$ is the class of the $k_v$-torsor $f^{-1}(x')$ under $\PGL_n$.
It follows that $b(x')=0$ if and only if  $f^{-1}(x')$ contains a $k_v$-point,
i.e.  $x'=f(y)$ for some $y\in Y(k_v)$. Hence the set of points $x' \in X(k_v)$ such that $b(x')= 0$
is exactly the subset $f(Y(k_v))$ of $X(k_v)$.
We now conclude by the implicit function theorem: since $f : Y \to X$ is a smooth
morphism of $k_v$-schemes, the image $f(Y(k_v))$ is an open subset of
$X(k_v)$. Therefore, $\phi_b$ is zero on the open neighbourhood
$f(Y(k_v))$ of $x$, which concludes the proof.
\end{proof}



\section{Proof of the main theorem}
Throughout this section we consider $X = H \backslash G$ satisfying the assumptions of Theorem \ref{thm:Harari-generalized}.
Let $x\in X(\A)$ be an adelic point, we write $x=(x^f,x_\infty)$,
where $x^f\in X(\A^f),\ x_\infty\in X(k_\infty)$.
Let $S$ be a finite set of places of $k$ containing all archimedean places,
and we set $S_f:=S\cap\Omega_f$.
Let $\sU_{X}^f\subset X(\A^f)$ be an open neighbourhood of   $x^f$.
For $v\in\Omega_\infty$, we set $\sU_{X,v}$ to be the connected component of $x_v$ in $X(k_v)$.
We set $\sU_{X,\infty}:=\prod_{v\in \Omega_\infty} \sU_{X,v}$,
then $\sU_{X,\infty}$ is the connected component of $x_\infty$ in $X(k_\infty)$.
We set
$$
\sU_{X}:=\sU_{X}^f\times \sU_{X,\infty}\subset X(\A)
$$
and
$$
\sU'_{X} := \sU_{X}\cdot G\scu(k_{S_f})=\sU_{X}\cdot G\scu(k_S) =
{\sU'}_{X}^f \times \sU'_{X,\infty} \, ,
$$
where ${\sU'}_{X}^f = {\sU}_{X}^f\cdot G\scu(k_{S_f})$
and $\sU'_{X,\infty} =\sU_{X,\infty}  = \sU_{X,\infty}\cdot G\scu(k_{\infty})$
(because $G\scu(k_\infty)$ is a connected topological group, see \cite{OV}, Theorem  5.2.3).
Then $\sU_{X}$ and $\sU'_X$ are  open neighbourhoods of $x$ in $X(\A)$.
We  say that $\sU_{X}$ is
 \emph{the special neighbourhood of $x$ defined by $\sU_{X}^f$}.

For the sake of the argument it will be convenient to introduce
Property (\P) of a pair $(X,G)$:

(\P) \emph{For any point $x\in X(\A)$  orthogonal to $\Br_1(X,G)$,
and for any open neighbourhood $\sU_{X}^f$ of $x^f$,
the set $X(k)\cdot G\scu(k_S)\cap\sU_{X}$
(or equivalently the set $X(k)\cap\sU'_X$,
or equivalently the set $X(k)\cdot G\scu(k_{S_f})\cap\sU_{X}$ )
is non-empty,
where $\sU_{X}$ is the special neighbourhood of $x$ defined by $\sU_{X}^f$.}

  The nontrivial part of Theorem \ref{thm:Harari-generalized} precisely says that property
 (\P)  holds for any $X$, $G$ and $S$ as in the theorem.

We start proving Theorem \ref{thm:Harari-generalized}.
The structure of the proof is somewhat similar to that of Theorem A.1 of \cite{BCTS}.

\medskip
\renewcommand{\ab}{^{\textup{abvar}}}

\begin{subsec}\label{subsec:First-reduction}
\emph{First reduction.}

We reduce Theorem \ref{thm:Harari-generalized} to the case $G\uu=1$.
Let $X$ and $G$ be as in the theorem.
We represent $G$ as an extension
$$
1\to G\lin\to G\to G\ab\to 1,
$$
where $G\lin$ is a connected linear algebraic $k$-group
and $G\ab$ is an abelian variety over $k$.
We use the notation of \ref{subsec:groups}.
Set $G':=G/G\uu$, $Y:=X/G\uu$.
We have a canonical epimorphism $\varphi\colon G\to G'$
and a canonical smooth $\varphi$-equivariant morphism $\psi\colon X\to Y$.
We have  ${G'}\lin=G\lin/G\uu$,
hence ${G'}\uu=1$.
We have ${G'}\ab=G\ab$, hence $\Sha({G'}\ab)$ is finite.

Assume that the pair $(Y,G')$ has Property (\P).
We prove that the pair $(X,G)$ has this property.
Let $x\in X(\A)$ be a point orthogonal to $\Br_1(X,G)$.
Set $y:=\psi(x)\in Y(\A)$. By functoriality, $y$ is orthogonal to $\Br_1(Y,G')$.
Let  $\sU_{X}^f$ be as in (\P), and let $\sU_X$, $\sU'_X$ be the special neighbourhoods of $x$ defined by $\sU_X^f$.
Note that $\sU_X'={\sU_X'}^f\times \sU_{X,\infty}$, where ${\sU_X'}^f$ is an open subset of $X(\A^f)$.
Indeed, $G\scu(k_v)$ is connected for all $v\in \Omega_\infty$ (see \cite{OV}, Theorem  5.2.3).

Set $\sU_{Y}^f:=\psi(\sU_{X}^f)\subset Y(\A^f)$,  $\sU_{Y}:=\psi(\sU_{X})\subset Y(\A)$
and  $\sU_{Y}':=\psi(\sU_{X}')\subset Y(\A)$.
Since $G\uu$ is connected, by Lemma \ref{lem:open-map}
$\sU_{Y}^f$ is open in $Y(\A^f)$ and   $\sU_{Y}$ and $\sU'_{Y}$ are open in $Y(\A)$.
Set $\sU_{Y,v}:=\psi(\sU_{X,v})$.
By \cite{BCTS}, Lemma A.2, for each
$v\in\Omega_\infty$ the set $\sU_{Y,v}$ is the
connected component of $y_v$ in $Y(k_v)$.
Set $\sU_{Y,\infty}:=\prod_{v\in \Omega_\infty}\sU_{Y,v}$.
We have $\sU_Y=\sU_Y^f\times \sU_{Y,\infty}$.
We see that $\sU_{Y}$ is the special neighbourhood of $y$ defined by $\U_Y^f$.
From the split exact sequence
$$
1\to G\uu\to G\scu\to {G'}\sc\to 1 \, ,
$$
we see that the map $G\scu(k_S)\to {G'}\sc(k_S)$ is surjective.
Note that ${G'}\scu={G'}\sc$.
It follows that
$$
\sU_Y'=\sU_Y. G\scu(k_S)=\sU_Y.{G'}\sc(k_S)=\sU_Y.{G'}\scu(k_S).
$$
Since the pair $(Y,G')$ has Property (\P), there exists a $k$-point $y_0 \in Y(k)\cap \sU_Y'$.

Let $X_{y_0}$ denote the fibre of $X$ over $y_0$.
It is a homogeneous space of the unipotent group  $G\uu$.
By Lemma \ref{lem:unipotent-SA}, $X_{y_0}(k)\neq\emptyset$ and $X_{y_0}$
has the strong approximation property away from $\Omega_\infty$:
the set $X_{y_0}(k)$ is dense in $X_{y_0}(\A^f)$.
Consider the set
$\sV^f:=X_{y_0}(\A^f)\cap{\sU'}_{X}^f$.
By Corollary \ref{cor:unipotent-R}, for any $v\in\Omega_\infty$
the set $X_{y_0}(k_v)$ is connected, and by Lemma \ref{lem:unipotent-k}, $X_{y_0}(k_v)$ is one orbit under $G\uu(k_v)$.
Set $\sV:=\sV^f\times X_{y_0}(k_{\infty})$.

Let $v\in\Omega_\infty$.
We show that $X_{y_0}(k_v)\subset\sU_{X,v}$.
Since $y_0\in\sU_Y'=\psi(\sU_X')$, there exists a point $x_v\in\sU_{X,v}$ such that $y_0=\psi(x_v)$.
Clearly $x_v\in X_{y_0}(k_v)$.
Since $X_{y_0}(k_v)$ is one orbit under $G\uu(k_v)$,
we see that $X_{y_0}(k_v)=x_v\cdot G\uu(k_v)\subset \sU_{X,v}$, because $G\uu(k_v)$ is a connected group.
Thus $X_{y_0}(k_\infty)\subset \sU_{X,\infty}$ and
$\sV\subset\sU_X^{\prime f}\times\sU_{X,\infty}=\sU'_X$, hence $\sV\subset X_{y_0}(\A)\cap\sU'_{X}$.

Since  $y_0\in\psi(\sU'_{X})$, the set $\sV$ is non-empty.
Since by Lemma \ref{lem:unipotent-SA} $X_{y_0}(k)$
is dense in $X_{y_0}(\A^f)$,
there is a point $x_0\in X_{y_0}(k)\cap\sV$.
Clearly $x_0\in X(k) \cap \sU'_{X}$.
Thus the pair $(X,G)$ has Property (\P).
We see that in the proof of Theorem \ref{thm:Harari-generalized}
we may assume that $G\uu=1$.
\end{subsec}

\begin{subsec}\label{subsec:Second-reduction}
 \emph{Second reduction.}

By \cite{BCTS}, Proposition 3.1 we may regard $X$ as a homogeneous space
of another connected  group $G'$ such that ${G'}\uu=\{1\}$,  ${G'}\sss$
is semisimple simply connected, and the stabilizers of
the geometric points of $X$ in $G'$ are linear and connected.
We have ${G'}\sc=G\sc$, hence ${G'}\scu=G\scu$ (because $G\scu=G\sc$ and ${G'}\scu={G'}\sc$).
It follows from the construction in the proof of Proposition  3.1 of \cite{BCTS}
that there is a surjective homomorphism $G\ab\to {G'}\ab$.
Since by assumption $\Sha(G\ab)$ is finite, we obtain from
\cite{BCTS}, Lemma A.3 that $\Sha({G'}\ab)$ is finite.

Let us prove that if a point $x \in X(\A)$ is orthogonal to
$\Br_1(X,G)$, then it is orthogonal to $\Br_1(X,G')$.
More precisely, we prove that $\Br_1(X,G')$ is a subgroup of $\Br_1(X,G)$.

By construction (see \cite{BCTS}, proof of Proposition ~3.1),
there is an exact sequence of connected algebraic groups
$$
1 \to S \to G' \xrightarrow{q} G_1 \to 1,
$$
where $G_1$ is the quotient of $G$ by the central subgroup $Z(G) \cap H$ and $S$ is a $k$-torus.
Consider the following natural commutative diagram
\begin{displaymath}
\xymatrix{
& & & X & \\
1 \ar[r] & S \ar[r] & G' \ar[ru]^{\pi'} \ar[r]^q & G_1 \ar[r] \ar[u]^{\pi_1} & 1 \\
& & & G \ar[u]^p \ar@/_2pc/[uu]_(.65){\pi} & \, ,
}
\end{displaymath}
where the maps $\pi$, $\pi'$ and $\pi_1$ are the natural quotient maps.
From this diagram, we deduce the following one, where the second line is exact
(see the top row of diagram \eqref{eq:Sansuc-cor-1}):
\begin{displaymath}
\xymatrix{
& \Br(X) \ar[d]^{\pi_1^*} \ar@/_3pc/[dd]_(.65){\pi^*} \ar[rd]^{{\pi'}^*} & \\
0 = \Pic(\ov{S}) \ar[r] & \Br(\ov{G_1}) \ar[d]^{p^*} \ar[r]^{q^*} & \Br(\ov{G'}) \\
& \Br(\ov{G}) & \, .
}
\end{displaymath}
Therefore, the injectivity of the map $q^* \colon \Br(\ov{G_1}) \to \Br(\ov{G'})$
implies that the natural inclusion $\Br_{1,x_0}(X,G_1) \subset \Br_{1,x_0}(X,G')$ is an equality.
And by functoriality $\Br_{1,x_0}(X,G_1)$ is a subgroup of $\Br_{1,x_0}(X,G)$.

Thus $\Br_{1,x_0}(X,G') = \Br_{1,x_0}(X,G_1)$ is a subgroup of $\Br_{1,x_0}(X,G)$.
It follows that if a point $x \in X(\A)$ is orthogonal to
$\Br_1(X,G)$, then it is orthogonal to $\Br_1(X,G')$.

Thus if Theorem \ref{thm:Harari-generalized} holds for the pair $(X,G')$,
then it holds for $(X,G)$.
We see that we may assume in the proof of Theorem \ref{thm:Harari-generalized}
that  $G\lin$ is reductive, $G\sss$ is  simply connected,
and the stabilizers of the geometric points of $X$ in $G$ are linear and connected.
\end{subsec}

Now in order to prove Theorem  \ref{thm:Harari-generalized} it is enough to prove
the following Theorem \ref{thm:non-connected-Manin}.

\begin{theorem} \label{thm:non-connected-Manin}
Let $k$ be a number field,
$G$ a connected $k$-group, and $X := H \backslash G$ a homogeneous space of $G$ with connected stabilizer $H$.
Assume:

{\rm (i)} $G\uu=\{1\}$,

{\rm (ii)} $H \subset G\lin$, i.e. $H$ is linear,

{\rm (iii)} $G\sss$ is simply connected,

{\rm (iv)} $\Sha(G\ab)$ is finite.

\noindent Let $S\supset\Omega_\infty$ be a finite set of places of $k$
containing  all  archimedean places.
We assume that $G\sc(k)$ is dense in $G\sc(\A^S)$.
Then the pair $(X,G)$ has Property (\P).
\end{theorem}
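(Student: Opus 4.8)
The plan is to break the problem into the two cases available in the literature---torsors under semiabelian varieties (Harari, \cite{H}, Theorem~4) and homogeneous spaces of semisimple simply connected groups (Colliot-Th\'el\`ene--Xu, Theorem~\ref{prop:simply-connected-homogeneous-spaces})---by fibring $X$ over the quotient $Y:=X/G\sc$ and gluing the two halves with the compatibility results of Section~\ref{sec:compatibility} and the Sansuc-type sequences of Section~\ref{sec:Sansuc}. The key preliminary remark is that, since $G\uu=\{1\}$, the group $G\lin$ is reductive, so $G\sss=G\sc$ is a \emph{connected normal} $k$-subgroup of $G$ and $G\sab=G/G\sc$ is a semiabelian variety.

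First I would form $Y:=X/G\sc$ (it exists by \cite{B96}, Lemma~3.1), with quotient map $\psi\colon X\to Y$, equivariant for $\varphi\colon G\to G\sab$. As $H$ is linear, its image $\Hbar$ in $G\sab$ is a connected affine subgroup, hence a subtorus of $G\tor$, so $Y=\Hbar\backslash G\sab$ is a torsor under the semiabelian variety $A:=G\sab/\Hbar$, and $A\abvar=G\abvar$, so $\Sha(A\abvar)$ is finite; this is the construction of \cite{BCTS}, Subsection~3.3. I would then transfer the Brauer--Manin condition: with $x_0=He$ and $y_0=\psi(x_0)$, the equality $\psi\circ\pi_{x_0}=\pi^Y_{y_0}\circ\varphi$ of orbit maps together with functoriality of the Brauer group gives $\psi^*(\Br_1(Y))\subseteq\psi^*(\Br_1(Y,G\sab))\subseteq\Br_1(X,G)$, and functoriality of the Manin pairing then shows that $y:=\psi_*(x)\in Y(\A)$ is orthogonal to $\Br_1(Y)$. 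By Harari's theorem (\cite{H}, Theorem~4) applied to the semiabelian torsor $Y$, the image of $y$ in $Y(\A)_{\bullet}$ lies in the closure of $Y(k)$. Since $\psi\colon X(\A)\to Y(\A)$ is open (Lemma~\ref{lem:open-map}, $G\sc$ being connected and normal) and carries the connected component of $x_v$ onto that of $y_v$ at each $v\mid\infty$ (\cite{BCTS}, Lemma~A.2), I can choose $y_1\in Y(k)$ lying in $\psi(\sU_X^f)$ at the finite places and in the component of $y_v$ at each archimedean place, and then lift it, again by openness of $\psi$ on the adeles, to a point $x'\in\sU_X$ with $\psi(x')=y_1$.

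It remains to handle the fibre $F:=\psi^{-1}(y_1)$, a geometrically homogeneous space of $G\sc$ whose geometric stabilizer $H_0$ is linear but possibly disconnected. The subgroup of $\Br(F)$ relevant to the Hasse principle and to strong approximation for $F$ is $\Br_{1,x_1}(F,G\sc)$, which equals $\Br_{x_1}(F)$ since $\Br(\overline{G\sc})=0$, and which, once $F$ has a $k$-point, is a quotient of $\Pic(H_0)$ by the Sansuc sequence~\eqref{eq:Sansuc-3} (using $\Pic(G\sc)=0=\Br_{1,e}(G\sc)$); in particular it is finite. By functoriality of the maps $\Delta_{\bullet/\bullet}$ of \ref{subsec:definiton-of-DelataX}, the part of this group that is restricted from $\Br_1(X,G)$ along $F\hookrightarrow X$ pairs trivially with $x'|_F$---here one uses that $x$ is orthogonal to $\Br_1(X,G)$, that $x'$ may be chosen arbitrarily close to $x$, and Lemma~\ref{lem:Ducros}; the remaining classes, together with the disconnectedness of $H_0$, I would absorb into a descent along the finite \'etale cover $H_0^\circ\backslash G\sc\to F$, which after a suitable twist produces a homogeneous space $F'$ of $G\sc$ with \emph{connected} stabilizer, equipped with an adelic point lying over $x'|_F$ and orthogonal to $\Br(F')$. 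Borovoi's Hasse principle (\cite{B93}) endows $F'$ with a $k$-point, and, since $G\sc(k)$ is dense in $G\sc(\A^S)$ (a property preserved by the reductions of Subsections~\ref{subsec:First-reduction}--\ref{subsec:Second-reduction}), Theorem~\ref{prop:simply-connected-homogeneous-spaces} places the adelic point in the closure of $F'(k)\cdot G\sc(k_S)$. Pushing this point down to $F$ and then into $X$, and using $F(k)\subseteq X(k)$ together with $G\scu=G\sc$, yields a point of $X(k)\cdot G\scu(k_S)$ inside $\sU_X$, which is Property~(\P).

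The step I expect to be hardest is this last one, specifically the Brauer-group bookkeeping: one must verify that, after the descent along $H_0^\circ\backslash G\sc\to F$, the whole finite group $\Br_{1,x_1}(F,G\sc)$ is accounted for by $\Br_1(X,G)$ under restriction, so that the orthogonality hypothesis on $x$ genuinely survives both the perturbation from $x$ to $x'$ and the passage to the fibre. This is exactly where ``$\Br_1(X,G)$''---rather than the smaller $\Bro(X)$---enters, and it is why the compatibility statements of Section~\ref{sec:compatibility}, describing how the Manin pairing transforms under the $G\sc$-action and under $\psi$, are indispensable.
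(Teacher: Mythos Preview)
Your overall architecture---fibre $X$ over $Y=X/G\sc$, invoke Harari on $Y$, then Colliot-Th\'el\`ene--Xu on the fibre---is exactly the paper's. The gap is precisely where you flag it: controlling $\Br_{x_1}(F)$ in terms of $\Br_1(X,G)$, and it is a genuine gap, not just bookkeeping.

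Two problems arise. First, the stabilizer $H_0=H\cap G\sss$ in the fibre need not be connected (equivalently, $H\tor\to G\sab$ need not be injective), so Theorem~\ref{prop:simply-connected-homogeneous-spaces} does not apply directly. Your proposed descent along $H_0^\circ\backslash G\sc\to F$ gives no mechanism for making the lifted adelic point orthogonal to $\Br(F')$: the twist introduces new Brauer classes over which you have no control. Second, even when $H_0$ is connected, the restriction $\Br_1(X,G)\to\Br_{x_1}(F)$ is not automatically surjective; its cokernel is governed by the cokernel of $\Br_{1,e}(G\sab)\to\Br_{1,e}(H\tor)$ (this is the content of Proposition~\ref{prop : key prop}, proved via Lemmas~\ref{lem sec1} and~\ref{lem sec2}). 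Both obstructions must vanish before the fibre argument can run, and your sketch provides no reason for either.

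The paper removes both at once by a single construction (Construction~\ref{con:torsor}): take a coflasque resolution $0\to H\tor\to P\to Q\to 0$ with $P$ quasi-trivial, set $F:=G\times P$, embed $H$ diagonally, and form $W:=H\backslash F\to X$. Then $H\tor\hookrightarrow F\sab=G\sab\times P$ is injective (so the fibre stabilizer is $H\ssu$, connected and character-free), and the coflasque property forces $\Br_{1,e}(F\sab)\to\Br_{1,e}(H\tor)$ to be surjective (Lemma~\ref{lem:Brauersab}). With these hypotheses, Proposition~\ref{prop : key prop} yields the surjectivity onto the Brauer group of the fibre. One must then lift the orthogonality of $x$ to some $w\in W(\A)$; since $W\to X$ is a torsor under the quasi-trivial torus $P$, any lift can be translated by a suitable $p\in P(\A)$ to kill the defect (Lemma~\ref{lem:torsor-quasi-trivial-exact} and Lemma~\ref{lem:surjectivity}).

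There is also an order-of-operations issue in your sketch. Lemma~\ref{lem:Ducros} gives local constancy one Brauer class at a time, so you must fix a \emph{finite} subset $M\subset\Br_1(X,G)/\Br(k)$ \emph{before} Harari's theorem produces $y_1$, yet this $M$ must still surject onto $\Br(\text{fibre})/\Br(k)$ for whichever fibre appears. The paper arranges this with Lemma~\ref{lem:Br(X1)Br(X_2)} (resting on Theorem~\ref{thm:Brauer}, that a connected group acts trivially on the geometric Brauer group): the Brauer groups of all fibres are canonically isomorphic compatibly with restriction from $X$, so a single $M$ works uniformly.
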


The homogeneous space $X$ defines a
natural homomorphism $H\tor \to G\sab$.
We first prove a crucial special case of Theorem \ref{thm:non-connected-Manin}.

\begin{proposition} \label{5.4-Manin} With the hypotheses of Theorem
$\ref{thm:non-connected-Manin}$, assume that $H\tor$ {\em injects} into
$G\sab$ (i.e. $H \cap G\sss=H\ssu$), and that the homomorphism $\Br_{1,e}(G\sab) \to \Br_{1,e}(H\tor)$ is surjective.
Then the pair  $(X,G)$ has Property (\P).
\end{proposition}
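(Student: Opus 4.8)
The plan is to exhibit $X$ as a fibration over a torsor under a semiabelian variety, with fibres that are homogeneous spaces of the simply connected group $G\sc$, and to combine Harari's theorem on the base with the theorem of Colliot-Th\'el\`ene and Xu (Theorem \ref{prop:simply-connected-homogeneous-spaces}) on a fibre, using the two extra hypotheses to transport the Brauer--Manin condition between the layers. Since $G\uu=1$, the subgroup $G\sss$ is characteristic in $G\lin=G\red$, hence normal in $G$; set $H':=H\cdot G\sss$ and $Y:=H'\backslash G$. Then $G$ acts on $Y$ through $G\sab=G/G\sss$, the quotient map $\psi\colon X\to Y$ is $G$-equivariant, and the hypothesis $H\cap G\sss=H\ssu$ identifies $H'/G\sss$ with $H\tor$, so $Y=H\tor\backslash G\sab$. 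As $H\tor$ is a torus it has trivial image in $G\abvar$, hence maps (by hypothesis, injectively) into $G\tor\subset G\sab$, and $Y$ is the underlying variety of a torsor under the semiabelian variety $A:=G\sab/H\tor$; since $A\abvar=G\abvar$, the group $\Sha(A\abvar)$ is finite. A double-coset computation identifies every fibre $X_{y_0}=\psi^{-1}(y_0)$, $y_0\in Y(k)$, $G\sss$-equivariantly with a homogeneous space $H_1\ssu\backslash G\sss$, where $H_1\subset G$ is the stabilizer of a point of the fibre --- an inner twist of $H$, so $H_1\tor=H\tor$ --- and $H_1\ssu=H_1\cap G\sss$; since $G\sc=G\sss=G\scu$ (using $G\uu=1$ and hypothesis (iii)), this is a homogeneous space of the simply connected group $G\scu$ with connected linear stabilizer.

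\textbf{Descent to the base.} Let $x\in X(\A)$ be orthogonal to $\Br_1(X,G)$ and set $y:=\psi(x)\in Y(\A)$. Since $\psi$ is equivariant for $G\to G\sab$, functoriality gives $\psi^*\Br_1(Y)\subseteq\Br_1(X,G)$, so by functoriality of the Manin pairing $y$ is orthogonal to $\Br_1(Y)$, the algebraic Brauer group entering Harari's strong approximation theorem for the semiabelian torsor $Y$. Applying that theorem (\cite{H}, Theorem 4 --- its hypothesis on $\Sha$ holds, and $G\sc$ acts trivially on $Y$) with respect to the set $\Omega_\infty$ of archimedean places yields $y_0\in Y(k)$ arbitrarily close to $y$ at every finite place and in the prescribed connected component of $Y(k_v)$ for each $v\in\Omega_\infty$.

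\textbf{Lifting and the fibre.} Fix $\sU_X^f$ and the associated special neighbourhoods $\sU_X\subset\sU_X'$ of $x$ as in Property (\P). Since $G\sss$ is normal in $G$, the map $X(\A)\to Y(\A)$ is open (Lemma \ref{lem:open-map}); using this, openness of $\psi$ on $k_v$-points, and \cite{BCTS}, Lemma A.2, a sufficiently close $y_0$ lifts to a point $x'\in X(\A)$ with $\psi(x')=y_0$ all of whose components lie in $\sU_X'$, so that $x'\in X_{y_0}(\A)$ and $x'$ is still orthogonal to $\Br_1(X,G)$ by Lemma \ref{lem:Ducros}. Now comes the step using the surjectivity of $\Br_{1,e}(G\sab)\to\Br_{1,e}(H\tor)$: combining the Sansuc exact sequences of Theorem \ref{lem Sansuc} and Corollary \ref{cor:Sansuc-cor} for $X$, $Y$ and $X_{y_0}$ with the compatibility results of Section \ref{sec:compatibility}, one shows that $x'$ is orthogonal to $\Br_{x_1}(X_{y_0})=\Br_{1,x_1}(X_{y_0},G\sss)$ for a base point $x_1\in X_{y_0}(k)$: the part of $\Br_{x_1}(X_{y_0})$ not pulled back from $\Br_1(X,G)$ maps into $\Br_{1,e}(H\tor)$, which by hypothesis is covered by $\Br_{1,e}(G\sab)$, and the corresponding classes on $Y$ are algebraic, hence killed by the orthogonality of $y$; the same input gives $X_{y_0}(k)\neq\emptyset$ via \cite{B93}. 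Since the hypothesis makes $G\sss(k)$ (which is $G\sc(k)$) dense in $G\sss(\A^S)$, Theorem \ref{prop:simply-connected-homogeneous-spaces} puts $x'$ in the closure of $X_{y_0}(k)\cdot G\sss(k_S)$; approximating $x'$ there and using $X_{y_0}(k)\cdot G\sss(k_S)\subset X(k)\cdot G\scu(k_S)$ together with $x'\in\sU_X'$, we obtain a point of $X(k)\cdot G\scu(k_S)$ in $\sU_X$. Hence $(X,G)$ has Property (\P).

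\textbf{The main obstacle.} The delicate part is the Brauer-group bookkeeping just invoked: one must check precisely that the Brauer--Manin orthogonality of $x$ descends to $y$ for exactly the Brauer group required by Harari's theorem, and, after lifting, transfers to $X_{y_0}$ for exactly the group $\Br_{x_1}(X_{y_0})$ required by Colliot-Th\'el\`ene and Xu; the hypothesis ``$\Br_{1,e}(G\sab)\to\Br_{1,e}(H\tor)$ surjective'' is exactly what makes the second transfer go through, through the exact sequences of Section \ref{sec:Sansuc} and the compatibility of Section \ref{sec:compatibility}. A secondary, routine point is keeping track of connected components at the real places in the maps $X\to Y$ and $X_{y_0}\hookrightarrow X$, handled by \cite{BCTS}, Lemma A.2 and the connectedness of $G\sc(k_v)$.
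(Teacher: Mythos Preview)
Your overall fibration strategy --- push down to $Y = X/G\sss$, apply Harari's theorem on the base, then apply Colliot-Th\'el\`ene--Xu on the fibre --- matches the paper's approach exactly. But there is a genuine gap in the lifting step. You claim that the lifted point $x' \in X_{y_0}(\A) \cap \sU_X'$ is ``still orthogonal to $\Br_1(X,G)$ by Lemma~\ref{lem:Ducros}''. That lemma only says that for each fixed $b \in \Br(X)$ the map $x \mapsto \langle b, x \rangle$ is locally constant; it does \emph{not} produce a single neighbourhood on which all of $\Br_1(X,G)$ pairs trivially, and $\Br_1(X,G)/\Br(k)$ is in general infinite. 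So orthogonality of $x'$ to $\Br_1(X,G)$ is unjustified, and the subsequent transfer to $\Br_{x_1}(X_{y_0})$ collapses.

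The paper repairs this by reversing the order of operations. The key technical statement (Proposition~\ref{prop : key prop}) is that the restriction $\Br_{1,x_0}(X,G) \to \Br_{x_0}(X_{y_0})$ is \emph{surjective}; this is where the hypothesis that $\Br_{1,e}(G\sab) \to \Br_{1,e}(H\tor)$ is surjective enters, via a diagram chase through an auxiliary variety $Z$ and two exact sequences derived from Theorem~\ref{lem Sansuc}. (Your description --- that the part of $\Br_{x_1}(X_{y_0})$ ``not pulled back from $\Br_1(X,G)$'' maps into $\Br_{1,e}(H\tor)$ and is then killed separately --- is not what happens: nothing fails to pull back.) Since $\Br(X_{y_0})/\Br(k) \cong \Pic(H\ssu)$ is \emph{finite} (Lemma~\ref{lem:injective}), one lifts a finite set of generators to a finite subset $M \subset \Br_1(X,G)/\Br(k)$ and, by Lemma~\ref{lem:finite}, shrinks $\sU_X$ to be orthogonal to $M$ \emph{before} applying Harari. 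Then any point of $\sV = X_{y_1}(\A) \cap \sU_X$ is automatically orthogonal to the image of $M$ in $\Br(X_{y_1})/\Br(k)$; a separate argument (Lemma~\ref{lem:Br(X1)Br(X_2)}, using that $G(\kbar)$ acts trivially on $\Br(\Xbar)$) shows that $M$ still surjects onto $\Br(X_{y_1})/\Br(k)$ for the rational point $y_1$ produced by Harari, which need not be the base point $y_0$.
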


\begin{construction}\label{subsec:modulo-G-ss}
 Set $Y:=X/G\sss$. Then $Y$ is a homogeneous space of
the semiabelian variety $G\sab$, hence it is a \emph{torsor} of some semiabelian variety $G'$.
 We have ${G'}\ab=G\ab$, hence
$\Sha({G'}\ab)$ is finite.
We have a canonical smooth morphism
$\psi\colon X\to Y$.
\end{construction}

To prove Proposition \ref{5.4-Manin} we need a number of lemmas and propositions.

The following proposition is crucial for our proof of Proposition \ref{5.4-Manin} by d\'evissage.

\begin{proposition}\label{prop : key prop}
Let $G,\ X$ be as in Proposition \ref{5.4-Manin}.
Let $Y,\ \psi\colon X\to Y$ be as in Construction \ref{subsec:modulo-G-ss}.
Let $x_0\in X(k)$, $y_0:=\psi(x_0)$, $X_{y_0}:=\psi^{-1}(y_0)=x_0\cdot G\sss$.
Then the natural pullback homomorphism
$$
i^* : \Br_{1,x_0}(X, G) \to \Br_{x_0}(X_{y_0})
$$
is surjective.
\end{proposition}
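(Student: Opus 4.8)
The plan is to analyze the fibration $\psi\colon X\to Y$, where $Y=X/G\sss$ and $X_{y_0}=x_0\cdot G\sss$ is a homogeneous space of the simply connected semisimple group $G\sss$ with connected (linear) stabilizer, and to compare the two Sansuc-type exact sequences attached to the torsors $G\lin\to X$ (over $X$) and its restriction over the fibre. First I would identify $\Br_{x_0}(X_{y_0})$ using the exact sequence \eqref{eq:Sansuc-3} of Theorem \ref{lem Sansuc} applied to the torsor $G\sss\to X_{y_0}$ (equivalently, $G\sc\to X_{y_0}$ since $G\sss=G\sc$ by hypothesis (iii)): since $\Br(\overline{G\sc})=0$ (\cite{G}, corollary in the Introduction) and $\Pic(\overline{G\sc})=0$, one gets $\Br_{x_0}(X_{y_0})=\Br_{1,x_0}(X_{y_0},G\sc)$, and the sequence reads
\begin{equation*}
\Pic(G\sc)\xrightarrow{\ } \Pic(H_0)\xrightarrow{\Delta}\Br_{x_0}(X_{y_0})\xrightarrow{\pi^*}\Br_{1,y_0'}(G\sc)=0,
\end{equation*}
where $H_0$ is the stabilizer of $x_0$ in $G\sc$, so $\Delta\colon\Pic(H_0)\to\Br_{x_0}(X_{y_0})$ is surjective. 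Hence it suffices to lift classes coming from $\Pic(H_0)$ to $\Br_{1,x_0}(X,G)$.

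The key step is a functoriality/diagram-chase argument comparing the torsor $\pi\colon G\lin\to X$ with the torsor $G\sc\to X_{y_0}$ via the inclusion $i\colon X_{y_0}\hookrightarrow X$ and the homomorphism $G\sc\to G\lin$. By the functoriality clause in Theorem \ref{lem Sansuc}, these two copies of sequence \eqref{eq:Sansuc-3} fit into a commutative ladder; the pullback $i^*\colon\Br(X)\to\Br(X_{y_0})$ carries $\Br_{1,x_0}(X,G)$ into $\Br_{x_0}(X_{y_0})$ (one checks the compatibility of the $\Br_1$-conditions using that $G\sc\to G\lin\to G$ and $\pi_{x_0}\circ i$ factors appropriately). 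On the level of stabilizers, the inclusion $H_0\hookrightarrow H\ssu$ (or $H_0\hookrightarrow H\lin$) induces $\Pic(H)\to\Pic(H_0)$, and I expect this map to be surjective: writing $H_0=\ker(H\ssu\to \text{something})$, or more directly using that $G\sc\to G\sss$ is the universal cover and $H\cap G\sss=H\ssu$ under the hypothesis of Proposition \ref{5.4-Manin} (so $H_0$ is the preimage of $H\ssu$ in $G\sc$), one compares $\Pic(H)$ and $\Pic(H_0)$ through the torus quotient $H\tor$ and the semisimple parts; the map on $\Pic$ of the semisimple simply connected parts is between zero groups, and the unipotent/toric contributions match. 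Granting that $\Pic(H)\to\Pic(H_0)$ is surjective, a chase in the ladder shows: given $\beta\in\Br_{x_0}(X_{y_0})$, lift it to $p\in\Pic(H_0)$ via $\Delta$, then lift $p$ to $\tilde p\in\Pic(H)$, push $\tilde p$ to $\tilde\beta:=\Delta_{G\lin/X}(\tilde p)\in\Br_{1,x_0}(X,G\lin)\subset\Br_{1,x_0}(X,G)$, and verify $i^*\tilde\beta=\beta$ by commutativity of the ladder.

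The main obstacle I anticipate is precisely the surjectivity of $\Pic(H)\to\Pic(H_0)$ (equivalently, controlling the map $\Pic(H\ssu)\to\Pic(H_0)$ where $H_0$ is the preimage of $H\ssu$ under the isogeny $G\sc\to G\sss$ restricted suitably), together with the bookkeeping needed to ensure the two $\Delta$-maps are genuinely compatible under the non-canonical choices of central extensions in the definition \ref{subsec:definiton-of-DelataX} of $\Delta$. The functoriality of $\Delta_{Y/X}$ in $(X,H)$ asserted in Theorem \ref{lem Sansuc} is exactly what makes the ladder commute, so the real content is the $\Pic$-surjectivity; one can hope to reduce it to the statement that $\Pic$ of a connected linear group surjects onto $\Pic$ of a connected subgroup containing its unipotent radical and whose reductive quotient has simply connected semisimple part — which follows from Sansuc's description of $\Pic$ via characters, since then $\Pic(H\ssu)=\Pic(\widehat{H\tor})$-type terms and the relevant character groups surject. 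I would also need $\Sha({G'}\abvar)$ finiteness and the density hypothesis nowhere here — this proposition is purely about Brauer groups — so I would keep the argument algebraic throughout.
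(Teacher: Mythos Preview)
Your approach has a genuine gap at precisely the step you flagged as the main obstacle: the surjectivity of $\Pic(H)\to\Pic(H_0)$. Under the hypotheses of Proposition~\ref{5.4-Manin} we have $G\sss=G\sc$ and $H\cap G\sss=H\ssu$, so the stabilizer $H_0$ of $x_0$ in $G\sss$ is simply $H\ssu$ (no isogeny preimage is involved). Its semisimple part is $H\sss$, which is \emph{not} assumed simply connected, so $\Pic(H\ssu)\cong\Pic(H\sss)$ can be nonzero and your heuristic ``the map on $\Pic$ of the semisimple simply connected parts is between zero groups'' does not apply. Indeed the bottom row of \eqref{eq:Sansuc-cor-1} for $1\to H\ssu\to H\to H\tor\to 1$ reads
\[
\Pic(H)\to\Pic(H\ssu)\labelto{\Delta}\Br_{1,e}(H\tor)\to\Br_{1,e}(H),
\]
so the cokernel of $\Pic(H)\to\Pic(H\ssu)$ injects into $\Br_{1,e}(H\tor)$ and is in general nontrivial. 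The ladder you set up is correct, but it does not close without further input.

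What is missing is exactly the second hypothesis of Proposition~\ref{5.4-Manin}, that $\Br_{1,e}(G\sab)\to\Br_{1,e}(H\tor)$ is surjective; your proposal never invokes it. The paper uses this hypothesis by introducing the intermediate homogeneous space $Z:=H\ssu\backslash G$ (so $X_{y_0}\labelto{f} Z\labelto{q} X$ with $q\circ f=i$) and proving two exact pieces (Lemmas~\ref{lem sec1} and \ref{lem sec2}):
\[
\Br_{1,e}(G\sab)\labelto{r^*}\Br_{1,z_0}(Z,G)\labelto{f^*}\Br_{x_0}(X_{y_0})\to 0,
\qquad
\Br_{1,x_0}(X,G)\labelto{q^*}\Br_{1,z_0}(Z,G)\labelto{g^*}\Br_{1,e}(H\tor).
\]
Given $\beta\in\Br_{x_0}(X_{y_0})$, one lifts it to $\gamma\in\Br_{1,z_0}(Z,G)$; the element $g^*(\gamma)\in\Br_{1,e}(H\tor)$ is the obstruction to $\gamma$ coming from $\Br_{1,x_0}(X,G)$, and surjectivity of $\Br_{1,e}(G\sab)\to\Br_{1,e}(H\tor)$ allows one to correct $\gamma$ by an element of $r^*(\Br_{1,e}(G\sab))\subset\ker f^*$ so as to kill this obstruction while keeping $f^*(\gamma)=\beta$. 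Your argument can be repaired along these lines, but only after this hypothesis is brought in to absorb the $\Br_{1,e}(H\tor)$-valued defect of $\Pic(H)\to\Pic(H\ssu)$.
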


Note that $\Br_{1,x_0}(X_{y_0}, G\sss)=\Br_{x_0}(X_{y_0})$ because $\Br(\Gbar\sss)=0$.

\begin{construction}
Consider the map $\pi_{x_0}\colon G\to X,\ g\mapsto x_0\cdot g$.
This map identifies $X$ (resp. $X_{y_0}$)
with a quotient of $G$ (resp. $G\sss$) by a connected subgroup $H'$ (resp. ${H'}\ssu$),
so we have
$$
X=H'\backslash G,\quad X_{y_0}= {H'}\ssu\backslash G\sss.
$$
We define a $k$-variety $Z$ by
$$
Z:={H'}\ssu\backslash G
$$
and denote by $z_0\in Z(k)$ the image of $e\in G(k)$.
  We have a commutative diagram of $k$-varieties:
\begin{displaymath}
\xymatrix{
& 1 \ar[d] & 1 \ar[d] & & 1 \ar[d] & \\
1 \ar[r] & {H'}\ssu \ar[r]^w \ar[rd] \ar[d] & H' \ar[rr]^{v} \ar[d]^{h} & &  H\tor \ar@{-->}[ldd]_(.3){g} \ar[d]^{j} \ar[r] & 1 \\
1 \ar[r] & G\sss \ar[r] \ar[dd]^{\pi'_{x_0}} & G \ar[rr]^{p} \ar[rd]^{\pi_Z} \ar[dd]^{\pi_{x_0}} & & G\sab \ar[dd] \ar[r] & 1 \\
& & & Z \ar[rd]^u \ar[dl]^{q} \ar[ru]_{r} & & \\
& X_{y_0} \ar@{-->}[rru]^(.4){f} \ar[r]^{i} & X \ar[rr]^{\psi} & & Y \ar[d] & \\
& & & & 1 &
}
\end{displaymath}
where the first two rows and the last column are exact sequences of connected algebraic groups,
and the other maps are the natural maps between the different homogeneous spaces.
\end{construction}

The following two lemmas are versions of exact sequence \eqref{eq:Sansuc-3} of Theorem \ref{lem Sansuc}.

\begin{lemma}\label{lem sec1}
The following sequence is exact:
$$
\textup{Br}_{1,e}(G\sab) \xrightarrow{r^*} \textup{Br}_{1,z_0}(Z, G) \xrightarrow{f^*} \Br_{x_0}(X_{y_0}) \rightarrow 0.
$$
\end{lemma}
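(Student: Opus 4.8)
The plan is to deduce the lemma from Sansuc's exact sequence \eqref{eq:Sansuc-3}, applied to two torsors under ${H'}\ssu$ and glued by functoriality, together with its group version (Corollary \ref{cor:Sansuc-cor}) applied to $1\to G\sss\to G\xrightarrow{p}G\sab\to 1$. Throughout I use that $G\sss$ is semisimple simply connected (assumption (iii) of Theorem \ref{thm:non-connected-Manin}), so $\Pic(G\sss)=0$ and $\Br_{1,e}(G\sss)=0$ by \cite{San}, Proposition 6.9(iv), and that $\Br(\Gbar\sss)=0$, so that $\Br_{1,x_0}(X_{y_0},G\sss)=\Br_{x_0}(X_{y_0})$ as already noted after Proposition \ref{prop : key prop}.

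First I would write down two instances of \eqref{eq:Sansuc-3}. For the torsor $\pi_Z\colon G\xrightarrow{{H'}\ssu}Z$ with base point $e\in G(k)$ (so $\pi_Z(e)=z_0$) it gives the exact sequence
$$\Pic(G)\xrightarrow{i_e^*}\Pic({H'}\ssu)\xrightarrow{\Delta_{G/Z}}\Br_{1,z_0}(Z,G)\xrightarrow{\pi_Z^*}\Br_{1,e}(G)\xrightarrow{i_e^*}\Br_{1,e}({H'}\ssu),$$
and for the torsor $\pi'_{x_0}\colon G\sss\xrightarrow{{H'}\ssu}X_{y_0}$ with base point $e\in G\sss(k)$ (so $\pi'_{x_0}(e)=x_0$), using $\Pic(G\sss)=\Br_{1,e}(G\sss)=0$, it shows that $\Delta_{G\sss/X_{y_0}}\colon\Pic({H'}\ssu)\to\Br_{x_0}(X_{y_0})$ is an isomorphism. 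Since the square $\pi_Z\circ\iota=f\circ\pi'_{x_0}$ commutes (with $\iota\colon G\sss\hookrightarrow G$ the inclusion), $\iota$ is ${H'}\ssu$-equivariant, and the base points match ($e\mapsto e$ and $x_0\mapsto z_0$), the triple $(f,\iota,\id_{{H'}\ssu})$ is a morphism of torsors, and functoriality of \eqref{eq:Sansuc-3} yields a commutative ladder between these two sequences. In particular $f^*\circ\Delta_{G/Z}=\Delta_{G\sss/X_{y_0}}$ is surjective, hence $f^*\colon\Br_{1,z_0}(Z,G)\to\Br_{x_0}(X_{y_0})$ is surjective, which is exactness of the sequence of the lemma at its last term.

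Next I would check that $r^*$ takes values in $\Br_{1,z_0}(Z,G)$: since $r\circ\pi_Z=p$, for $b\in\Br_{1,e}(G\sab)$ we have $\pi_Z^*(r^*(b))=p^*(b)\in\Br_{1,e}(G)$, as required. The composite $r\circ f\colon X_{y_0}\to G\sab$ is the constant map to the identity (because $X_{y_0}=r^{-1}(\bar e)$), so $f^*\circ r^*$ factors through the evaluation $e^*\colon\Br(G\sab)\to\Br(k)$ and hence vanishes on $\Br_{1,e}(G\sab)$; thus $\im(r^*)\subseteq\ker(f^*)$. Finally, applying Corollary \ref{cor:Sansuc-cor} to $1\to G\sss\to G\xrightarrow{p}G\sab\to 1$ (its hypothesis holds since $(G\sss)\sss=G\sss$ is simply connected) and using $\Pic(G\sss)=\Br_{1,e}(G\sss)=0$, the bottom row of \eqref{eq:Sansuc-cor-1} shows that $p^*\colon\Br_{1,e}(G\sab)\isoto\Br_{1,e}(G)$ is an isomorphism.

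It then remains to prove $\ker(f^*)\subseteq\im(r^*)$, by a diagram chase. Given $c\in\Br_{1,z_0}(Z,G)$ with $f^*(c)=0$, set $b:=(p^*)^{-1}(\pi_Z^*(c))\in\Br_{1,e}(G\sab)$; then $\pi_Z^*(c-r^*(b))=\pi_Z^*(c)-p^*(b)=0$, so by exactness of the first sequence at $\Br_{1,z_0}(Z,G)$ we may write $c-r^*(b)=\Delta_{G/Z}(\xi)$ for some $\xi\in\Pic({H'}\ssu)$. Applying $f^*$ and using $f^*\circ\Delta_{G/Z}=\Delta_{G\sss/X_{y_0}}$ together with $f^*(c)=0=f^*(r^*(b))$ gives $\Delta_{G\sss/X_{y_0}}(\xi)=0$, whence $\xi=0$ because $\Delta_{G\sss/X_{y_0}}$ is injective, and therefore $c=r^*(b)$. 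The main obstacle is the bookkeeping in the middle paragraphs: one must verify that $(f,\iota,\id)$ genuinely is a morphism of ${H'}\ssu$-torsors with matching base points, so that functoriality of \eqref{eq:Sansuc-3} applies, and that the relations $r\circ\pi_Z=p$ and $r\circ f=\bar e$ hold as read off the commutative diagram preceding the lemma; granting these, the rest is formal homological algebra.
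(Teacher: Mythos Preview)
Your proof is correct and follows essentially the same approach as the paper: both apply the functoriality of the exact sequence \eqref{eq:Sansuc-3} to the pair of ${H'}\ssu$-torsors $G\sss\to X_{y_0}$ and $G\to Z$, combine this with Corollary \ref{cor:Sansuc-cor} for $1\to G\sss\to G\to G\sab\to 1$ (using $\Pic(G\sss)=\Br_{1,e}(G\sss)=0$), and finish by a diagram chase. Your version is simply more explicit, spelling out the chase and noting that $p^*$ is in fact an isomorphism rather than merely surjective, which streamlines the last step.
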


\begin{proof}
We use the functoriality of  exact sequence \eqref{eq:Sansuc-3} of
Theorem \ref{lem Sansuc} to get the following commutative diagram with exact columns.
Here the second column is the exact sequence \eqref{eq:Sansuc-3} for $Z = {H'}\ssu \backslash G$ and the third column  is  exact sequence \eqref{eq:Sansuc-3}
applied to $X_{y_0} = {H'}\ssu \backslash G\sss$.
\begin{equation}\label{eq:diagram}
  \xymatrix{
    &                                                       & \Pic(G\sss)=0\ar[d]\\
    & \Pic({H'}\ssu) \ar@{=}[r] \ar[d]^{\Delta_{G/Z}} & \Pic({H'}\ssu) \ar[d]_{\Delta_{G\sss / X_{y_0}}}^{\cong} \\
\Br_{1,e}(G\sab) \ar[r]^{r^*} \ar@{=}[d] & \Br_{1,z_0}(Z, G) \ar[r]^{f^*} \ar[d]^{\pi_Z^*} & \Br_{x_0}(X_{y_0})\ar[d] \\
\Br_{1,e}(G\sab) \ar[r]^-{p^*}            & \Br_{1, e}(G)\ar[r]
& \Br_{1,e}(G\sss)=0 \, .
}
\end{equation}
We have  $\Pic(G\sss) = 0$ and $\Br_{1,e}(G\sss) = 0$ by \cite{San}, Lemma 6.9(iv),
because $G\sss$ is simply connected.
From the bottom row of diagram \eqref{eq:Sansuc-cor-1} of  Corollary \ref{cor:Sansuc-cor} we get an exact sequence
$$
\Br_{1,e}(G\sab) \xrightarrow{p^*} \Br_{1, e}(G) \xrightarrow{l^*} \Br_{1,e}(G\sss),
$$
where $l\colon G\sss\to G$ is the canonical embedding.
But
$\Br_{1,e}(G\sss)=0$,
therefore the homomorphism $p^* : \Br_{1,e}(G\sab) \rightarrow \Br_{1, e}(G)$ is surjective.
The composition $r \circ f$ being the trivial morphism, the second row of the diagram is a complex.
A diagram chase in  diagram \eqref{eq:diagram} proves the exactness of the sequence of the lemma.
\end{proof}

\begin{lemma}
\label{lem sec2}
The sequence
$$\Br_{1,x_0}(X, G) \xrightarrow{q^*} \textup{Br}_{1,z_0}(Z, G) \xrightarrow{g^*} \Br_{1, e}(H\tor)$$
is exact.
\end{lemma}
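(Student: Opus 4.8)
The plan is to exhibit the sequence of the lemma as the middle column of a commutative diagram with exact rows, assembled from two instances of the exact sequence~\eqref{eq:Sansuc-3} and one instance of Corollary~\ref{cor:Sansuc-cor}, and then to conclude by a diagram chase.

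First I would record the geometry. With the identifications $X=H'\backslash G$ and $Z={H'}\ssu\backslash G$ of the construction, $H\tor=H'/{H'}\ssu$ is a torus, the map $q\colon Z\to X$ is a (left) torsor under $H\tor$, and $g\colon H\tor\to Z$ is the closed immersion $\bar h\mapsto\bar h\cdot z_0$ identifying $H\tor$ with the fibre $q^{-1}(x_0)$. Writing $\pi_Z\colon G\to Z$ and $v\colon H'\onto H\tor$, $w\colon{H'}\ssu\into H'$ for the quotient and inclusion maps, one has $\pi_{x_0}=q\circ\pi_Z$ and $\pi_Z|_{H'}=g\circ v$ for the inclusion $H'\into G$; hence there is a morphism of torsors from $\pi_Z\colon G\xrightarrow{{H'}\ssu}Z$ to $\pi_{x_0}\colon G\xrightarrow{H'}X$ given by base map $q$, total map $\mathrm{id}_G$ and group map $w$, and a morphism of torsors from $v\colon H'\xrightarrow{{H'}\ssu}H\tor$ to $\pi_Z\colon G\xrightarrow{{H'}\ssu}Z$ given by base map $g$, total map $H'\into G$ and group map $\mathrm{id}$. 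Applying~\eqref{eq:Sansuc-3} to $\pi_{x_0}$ and $\pi_Z$ with the rational point $e\in G(k)$, and Corollary~\ref{cor:Sansuc-cor} to $1\to{H'}\ssu\xrightarrow{w}H'\xrightarrow{v}H\tor\to1$, and using the functoriality of these sequences, I obtain a commutative diagram with exact rows
\begin{displaymath}
\xymatrix@C=1.1em{
\Pic(G)\ar[r]\ar@{=}[d] & \Pic(H')\ar[r]^-{\Delta_{G/X}}\ar[d] & \Br_{1,x_0}(X,G)\ar[r]^-{\pi_{x_0}^*}\ar[d]^{q^*} & \Br_{1,e}(G)\ar[r]^-{i_e^*}\ar@{=}[d] & \Br_{1,e}(H')\ar[d]\\
\Pic(G)\ar[r]\ar[d] & \Pic({H'}\ssu)\ar[r]^-{\Delta_{G/Z}}\ar@{=}[d] & \Br_{1,z_0}(Z,G)\ar[r]^-{\pi_Z^*}\ar[d]^{g^*} & \Br_{1,e}(G)\ar[r]\ar[d] & \Br_{1,e}({H'}\ssu)\ar@{=}[d]\\
\Pic(H')\ar[r]^-{w^*} & \Pic({H'}\ssu)\ar[r]^-{\Delta_{H'/H\tor}} & \Br_{1,e}(H\tor)\ar[r]^-{v^*} & \Br_{1,e}(H')\ar[r]^-{w^*} & \Br_{1,e}({H'}\ssu)\, ,
}
\end{displaymath}
where the unlabelled vertical arrows are the restriction maps induced by the inclusions ${H'}\ssu\subset H'\subset G$. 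Here one uses that $H\tor$ is a torus, so $\Br(\overline{H\tor})=0$: this identifies the middle term $\Br_{1,e}(H\tor,H')$ of the sequence from Corollary~\ref{cor:Sansuc-cor} with $\Br_{1,e}(H\tor)$, shows that $g^*$ carries $\Br_{1,z_0}(Z,G)$ into $\Br_{1,e}(H\tor)$, and (together with $\pi_{x_0}=q\circ\pi_Z$) shows that $q^*$ carries $\Br_{1,x_0}(X,G)$ into $\Br_{1,z_0}(Z,G)$. Thus the sequence of the lemma is the middle column of the diagram.

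It then remains to chase the diagram. First, $g^*\circ q^*=0$ on $\Br_{1,x_0}(X,G)$ because $q\circ g\colon H\tor\to X$ is the constant map $x_0$ — which factors through $\Spec(k)$ — while $x_0^*(b)=0$ for $b\in\Br_{1,x_0}(X,G)$. Conversely, let $c\in\Br_{1,z_0}(Z,G)$ with $g^*(c)=0$. By commutativity $i_e^*(\pi_Z^*(c))=v^*(g^*(c))=0$, so exactness of the top row yields $b_1\in\Br_{1,x_0}(X,G)$ with $\pi_{x_0}^*(b_1)=\pi_Z^*(c)$. Since $\pi_Z^*(q^*(b_1))=\pi_{x_0}^*(b_1)$, we get $\pi_Z^*(c-q^*(b_1))=0$, so exactness of the middle row yields $p\in\Pic({H'}\ssu)$ with $\Delta_{G/Z}(p)=c-q^*(b_1)$. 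By commutativity $\Delta_{H'/H\tor}(p)=g^*(\Delta_{G/Z}(p))=g^*(c)-g^*(q^*(b_1))=0$, so exactness of the bottom row yields $p'\in\Pic(H')$ with $w^*(p')=p$. Finally $c=q^*(b_1)+\Delta_{G/Z}(w^*(p'))=q^*\bigl(b_1+\Delta_{G/X}(p')\bigr)$, and $b_1+\Delta_{G/X}(p')\in\Br_{1,x_0}(X,G)$, which gives the required exactness.

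The step I expect to be the main obstacle is the construction of the functorial diagram: one must produce the two morphisms of torsors above, check compatibility of the actions and of the base points, keep the various relative Brauer groups occurring in~\eqref{eq:Sansuc-3} and in Corollary~\ref{cor:Sansuc-cor} carefully apart, and justify the identification $\Br_{1,e}(H\tor,H')=\Br_{1,e}(H\tor)$ (valid since $\Br(\overline{H\tor})=0$). Once the diagram is in place, the chase is purely formal.
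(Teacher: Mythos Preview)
Your proof is correct and follows essentially the same approach as the paper's first proof: both assemble a commutative diagram from the two instances of~\eqref{eq:Sansuc-3} for $\pi_{x_0}\colon G\to X$ and $\pi_Z\colon G\to Z$ together with Corollary~\ref{cor:Sansuc-cor} for $1\to{H'}\ssu\to H'\to H\tor\to1$, and then perform the same diagram chase. The only difference is organizational---you lay the three exact sequences out as rows with the target sequence as the middle column, whereas the paper arranges them as columns and a row with the target sequence as the middle row; the paper also spells out the commutativity $g^*\circ\Delta_{G/Z}=\Delta_{H'/H\tor}$ via the cartesian square of torsors, which you subsume under functoriality.
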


\begin{proof}
We consider the following  diagram,
in which the middle column and the last row are the exact sequences
coming from exact sequence \eqref{eq:Sansuc-3} of Theorem \ref{lem  Sansuc},
and the last column is the exact sequence coming from the exact bottom row of diagram
\eqref{eq:Sansuc-cor-1} of Corollary \ref{cor:Sansuc-cor}:
\begin{eqnarray}
\label{diag Sansuc}
\xymatrix{
                               &                  &\Pic(H')\ar[d]^{w^*}  \\
  \Pic(H') \ar[r]^{w^*} \ar[d]^{\Delta_{G/X}} & \Pic({H'}\ssu) \ar@{=}[r]
  \ar[d]^{\Delta_{G/Z}} & \Pic({H'}\ssu) \ar[d]^{\Delta_{H' / H\tor}} \\
\Br_{1,x_0}(X, G) \ar[r]^{q^*} \ar@{=}[d] & \textup{Br}_{1,z_0}(Z, G) \ar[r]^{g^*} \ar[d]^{\pi_Z^*}
& \Br_{1, e}(H\tor) \ar[d]^{v^*} \\
\Br_{1,x_0}(X, G) \ar[r]^-{\pi_{x_0}^*} & \Br_{1, e}(G) \ar[r]^{h^*} &
\Br_{1, e}(H') \, .
}
\end{eqnarray}
We prove that the diagram is commutative.
In this diagram the two first columns define a commutative diagram by
functoriality, and the second row is a complex.
By construction we have $g \circ v = \pi_Z \circ h$, hence in the diagram we have $v^* \circ g^* = h^* \circ \pi_Z^*$.
Let us prove the commutativity of the square in the top right-hand
corner, i.e. let us prove that $g^* \circ \Delta_{G/Z} = \Delta_{H' / H\tor}$.
We observe that the following diagram of torsors under ${H'}\ssu$ is cartesian:
\begin{displaymath}
\xymatrix{
H' \ar[r]^{v} \ar[d]^{h} & H\tor \ar[d]^{g} \\
G \ar[r]^{\pi_Z} & Z \, ,
}
\end{displaymath}
i.e. $\pi_Z^{-1}(H\tor)=H'$.
In other words, the $H\tor$-torsor $H'$ is the pullback of the
$Z$-torsor $G$ by the morphism $H\tor \xrightarrow{g} Z$.
Therefore, if
$$
1 \rightarrow \Gm \rightarrow H_1 \rightarrow
{H'}^{\textup{ssu}} \rightarrow 1
$$
is a central extension representing an
element $p \in \textup{Pic}({H'}\ssu)$ via the isomorphism
$\textup{Ext}^c_k({H'}\ssu, \mathbb{G}_m) \cong \Pic({H'}\ssu)$,
we get a commutative diagram:
\begin{displaymath}
\xymatrix{
H^1(Z, {H'}\ssu) \ar[r]^{\partial_{H_1}} \ar[d]^{g^*} & H^2(Z, \mathbb{G}_m) \ar[d]^{g^*} \\
H^1(H\tor, {H'}\ssu) \ar[r]^{\partial_{H_1}} & H^2(H\tor, \mathbb{G}_m) \\
}
\end{displaymath}
such that $g^*[G] = [H']$ in $H^1(H\tor, {H'}\ssu)$.
We deduce from
this diagram that $g^*(\partial_{H_1}([G])) = \partial_{H_1}([H'])$ in
$H^2(H\tor, \mathbb{G}_m)$,
i.e. that $\Delta_{H' / H\tor}(p) =
g^*(\Delta_{G/Z}(p))$ in $\Br(H\tor)$.
Therefore the top right-hand square in  diagram (\ref{diag Sansuc}) is commutative.

Returning to  diagram (\ref{diag Sansuc}), we see that its commutativity and the
exactness of the last two columns and of the last row imply,
via an easy diagram chase, that the second row of (\ref{diag Sansuc}) is also exact, hence the sequence of the lemma
is exact.
\end{proof}

For an alternative proof of Lemma \ref{lem sec2} we need
the following generalization of Proposition \ref{lem  Sansuc}:

\begin{proposition}
\label{prop Sansuc general}
  Let $k$ be a field of characteristic zero, and
$$1 \to H_1 \to G \to H_2 \to 1$$
be an exact sequence of connected linear algebraic groups over $k$. Let $\pi : Z \to X$ and
$\pi' : Y \to Z$ be two morphisms of algebraic varieties such that the
composite $Y \to X$ is an $X$-torsor under $G$, such that the
restriction to $H_1$ of the action of $G$ on $Y$ defines the structure
of a $Z$-torsor under $H_1$ on $Y$, and such that $Z \to X$ is a
torsor under $H_2$ via the induced action.
Then we have a natural exact sequence
\begin{equation}\label{eq:Sansuc-general}
\textup{Pic}(Z) \xrightarrow{\varphi_1} \textup{Pic}(H_2) \xrightarrow{\Delta_{Z/X}} \textup{Br}_1(X,Y) \xrightarrow{\pi^*}
\textup{Br}_1(Z,Y) \xrightarrow{\varphi_2'} \textup{Br}_{1,e}(H_2) \, .
\end{equation}
If in addition $z \in Z(k)$, we have an exact sequence
\begin{equation}\label{eq:Sansuc-general-2}
\textup{Pic}(Z) \xrightarrow{i_z^*} \textup{Pic}(H_2) \xrightarrow{\Delta_{Z/X}} \textup{Br}_{1,x}(X,Y) \xrightarrow{\pi^*}
\textup{Br}_{1,z}(Z,Y) \xrightarrow{i_z^*} \textup{Br}_{1,e}(H_2) \, ,
\end{equation}
where $x \in X(k)$ is the image of $z$.
\end{proposition}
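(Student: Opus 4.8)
The plan is to derive the five-term sequence \eqref{eq:Sansuc-general} starting from the Sansuc exact sequence (Theorem \ref{lem Sansuc}) of the torsor $\pi\colon Z\xrightarrow{H_2}X$, and then enlarging its two middle terms $\Br_1(X,Z)$ and $\Br_1(Z)$ to $\Br_1(X,Y)$ and $\Br_1(Z,Y)$ by bringing in the Sansuc sequences of the other two torsors in the tower $Y\to Z\to X$, namely $Y\xrightarrow{H_1}Z$ and the composite $Y\xrightarrow{G}X$, together with the group-extension sequence of Corollary \ref{cor:Sansuc-cor} for $1\to H_1\xrightarrow{i}G\xrightarrow{j}H_2\to1$. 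The glue is functoriality of Sansuc's sequence: since the $H_1$-action on $Y$ is the restriction of the $G$-action, $(\mathrm{id}_Y,\pi,i)$ is a morphism of torsors from $(Y\xrightarrow{H_1}Z)$ to $(Y\xrightarrow{G}X)$, so Theorem \ref{lem Sansuc} yields a commutative ladder between their Sansuc sequences, with vertical maps $i^*$ on $\Pic$ and on $\Bra$ of the groups, $\pi^*$ on $\Br_1$ of the base spaces, and the identity on $\Br_1(Y)$. Naturality of \eqref{eq:Sansuc-general} will be inherited from that of all these ingredients.

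First I would check that the arrows in \eqref{eq:Sansuc-general} make sense. The map $\pi^*\colon\Br_1(X,Y)\to\Br_1(Z,Y)$ is well defined because $(\pi')^*\pi^*b=(\pi\circ\pi')^*b\in\Br_1(Y)$ whenever $b\in\Br_1(X,Y)$, and it kills $\mathrm{im}\,\Delta_{Z/X}\subseteq\Br_1(X,Z)\subseteq\Br_1(X,Y)$ by the Sansuc sequence of $Z\xrightarrow{H_2}X$. The delicate point is $\varphi_2'$: it is the map induced by the $H_2$-action $m_Z\colon H_2\times Z\to Z$ via Lemma \ref{lem compatibility functors}, exactly as $\varphi_2$ in Theorem \ref{lem Sansuc}, but it must be defined on the \emph{larger} group $\Br_1(Z,Y)$ and still take values in $\Br_{1,e}(H_2)$. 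I would establish this by lifting $m_Z$ through $\pi'$ to the $G$-action $m_Y$ on $Y$ (legitimate precisely because, by hypothesis, the $H_2$-action on $Z$ is the one induced from $Y$): for $\beta\in\Br_1(Z,Y)$ one has $(\pi')^*\beta\in\Br_1(Y)$, and pulling back along $j\times\pi'$ identifies $m_Z^*\beta-p_Z^*\beta$ with $m_Y^*(\pi')^*\beta-p_Y^*(\pi')^*\beta=p_G^*\varphi_2^{G}((\pi')^*\beta)$, a class pulled back from $G$; the rightmost square of the ladder gives $i^*\varphi_2^{G}((\pi')^*\beta)=\varphi_2^{H_1}((\pi')^*\beta)=0$, because $(\pi')^*\beta\in\mathrm{im}\,(\pi')^*=\ker\varphi_2^{H_1}$ in the Sansuc sequence of $Y\xrightarrow{H_1}Z$; hence by Corollary \ref{cor:Sansuc-cor} (using that all of $H_1,G,H_2$ are linear, so $\Br_{1,e}(H_2,G)=\Br_{1,e}(H_2)$) the class $\varphi_2^{G}((\pi')^*\beta)$ comes from $\Br_{1,e}(H_2)$, and this is $\varphi_2'(\beta)$. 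Getting this construction and its target exactly right is the step I expect to be the main obstacle.

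Given the maps, exactness is a short diagram chase. At $\Pic(H_2)$ it is just the exactness of the Sansuc sequence of $Z\xrightarrow{H_2}X$ at $\Pic(H_2)$. At $\Br_1(X,Y)$: $\ker[\pi^*\colon\Br(X)\to\Br(Z)]=\mathrm{im}\,\Delta_{Z/X}$ by that same sequence, and since this image already lies inside $\Br_1(X,Y)$ it equals $\ker[\pi^*|_{\Br_1(X,Y)}]$. At $\Br_1(Z,Y)$: the inclusion $\mathrm{im}\,\pi^*\subseteq\ker\varphi_2'$ is immediate from the exact top row of \eqref{eq:Sansuc-1} (Theorem \ref{lem Sansuc}) for $Z\xrightarrow{H_2}X$, since $m_Z^*-p_Z^*$ annihilates $\pi^*\Br(X)$; conversely, if $\beta\in\Br_1(Z,Y)$ and $\varphi_2'(\beta)=0$, then by the construction of $\varphi_2'$ one gets $m_Z^*\beta=p_Z^*\beta$, so that top row gives $\beta=\pi^*b_0$ for some $b_0\in\Br(X)$, and then $(\pi\circ\pi')^*b_0=(\pi')^*\beta\in\Br_1(Y)$ forces $b_0\in\Br_1(X,Y)$, whence $\beta\in\mathrm{im}[\pi^*\colon\Br_1(X,Y)\to\Br_1(Z,Y)]$.

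Finally, the base-pointed sequence \eqref{eq:Sansuc-general-2} is obtained from \eqref{eq:Sansuc-general} exactly as \eqref{eq:Sansuc-3} was obtained from \eqref{eq:Sansuc-1} in the proof of Theorem \ref{lem Sansuc}: one forms the diagram whose second row is \eqref{eq:Sansuc-general} and whose first row is the sequence of kernels of evaluation at $z$ (and at its image $x\in X(k)$), notes that all vertical maps are injective because every term splits off a copy of $\Br(k)$, checks that the two outer squares commute by \cite{San}, Lemma 6.4, and chases.
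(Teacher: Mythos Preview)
Your overall strategy is essentially the paper's, and your exactness arguments at $\Pic(H_2)$ and at $\Br_1(X,Y)$ are correct, as is your derivation of \eqref{eq:Sansuc-general-2} from \eqref{eq:Sansuc-general}.

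There is, however, a real gap in your construction of $\varphi_2'$, exactly where you anticipated it. You define $\varphi_2'(\beta)$ as the preimage in $\Br_{1,e}(H_2)$ of $\varphi_2^{G}((\pi')^*\beta)\in\Br_{1,e}(G)$ under $j^*$. But $j^*\colon\Br_{1,e}(H_2)\to\Br_{1,e}(G)$ need not be injective: by Corollary \ref{cor:Sansuc-cor} its kernel is the image of $\Delta_{G/H_2}\colon\Pic(H_1)\to\Br_{1,e}(H_2)$, which is in general nonzero (take for instance $G$ semisimple simply connected and $H_1$ a torus with $H^1(k,\widehat{H_1})\ne0$). So $\varphi_2'(\beta)$ is not well-defined by your recipe. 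This also undermines your exactness argument at $\Br_1(Z,Y)$: from $\varphi_2^{G}((\pi')^*\beta)=0$ you obtain only $(j\times\pi')^*(m_Z^*\beta-p_Z^*\beta)=0$, and $(j\times\pi')^*$ is likewise not injective on $\Br_1(H_2\times Z,\,G\times Y)$, so you cannot conclude $m_Z^*\beta=p_Z^*\beta$.

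The paper fills this gap by staying on $H_2\times Z$ rather than pulling back to $G\times Y$. Your computation does establish the key point that $m_Z^*\beta\in\Br_1(H_2\times Z,\,G\times Y)$; now Lemma \ref{lem:product} (applied to the torsors $Y\xrightarrow{H_1}Z$ and $G\xrightarrow{H_1}H_2$, the latter having the rational point $e\in G(k)$) gives an isomorphism
\[
p_Z^*+p_{H_2}^*\colon\ \Br_1(Z,Y)\oplus\Br_{1,e}(H_2,G)\ \xrightarrow{\ \sim\ }\ \Br_1(H_2\times Z,\,G\times Y),
\]
and one \emph{defines} $\varphi_2'(\beta)$ as the $\Br_{1,e}(H_2,G)$-component of $m_Z^*\beta$ under this decomposition. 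Restricting to $\{e\}\times Z$ shows that the $Z$-component of $m_Z^*\beta-p_Z^*\beta$ vanishes, so $m_Z^*\beta-p_Z^*\beta=p_{H_2}^*\varphi_2'(\beta)$ and hence $\ker\varphi_2'=\ker(m_Z^*-p_Z^*)$, which is exactly what your exactness argument needs. The identification $\Br_{1,e}(H_2,G)=\Br_{1,e}(H_2)$, which you correctly deduce from Corollary \ref{cor:Sansuc-cor} using that $H_1,G,H_2$ are all linear, then gives the target.
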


\begin{proof}
As in the end of the proof of Theorem \ref{lem Sansuc}, we construct the exact
sequence \eqref{eq:Sansuc-general}
from  the top row of diagram (\ref{eq:Sansuc-1}) applied to the
torsor $Z \to X$:
\begin{equation}\label{eq:Sansuc-1bis}
\textup{Pic}(Z) \xrightarrow{\varphi_1} \textup{Pic}(H_2) \xrightarrow{\Delta_{Z/X}} \textup{Br}(X) \xrightarrow{\pi^*}
\textup{Br}(Z) \xrightarrow{m^* - p_Z^*} \textup{Br}(H_2 \times Z) \, .
\end{equation}
Define a map $\varphi_2'\colon  \textup{Br}(Z,Y) \to \textup{Br}_{1,e}(H_2)$
to be the composition
\begin{align*}
\textup{Br}_1(Z,Y) \labelto{m^*} \textup{Br}_1(Z \times H_2, Y
\times G) \xleftarrow{p_Z^* + p_{H_2}^*} \textup{Br}_{1,e}(H_2, G)
\oplus \textup{Br}_1&(Z,Y) \\
\xrightarrow{\pi_{H_2}}
&\textup{Br}_{1,e}(H_2, G)
\end{align*}
where the morphism $p_Z^* + p_{H_2}^*$ is an isomorphism by Lemma
\ref{lem:product}, and $\pi_{H_2}$ is the projection onto the first factor.
Note that  $\textup{Br}_{1,e}(H_2, G) =\textup{Br}_{1,e}(H_2)$
as a consequence of the injectivity of the homomorphism
$\textup{Br}(\ov{H_2}) \to \textup{Br}(\ov{G})$,
which comes from
the exactness of the top row of diagram \eqref{eq:Sansuc-cor-1} of Corollary \ref{cor:Sansuc-cor}
and the fact that $\textup{Pic}(\ov{G}) \to \textup{Pic}(\ov{H_1})$ is onto (see \cite{San}, Remark 6.11.3).

Consider the diagram
\begin{equation}\label{eq:diagram-BR1Z,Y}
\xymatrix{
\textup{Br}_1(Z,Y) \ar[r]^(.4){m^* - p_Z^*} \ar[d]^{\varphi_2'} & \textup{Br}_1(H_2 \times Z, G
\times Y) \\
\textup{Br}_{1,e}(H_2) \ar[r]^(.35){i_H} & \textup{Br}_{1,e}(H_2) \oplus
\textup{Br}_1(Z,Y) \ar[u]_{p_Z^* + p_{H_2}^*}^{\cong} \, .
}
\end{equation}
This diagram is commutative.
Since $i_H$ is a canonical embedding, we see that $\ker \varphi_2'=\ker(m^*-p_Z^*)$.

We prove that  sequence \eqref{eq:Sansuc-general} is exact.
We use the exactness of \eqref{eq:Sansuc-1bis}.
We know from Theorem \ref{lem Sansuc} that the map $\Delta_{Z/X}\colon\Pic(H_2)\to \Br(X)$
lands in $\Bro(X,Z)$.
Since $\Bro(X,Y)\supset\Bro(X,Z)$, the map $\Delta_{Z/X}\colon\Pic(H_2)\to \Bro(X,Y)$ is defined.
We obtain a commutative diagram with an exact long horizontal line and exact vertical lines:
\begin{equation}\label{eq:Sansuc-1-diagram}
\xymatrix{
                              &                                                &0\ar[d]                        &0\ar[d]           \\
                              &                                                &\Bro(X,Y)\ar[r]^{\pi^*}\ar[d]  &\Bro(Z,Y)\ar[d]   \\
{\Pic}(Z) \ar[r]^{\varphi_1}  &{\Pic}(H_2)\ar[r]^{\Delta_{Z/X}}\ar[ru]^{\Delta_{Z/X}}
                              &{\Br}(X)\ar[r]^{\pi^*}\ar[d]   &{\Br}(Z)\ar[r]^-{m^* - p_Z^*}\ar[d]
            &{\Br}(H_2 \times Z)  \\
                              &
                              &\Br(\Ybar)\ar@{=}[r]
                              &\Br(\Ybar)                         & \, .
}
\end{equation}
Now we see immediately that  sequence \eqref{eq:Sansuc-general} is exact at $\Pic(H_2)$ and $\Bro(X,Y)$.
Since $\ker(m^*-p_Z^*)=\ker \varphi_2'$ in diagram \eqref{eq:diagram-BR1Z,Y}, it follows from the exactness
of \eqref{eq:Sansuc-1bis} at $\Br(Z)$ that  sequence \eqref{eq:Sansuc-general} is exact at $\Bro(Z,Y)$.
Thus \eqref{eq:Sansuc-general} is exact, which completes the proof.
\end{proof}

\begin{subsec}
{\em Alternative proof of Lemma \ref{lem sec2}.}
We consider the exact sequence of linear algebraic groups
$$1 \to {H'}\ssu \to H' \to H\tor \to 1$$
and the torsors $\pi_Z : G \labelto{{H'}\ssu} Z$ and $q : Z \labelto{H\tor} X$.
The composition $\pi_{x_0}\colon G \to X$
is naturally a torsor under $H'$.
Therefore, we can apply Proposition \ref{prop Sansuc  general} to get the  exact sequence
$$
\textup{Pic}(Z) \xrightarrow{g^*} \textup{Pic}(H\tor) \xrightarrow{\Delta_{Z/X}} \textup{Br}_{1,x_0}(X,G) \xrightarrow{q^*}
\textup{Br}_{1,z_0}(Z,G) \xrightarrow{g^*} \textup{Br}_{1,e}(H\tor)\, ,
$$
which concludes the proof.
\qed
\end{subsec}

\begin{subsec}
 {\em Proof of  Proposition \ref{prop : key prop}.}
We have a commutative diagram
\begin{displaymath}
\xymatrix{
& & & 0 \\
\Br_{1,x_0}(X, G) \ar[rd]^{q^*} \ar[rr]^{i^*} & & \Br_{x_0}(X_{y_0}) \ar[ru] & \\
& \Br_{1,z_0}(Z, G) \ar[ru]^{f^*} \ar[rd]^{g^*} & & \\
\Br_{1,e}(G\sab) \ar[ru]^{r^*} \ar[rr]^{j^*} & & \Br_{1,e}(H\tor)
\ar[r] & 0 \, ,
}
\end{displaymath}
where the last row is exact by assumption, and the two slanted sequences are also exact by Lemmas \ref{lem sec1} and \ref{lem sec2}.
A diagram chase shows that the homomorphism
$$
\Br_{1,x_0}(X, G) \xrightarrow{i^*} \Br_{x_0}(X_{y_0})
$$
is surjective, which completes the proof of Proposition \ref{prop : key prop}.
\qed
\end{subsec}

For the proof of Proposition \ref{5.4-Manin}
we need three lemmas.

\begin{lemma}\label{lem:Br(X1)Br(X_2)}
Let $G,X$ be as in Proposition \ref{5.4-Manin} and $Y:=X/G\sss$.
Let $\psi\colon X\to Y$ be the canonical map.
Let $x_1,x_2\in X(k)$, $y_i:=\psi(x_i)$, $X_i:=X_{y_i},\ (i=1,2)$.
Let $r_i\colon\Br_1(X,G)/\Br(k)\to\Br(X_i)/\Br(k)$ be the restriction homomorphisms.
Then there exists a canonical isomorphism $\lambda_{1,2}\colon \Br(X_1)/\Br(k)\isoto\Br(X_2)/\Br(k)$
such that the following diagram commutes:
\begin{equation}\label{eq:diagam-Brauer}
\xymatrix{
\Br_1(X,G)/\Br(k)\ar[r]^{\id}\ar[d]_{r_1}  &\Br_1(X,G)/\Br(k)\ar[d]^{r_2}\\
\Br(X_1)/\Br(k) \ar[r]^{\lambda_{1,2}}
&\Br(X_2)/\Br(k) \, .
}
\end{equation}
\end{lemma}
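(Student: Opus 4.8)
The plan is to construct $\lambda_{1,2}$ out of the \emph{equality} $\ker r_1=\ker r_2$, so that equality is the real content. First I would note that $r_1$ and $r_2$ are both surjective: using the canonical direct‑sum decompositions $\Br_1(X,G)=\Br_{1,x_i}(X,G)\oplus\Br(k)$ and $\Br(X_i)=\Br_{x_i}(X_i)\oplus\Br(k)$ provided by the sections $x_i^{*}$, the map $r_i$ gets identified with the restriction $\Br_{1,x_i}(X,G)\to\Br_{x_i}(X_{y_i})$ along $X_{y_i}\hookrightarrow X$, which is onto by Proposition \ref{prop : key prop} (applied with $x_0=x_i$). Granting $\ker r_1=\ker r_2$, there is then a unique map $\lambda_{1,2}\colon\Br(X_1)/\Br(k)\to\Br(X_2)/\Br(k)$ with $\lambda_{1,2}\circ r_1=r_2$; it is surjective because $r_2$ is, injective because $\ker r_2=\ker r_1$, and canonical because it is built solely from $r_1$ and $r_2$.

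Next I would record that $\Bro(X_{y_i})$ coincides with the image of $\Br(k)$, i.e. $\Bra(X_{y_i})=0$. Write $L_i:=\mathrm{Stab}_G(x_i)$, a connected linear $k$‑group. Since we are under the hypotheses of Proposition \ref{5.4-Manin}, $L_i\cap G\sss=L_i\ssu$, so $X_{y_i}\cong L_i\ssu\backslash G\sss$ with $x_i$ the image of $e$; and $L_i\ssu$, being an extension of a semisimple group by a unipotent group, satisfies $\widehat{\ov{L_i\ssu}}=0$. Applying the exact sequence \eqref{eq:Sansuc-3} to the torsor $G\sss\to X_{y_i}$ under $L_i\ssu$, with $\Pic(G\sss)=\Br_{1,e}(G\sss)=0$ by \cite{San}, Lemma 6.9(iv), and $\Br(\Gbar\sss)=0$ by \cite{G}, one obtains a base‑change compatible isomorphism $\Pic(L_i\ssu)\isoto\Br_{x_i}(X_{y_i})$. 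Intersecting with $\Bro(X_{y_i})$ identifies $\Br_{1,x_i}(X_{y_i})$ with the kernel of $\Pic(L_i\ssu)\to\Pic(\ov{L_i\ssu})$, and that kernel is trivial because $\widehat{\ov{L_i\ssu}}=0$ (see \cite{San}, Lemma 6.9). Hence $\Bra(X_{y_i})\cong\Br_{1,x_i}(X_{y_i})=0$.

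Finally I would prove $\ker r_1\subseteq\ker r_2$; the reverse inclusion follows by symmetry. Let $b\in\Br_1(X,G)$ with $b|_{X_1}\in\Br(k)$, and denote by $\bar b\in\Br(\Xbar)$ its image. As $Y$ is a homogeneous space of $G\sab=G/G\sss$, the group $G\sab(\kbar)$ acts transitively on $Y(\kbar)$; choose $g\in G(\kbar)$ whose image $\bar g\in G\sab$ satisfies $y_1\cdot\bar g=y_2$. Then right translation $R_g\colon\Xbar\to\Xbar$, $x\mapsto x\cdot g$, is an automorphism of $\Xbar$ mapping $\ov{X_{y_1}}$ isomorphically onto $\ov{X_{y_2}}$, so it induces an isomorphism $R_g^{*}\colon\Br(\ov{X_{y_2}})\isoto\Br(\ov{X_{y_1}})$. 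By Theorem \ref{thm:Brauer} every element of $G(\kbar)$ acts trivially on $\Br(\Xbar)$, so $R_g^{*}\bar b=\bar b$; restricting to $\ov{X_{y_1}}$ and using functoriality yields $R_g^{*}(\bar b|_{\ov{X_{y_2}}})=\bar b|_{\ov{X_{y_1}}}$ in $\Br(\ov{X_{y_1}})$. But $b|_{X_1}\in\Br(k)$ forces $\bar b|_{\ov{X_{y_1}}}=0$, hence $\bar b|_{\ov{X_{y_2}}}=0$ by injectivity of $R_g^{*}$, i.e. $b|_{X_2}\in\Bro(X_2)$; by the second paragraph this means $b|_{X_2}\in\Br(k)$, so $b\bmod\Br(k)\in\ker r_2$. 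I expect the main obstacle to be precisely the vanishing $\Bra(X_{y_i})=0$: it is what upgrades the geometric conclusion $\bar b|_{\ov{X_{y_2}}}=0$ to $b|_{X_2}\in\Br(k)$, and it relies on the semisimple‑by‑unipotent shape of the stabilizers (a consequence of the injectivity hypothesis in Proposition \ref{5.4-Manin}) together with Sansuc's exact sequence; the geometric heart, Theorem \ref{thm:Brauer}, then does the rest effortlessly. One could instead pick $g$ with $x_1\cdot g=x_2$ and argue via Theorem \ref{thm:H^i} that conjugation by $g$ induces a $\Gal(\kbar/k)$‑equivariant isomorphism between the Picard groups of the two stabilizers, but that route is more delicate because the normalizers of the stabilizers need not be connected.
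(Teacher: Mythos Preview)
Your argument is correct and is essentially the paper's own proof, just reorganized: the paper packages the same three ingredients---surjectivity of $r_i$ from Proposition \ref{prop : key prop}, injectivity of $\Br(X_i)/\Br(k)\to\Br(\Xbar_i)$ (your $\Bra(X_{y_i})=0$, the paper's Lemma \ref{lem:injective}), and triviality of the $G(\kbar)$-action on $\Br(\Xbar)$ from Theorem \ref{thm:Brauer}---into a single three-row commutative diagram and reads off $\lambda_{1,2}$ as the unique dotted arrow, which amounts exactly to your verification that $\ker r_1=\ker r_2$.
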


\begin{proof}
Choose $g\in G(\kbar)$ such that $x_1=x_2\cdot g$.
Then we have $\psi(x_1)=\psi(x_2)\cdot g$, hence
$\psi^{-1}(\psi(x_1))=\psi^{-1}(\psi(x_2))\cdot g$,
thus $\Xbar_1=\Xbar_2\cdot g$.
We obtain commutative diagrams
$$
\xymatrix{
\Xbar_2\ar[r]^g\ar@{_{(}->}[d] &\Xbar_1\ar@{_{(}->}[d]    &&\Br(\Xbar)\ar[r]^{g^*}\ar[d] &\Br(\Xbar)\ar[d] \\
\Xbar\ar[r]^g               &\Xbar
&&\Br(\Xbar_1)\ar[r]^{g^*}     &\Br(\Xbar_2) \, .
}
$$

By Theorem \ref{thm:Brauer}  $g^*\colon \Br(\Xbar)\to\Br(\Xbar)$ is the identity map.
Therefore, the following diagram
$$
\xymatrix{
\Br(X)/\Br(k)\ar[r]^{\id}\ar[d]^{r_1}  &\Br(X)/\Br(k)\ar[d]^{r_2} \\
\Br(X_1)/\Br(k)\ar@{.>}[r]^{\lambda_{1,2}}\ar[d]^{\mu_1}     &\Br(X_2)/\Br(k)\ar[d]^{\mu_2}  \\
\Br(\Xbar_1)      \ar[r]^{g^*}     &\Br(\Xbar_2)
}
$$
is commutative.

Here $r_1$ and $r_2$ are surjective by Proposition \ref{prop : key prop},
 while $\mu_1$ and $\mu_2$ are injective by Lemma \ref{lem:injective} below.
Clearly we can define the dotted arrow (in a unique way)
such that the diagram with this new arrow will be also commutative.
The top square of this new diagram is the desired diagram \eqref{eq:diagam-Brauer}.
\end{proof}

\begin{lemma}\label{lem:injective}
Let $X:=H\backslash G$,
where $G$ is a simply connected semisimple $k$-group over a field $k$ of characteristic 0,
and $H\subset G$ is a connected $k$-subgroup such that $H\tor=1$.
Then $\Br(X)/\Br(k)$ is finite and the canonical homomorphism
$$
\Br(X)/\Br(k)\to\Br(\Xbar)
$$
is injective.
\end{lemma}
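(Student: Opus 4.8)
The plan is to reduce both assertions to two facts about $\Pic(H)$, using Sansuc's exact sequence applied to the torsor $\pi\colon G\to X=H\backslash G$ under $H$.

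\emph{Step 1: properties of $H$.} Since $H\tor=1$, the reductive quotient $H\red$ is semisimple, so $H$ (and $\Hbar$) has no nontrivial characters: $\widehat{H}=0$, whence $\kbar[\Hbar]^*=\kbar^*$. I would then look at the low-degree terms of the Hochschild--Serre spectral sequence $H^p(k,H^q_\et(\Hbar,\Gm))\Rightarrow H^{p+q}_\et(H,\Gm)$: combined with Hilbert 90 ($H^1(k,\kbar^*)=0$) they give that the natural base-change map $\Pic(H)\to\Pic(\Hbar)$ is injective. Since $\Hbar$ is connected linear, $\Pic(\Hbar)$ is finite (\cite{San}, Lemma 6.9), hence $\Pic(H)$ is finite as well. (Alternatively, since $\Pic(H)=\Pic(H\red)$ and $\Pic(\Hbar)=\Pic(\Hbar\red)$, this is the semisimple case of \cite{San}, Lemma 6.9(i).)

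\emph{Step 2: computing $\Br(X)/\Br(k)$.} I would apply Theorem \ref{lem Sansuc}, specifically exact sequence \eqref{eq:Sansuc-3}, to the left torsor $\pi\colon G\to X$ under $H$ with $e\in G(k)$ and $x_0:=\pi(e)\in X(k)$. Since $G$ is semisimple simply connected, $\Pic(G)=0$ and $\Br_{1,e}(G)=0$ by \cite{San}, Lemma 6.9(iv), and $\Br(\Gbar)=0$ by \cite{G}. The vanishing of $\Br(\Gbar)$ gives $\Br_1(X,G)=\Br(X)$, hence $\Br_{1,x_0}(X,G)=\Br_{x_0}(X)$; then \eqref{eq:Sansuc-3} collapses to an isomorphism $\Delta_{G/X}\colon\Pic(H)\isoto\Br_{x_0}(X)$. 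As $x_0^*$ splits $\Br(k)\into\Br(X)$, we get $\Br(X)=\Br(k)\oplus\Br_{x_0}(X)$, so $\Br(X)/\Br(k)\cong\Br_{x_0}(X)\cong\Pic(H)$ is finite, which is the first assertion.

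\emph{Step 3: injectivity.} Running the same argument over $\kbar$, where in addition $\Br(\kbar)=0$, yields an isomorphism $\Delta_{\Gbar/\Xbar}\colon\Pic(\Hbar)\isoto\Br(\Xbar)$. By functoriality of the construction of $\Delta_{Y/X}$ with respect to base change $k\to\kbar$ (see \ref{subsec:definiton-of-DelataX} and Theorem \ref{lem Sansuc}), the maps $\Delta_{G/X}$ and $\Delta_{\Gbar/\Xbar}$ fit into a commutative square whose vertical arrows are the natural homomorphisms $\Pic(H)\to\Pic(\Hbar)$ and $\Br_{x_0}(X)\to\Br(\Xbar)$. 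Since the horizontal arrows are isomorphisms and $\Pic(H)\to\Pic(\Hbar)$ is injective by Step 1, the map $\Br_{x_0}(X)\to\Br(\Xbar)$ is injective; as $\Br(k)$ maps to $0$ in $\Br(\Xbar)$ and $\Br(X)=\Br(k)\oplus\Br_{x_0}(X)$, the induced map $\Br(X)/\Br(k)\to\Br(\Xbar)$ is injective, giving the second assertion. I do not expect a serious obstacle here: the argument is a diagram chase once Sansuc's sequence is available. The only point requiring care is the bookkeeping that identifies $\Br_{1,x_0}(X,G)$ with $\Br_{x_0}(X)$ and $\Br_{1,e}(G)$ with $0$ — which rests on Sansuc's computations for simply connected groups together with Gille's vanishing theorem $\Br(\Gbar)=0$ — and checking that the comparison square between the $k$- and $\kbar$-versions of $\Delta$ is exactly the base-change square.
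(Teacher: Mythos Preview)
Your proof is correct and follows essentially the same approach as the paper's: both use Sansuc's exact sequence \eqref{eq:Sansuc-3} for the torsor $G\to X$, together with $\Pic(G)=0$, $\Br_{1,e}(G)=0$ (\cite{San}, Lemma~6.9(iv)) and $\Br(\Gbar)=0$ (\cite{G}), to identify $\Br_{x_0}(X)\cong\Pic(H)$ functorially in $k$, and then reduce injectivity to that of $\Pic(H)\to\Pic(\Hbar)$. The only cosmetic difference is that the paper passes through $\Pic(H)\cong\Pic(H\sss)$ (using $H=H\ssu$) and invokes the injectivity of $\Pic(H\sss)\to\Pic(\ov{H\sss})$ as ``clear'', whereas you argue the injectivity directly from $\widehat{H}=0$ via Hochschild--Serre; both are valid and amount to the same Sansuc computation.
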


\begin{proof}
Let $x_0$ denote the image of $e\in G(k)$ in $X(k)$.
We have a canonical isomorphism $\Br(X)/\Br(k)\cong\Br_{x_0}(X)$.

By Theorem \ref{lem Sansuc} we have a canonical exact sequence
$$
\Pic(G)\to\Pic(H)\to\Br_{1,x_0}(X,G)\to\Br_{1,e}(G),
$$
where by \cite{San}, Lemma 6.9(iv), we have $\Pic(G)=0$ and $\Br_{1,e}(G)=0$.
Moreover, by \cite{G} we have $\Br(\Gbar)=0$, hence $\Br_{1,x_0}(X,G)=\Br_{x_0}(X)$.
We obtain a canonical isomorphism $\Br_{x_0}(X)\cong\Pic(H)$,
functorial in $k$.

We have $H=H\ssu$, hence $\Pic(H)\cong\Pic(H\sss)$. In the commutative diagram
$$
\xymatrix{
\Br(X)/\Br(k)\ar[r]^-{\cong} \ar[d]   & \Pic(H\sss) \ar[d]\\
\Br(\Xbar)\ar[r]^\cong             & \Pic(\ov{H\sss})
}
$$
the right vertical arrow is clearly injective, hence so is the left one.
\end{proof}


\begin{lemma}\label{lem:finite}
Let $X:=H\backslash G$,
where $G$ is a connected $k$-group over a number field $k$,
and $H\subset G$ is a connected linear $k$-subgroup.
Let $M\subset\Br_1(X,G)/\Br(k)$ be a finite subset.
Let $y:=(y_f,y_\infty)\in X(\A^f)\times X(k_\infty)=X(\A)$
be a point orthogonal to $M$ with respect to Manin pairing.
Let $\sU_\infty$ denote the connected component of $y_\infty$ in $X(k_\infty)$.
Then there exists an open neighbourhood $\sU^f\subset X(\A^f)$ of $y_f$
such that  $\sU^f\times\sU_\infty$ is orthogonal to $M$.
\end{lemma}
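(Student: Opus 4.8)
The plan is to reduce the statement to the local constancy of evaluation maps, which is the content of Lemma~\ref{lem:Ducros} together with the local assertion proved there. Since $M$ is finite, choose representatives $b_1,\dots,b_m\in\Br_1(X,G)\subset\Br(X)$ of its elements; because every class coming from $\Br(k)$ pairs trivially with all adelic points, it suffices to produce an open neighbourhood $\sU^f$ of $y_f$ in $X(\A^f)$ such that $\sU^f\times\sU_\infty$ is orthogonal to each $b_i$.

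First I would apply Lemma~\ref{lem:Ducros} to each $b_i$: the function $x\mapsto\langle b_i,x\rangle$ is locally constant on $X(\A)$, so, intersecting the finitely many resulting neighbourhoods, we get an open neighbourhood of $y$ of the form $\sU^f\times\sU'_\infty$, with $\sU^f\subset X(\A^f)$ and $\sU'_\infty\subset X(k_\infty)$ open, on which $\langle b_i,\cdot\rangle\equiv\langle b_i,y\rangle=0$ for all $i$. Here $\sU'_\infty$ may be strictly smaller than the connected component $\sU_\infty=\prod_{v\in\Omega_\infty}\sU_v$ of $y_\infty$. To enlarge it I would invoke the local statement proved inside Lemma~\ref{lem:Ducros}: for each archimedean $v$ and each $b\in\Br(X)$, the map $X(k_v)\to\Br(k_v)$, $x_v\mapsto b(x_v)$, is locally constant, hence constant on the connected component $\sU_v$. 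Writing $\langle b_i,x\rangle=\sum_{v\in\Omega_f}\mathrm{inv}_v(b_i(x_v))+\sum_{v\in\Omega_\infty}\mathrm{inv}_v(b_i(x_v))$, the second sum is unchanged when each $x_v$ ($v\in\Omega_\infty$) moves inside its own connected component, while the first depends only on $x^f$. Hence for $x^f\in\sU^f$ and $x_\infty\in\sU_\infty$ we obtain $\langle b_i,(x^f,x_\infty)\rangle=\langle b_i,(x^f,y_\infty)\rangle=0$, so $\sU^f\times\sU_\infty$ is orthogonal to every $b_i$, and hence to $M$.

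The one step that deserves care, and which I regard as the main (though minor) obstacle, is precisely this enlargement: one must check that replacing each archimedean evaluation $b_i(x_v)$ by its value at $y_v$ does not move us out of the zero locus of $\langle b_i,\cdot\rangle$. This is exactly what the local form of Lemma~\ref{lem:Ducros} combined with the connectedness of $\sU_v$ provides, together with the fact that $\Q/\Z$ carries the discrete topology so that ``continuous'' means ``locally constant''. Equivalently, one can observe that the pairing \eqref{eq:Manin-pairing-0} factors through $X(\A)_{\bullet}$ and that $X(\A)\to X(\A)_{\bullet}$ is a continuous open surjection (the map $X(k_v)\to\pi_0(X(k_v))$ is open since $X(k_v)$ is locally connected), so that local constancy on $X(\A)_{\bullet}$ follows from Lemma~\ref{lem:Ducros} and $\sU^f$ is read off directly from a neighbourhood of the image of $y$. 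Note that the hypotheses $X=H\backslash G$ with $H$ linear and $G$ connected play no role here: only smoothness and geometric integrality of $X$ are used.
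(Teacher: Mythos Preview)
Your proof is correct and follows the same approach as the paper, which simply says the lemma is an immediate consequence of Lemma~\ref{lem:Ducros} together with the finiteness of $M$. You have usefully spelled out the one point the paper leaves implicit, namely that at each archimedean place the locally constant evaluation is constant on the connected component $\sU_v$, so that the open neighbourhood at infinity can be enlarged to all of $\sU_\infty$.
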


\begin{proof}
It is an immediate consequence of Lemma \ref{lem:Ducros}, using the
finiteness of $M$.
\end{proof}

\begin{subsec}\label{subsec:M-and-sU}
Recall that $X=H\backslash G$.
Let $x_0$ be the image of $e\in G(k)$ in $X(k)$.
Set $X_0:=x_0\cdot G\sss$.
By assumption $H\cap G\sss=H\ssu$.
By Lemma \ref{lem:injective}  $\Br(X_0)/\Br(k)$ is a finite group.
By Proposition \ref{prop : key prop} the map
$\Br_{1}(X,G)/\Br(k)\to\Br(X_0)/\Br(k)$ is surjective.
We choose a finite subset $M\subset \Br_1(X,G)/\Br(k)$
such that $M$ surjects onto $\Br(X_0)/\Br(k)$.

Now let $x_1\in X(k)$ be any other point.
Set $X_1:=x_1\cdot G\sss$.
It follows from Lemma \ref{lem:Br(X1)Br(X_2)}
that $M\subset \Br_{1}(X,G)/\Br(k)$ surjects onto $\Br(X_1)/\Br(k)$.
\end{subsec}

\begin{subsec}
{\em Proof of Proposition \ref{5.4-Manin}.}
Let $x\in X(\A)$ be orthogonal to $\Br_{1,x_0}(X,G)$.
Let  $\sU_{X}^f$ be an open neighbourhood of the $\A^f$-part $x^f$ of $x$.
Let $\U_X$ be the special neighbourhood of $x$ defined by $\U_X^f$.

Let $M\subset\Br_1(X,G)/\Br(k)$ be as in \ref{subsec:M-and-sU}.
Then $x$ is orthogonal to $M$.
By Lemma \ref{lem:finite} there exists an open neighbourhood
$\U^f$ of $x^f$ such that the corresponding special neighbourhood $\U$ of $x$ in $X(\A)$
is orthogonal to $M$.
We may assume that $\U_X^f\subset \U^f$, then $\U_X\subset \U$, hence $\U_X$ is orthogonal to $M$.

Let $Y$ and $\psi\colon X\to Y$ be as in Construction \ref{subsec:modulo-G-ss}
(i.e. $Y:=X/G\sss$).
Set $y:=\psi(x)\in Y(\A)$.
Since $x$ is orthogonal to $\Br_{1,x_0}(X,G)$, we see by functoriality
that $y$ is orthogonal to $\Br_{1, y_0}(Y,G\sab)$.
Clearly there is a semiabelian variety $G'$ such that $Y$ is a (trivial)
principal homogeneous space of $G'$.
We have a morphism of pairs $(Y, G\sab)\to (Y, G')$,
hence a homomorphism $\Br_1(Y,G')\to\Br_1(Y,G\sab)$.
But $\Br_1(Y,G')=\Br_1(Y)$,
hence $y$ is orthogonal  to the group $\Br_1(Y)$.
As in the  First reduction, see \ref{subsec:First-reduction},
we define $\sU_{Y}^f:=\psi(\sU_{X}^f)$
and we construct the corresponding
special open neighbourhood $\sU_{Y}$ of $y$.
By \cite{BCTS}, Lemma A.2, for any $v\in\Omega_\infty$
we have $\psi(\U_{X,v})=\U_{Y,v}$.
We see that $\psi(\U_X)$ is an open subset
of $Y(\A)$ of the form ${\U}^f\times\U_{\infty}$,
where ${\U}^f\subset Y(\A^f)$ is an open subset
and $\U_{\infty}=\prod_{v\in\Omega_\infty}\U_{Y,v}$, where $\U_{Y,v}\subset Y(k_v)$
is the connected component of $y_v$.
Then $\U_\infty$ is the connected component of $x_\infty$ in $X(k_\infty)$.
Now since $Y$ is a torsor of the semiabelian variety  $G'$
with finite Tate--Shafarevich group,
by \cite{H}, Theorem 4,
there exists a $k$-point $y_1\in Y(k)\cap\psi(\U_X)$.

Let $X_{y_1}$ denote the fibre of $X$ over $y_1$.
Consider the set $\sV:=X_{y_1}(\A)\cap\sU_{X}$,
it is open in $X_{y_1}(\A)$.
Since $y_1\in\psi(\sU_{X})$, the set $\sV$ is non-empty:
there exists a point $x'=(x'_v)\in \sV$.
In particular, $X_{y_1}(k_v)\neq\emptyset$ for any $v\in\Omega_r$.
The variety $X_{y_1}$ is a homogeneous space of $G\sss$ with geometric stabilizer
$\Hbar\cap\Gbar\sss=\Hbar\ssu$.
The group $G\sss$ is semisimple simply
connected by (iii).
The group $\Hbar\ssu$ is connected and character-free, i.e. $(\Hbar\ssu)\tor=1$.
By \cite{B93},  Corollary 7.4, the fact that $X_{y_1}$ has points in all
real completions of $k$ is enough to ensure  that
$X_{y_1}$ has a $k$-point.
Note that $\Br(\Gbar\sss)=0$ by \cite{G},
hence $\Br_1(X_{y_1},G\sss)=\Br(X_{y_1})$.
Since $\U_X$ is orthogonal to $M$, we see that $\sV\subset\U_X$ is orthogonal to $M$.
Since $M$ surjects onto $\Br(X_{y_1})/\Br(k)$, see \ref{subsec:M-and-sU},
we see that $\sV$ is orthogonal to $\Br(X_{y_1})$.
By Theorem \ref{prop:simply-connected-homogeneous-spaces}
(due to Colliot-Th\'el\`ene and Xu)
there is a point of the form $x_1\cdot g_S$ in $\sV$,
where $x_1\in X_{y_1}(k)$ and $g_S\in G\sss(k_S)$.
It follows that the set
$\sV\cdot G\sss(k_S)$ contains a $k$-point of $X_{y_1}$.
Clearly $\sV\cdot G\sss(k_S)\subset \sU_X\cdot G\sss(k_S)$.
Thus $\sU_X\cdot G\sss(k_S)$ contains a $k$-point of $X$,
which shows that the pair $(X,G)$ has Property (\P).
This completes the proof of Proposition \ref{5.4-Manin}.
\qed
\end{subsec}

Let us resume the proof of Theorem \ref{thm:non-connected-Manin}.
We need a construction.

\begin{construction}\label{con:torsor}
We follow an idea of a construction in the proof of \cite{BCTS}, Theorem 3.5.
By Lemma 3 in \cite{CTS77}
there exists a coflasque resolution of the torus $H\tor$, i.e. an exact sequence of $k$-tori
$$
0 \to H\tor \to P \to Q \to 0
$$
where $P$ is a quasi-trivial torus and $Q$ is a coflasque torus.
Recall that a torus is coflasque if for any field extension $K/k$ we have
$H^1(K, \widehat{Q})=0$, where $\widehat{Q}$ is the character group of $Q$.
Consider the $k$-group $F:=G\times P$.
The group $H$ maps diagonally into $F$, and we can consider the quotient homogeneous space
$W := H \backslash F$ of $F$.
There is a natural morphism $t \colon W\to X$.
We have $F\ab=G\ab$, hence $\Sha(F\ab)$ is finite.
We have a canonical homomorphism $H\tor\to F\sab$, and this homomorphism is clearly injective.
Let us prove the following fact, which is necessary to apply Proposition \ref{5.4-Manin} to the homogeneous space $W$ of $F$.
\end{construction}

\begin{lemma}\label{lem:Brauersab}
With the notation of Construction \ref{con:torsor}, the pullback homomorphism
$$
\Br_{1,e}(F\sab) \to \Br_{1,e}(H\tor)
$$
is surjective.
\end{lemma}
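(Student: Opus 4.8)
The plan is to compute the semiabelian part $F\sab$ of $F = G \times P$ and reduce the surjectivity to the one in the split exact sequence defining the coflasque resolution. First I would observe that $F\tor = G\tor \times P$ and $F\sss = G\sss$, so that $F\sab = (F/F\uu)/F\sss$ fits into the short exact sequence $1 \to F\tor \to F\sab \to F\ab \to 1$ with $F\tor = G\tor \times P$ and $F\ab = G\ab$. Since $G\uu = 1$ by hypothesis (i) of Theorem \ref{thm:non-connected-Manin}, we actually have $F\sab = G\sab \times P$: indeed, $F = G \times P$, $F\sss = G\sss \times 1$, so $F\sab = (G\times P)/(G\sss) = (G/G\sss) \times P = G\sab \times P$.

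Next I would use the functoriality of the bottom row of diagram \eqref{eq:Sansuc-cor-1} in Corollary \ref{cor:Sansuc-cor}, or more simply the fact that for a product of connected groups $\Bra(A \times B) \cong \Bra(A) \oplus \Bra(B)$ (this is \cite{San}, Lemma 6.6, as already invoked in the proof of Lemma \ref{lem:product}), to write
$$
\Br_{1,e}(F\sab) = \Br_{1,e}(G\sab \times P) \cong \Br_{1,e}(G\sab) \oplus \Br_{1,e}(P).
$$
Under this decomposition, the pullback $\Br_{1,e}(F\sab) \to \Br_{1,e}(H\tor)$ restricted to the second summand is precisely the pullback along the composite $H\tor \to P \hookrightarrow F\sab$, i.e. along the inclusion $H\tor \to P$ coming from the coflasque resolution. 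So it suffices to show that $\Br_{1,e}(P) \to \Br_{1,e}(H\tor)$ is surjective, i.e. that $\Bra(P) \to \Bra(H\tor)$ is surjective.

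For this I would apply the corollary of Theorem \ref{lem Sansuc} (the $k$-torus case of Sansuc's exact sequence) to the torsor $P \to P/H\tor = Q$ under $H\tor$ — or equivalently the Sansuc exact sequence associated to the short exact sequence $0 \to H\tor \to P \to Q \to 0$ of tori, which gives
$$
\Pic(Q) \to \Br_{1,e}(Q) \to \Br_{1,e}(P) \to \Bra(H\tor) \to \textup{(something)}.
$$
Actually the cleanest route: dualize. We have $\Bra(T) = H^2(k,\widehat{T})$ for a $k$-torus $T$ by \cite{San}, Lemma 6.9(ii), and the character sequence $0 \to \widehat{Q} \to \widehat{P} \to \widehat{H\tor} \to 0$ gives a long exact cohomology sequence
$$
H^2(k,\widehat{Q}) \to H^2(k,\widehat{P}) \to H^2(k,\widehat{H\tor}) \to H^3(k,\widehat{Q}).
$$
The map $\Br_{1,e}(P) \to \Br_{1,e}(H\tor)$ is $H^2(k,\widehat{P}) \to H^2(k,\widehat{H\tor})$, whose cokernel injects into $H^3(k,\widehat{Q})$. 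Since $Q$ is coflasque, $H^1(K,\widehat{Q}) = 0$ for every field extension $K/k$; I would need to deduce the vanishing (or at least enough control) of the relevant higher cohomology — here one uses that for a number field $k$, $H^3(k,\widehat{Q}) = 0$ because $\widehat{Q}$ is a finitely generated torsion-free module and $\mathrm{cd}(k) \leq 2$ up to the contribution of real places, combined with the coflasque property killing the local terms at the real places (this is the standard argument, cf. the use of coflasque tori in \cite{CTS77}). That gives surjectivity of $H^2(k,\widehat{P}) \to H^2(k,\widehat{H\tor})$, hence of $\Br_{1,e}(P) \to \Br_{1,e}(H\tor)$, and composing with the projection from $\Br_{1,e}(F\sab)$ finishes the proof.

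The main obstacle I anticipate is pinning down the precise cohomological vanishing statement that makes the coflasque hypothesis do its job over a number field: one must handle the archimedean (real) places carefully, since $\mathrm{cd}_2(k_v) = \infty$ for $v$ real, and it is exactly the coflasque condition $H^1(k_v,\widehat{Q}) = 0$ at the real places (plus periodicity of Tate cohomology of $\mathbf{Z}/2$) that rescues the argument. The algebraic identification $F\sab \cong G\sab \times P$ and the splitting of $\Bra$ over a product are routine; the real subtlety is entirely in the number-theoretic input controlling $H^2(k,\widehat{P}) \twoheadrightarrow H^2(k,\widehat{H\tor})$.
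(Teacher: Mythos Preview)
Your proof is correct and in fact more elementary than the paper's. Both arguments hinge on the same arithmetic input, namely the vanishing
$$
H^3(k,\widehat{Q})\;\cong\;\prod_{v\text{ real}}H^3(k_v,\widehat{Q})\;\cong\;\prod_{v\text{ real}}H^1(k_v,\widehat{Q})\;=\;0
$$
coming from the coflasque hypothesis on $Q$ (your sketch of this step is correct, though the paper writes it out explicitly via the localization isomorphism for $H^i$ with $i\ge 3$ and the period-$2$ Tate cohomology at real places). Where you diverge from the paper is in how this vanishing is fed into the Brauer group statement. You exploit the product decomposition $F\sab=G\sab\times P$ together with \cite{San}, Lemma~6.6 (valid here since $\overline{P}$ is rational and $P$ has a $k$-point) to split $\Br_{1,e}(F\sab)\cong\Br_{1,e}(G\sab)\oplus\Br_{1,e}(P)$ and then work entirely on the torus factor, where the identification $\Br_{1,e}(T)\cong H^2(k,\widehat T)$ of \cite{San}, Lemma~6.9(ii) reduces everything to the long exact sequence in Galois cohomology for $0\to\widehat Q\to\widehat P\to\widehat{H\tor}\to 0$. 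The paper instead treats $F\sab$ as a single semiabelian variety, passes to the associated $1$-motive $M_F=[0\to F\sab]$ and its Cartier dual $M_F^*$, dualizes the whole diagram of semiabelian varieties to a diagram of $1$-motives, and uses hypercohomology together with the comparison map $H^1(k,M_F^*)\to\Br_{1,e}(F\sab)$ of \cite{HSz2}. Your route avoids $1$-motives entirely and is shorter; the paper's route is more uniform in that it never separates the abelian-variety and torus parts of $F\sab$, at the cost of invoking heavier machinery.
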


\begin{proof}
By definition, we have the following exact commutative diagram:
\begin{equation}
\label{diag tori}
\xymatrix{
& & 0 \ar[d] & 0 \ar[d] & \\
& & G\sab \ar@{=}[r] \ar[d] & G\sab \ar[d] & \\
0 \ar[r] & H\tor \ar[r] \ar@{=}[d] & F\sab = G\sab \times P \ar[r]
\ar[d] & S \ar[d] \ar[r] & 0 \\
0 \ar[r] & H\tor \ar[r] & P \ar[r] \ar[d] & Q \ar[r] \ar[d] &
0 \\
& & 0 & 0 &
}
\end{equation}
where $S$ is defined to be the quotient $F\sab / H\tor$ and all the maps are the natural ones.
The group $S$ is a semi-abelian variety.
By assumption $Q$ is a coflasque torus. Therefore
$$
H^3(k, \widehat{Q}) \cong \prod_{v \textup{ real}} H^3(k_v,
\widehat{Q}) \cong \prod_{v \textup{ real}} H^{1}(k_v, \widehat{Q}) =
0 \, .
$$
Denote by $M_F := [0 \to F\sab]$ (resp. $M_S := [0 \to S]$) the 1-motive
(in degrees $-1$ and $0$) associated to the semi-abelian variety $F\sab$ (resp. $S$), and by ${M_F}^*$ (resp. ${M_S}^*$)
its Cartier dual (see \cite{HSz}, Section 1 page 97 for the definition of the Cartier dual of a 1-motive).
We call a sequence of 1-motives over $k$ exact if the
associated sequence of complexes of fppf sheaves on $\textup{Spec}(k)$
is exact.

Considering diagram (\ref{diag tori}) as an exact diagram in the category of 1-motives over $k$,
we get a commutative exact diagram of 1-motives:
\begin{displaymath}
 \xymatrix{
0 \ar[r] & [0 \to H\tor] \ar[r] \ar@{=}[d] & M_F \ar[r] \ar[d] & M_S \ar[d] \ar[r] & 0 \\
0 \ar[r] & [0 \to H\tor] \ar[r] & [0 \to P] \ar[r] & [ 0 \to Q] \ar[r] & 0 \, .
}
\end{displaymath}
We can dualize this diagram to get the following commutative diagram of 1-motives:
\begin{displaymath}
 \xymatrix{
0 \ar[r] & [\widehat{Q} \to 0] \ar[r] \ar[d] & [\widehat{P} \to 0] \ar[r] \ar[d] & [\widehat{H\tor} \to 0] \ar@{=}[d] \ar[r] & 0 \\
0 \ar[r] & {M_S}^* \ar[r] & {M_F}^* \ar[r] & [\widehat{H\tor} \to 0]
\ar[r] & 0 \, .
}
\end{displaymath}
This diagram is exact as a diagram of complexes of fppf sheaves since the 1-motive $[0 \to H\tor]$ is associated to a $k$-torus
(see \cite{BVB}, Remark 1.3.4).
Hence this exact diagram induces a commutative exact diagram in hypercohomology:
\begin{displaymath}
\xymatrix{
H^2(k, \widehat{P}) \ar[r] \ar[d] & H^2(k, \widehat{H\tor}) \ar[r] \ar@{=}[d] & H^3(k, \widehat{Q}) = 0 \ar[d]  \\
H^1(k, {M_F}^*) \ar[r] & H^2(k, \widehat{H\tor}) \ar[r] & H^2(k, {M_S}^*) \, .
}
\end{displaymath}
Therefore the map $H^1(k, {M_F}^*) \to H^2(k, \widehat{H\tor})$ is surjective.
But by \cite{HSz2}, beginning of Section 4, there are natural maps $\iota_F : H^1(k, {M_F}^*) \to \Br_{1,e}(G\sab)$ and
$\iota_{H\tor} : H^2(k, \widehat{H\tor}) \to \Br_{1,e}(H\tor)$ such that the second map is the canonical isomorphism
of \cite{San}, Lemma 6.9(ii).
 Hence we get a commutative diagram
\begin{displaymath}
\xymatrix{
H^1(k, {M_F}^*) \ar[r] \ar[d]^{\iota_F} & H^2(k, \widehat{H\tor}) \ar[d]_{\cong}^{\iota_{H\tor}} \\
\Br_{1,e}(G\sab) \ar[r] & \Br_{1,e}(H\tor) \, .
}
\end{displaymath}
Since the top map is surjective, so is the bottom one.
\end{proof}

Let $x\in X(\A)$ be a point,
and assume that $x$ is orthogonal to $\Br_{1}(X,G)$.
The map $t \colon W\to X$ is a torsor under a quasi-trivial torus, and we want to lift $x$ to some $w\in W(\A)$
orthogonal to $\Br_1(W,F)$.
To do this, we need the following lemma.

\begin{lemma}\label{lem:torsor-quasi-trivial-exact}
With the above notation, the torsor $t \colon W=H\backslash F\to X$
under the quasi-trivial torus $P$ induces a canonical exact sequence
$$
0 \to \Br_{1,x_0}(X,G) \xrightarrow{t^*} \Br_{1,w_0}(W,F) \xrightarrow{\varphi} \Br_{1,e}(P),
$$
where $x_0$ is the image of $e\in G(k)$ and $w_0$ is the image of $e\in F(k)$.
\end{lemma}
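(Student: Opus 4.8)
The plan is to recognise $\pi_{w_0}\colon F\to W$ and $t\colon W\to X$ as a tower of torsors to which Proposition \ref{prop Sansuc general} applies, and then to extract the sequence \eqref{eq:Sansuc-general-2} and simplify it using that $P$ is quasi-trivial.

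First I would spell out the torsor structures. Write $\phi\colon H\twoheadrightarrow H\tor\hookrightarrow P$ for the composite appearing in Construction \ref{con:torsor}, so that the image of $H$ in $F=G\times P$ is $\{(h,\phi(h))\}$ and $W=H\backslash F$ is the quotient of $F$ by the left action $h\cdot(g,p)=(hg,\phi(h)p)$. Let $\wt H:=H\times P$ act on $F$ on the left by $(h,p')\cdot(g,p):=(hg,\phi(h)p p')$; since $P$ is commutative this is a group action, it is free, and its orbit map is exactly the composite $F\to X$, $(g,p)\mapsto Hg$, so $F\to X$ is a torsor under $\wt H$. Restricting this action to $H_1:=H\times\{e\}$ recovers the action defining $W$, so $\pi_{w_0}\colon F\to W$ is a (left) torsor under $H$; the induced action of $H_2:=\wt H/H_1\cong P$ on $W$, namely $p'\cdot H(g,p)=H(g,p p')$, makes $t\colon W\to X$ a torsor under $P$. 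Thus $1\to H\to\wt H\to P\to 1$, together with the torsors $\pi_{w_0}\colon F\to W$, $t\colon W\to X$ and the $\wt H$-torsor $F\to X$, satisfies all the hypotheses of Proposition \ref{prop Sansuc general} (the groups $H$, $P$, $\wt H$ are connected linear, $H$ being linear by hypothesis (ii) of Theorem \ref{thm:non-connected-Manin} and $P$ a torus).

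Applying Proposition \ref{prop Sansuc general}, exact sequence \eqref{eq:Sansuc-general-2}, with $z=w_0\in W(k)$ and $x=x_0\in X(k)$, yields the exact sequence
$$
\Pic(W)\xrightarrow{i_{w_0}^*}\Pic(P)\xrightarrow{\Delta_{W/X}}\Br_{1,x_0}(X,F)\xrightarrow{t^*}\Br_{1,w_0}(W,F)\xrightarrow{\varphi}\Br_{1,e}(P),
$$
where $\varphi=i_{w_0}^*$ is the restriction of Brauer classes to the fibre $t^{-1}(x_0)\cong P$ (trivialised by $w_0$). It then remains to make two identifications. Since the second projection $\mathrm{pr}_G\colon F=G\times P\to G$ is the trivial $P$-torsor, the pullback $\ov{\mathrm{pr}}_G^{*}\colon\Br(\ov G)\to\Br(\ov F)$ is split injective, and a short computation with base points shows $\Br_{1,x_0}(X,F)=\Br_{1,x_0}(X,G)$. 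Moreover $P$ quasi-trivial forces $\Pic(P)=0$ (see \cite{San}, Lemma 6.9, or note that a quasi-trivial torus is an open subscheme of affine space with complement of pure codimension one). Plugging these in, the sequence above becomes the asserted $0\to\Br_{1,x_0}(X,G)\xrightarrow{t^*}\Br_{1,w_0}(W,F)\xrightarrow{\varphi}\Br_{1,e}(P)$.

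The only genuinely delicate part is bookkeeping in the first step: checking that the twisted $\wt H$-action on $F$ is a well-defined free action whose successive quotients reproduce $W$ and $X$ with the stated torsor structures, so that Proposition \ref{prop Sansuc general} really does apply. It is worth stressing that one cannot instead simply apply the Sansuc sequence \eqref{eq:Sansuc-3} to the $P$-torsor $t\colon W\to X$: that would give $\Br_{1,x_0}(X,W)$ and $\Br_{1,w_0}(W)$, which are in general strictly smaller than $\Br_{1,x_0}(X,G)$ and $\Br_{1,w_0}(W,F)$ — carrying the group $F$ along, as Proposition \ref{prop Sansuc general} does, is exactly what makes the identification work.
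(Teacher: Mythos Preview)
Your argument is correct and is a genuinely different route from the paper's proof. The paper defines $\varphi$ by hand as the composite $\Br_{1,w_0}(W,F)\xrightarrow{\pi_W^*}\Br_{1,e}(F)\cong\Br_{1,e}(G)\oplus\Br_{1,e}(P)\to\Br_{1,e}(P)$, gets injectivity of $t^*$ from the ordinary Sansuc sequence for the $P$-torsor $W\to X$ together with $\Pic(P)=0$, and then establishes exactness at $\Br_{1,w_0}(W,F)$ by a diagram chase that compares the two Sansuc sequences \eqref{eq:Sansuc-3} for the $H$-torsors $G\to X$ and $F\to W$. You instead recognise the whole tower $F\to W\to X$ as a single instance of Proposition~\ref{prop Sansuc general} with respect to $1\to H\to H\times P\to P\to 1$, so the desired exact sequence drops out at once; the only residual work is the identification $\Br_{1,x_0}(X,F)=\Br_{1,x_0}(X,G)$, which you correctly deduce from the factorisation $F\to G\to X$ and the split injectivity of $\mathrm{pr}_G^*\colon\Br(\ov G)\hookrightarrow\Br(\ov G\times\ov P)$. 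Your approach is more economical and shows that Proposition~\ref{prop Sansuc general} already encodes the lemma; the paper's approach, by contrast, keeps everything at the level of the basic Sansuc sequence and is perhaps more transparent about where the map $\varphi$ comes from and why it agrees with the fibre-restriction map. It is worth noting, as you do, that the twisted $\wt H$-action is what makes this work: the na\"ive product action would not have $X$ as its quotient.
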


\begin{proof}
We first define the map $\varphi$ of the lemma. The pullback homomorphism:
$\textup{Br}(W) \xrightarrow{\pi_W^*} \textup{Br}(F)$ sends the subgroup $\textup{Br}_{1,w_0}(W, F)$ into $\textup{Br}_{1, e}(F)$.
But $F = G \times P$,
hence thanks to \cite{San}, Lemma 6.6, we have a natural isomorphism $\textup{Br}_{1, e}(F) \cong
\textup{Br}_{1, e}(G) \bigoplus \textup{Br}_{1, e}(P)$.
We compose this map with the second projection
$$
\pi_P \colon \textup{Br}_{1, e}(G) \oplus \textup{Br}_{1, e}(P) \to \textup{Br}_{1, e}(P).
$$
So we define a morphism
$\varphi := \textup{pr}_P \circ \pi_W^* \colon \textup{Br}_{1,w_0}(W, F) \to \textup{Br}_{1, e}(P)$.
The morphism $t^* \colon \textup{Br}_{1,x_0}(X,G) \to \textup{Br}_{1,w_0}(W, F)$ in the lemma
is induced by the morphism of pairs $t \colon (W,F) \to (X,G)$.
By Theorem \ref{lem Sansuc} we have an exact sequence
$$
\Pic(P) \to \Br(X) \xrightarrow{t^*} \Br(W) \, .
$$
The torus $P$ is quasi-trivial, therefore by Lemma 6.9(ii) of \cite{San}, the group $\Pic(P)$ is trivial,
so the homomorphism $t^* : \Br(X) \rightarrow \Br(W)$ is injective.
In particular, the homomorphism $t^* \colon \textup{Br}_{1,x_0}(X,G) \to \textup{Br}_{1,w_0}(W, F)$
in Lemma \ref{lem:torsor-quasi-trivial-exact} is injective.
Therefore, it just remains to prove the exactness of the sequence of the lemma at the term
$\textup{Br}_{1,w_0}(W, F)$.
Consider the following diagram
\begin{displaymath}
\xymatrix{
\Pic(H) \ar@{=}[d] \ar[r]^(.4){\Delta_{G/X}} & \Br_{1,x_0}(X,G) \ar[r] \ar[d]^{t^*}
              & \Br_{1,e}(G) \ar[r] \ar[d] & \Br_{1,e}(H) \ar@{=}[d]\\
\Pic(H) \ar[r]^(.4){\Delta_{F/W}} & \Br_{1,w_0}(W, F) \ar[r] \ar[d]^{\varphi} & \Br_{1, e}(F) \ar[r]
\ar[d] & \Br_{1,e}(H) \\
& \Br_{1,e}(P) \ar@{=}[r] & \Br_{1,e}(P) &
}
\end{displaymath}
where the rows come from Theorem \ref{lem Sansuc}. The
commutativity of this diagram is a consequence of the functoriality of
the exact sequences of Theorem \ref{lem Sansuc} and of the
definition of the map $\varphi$.
We conclude the proof of the exactness of the second column of the
diagram by an easy diagram chase, using the exactness of the two first
rows and that of the third column (see Corollary \ref{cor:Sansuc-cor}).
\end{proof}

\begin{corollary}
With the above notation, if $x \in X(\A)$ is orthogonal to  $\Br_1(X,G)$, then there exists $w \in W(\A)$
such that $t(w) = x$ and $w$ is orthogonal to  $\Br_1(W,F)$.
\end{corollary}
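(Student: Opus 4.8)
The plan is to produce the required point $w$ in two steps: first choose \emph{any} adelic lift of $x$ along $t$, and then correct it by translating it inside a fibre of $t$ by a suitably chosen adelic point of the torus $P$.

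First I would check that $t\colon W(\A)\to X(\A)$ is surjective, so that there is some $w_1\in W(\A)$ with $t(w_1)=x$. This is routine: since $P$ is a quasi-trivial torus we have $H^1(k_v,P)=0$ for every place $v$, so $W(k_v)\to X(k_v)$ is onto for all $v$, and for a suitable $\sO^S$-model and almost all finite $v$ the $P$-torsor $W\to X$ is trivial over $\sO_v$ by Lang's theorem and Hensel's lemma, exactly as in the proof of Lemma \ref{lem:open-map}.

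The key step is the following. The assignment $b\mapsto\langle b,w_1\rangle$ is a homomorphism $\Br_{1,w_0}(W,F)\to\Q/\Z$. Since $x$ is orthogonal to $\Br_1(X,G)\supset\Br_{1,x_0}(X,G)$, functoriality of the Manin pairing gives $\langle t^*a,w_1\rangle=\langle a,t(w_1)\rangle=\langle a,x\rangle=0$ for every $a\in\Br_{1,x_0}(X,G)$; by the exact sequence of Lemma \ref{lem:torsor-quasi-trivial-exact} we have $\ker\varphi=t^*(\Br_{1,x_0}(X,G))$, so this homomorphism vanishes on $\ker\varphi$ and therefore factors as $\psi\circ\varphi$ for a homomorphism $\psi$ defined on $\varphi(\Br_{1,w_0}(W,F))\subset\Br_{1,e}(P)$. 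Since $\Q/\Z$ is an injective abelian group I would extend $-\psi$ to a homomorphism $\widetilde\psi\colon\Br_{1,e}(P)\to\Q/\Z$, and then---using $\Br_{1,e}(P)\cong\Bra(P)$ and the fact that $P$ is quasi-trivial---invoke Lemma \ref{lem:surjectivity} to find $p\in P(\A)$ with $\langle d,p\rangle=\widetilde\psi(d)$ for all $d\in\Br_{1,e}(P)$. Finally, setting $w:=w_1\cdot p$ (for the right action of $P\subset F$ on $W=H\backslash F$), we have $t(w)=t(w_1)=x$ since $P$ stabilises the fibres of $t$ (indeed $X=W/P$), and for $b\in\Br_{1,w_0}(W,F)$ Corollary \ref{cor:compatibility}, applied to the $F$-homogeneous space $W$ and the element $p\in P(\A)\subset F(\A)$, gives
$$
\langle b,w\rangle=\langle b,w_1\rangle+\langle\pi_W^*(b),p\rangle=\langle b,w_1\rangle+\langle\varphi(b),p\rangle=\psi(\varphi(b))+\widetilde\psi(\varphi(b))=0,
$$
where $\pi_W\colon F\to W$ is the quotient map, and the second equality uses the definition $\varphi=\textup{pr}_P\circ\pi_W^*=j_P^*\circ\pi_W^*$ (with $j_P\colon P\hookrightarrow F$) together with functoriality of the Manin pairing along $j_P$. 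Since $\Br_1(W,F)=\Br_{1,w_0}(W,F)+\Br(k)$ and $\Br(k)$ pairs trivially with adelic points, $w$ is orthogonal to $\Br_1(W,F)$, which is what we want.

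The main obstacle is the third paragraph: one must recognise that the obstruction to an arbitrary lift $w_1$ being orthogonal to $\Br_1(W,F)$ is exactly a linear functional on $\Br_{1,e}(P)$ (through $\varphi$ and the exact sequence of Lemma \ref{lem:torsor-quasi-trivial-exact}), that every such functional is realised by evaluation at some adelic point of $P$ (Lemma \ref{lem:surjectivity}), and that translating $w_1$ by such a point shifts the Manin pairing by precisely the required amount (Corollary \ref{cor:compatibility}). The only slightly delicate bookkeeping is the compatibility of $\varphi$ with the decomposition $\Br_{1,e}(F)\cong\Br_{1,e}(G)\oplus\Br_{1,e}(P)$, i.e.\ the identity $\varphi(b)=j_P^*\pi_W^*(b)$ used above.
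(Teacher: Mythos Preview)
Your proof is correct and follows essentially the same approach as the paper's: lift $x$ to some $w_1\in W(\A)$ using $H^1(k_v,P)=0$, observe via the exact sequence of Lemma~\ref{lem:torsor-quasi-trivial-exact} that the obstruction $b\mapsto\langle b,w_1\rangle$ factors through $\varphi$, realise the correcting functional by an adelic point of $P$ via Lemma~\ref{lem:surjectivity}, and translate using Corollary~\ref{cor:compatibility}. The only cosmetic difference is that the paper packages your factoring-and-extension step as the dual exact sequence $\Br_{1,e}(P)^D\to\Br_{1,w_0}(W,F)^D\to\Br_{1,x_0}(X,G)^D\to 0$ (exactness of $\Hom(-,\Q/\Z)$), which is equivalent to your explicit use of the injectivity of $\Q/\Z$.
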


\begin{proof}
Consider the exact sequence of Lemma \ref{lem:torsor-quasi-trivial-exact}.
Taking dual groups, we obtain the dual exact sequence
\begin{equation}\label{eq:Bra-dual}
\Br_{1,e}(P)^D\labelto{\varphi^D}\Br_{1,w_0}(W,F)^D\labelto{t_*} \Br_{1,x_0}(X,G)^D\to 0,
\end{equation}
Let  $m_{X,G}(x)\in \Br_{1,x_0}(X,G)^D$ denote the homomorphism $b\mapsto \langle b,x\rangle\colon \Br_{1,x_0}(X,G)$ $\to\Q/\Z$.
By assumption $m_{G,X}(x)=0$.
We wish to lift $x$ to some $w\in W(\A)$ such that $m_{W,F}(w)=0$.

Since $H^1(k_v,P)=0$ for all $v$, we can lift $x$ to some point $w'\in W(\A)$ such that $t(w')=x$
(we use also Lang's theorem and Hensel's lemma).
Then  $t_*(m_{W,F}(w'))=m_{X,G}(x) \in \Br_{1,x_0}(X,G)^D$.
Since $m_{X,G}(x)=0$, we see from \eqref{eq:Bra-dual}
that $m_{W,F}(w')=\varphi^D(\xi)$ for some $\xi\in\Br_{1,e}(P)^D \cong \Bra(P)^D$.
Let $p\in P(\A)$. By Corollary \ref{cor:compatibility} we have
$$\langle b_W,w'\cdot p\rangle =\langle b_W,w'\rangle + \langle\varphi(b_W),p\rangle$$
for any $b_W\in\Br_{1,w_0}(W,F)$.
This means that
$$m_{W,F}(w'\cdot p)=m_{W,F}(w') + \varphi^D(m_P(p))$$
But we have seen that $m_{W,F}(w')=\varphi^D(\xi)$ for some $\xi\in\Bra(P)^D$.
Now it follows from Lemma \ref{lem:surjectivity} that there exists $p\in P(\A)$ such that $m_P(p)=-\xi$.
Then
$$m_{W,F}(w'\cdot p)=\varphi^D(\xi)+ \varphi^D(-\xi)=0 \, .$$
We set $w := w'\cdot p\in W(\A)$, then $m_{W,F}(w)=0$ and $t(w)=x$.
\end{proof}

\begin{subsec}
We can now resume the proof of Theorem \ref{thm:non-connected-Manin}.
We use Construction \ref{con:torsor}.
 Let $\sU_{X}^f\subset X(\A^f)$ be an open neighbourhood of $x^f$.
Let $\sU_{X}\subset X(\A)$
be the corresponding special neighbourhood of $x$.
Set
$$
\sU_{W}^f:=t^{-1}(\sU_{X}^f)\subset W(\A^f).
$$
For $v\in\Omega_\infty$ let $\sU_{W,v}$ be
the connected component of $w_v$ in $W(k_v)$, then
by \cite{BCTS}, Lemma A.2, we have $t(\sU_{W,v})=\sU_{X,v}$.
Set $\U_{W,\infty}:=\prod_{v\in\Omega_\infty} \sU_{W,v}$.
Set $\U_W:=\U_{W}^f\times\U_{W,\infty}$,
then $\sU_{W}$ is the special open neighbourhood of $w$ defined by $\sU^f_W$,
and  $t(\sU_{W})\subset \sU_{X}$.

The pair $(W,F)$ of $F$ satisfies the hypotheses
of Proposition \ref{5.4-Manin} (see Lemma \ref{lem:Brauersab}),
so by that proposition, there is a point $w_1\in W(k)\cap \sU_{W}\cdot F\sc(k_S)$.
Note that $F\sc=G\sc$.
Set $x_1:=t(w_1)$, then $x_1\in X(k)\cap\sU_{X}\cdot G\sc(k_S)$.
Thus the pair $(X,G)$ has Property (\P).

This completes the proofs of Theorem \ref{thm:non-connected-Manin}
and proves the nontrivial inclusion  of Theorem \ref{thm:Harari-generalized},
that is, that any element of $(X(\A)_{\bullet})^{\Bro(X,G)}$ lies in the closure of the set
$X(k)\cdot G\scu(k_{S_f})$.
The argument in the proof of the trivial inclusion
of Theorem \ref{prop:simply-connected-homogeneous-spaces}
also proves the trivial inclusion of Theorem \ref{thm:Harari-generalized},
that is, that each element of this closure is orthogonal to $\Br_1(X,G)$.
This completes the proof of Main Theorem \ref{thm:Harari-generalized}.
\qed
\end{subsec}



\renewcommand{\ab}{\textup{ab}}

\section{The algebraic Manin obstruction}

In this section we prove Theorem \ref{thm:Main-algebraic} about the algebraic Manin obstruction
(``algebraic'' means coming from $\Bro(X)$).
We  prove this result without using the result of Colliot-Th\'el\`ene and Xu
(Theorem \ref{prop:simply-connected-homogeneous-spaces} or \cite{CTX}, Theorem 3.7(b)).

\begin{subsec}
Before proving Theorem \ref{thm:Main-algebraic}, we need to prove a special case --
an analogue of  Theorem \ref{prop:simply-connected-homogeneous-spaces}.
In \cite{B98}, the first-named author defined, for any connected group $H$
over a field $\kk$ of characteristic 0,
a Galois module $\pi_1(\Hbar)$,
an abelian group $H^1_{\ab}(\kk, H)$ and a canonical abelianization map
$$
{\ab}^1\colon H^1(\kk, H)\to H^1_{\ab}(\kk,H)
$$
(see also \cite{CT}  in any characteristic).
These $\pi_1(\Hbar)$, $H^1_{\ab}(\kk,H)$ and ${\ab}^1$ are functorial in $H$.

Now let $\kk$ be a number field.
Set $\Gamma:=\Gal(\kbar/\kk)$, $\Gamma_v:=\Gal(\kbar_v/\kk_v)$.
We regard $\Gamma_v$ as a subgroup of $\Gamma$.

For $v\in\Omega_f$ we defined in \cite{B98}, Proposition ~4.1(i),  a canonical isomorphism
$$
\lambda_v\colon H^1_{\ab}(\kk_v,H)\isoto (\pi_1(\Hbar)_{\Gamma_v})\tors,
$$
where $\pi_1(\Hbar)_{\Gamma_v}$ denotes the groups of coinvariants
 of $\Gamma_v$ in $\pi_1(\Hbar)$, and $(\  )\tors$ denotes
the torsion subgroup.
Here we set $\lambda'_v:=\lambda_v$.

For $v\in\Omega_\infty$ we defined in \cite{B98}, Proposition  4.2,
 a canonical isomorphism
$$
\lambda_v\colon H^1_{\ab}(\kk_v,H)\isoto H^{-1}(\Gamma_v,\pi_1(\Hbar)).
$$
Here we define a homomorphism $\lambda'_v$ as the composition
$$
\lambda'_v\colon H^1_{\ab}(\kk_v,H)\labelto{\lambda_v} H^{-1}(\Gamma_v,\pi_1(\Hbar))
\into(\pi_1(\Hbar)_{\Gamma_v})\tors\;.
$$
For any $v\in\Omega$ we define the Kottwitz map $\beta_v$ as the composition
$$
\beta_v\colon H^1(\kk_v,H)\labelto{{\ab}^1} H^1_{\ab}(\kk_v,H)\labelto{\lambda'_v}
(\pi_1(\Hbar)_{\Gamma_v})\tors\;.
$$
This map $\beta_v$ is functorial in $H$.
Note that for  $v\in\Omega_f$ the  maps
${\ab}^1\colon H^1(\kk_v,H)\to H^1_{\ab}(\kk_v,H)$ and $\beta_v$  are bijections.
Thus for $v\in\Omega_f$ we have a canonical and functorial in $H$ bijection
$\beta_v\colon H^1(\kk_v,H)\isoto (\pi_1(\Hbar)_{\Gamma_v})\tors$.

For any $v\in\Omega$ we define a map $\mu_v$ as the composition
\begin{equation}\label{eq:mu}
\mu_v\colon H^1(\kk_v,H)\labelto{\beta_v} (\pi_1(\Hbar)_{\Gamma_v})\tors
\labelto{\textup{cor}_v} (\pi_1(\Hbar)_{\Gamma})\tors\;,
\end{equation}
where $\textup{cor}_v$ is the obvious corestriction map.

We write $\bigoplus_v H^1(k_v,H)$ for the set of families $(\xi_v)_{v\in\Omega}$
such that $\xi_v=1$ for almost all $v$.
We define a map
$$
\mu:=\sum_{v\in\Omega}\mu_v\colon \bigoplus_{v\in \Omega} H^1(k_v,H)\to
(\pi_1(\Hbar)_\Gamma)\tors \, .
$$

\begin{proposition}[{\rm Kottwitz \cite{K}, Proposition  2.6, see also \cite{B98}, Theorem 5.15}]
\label{prop:Kottwitz}
 The kernel of the map $\mu$ is equal to the image
of the localization map $H^1(k,H)\to \bigoplus_v H^1(k_v,H)$.
\end{proposition}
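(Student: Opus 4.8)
This is Kottwitz's result; the plan is to pass to abelian Galois cohomology and then to invoke the Poitou--Tate exact sequence. Since each Kottwitz map $\beta_v$ factors as $\lambda'_v\circ{\ab}^1$, and since ${\ab}^1$, the $\lambda'_v$ and the corestrictions $\mathrm{cor}_v$ are all functorial in $H$, we have a commutative diagram
\begin{displaymath}
\xymatrix{
H^1(k,H) \ar[r] \ar[d]_{{\ab}^1} & \bigoplus_v H^1(k_v,H) \ar[r]^-{\mu} \ar[d] & (\pi_1(\Hbar)_\Gamma)\tors \ar@{=}[d] \\
H^1_{\ab}(k,H) \ar[r] & \bigoplus_v H^1_{\ab}(k_v,H) \ar[r]^-{\mu_{\ab}} & (\pi_1(\Hbar)_\Gamma)\tors
}
\end{displaymath}
where $\mu_{\ab}:=\sum_v \mathrm{cor}_v\circ\lambda'_v$ and the rows are the localization maps. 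The inclusion ``$\mathrm{image}\subset\ker\mu$'' reduces, via this diagram, to the assertion that the composite in the bottom row vanishes on the image of $H^1_{\ab}(k,H)$; this is a reciprocity statement (the Poitou--Tate sequence is a complex), ultimately a consequence of global class field theory. So the substantive content is the reverse inclusion, and for it it suffices to treat the bottom row and then transfer back.

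\textbf{The abelian row via Poitou--Tate.}
Recall that $H^1_{\ab}(k,H)=\mathbb{H}^1(k,C_H)$ for the canonical complex of $k$-tori $C_H$ attached to $H$ (placed in degrees $-1$ and $0$), and that $\pi_1(\Hbar)$ is the cokernel of the induced map of cocharacter lattices of $\overline{C_H}$. The Poitou--Tate exact sequence for the complex $C_H$ --- in the form established for complexes of $k$-tori, cf. \cite{D1}, with the customary modification of the archimedean local terms by Tate hypercohomology --- contains a three-term exact sequence
\begin{displaymath}
\mathbb{H}^1(k,C_H)\longrightarrow \bigoplus_v \mathbb{H}^1(k_v,C_H)\longrightarrow D_H
\end{displaymath}
whose middle term is $\bigoplus_v H^1_{\ab}(k_v,H)$ and whose last term $D_H$ is the relevant dual group. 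One identifies $D_H$ canonically with $(\pi_1(\Hbar)_\Gamma)\tors$ and checks place by place that under this identification the $v$-th summand of the connecting map agrees with $\mathrm{cor}_v\circ\lambda'_v$; this last step is the translation, through the canonical isomorphisms $\lambda_v$ of \cite{B98}, Propositions 4.1 and 4.2, between the local descriptions of $H^1_{\ab}(k_v,H)$ used in the excerpt and those entering Poitou--Tate. Granting these identifications, the exactness of the displayed sequence is exactly the statement that $\ker\mu_{\ab}$ equals the image of the localization map for abelian cohomology.

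\textbf{Transfer from $H^1_{\ab}$ back to $H^1$.}
For nonarchimedean $v$ the map ${\ab}^1\colon H^1(k_v,H)\to H^1_{\ab}(k_v,H)$ is bijective (\cite{B98}, Proposition 4.1), and globally ${\ab}^1\colon H^1(k,H)\to H^1_{\ab}(k,H)$ is surjective (\cite{B98}). Given $(\xi_v)\in\ker\mu$, apply the abelian row to $({\ab}^1\xi_v)\in\ker\mu_{\ab}$ to obtain $\eta\in H^1_{\ab}(k,H)$ with $\mathrm{loc}_v\eta={\ab}^1\xi_v$ for all $v$, and lift $\eta$ to some $\xi\in H^1(k,H)$. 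Then $\mathrm{loc}_v\xi=\xi_v$ automatically for nonarchimedean $v$, while at the finitely many real places $\mathrm{loc}_v\xi$ and $\xi_v$ lie in the same fibre of ${\ab}^1$; the discrepancy is governed by the cohomology of the semisimple simply connected part of $H$, and since the Hasse principle for simply connected groups makes the map from $H^1(k,\cdot)$ to the product of its real localizations surjective, $\xi$ can be corrected by a global class to a class with $\mathrm{loc}_v\xi=\xi_v$ for all $v$. This exhibits $(\xi_v)$ in the image of the localization map, which is the reverse inclusion; the converse follows by functoriality from the corresponding fact for the abelian row.

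\textbf{Main obstacle.}
The delicate point is the archimedean bookkeeping in the middle step: matching the term $H^{-1}(\Gamma_v,\pi_1(\Hbar))$ entering $\lambda'_v$ at a real place $v$ --- via the inclusion $H^{-1}(\Gamma_v,\pi_1(\Hbar))\into(\pi_1(\Hbar)_{\Gamma_v})\tors$ --- with the Tate-modified archimedean term of the Poitou--Tate sequence, and then carrying out the correction step compatibly with all of these identifications. The other ingredient one should not take lightly is the global surjectivity of ${\ab}^1$. Complete proofs are given in \cite{K}, Proposition 2.6, and \cite{B98}, Theorem 5.15.
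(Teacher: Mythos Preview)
The paper does not supply a proof of this proposition: it is quoted from Kottwitz \cite{K}, Proposition~2.6, with the reference \cite{B98}, Theorem~5.15, for a treatment in the language of abelianized Galois cohomology. Your sketch is precisely the strategy of \cite{B98}: pass to $H^1_{\ab}$, invoke the Poitou--Tate sequence for the complex $C_H$ to obtain the abelian statement, and then transfer back to $H^1$ using the bijectivity of ${\ab}^1$ at finite places, its global surjectivity, and the Hasse principle for simply connected groups at the real places.

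As an outline this is correct. One point deserves a sharper statement: in the transfer step, the ``correction by a global class'' is not a group operation on $H^1(k,H)$, so the argument has to be phrased via twisting. Having lifted to $\xi\in H^1(k,H)$, one twists $H$ by $\xi$ and observes that the remaining discrepancy at the real places lies in the image of $H^1(k_v,\,{}_\xi H\sc)$; since $H^1(k,\,{}_\xi H\sc)\to\prod_{v\in\Omega_r}H^1(k_v,\,{}_\xi H\sc)$ is a bijection (Kneser--Harder--Chernousov) and $H^1(k_v,\,{}_\xi H\sc)=1$ at finite $v$, the correction can be made globally without disturbing the finite places. This is the content hiding behind your last paragraph, and it is exactly how \cite{B98} completes the argument.
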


\begin{proposition}\label{lem:simply-connected-homogeneous-spaces-orbits}
Let G be a simply connected $k$-group over a number field $k$,
and let $H\subset G$ be a connected geometrically character-free subgroup
(i.e. $H\tor=1$).
Set $X:=H\backslash G$.
Let $S$ be a finite set of places of $k$ containing at least one
nonarchimedean place.
Then any orbit of $G(\A^S)$ in  $X(\A^S)$ contains a $k$-point.
\end{proposition}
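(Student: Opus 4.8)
The plan is to reduce the statement to Kottwitz's exact sequence (Proposition \ref{prop:Kottwitz}) via the orbit description of the fibers of the map $G(\A^S) \to X(\A^S)$ in terms of Galois cohomology of $H$. First I would fix a $k$-point $x_0 \in X(k)$ (which exists, e.g. the image of $e$) and consider an arbitrary orbit $\mathcal{O} = x_0 \cdot G(\A^S) \subset X(\A^S)$; since $G$ is simply connected, $H^1(k_v, G) = 1$ for all $v$ (including real places, by a theorem of Kneser--Harder--Chernousov), so the orbits of $G(k_v)$ in $X(k_v)$ are in bijection with $\ker[H^1(k_v, H) \to H^1(k_v, G)] = H^1(k_v, H)$ via the characteristic-class map. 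Concretely, a point $x \in X(\A^S)$ lies in the $G(\A^S)$-orbit of a $k$-point if and only if its local invariants $(\xi_v) \in \bigoplus_{v \notin S} H^1(k_v, H)$ (the classes of the fibers $\pi_{x_0}^{-1}(x_v)$, which are torsors under the stabilizer, a conjugate of $H$) come from a global class in $H^1(k, H)$. Here the stabilizers at different points are inner forms of $H$, but since $H\tor = 1$ the group $\pi_1(\Hbar)$ is canonically the same for all of them, so the Kottwitz invariants $\beta_v$ make sense uniformly.

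The key step is then: given an orbit $\mathcal{O}$, choose any $x = (x_v)_{v \notin S} \in \mathcal{O} \subset X(\A^S)$, with local classes $\xi_v \in H^1(k_v, H)$, almost all trivial. I want to modify $x$ within its $G(\A^S)$-orbit, adjusting finitely many components and adding one component at a place $v_0 \in S \cap \Omega_f$, so that the resulting adelic collection lies in the image of $H^1(k, H) \to \bigoplus_{v} H^1(k_v, H)$. By Proposition \ref{prop:Kottwitz}, it suffices to arrange that $\sum_{v \in \Omega} \mu_v(\xi_v) = 0$ in $(\pi_1(\Hbar)_\Gamma)\tors$. I have freedom to pick $\xi_{v_0} \in H^1(k_{v_0}, H)$ arbitrarily, and by the displayed fact that for nonarchimedean $v$ the map $\beta_v\colon H^1(k_v, H) \isoto (\pi_1(\Hbar)_{\Gamma_{v_0}})\tors$ is a bijection, together with surjectivity of the corestriction $\textup{cor}_{v_0}\colon (\pi_1(\Hbar)_{\Gamma_{v_0}})\tors \to (\pi_1(\Hbar)_\Gamma)\tors$ (which holds because $\Gamma_{v_0}$ is an open, hence finite-index, subgroup — corestriction is surjective on torsion of coinvariants for finite index, or one invokes that cyclic decomposition groups at a suitable $v_0$ surject onto $\pi_1(\Hbar)_\Gamma$ by Chebotarev), I can choose $\xi_{v_0}$ with $\mu_{v_0}(\xi_{v_0}) = -\sum_{v \notin S} \mu_v(\xi_v)$. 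This makes the total sum vanish, so by Kottwitz the modified family is globalized by some $c \in H^1(k, H)$.

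Finally, the class $c \in H^1(k, H)$ corresponds to a $k$-form of the orbit, i.e. to a $k$-point $x_1 \in X(k)$ whose fiber $\pi_{x_0}^{-1}(x_1)$ has class $c$; by construction $x_1$ has the same local classes as $x$ at all $v \notin S$ (since we only changed the component at $v_0 \in S$), hence $x_1$ lies in the same $G(k_v)$-orbit as $x_v$ for each $v \notin S$, so $x_1 \in \mathcal{O}$ inside $X(\A^S)$. I expect the main obstacle to be the bookkeeping around stabilizers being inner forms rather than $H$ itself — one must check that the Kottwitz invariant is genuinely independent of the base point and transforms correctly under the $G$-action, so that ``having the same local class'' really does mean ``in the same orbit''. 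This is where the hypothesis $H\tor = 1$ (equivalently $\pi_1(\Hbar)$ insensitive to inner twisting) and the functoriality of $\beta_v$ in $H$ do the essential work; the rest is the standard fiber-class bijection plus Kottwitz's theorem and one Chebotarev-type surjectivity for the corestriction.
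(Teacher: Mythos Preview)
Your approach is exactly the paper's: translate orbits into $\ker[H^1(\cdot,H)\to H^1(\cdot,G)]$, use the nonarchimedean place in $S$ to adjust the total Kottwitz invariant to zero, then apply Proposition~\ref{prop:Kottwitz}. Two points need correcting.

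First, the claim that $H^1(k_v,G)=1$ for \emph{real} $v$ is false in general (e.g.\ $G=\mathrm{Spin}_n$). Kneser--Harder--Chernousov is the \emph{Hasse principle} for simply connected groups, not local triviality at archimedean places. This matters because of the second gap: you assert that the global class $c\in H^1(k,H)$ produced by Kottwitz ``corresponds to a $k$-point $x_1\in X(k)$'', but that requires $c\in\ker[H^1(k,H)\to H^1(k,G)]$, which you do not verify. The paper handles this explicitly: the $\xi_v$ for $v\notin S$ lie in the local kernels automatically (they come from $X(k_v)$-points), at the nonarchimedean $v_0\in S$ one has $H^1(k_{v_0},G)=1$, and at the remaining places of $S$ one takes $\xi_v=1$; hence the image of $c$ in $H^1(k,G)$ is everywhere locally trivial, and \emph{now} the Hasse principle for $G$ forces it to be globally trivial.

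Two smaller remarks. The surjectivity of $\textup{cor}_{v_0}$ is much simpler than your sketch: since $H\tor=1$ the module $M=\pi_1(\Hbar)$ is \emph{finite}, so $(M_{\Gamma_{v_0}})\tors=M_{\Gamma_{v_0}}$ and the map $M_{\Gamma_{v_0}}\twoheadrightarrow M_\Gamma$ is just the further-coinvariants quotient; no Chebotarev or finite-index argument is needed (and $\Gamma_{v_0}\subset\Gamma$ is not of finite index). Finally, your worry about inner forms is unnecessary: fixing the base point $x_0=\pi(e)$ once and for all, the standard torsor-fibre bijection identifies $G$-orbits on $X$ over any field with $\ker[H^1(\cdot,H)\to H^1(\cdot,G)]$ for the fixed group $H$, so no twisting bookkeeping enters.
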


\begin{proof}
Write $M:=\pi_1(\Hbar)$.
First we prove that the map
 $$
\mu_S=\sum_{v\in S}\mu_v\colon\prod_{v\in S}
 \ker[H^1(k_v,H)\to H^1(k_v,G)]\to (M_\Gamma)\tors
$$
is surjective.
Since $H$ is geometrically character-free,
the group $M=\pi_1(\Hbar)$ is finite, and therefore
$(M_{\Gamma})\tors=M_{\Gamma}$
and $(M_{\Gamma_v})\tors=M_{\Gamma_v}$.
In this case the map  $\textup{cor}_v$ is the canonical  map
$M_{\Gamma_v}\to M_\Gamma$,
which is clearly surjective.
Let $w\in S$ be a nonarchimedean place, then the map $\beta_w$
 is bijective.
It follows that the map $\mu_w\colon  H^1(k_w,H)\to M_\Gamma$ is surjective
(because $\mu_w=\textup{cor}_w\circ\beta_w$).
Since $w$ is nonarchimedean, we have $H^1(k_w,G)=1$  (because $G$ is simply connected),
hence $\ker[H^1(k_w,H)\to H^1(k_w,G)]=H^1(k_w,H)$.
It follows that the map
$\mu_w\colon \ker [H^1(k_w,H)\to H^1(k_w,G)]\to M_\Gamma$
is surjective.
Now it is clear that the map $\mu_S=\sum_{v\in S}\mu_v$   is surjective.

We prove the proposition.
We must prove that the localization map
$$
X(k)/G(k)\to X(\A^S)/G(\A^S)
$$
is surjective.
In the language of Galois cohomology, we must prove that the localization map
$$
\ker[H^1(k,H)\to H^1(k,G)]\to\bigoplus_{v\notin S}\ker[H^1(k_v,H)\to H^1(k_v,G)]
$$
is surjective.

Let $\xi^S=(\xi_v)\in\bigoplus_{v\notin S}\ker[H^1(k_v,H)\to H^1(k_v,G)]$.
Set $s:=\sum_{v\notin S} \mu_v(\xi_v)\in \pi_1(\Hbar)_\Gamma$.
Since the map $\mu_S$ is surjective,
 there exists an element $\xi_S$ in the product   $\prod_{v\in S} \ker[H^1(k_v,H)\to H^1(k_v,G)]$
such that $\sum_{v\in S}\mu_v(\xi_v)=-s$.
Set
$$
\xi:=(\xi^S,\xi_S)\in \bigoplus_{v\in\Omega}\ker[H^1(k_v,H)\to H^1(k_v,G)]\subset\bigoplus_{v\in\Omega}H^1(k_v,H),
$$
then $\mu(\xi)=\sum_{v\in\Omega}\mu_v(\xi_v)=s+(-s)=0$.
By Proposition \ref{prop:Kottwitz} there exists a class $\xi_0\in H^1(k,H)$
with image $\xi$ in $\bigoplus_{v\in\Omega}H^1(k_v,H)$.
Since the Hasse principle holds for $G$, we have $\xi_0\in \ker[H^1(k,H)\to H^1(k,G)]$.
Since the image of $\xi_0$ in $\bigoplus_{v\in\Omega}H^1(k_v,H)$ is
$(\xi^S,\xi_S)$,
we see that the image of $\xi_0$ in $\bigoplus_{v\notin S}H^1(k_v,H)$ is $\xi^S$.
Thus $\xi^S$ lies in the image of $\ker[H^1(k,H)\to H^1(k,G)]$.
\end{proof}

\begin{theorem}\label{prop:simply-connected-homogeneous-spaces-algebraic}
Let G be a  simply connected $k$-group over a number field $k$,
and let $H\subset G$ be a connected geometrically character-free subgroup
(i.e. $H\tor=1$).
Set $X:=H\backslash G$.
Let $S$ be a finite set of places of $k$ containing at least one nonarchimedean place $v_0$.
Assume that $G\sss(k)$ is dense in $G\sss(\A^S)$.
Then $X$ has strong approximation away from $S$
in the following sense.
Let $x=(x_v)\in X(\A)$ and let $\sU^S\subset X(\A^S)$
be any open neighbourhood of the $\A^S$-part $x^S$ of $x$.
Then there exists a $k$-point $x_0\in X(k)\cap\sU^S$.
Moreover, one can ensure that for $v\in \Omega_\infty\cap S$
the points $x_0$ and $x_v$ lie in the same connected component of $X(k_v)$.
More precisely, there exists $y_{v_0} \in X(k_{v_0})$ such that the point $x' \in X(\A)$
defined by $x'_{v_0} := y_{v_0}$ and $x'_v := x_v$ for $v \neq v_0$ belongs
to the closure of the set $X(k)\cdot G(k_S)$ in $X(\A)$ for the adelic topology.
\end{theorem}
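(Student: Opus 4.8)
Since $G$ is simply connected semisimple we have $G=G\sc=G\sss$, so the hypothesis reads: $G(k)$ is dense in $G(\A^S)$. The plan is to deduce the statement from Proposition~\ref{lem:simply-connected-homogeneous-spaces-orbits} together with this density, the one trick being to apply Proposition~\ref{lem:simply-connected-homogeneous-spaces-orbits} with the singleton $\{v_0\}$ rather than with $S$.

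First I would apply Proposition~\ref{lem:simply-connected-homogeneous-spaces-orbits} with $\{v_0\}$ in place of $S$; this is legitimate because $\{v_0\}$ contains a nonarchimedean place (namely $v_0$) and that proposition involves no density hypothesis. Applied to the $\A^{\{v_0\}}$-part $x^{\{v_0\}}$ of $x$, it yields $x_1\in X(k)$ and $g^{\{v_0\}}=(g_v)_{v\neq v_0}\in G(\A^{\{v_0\}})$ with $x^{\{v_0\}}=x_1\cdot g^{\{v_0\}}$, i.e. $x_v=x_1\cdot g_v$ for every $v\neq v_0$. This is the key gain: up to the $G$-action, the $k$-point $x_1$ matches $x$ at \emph{every} place except the single place $v_0$. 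I then set $y_{v_0}:=x_1$, viewed in $X(k_{v_0})$, and define $x'\in X(\A)$ by $x'_{v_0}:=y_{v_0}$ and $x'_v:=x_v$ for $v\neq v_0$.

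Next I would prove that $x'$ lies in the closure of $X(k)\cdot G(k_S)$. Given a basic open neighbourhood $\sU=\sU^S\times\prod_{v\in S}\sU_v$ of $x'$ in $X(\A)$, with $\sU^S$ an open neighbourhood of $x^S$ in $X(\A^S)$, continuity of the action $G(\A^S)\times X(\A^S)\to X(\A^S)$ (note that $x_1\cdot g^S=x^S$, where $g^S$ is the $\A^S$-component of $g^{\{v_0\}}$) gives an open set $W^S\ni g^S$ in $G(\A^S)$ with $x_1\cdot W^S\subseteq\sU^S$; by density I choose $g_0\in G(k)\cap W^S$ and put $x_2:=x_1\cdot g_0\in X(k)$, whose image in $X(\A^S)$ lies in $\sU^S$. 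Then I define $g_S=(g_{S,v})_{v\in S}\in G(k_S)$ by $g_{S,v_0}:=g_0^{-1}$ and $g_{S,v}:=g_0^{-1}g_v$ for $v\in S\smallsetminus\{v_0\}$, using the diagonal inclusions $G(k)\subset G(k_v)$. A direct computation gives $(x_2\cdot g_S)|_{v_0}=x_1=x'_{v_0}$, $(x_2\cdot g_S)|_v=x_1\cdot g_v=x_v=x'_v$ for $v\in S\smallsetminus\{v_0\}$, and the $\A^S$-component of $x_2\cdot g_S$ equals that of $x_2$, hence lies in $\sU^S$; therefore $x_2\cdot g_S\in\sU\cap\big(X(k)\cdot G(k_S)\big)$, which proves $x'\in\overline{X(k)\cdot G(k_S)}$.

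Finally, the remaining assertions come for free. Given $\sU^S$ as in the statement, I run the construction above: the point $x_2=x_1\cdot g_0\in X(k)$ already lies in $\sU^S$, so $x_0:=x_2$ works. For $v\in\Omega_\infty\cap S$ we have $v\neq v_0$, and both $x_2=x_1\cdot g_0$ and $x_v=x_1\cdot g_v$ lie in the orbit $x_1\cdot G(k_v)$, which is connected because $G(k_v)$ is connected for archimedean $v$ ($G$ being simply connected semisimple); hence $x_0$ and $x_v$ lie in the same connected component of $X(k_v)$. I expect the only genuinely non-routine step to be the realization that Proposition~\ref{lem:simply-connected-homogeneous-spaces-orbits} must be invoked for $\{v_0\}$ and not for $S$: this is what allows the density hypothesis (available only for $S\supseteq\{v_0\}$) to be confined to the approximation away from $S$, while the discrepancies at the remaining places of $S$ are absorbed by the factor $\prod_{v\in S}G(k_v)\subseteq G(k_S)$ — at the unavoidable cost of replacing $x_{v_0}$ by $y_{v_0}$.
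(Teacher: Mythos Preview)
Your proof is correct and follows essentially the same approach as the paper: apply Proposition~\ref{lem:simply-connected-homogeneous-spaces-orbits} with the singleton $\{v_0\}$ to obtain a $k$-point matching $x$ in the $G$-orbit at every place except $v_0$, then use density of $G(k)$ in $G(\A^S)$ to approximate, and finish with connectedness of $G(k_v)$ for archimedean $v$. The only cosmetic difference is that the paper invokes density of $G(k)\cdot G(k_S)$ in the full $G(\A)$ and lets the $g_S$ come out of that, whereas you work in $G(\A^S)$ and write down $g_S$ explicitly place by place; the content is the same.
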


\begin{proof}
Set $\Sigma:= \{v_0\}$.
We denote by $x^\Sigma\in X(\A^\Sigma)$ and $x^S\in x(\A^S)$
the corresponding projections of $x$.
By Proposition \ref{lem:simply-connected-homogeneous-spaces-orbits} applied to the finite set of places $\Sigma$,
there exists a $k$-point $x'_0\in X(k)\cap x^\Sigma\cdot G(\A^\Sigma)$.
Let $y_{v_0} := (x'_0)_{v_0} \in X(k_{v_0})$
and define $x'\in X(\A)$ as in the theorem.
Then there exists $g \in G(\A)$ such that $x'_0 \cdot g = x'$ in $X(\A)$.
Let $\sU \subset X(\A)$ be an open neighbourhood of $x'$.
Since the orbit $x'_0 \cdot G(\A)\subset X(\A)$ is open (because $H$
is connected) and contains $x'$, we may assume that $\sU \subset x'_0 \cdot G(\A)$.

By assumption $G(k).G(k_S)$ is dense in $G(\A)$.
It follows that there
exists $g_0\in G(k)$ and $g_S \in G(k_S)$ such that $x'' := x'_0 \cdot g_0.g_S$ belongs to $\sU$.
Set $x_0:=x'_0\cdot g_0\in X(k)$, then $x''=x_0\cdot g_S$.
We see that $x'' \in X(k) \cdot G(k_S) \cap \sU$.
Therefore, we conclude that $x'$ lies in the closure of  $X(k) \cdot G(k_S)$.

Concerning the infinite places, for $v\in\Omega_\infty\cap S$ we have $x_0\in x_v\cdot G(k_v)$,
because  $x'_0\in x_v\cdot G(k_v)$.
Since $G$ is simply connected, the group $G(k_v)$ is connected (see \cite{OV}, Theorem  5.2.3),
hence the image of $x_0$ in $X(k_v)$ is contained
in the connected component of $x_v$ in $X(k_v)$.
\end{proof}
\end{subsec}

\begin{subsec}{\em Proof of Theorem \ref{thm:Main-algebraic}.}
To prove this theorem, we can follow the proof of Theorem \ref{thm:Harari-generalized} to make  reductions,
so that we may assume the following:

{\rm (i)} $G\uu=\{1\}$,

{\rm (ii)} $H \subset G\lin$, i.e. $H$ is linear,

{\rm (iii)} $G\sss$ is simply connected,

{\rm (iv)} $\Sha(G^{\textup{abvar}})$ is finite.

{\rm (v)} the homomorphism $H\tor \to G\sab$ is injective.
\medskip

Set $\Sigma':=\Omega_\infty\cup\{v_0\}$.
Let $\sU_X^{\Sigma'}\subset X(\A^{\Sigma'})$ be an open neighbourhood of the projection $x^{\Sigma'}\in  X(\A^{\Sigma'})$ of $x$.
Set $\sU_X^f:=\sU_X^{\Sigma'}\times X(k_{v_0})$.
Let $\sU_X$ be the special open neighbourhood of $x$ in $X(\A)$ defined by $\sU_X^f$.
Set $Y:= G\sab / H\tor$, and consider the canonical morphism $\psi : X \to Y$.
Set $y := \psi(x)\in Y(\A)$, then $y$ is orthogonal to the group $\Bro(Y)$ for the Manin pairing.
Hence by \cite{H}, Theorem 4, there exists $y_0 \in Y(k) \cap \psi(\sU_X)$.
Set $X_{y_0}:=\psi^{-1}(y_0)\subset X$ and  $\sV := X_{y_0}(\A) \cap \sU_X$.
Then $\sV$ is open and non-empty since $y_0 \in \psi(\sU_X)$.
As in the proof of Proposition \ref{5.4-Manin}, we know that $X_{y_0}$
is a homogeneous space of the semisimple simply connected group $G\sss=G\sc$,
with connected character-free geometric stabilizers, and with a $k$-point.
Therefore Theorem \ref{prop:simply-connected-homogeneous-spaces-algebraic}
implies that $X_{y_0}(k)\cdot G\sc(k_S) \cap \sV \neq \emptyset$.
In particular, the set $X(k)\cdot G\sc(k_S) \cap \sU_X$ is non-empty.
Set $S':=S\smallsetminus \{v_0\}$, $S'_f:=S'\cap\Omega_f$,
$\sU_X^{\{v_0\}}:=\sU_X^{\Sigma'}\times\sU_{X,\infty}$,
then it follows
that the set $X(k)\cdot G\sc(k_{S'})\cap \sU_X^{\{v_0\}}\subset X(\A^{\{v_0\}})$ is non-empty.
Since $\sU_X^{\{v_0\}}\cdot G\sc(k_{S'})=\sU_X^{\{v_0\}}\cdot G\sc(k_{S'_f})$,
we obtain easily that
the set $X(k)\cdot G\sc(k_{S'_f})\cap \sU_X^{\{v_0\}}\subset X(\A^{\{v_0\}})$ is non-empty.
This completes the proof of Theorem \ref{thm:Main-algebraic}.
\qed

\end{subsec}

\bigskip
\noindent {\bf Acknowledgements.}
The first-named author is grateful to the Tata Institute of Fundamental Research, Mumbai,
where a part of this paper was written, for hospitality
and good working conditions. The second-named author thanks David
Harari for many helpful suggestions. Both authors thank the referee
for his/her comments and suggestions.








\begin{thebibliography}{22}

\bibitem
{SGA4}
 M. Artin, A. Grothendieck et J.-L. Verdier (\'eds.)
{\em Th\'eorie des topos et cohomologie \'etale des sch\'emas},
S\'eminaire de g\'eom\'etrie alg\'ebrique du Bois-Marie (SGA $4$),
Lect. Notes Math. vol. 269, 270, 305, Springer-Verlag, Berlin, 1972-73.

\bibitem
{BVB}
L. Barbieri-Viale and A. Bertapelle,
{Sharp de Rham realization,}
{\em Adv. Math.} {\bf 222} (2009), 1308--1338.

\bibitem
{BM}
E. Bierstone and P.D. Milman,
Canonical desingularization in characteristic zero by blowing up the maximum strata of a local invariant,
{\em Invent. Math.} {\bf 128} (1997),  207--302.

\bibitem
{B93}
M. Borovoi, Abelianization of the second nonabelian Galois cohomology,
{\it Duke Math. J.} {\bf 72} (1993), 217--239.

\bibitem
{B96}
M. Borovoi,  The Brauer--Manin obstruction for homogeneous spaces
with connected or abelian stabilizer,  {\it J. reine angew. Math. (Crelle)}
{\bf 473} (1996), 181--194.

\bibitem
{B98}
M. Borovoi,
Abelian Galois cohomology of reductive groups,
{\it Mem. Amer. Math. Soc.}  {\bf 132} (1998), no. 626.

\bibitem
{BCTS}
M. Borovoi, J.-L. Colliot-Th\'el\`ene and A.N. Skorobogatov,
The elementary obstruction and homogeneous spaces,
{\em Duke Math. J.} {\bf 141} (2008), 321-364.

\bibitem
{CT}
J.-L. Colliot-Th\'el\`ene,
{R\'esolutions flasques des groupes lin\'eaires connexes,}
{\em  J. reine angew. Math. (Crelle)} {\bf 618} (2008), 77--133.

\bibitem
{CTS77}
J.-L. Colliot-Th\'el\`ene et J.-J. Sansuc,
La R-\'equivalence sur les tores,
{\em Ann. Sci. \'Ecole Norm. Sup.} (4) {\bf 10} (1977), 175--229.


\bibitem
{CTSa}
J.-L. Colliot-Th\'el\`ene et J.-J. Sansuc,
La descente sur les vari\'et\'es rationnelles, II,
{\em Duke Math. J.} {\bf 54} (1987), 375--492.

\bibitem
{CTX}
J.-L. Colliot-Th\'el\`ene and Fei Xu,
Brauer--Manin obstruction for integral points of homogeneous spaces and representation by integral quadratic forms,
{\em Compositio Math.} {\bf 145} (2009), 309--363.

\bibitem
{dJ}
A.J.~de~Jong,
A result of Gabber,
Preprint, {\tt www.math.columbia.edu} {\tt /\~{}dejong/papers}{\tt /2-gabber.pdf}.

\bibitem
{SGA}
P. Deligne,
{\em Cohomologie \'etale},
S\'eminaire de G\'eometrie Alg\'ebrique du Bois-Marie (SGA $4\frac{1}{2}$),
Lect. Notes Math. vol. 569, Springer-Verlag, Berlin, 1977.

\bibitem
{D1}
C. Demarche,
Suites de Poitou-Tate pour les complexes de tores \`a deux
termes,
{\em Int. Math. Res. Notices} (2010), to appear, doi:10.1093/imrn/rnq060.

\bibitem
{EGA}
J. Dieudonn\'e et A. Grothendieck,
\'{E}l\'ements de g\'eom\'etrie alg\'ebrique (EGA)
\emph{Inst. Hautes \'Etudes Sci. Publ. Math.}
{\bf 4, 8, 11,  17, 20, 24, 28, 32} (1960--1967).

\bibitem
{Du}
A. Ducros,
Principe local-global sur le corps des fonctions d'une courbe r\'eelle:
espaces principaux homog\`enes et surfaces rationnelles,
Th\`ese de l'Universit\'e Paris-Sud, 1997.

\bibitem
{EV}
S. Encinas and O. Villamayor,
Good points and constructive resolution of singularities.
{\em Acta Math.} {\bf 181} (1998),  109--158.

\bibitem
{G}
S. Gille,
On the Brauer group of a semisimple algebraic group,
\emph{Adv. Math.} {\bf 220} (2009), 913--925.

\bibitem
{Gir}
J. Giraud,
{\it Cohomologie non ab\'elienne},
Die Grundlehren der mathematischen Wissenschaften, Band 179,
Springer-Verlag, 1971.

\bibitem
{Gr}
A. Grothendieck,
Le groupe de Brauer. I, II, III,
In: {\em Dix Expos\'es sur la Cohomologie des Sch\'emas,}
pp. 46--188,  North-Holland, Amsterdam; Masson, Paris, 1968.

\bibitem
{H1}
D. Harari,
The Manin obstruction for torsors under connected algebraic groups,
{\em Int. Math. Res. Notices} (2006) volume 2006,
Article ID 68632, 1--13.

\bibitem
{H}
D. Harari,
Le d\'efaut d'approximation forte pour les groupes alg\'ebriques commutatifs,
{\em Algebra and  Number Theory} {\bf 2} (2008), 595--611.

\bibitem
{HSk}
D. Harari and A. N. Skorobogatov,
The Brauer group of torsors and its arithmetic applications,
{\em Ann. Institut Fourier} {\bf 53} (2003), 1987-2019.

\bibitem
{HSk2}
D. Harari and A. N. Skorobogatov,
Non-abelian descent and the arithmetic of Enriques surfaces
{\em Int. Math. Res. Notices} {\bf 52} (2006), 3203-3228.

\bibitem
{HSz}
D. Harari and T. Szamuely, Arithmetic duality theorems for 1-motives,
{\em J. reine angew. Math. (Crelle)} {\bf 578} (2005), 93--128.

\bibitem
{HSz2}
D. Harari and T. Szamuely, Local-global principles for 1-motives,
{\em Duke Math. J.} {\bf 143} (2008), 531--557.

\bibitem
{Hir}
H. Hironaka,
Resolution of singularities of an algebraic variety over a field of characteristic zero. I, II.
{\em Ann. of Math.} (2) {\bf 79} (1964), 109--203; ibid. (2) {\bf 79} (1964) 205--326.

\bibitem
{Hoo}
R. T. Hoobler, A cohomological interpretation of Brauer groups of rings,
{\em Pacific J. Math.} {\bf 86} (1980), 89--92.

\bibitem
{Ka}
B. Kahn,
Deux th\'eor\`emes de comparaison en cohomologie \'etale;
applications,
{\em Duke Math. J.} {\bf 69} (1993), 137--165.

\bibitem
{K}
R.E. Kottwitz,
Stable trace formula: elliptic singular terms,
{\em Math. Ann.} {\bf 275} (1986), 365--399.

\bibitem
{L}
S. Lang,
Algebraic groups over finite fileds,
{\em Amer. J. Math} {\bf 78} (1956), 555--563.


\bibitem
{N}
M. Nagata,
A generalization of the imbedding problem
of an abstract variety in a complete variety,
{\em J. Math. Kyoto Univ.} {\bf 3} (1963), 89--102.

\bibitem
{OM}
O.T. O'Meara,
{\em Introduction to Quadratic Forms},
Grundlehren der mathematischen Wissenschaften,   Bd. 117,
Springer-Verlag, Berlin, 1963.

\bibitem
{OV}
A.L. Onishchik and E.B. Vinberg,
{\em Lie Groups and Algebraic Groups},
Springer-Verlag, Berlin, 1990.

\bibitem
{Oo}
F. Oort,
{\em Commutative Group Schemes},
Lect. Notes Math. vol. 15, Springer-Verlag, Berlin, 1966.

\bibitem
{PR}
V. Platonov and A. Rapinchuk,
{\em Algebraic Groups and Number Theory},
Acad. Press, Boston, 1994.

\bibitem
{R}
M. Rosenlicht,
Toroidal algebraic groups,
{\em Proceedings AMS} {\bf 12} (1961), 984--988.

\bibitem
{San}
J.-J. Sansuc,
Groupe de Brauer et arithm\'etique des groupes alg\'ebriques lin\'eaires
sur un corps de nombres,
{\it J. reine angew. Math. (Crelle)}   {\bf 327} (1981), 12--80.

\bibitem
{Sh}
I.R. Shafarevich,
{\em Basic Algebraic Geometry.
2. Schemes and Complex Manifolds,} 2nd ed.,
Springer-Verlag, Berlin, 1994

\bibitem
{Sk}
A. N. Skorobogatov,  {\it Torsors and Rational Points},
Cambridge Tracts in Mathematics {\bf 144},
Cambridge University Press, 2001.

\end{thebibliography}
\end{document}